\documentclass[12pt]{article}
\usepackage{amssymb,amsfonts,amsmath,amsthm,dsfont,xcolor,comment,stmaryrd,authblk,tikz,pgfplots,graphicx,float}
\usepackage[colorlinks, citecolor=purple,linkcolor=blue,urlcolor=teal]{hyperref}
\usepackage[lite,alphabetic]{amsrefs}
\usepackage{cleveref}
\usepackage{needspace}
\usepackage[a4paper, portrait, margin=25mm]{geometry}
\pgfplotsset{compat=1.18}
\newcommand{\1}{\mathds{1}}
\newcommand{\0}{\mathds{O}}

\newcommand{\Q}{\mathbb{Q}}
\newcommand{\R}{\mathbb{R}}

\newcommand{\N}{\mathbb{N}}

\newcommand{\Ba}{\mathrm{B}}
\newcommand{\Id}{\mathrm{Id}}

\newcommand{\So}{\mathrm{S}}

\newcommand{\8}{\infty}

\DeclareMathOperator{\spa}{span}

\DeclareMathOperator{\conv}{conv}
\DeclareMathOperator{\ext}{ext}

\DeclareMathOperator{\supp}{supp}
\DeclareMathOperator{\Aff}{Aff}
\newcommand{\Co}{\mathcal{C}}

\newcommand{\todo}{\xrightarrow[]{\mathrm{do}}}
\newcommand{\too}{\xrightarrow[]{\mathrm{o}}}

\newcommand{\toso}{\xrightarrow[]{\sigma\mathrm{o}}}

\newcommand{\toru}{\xrightarrow[]{\mathrm{ru}}}
\newcommand{\tow}{\xrightarrow[]{\mathrm{w}}}
\newcommand{\toww}{\xrightarrow[]{{\mathrm{w}^{*}}}}

\newcommand{\icecream}{%
  \mathord{%
    \tikz[baseline=-1.2ex,scale=0.13]{
      \draw (-1,0) -- (1,0);
      \draw (-1,0) arc (180:0:1);
      \draw (-1,0) -- (0,-2);
      \draw (1,0) -- (0,-2);
    }%
  }%
}

\newcommand{\smallicecream}{%
  \mathord{%
    \tikz[baseline=-0.7ex,scale=0.08]{
      \draw (-1,0) -- (1,0);
      \draw (-1,0) arc (180:0:1);
      \draw (-1,0) -- (0,-2);
      \draw (1,0) -- (0,-2);
    }%
  }%
}

\newtheoremstyle{mytheorem}
  {12pt}   % space above
  {4pt}   % space below
  {\itshape} % body font
  {}       % indentation
  {\bfseries} % theorem head font
  {.}      % punctuation after theorem head
  {.5em}   % space after theorem head
  {}       % theorem head specification

\newtheoremstyle{mydefinition}
  {12pt}        % space above
  {8pt}       % space below
  {\normalfont} % body font
  {}           % indentation
  {\bfseries}   % head font
  {.}           % punctuation
  {.5em}        % space after head
  {}

\newcounter{dummy} \numberwithin{dummy}{section}
\theoremstyle{mytheorem}
\newtheorem{theorem}[dummy]{Theorem}
\newtheorem{lemma}[dummy]{Lemma}

\newtheorem{proposition}[dummy]{Proposition}
\newtheorem{corollary}[dummy]{Corollary}
\newtheorem{question}[dummy]{Question}
\theoremstyle{mydefinition}
\newtheorem{remark}[dummy]{Remark}
\newtheorem{example}[dummy]{Example}
\usepackage{scalerel}

\title{Order convergence in ordered vector spaces: compactness, atoms, pervasiveness, and ice cream spaces}

\author[1,2]{Eugene Bilokopytov\footnote{bilokopy@ualberta.ca}}
\author[3]{Mark Roelands\footnote{m.roelands@math.leidenuniv.nl}}
\author[3]{Samuel Tiersma\footnote{s.j.tiersma@math.leidenuniv.nl}}
\author[4,5]{Jan Harm van der Walt\footnote{janharm.vanderwalt@up.ac.za}}
\author[3]{Onno van Gaans\footnote{vangaans@math.leidenuniv.nl}}
\author[4]{Marten Wortel\footnote{marten.wortel@up.ac.za}}

\affil[1]{Instituto de Ciencias Matem\'aticas, Nicol\'as Cabrera, 13-15 Campus de Cantoblanco, UAM, 28049, Madrid, Spain}
\affil[2]{Department of Mathematical and Statistical Sciences, University of Alberta, Edmonton, AB, T6G\,2G1, Canada}
\affil[3]{Mathematical Institute, Leiden University, 2300 RA Leiden, The Netherlands}
\affil[4]{Department of Mathematics and Applied Mathematics, University of Pretoria, Cor\-ner of Lynnwood Road and Roper Street, Hatfield 0083, Pretoria, South Africa}
\affil[5]{DSTI-NRF Centre of Excellence in Mathematical and Statistical Sciences (CoE-MaSS), South Africa}

\begin{document}
\maketitle

\begin{abstract}
We study order convergence in ordered vector spaces that are not necessarily lattices. In this setting, we consider two generalizations of order convergence on vector lattices, a directed and an undirected variant. We show that these notions agree for the class of pervasive spaces and extend several results concerning atomicity and order convergence in vector lattices to this class. In particular, we prove that a pervasive JB-algebra is associative, which strengthens a result of Sherman for C*-algebras. We also establish a number of new characterizations of pervasive spaces.

%We prove that a C*-algebra is commutative if and only if its self-adjoint elements form a pervasive ordered vector space, which strengthens a result of Sherman.
Additionally, we systematically study a class of order unit spaces associated to Banach spaces, called ice cream spaces. We provide descriptions of both directed and undirected order convergences in these spaces, yielding examples where the two notions differ. We also consider other order-theoretic aspects of ice cream spaces, such as atoms and lattice operations, relating them to the geometry of the underlying Banach spaces. We demonstrate that ice cream spaces exhibit behavior very different from vector lattices and can serve as a rich source of counterexamples in the theory of ordered vector spaces.\medskip

\emph{Keywords:} Ordered vector spaces, order convergence, directed order convergence, ice cream spaces, order unit spaces, atomic ordered vector spaces, pervasive ordered vector spaces, JB-algebras.

MSC2020: 06F20, 46A19, 46A40, 46B42, 46E25, 54A20.

\end{abstract}

{\footnotesize\tableofcontents}

\section{Introduction}
The term ``order convergence'' refers to several similar but non-identical concepts which allow one to endow a partially ordered set with a topology-like structure. In analysis, order convergence has proved useful in the study of vector lattices and Boolean algebras. In particular, a striking fact is that in Banach function spaces order convergence of sequences agrees with dominated convergence almost everywhere, which shows that the latter convergence only depends on the order structure of the space.

While order convergence of nets is rarely induced by a topology, among the competing notions of order convergence in vector lattices, only one gives rise to a so-called convergence space. Many concepts and results about topological spaces remain valid for convergence spaces. Hence, the appropriate notion of order convergence enables the transfer of tools from the theory of topological vector spaces to the theory of vector lattices. Due to these advantages, in recent times this definition of order convergence has seen a nearly universal adoption by the community.

A net in an ordered vector space is declared to be order convergent to zero if its tails are dominated by a set with infimum zero. In vector lattices, this set can always be assumed to be downward directed. In the absence of a lattice ordering, this need no longer hold. Demanding that the set be downward directed leads to a new notion, called directed order convergence. The variant in which the controlling set is not required to be directed will simply be called order convergence, on the grounds that it is the restriction of order convergence on the Dedekind completion.

The starting point for this paper is to analyze the extent to which the two notions of order convergence are different. To this end, we study two classes of ordered vector spaces: a general class of so-called pervasive ordered vector spaces, which are close to vector lattices, and a more concrete class of ice cream spaces, whose order structure is very far from being a lattice. We show that in pervasive spaces the two modes of convergence actually agree and give a concrete description of both modes in certain ice cream spaces. In particular, this yields an example of an ordered vector space where directed order convergence is strictly stronger than order convergence.

The first collection of results in this paper concerns Archimedean pervasive ordered vector spaces. These are the ordered vector spaces which embed order densely (in the sense of vector lattice theory) into Archimedean vector lattices, see \cite{kv} and \cite{km}. We add a number of characterizations of Archimedean pervasive ordered vector spaces (Theorem~\ref{pervasive}) and show that order convergence and directed order convergence coincide on these spaces (Proposition~\ref{o=do}). We also prove in Theorem~\ref{ordprec} that the results of \cite{abt} extend to the setting of pervasive Archimedean ordered vector spaces. For such a space $E$, atomicity is equivalent to order pre-compactness of every order interval (i.e.\ to every order bounded set containing an order Cauchy subnet) and to the fact that order convergence is topological on order intervals. These conditions are also equivalent to the existence of a set $X$ such that $E$ embeds as a subspace of $\R^{X}$ containing $c_{00}(X)$. On order bounded subsets, order convergence then agrees with pointwise convergence.

For Archimedean ordered vector spaces, pervasiveness is closely related to being a vector lattice, and for this reason we intend to study the connection between pervasiveness and the lattice structure explicitly in the context of JB-algebras. These constitute a vast and rich family of ordered vector spaces, including the self-adjoint parts of C*-algebras. Each associative JB-algebra is isomorphic to $\Co_{0}\left(X\right)$, for a locally compact Hausdorff space $X$, see \cite[Theorem~3.2.2]{hs}. Therefore, associativity implies the structure of a vector lattice. The converse was proved in \cite[Theorem~1]{sherman} for the case of the self-adjoint part of a C*-algebra, and the more general result for JB-algebras can be found in \cite[Theorem~15]{ilr}. We add to the various characterizations of associativity the fact that a JB-algebra is associative if and only if it is pervasive (Theorem~\ref{jblat}).

The second collection of results in this paper revolves around the study of the ice cream space over a normed space $F$, given by
\[\icecream\left(F\right):=F\oplus\mathbb{R},
    \qquad
    \icecream\left(F\right)_+:=\{(f,r)\in F\oplus\mathbb{R}:\|f\|\le r\}.\]
The ice cream space $\icecream\left(F\right)$ can be identified with the space of $\mathrm{w}^{*}$-continuous affine functions on the dual unit ball $\Ba_{F^{*}}$ of $F$, which makes geometric properties of the Banach space $F$ correspond to order-theoretic properties of $\icecream\left(F\right)$.

We prove that directed order convergence, relative uniform convergence, and norm convergence coincide on $\icecream\left(F\right)$ (Proposition~\ref{icedo}). Furthermore, under suitable assumptions on $F$, order convergence agrees with bounded weak and bounded pointwise convergence, order intervals are order metrizable and order pre-compact, and they are order compact exactly when $F$ is reflexive (Theorem~\ref{iceord}). If in addition $F$ is infinite-dimensional, order convergence and directed order convergence are therefore distinct. Atomicity in ice cream spaces also admits a geometric description: $\icecream\left(F\right)$ is atomic if and only if every nonempty face of $\Ba_{F}$ contains an extreme point (Proposition~\ref{atm}).

The paper is organized as follows. Section~\ref{s2} fixes the terminology and records the general facts needed for the sequel. For partially ordered sets, order convergence is outlined in Subsection~\ref{ss21}; it is characterized by eventual upper and lower bounds and is related to the MacNeille completion. For ordered vector spaces, Subsections~\ref{ss22}--\ref{ss24} cover the terminology and properties of order Cauchy nets, relative uniform convergence, various notions of completeness, the Dedekind completion, and the conditions under which order convergence and directed order convergence are linear convergence structures.

The results concerning order pre-compactness in pervasive ordered vector spaces and the study of pervasive JB-algebras are collected in Section~\ref{s3}. We start with various characterizations of pervasiveness and the study of the Dedekind completion and completeness of pervasive spaces in Subsection~\ref{ss31}. Next, in Subsection~\ref{sec_pervatom}, atoms are introduced and we characterize atomicity through order pre-compactness and topological properties of order convergence. Subsection~\ref{ss33} covers the corresponding compactness questions for $\sigma$-order convergence and relative uniform convergence. In Subsection~\ref{ss34}, it is shown that pervasive JB-algebras are associative and lattice-ordered.

For dessert, the ice cream spaces are served in Section~\ref{s4}. The necessary preliminary facts about faces and order unit spaces are collected in Subsection~\ref{icepre}. Subsection~\ref{icebsc} gives the affine-function representations and basic convergence and completeness results for ice cream spaces. Subsection~\ref{iceoc} identifies order convergence in ice cream spaces under additional assumptions on the underlying normed space $F$. The extent to which lattice operations exist in ice cream spaces is examined in Subsection~\ref{icelat}. Subsection~\ref{iceatm} characterizes atomicity and strong atomicity (every positive vector is a supremum of atoms) of ice cream spaces and gives examples distinguishing these properties from order (pre-)compactness of order intervals.

We conclude the introduction by sharing the \LaTeX{} command for the $\icecream$ symbol (a separate command is used for the ice cream in subscripts or superscripts, with baseline \texttt{-0.7ex} and scale \texttt{0.08}):

\begin{verbatim}
\newcommand{\icecream}{\mathord{\tikz[baseline=-1.2ex,scale=0.13]{
\draw (-1,0) -- (1,0); \draw (-1,0) arc (180:0:1); 
\draw (-1,0) -- (0,-2); \draw (1,0) -- (0,-2);}}}
\end{verbatim}

\section{Order convergence}\label{s2}

\subsection{Order convergence in a poset}\label{ss21}

Let $P$ be a poset. Let us first introduce some notation. If $Q\subseteq P$, the sets of all upper and lower bounds of $Q$ are denoted by $Q^{\uparrow}$ and $Q^{\downarrow}$, respectively. If $Q$ is such that $\left[p,q\right]\subseteq Q$ whenever $p,q\in Q$ and $p\le q$, then we call $Q$ \emph{full}; we call it a \emph{lower} set if $p\le q\in Q$ implies $p\in Q$. \emph{Upper} sets are defined analogously. If $p\in P$, let $\left[p\right):=\left\{q\in P: q\ge p\right\}$, and $\left[p\right)_{Q}:=\left[p\right)\cap Q$; $\left(p\right]$ and $\left(p\right]_{Q}$ are defined analogously; $\left[p\right)$ and $\left(p\right]$ are called \emph{principal upper and lower sets}, respectively. If $Q,R\subseteq P$, we write $Q\le R$ if $q\le r$, for every $q\in Q$ and $r\in R$. For $p\le q$ we also define $\left[p,q\right]_{Q}:=\left[p,q\right]\cap Q$, $\left(p,q\right]:=\left[p,q\right]\setminus\left\{p\right\}$ and $\left[p,q\right):=\left[p,q\right]\setminus\left\{q\right\}$. The material of this section is not novel (see e.g.\ \cite{acw}), but we present it here in a coherent way for the reader's convenience. In particular, we fix definitions for the convergences that we will be studying. Note that throughout the literature the terms for these convergences are used in an inconsistent way.

A net $\left(p_{\alpha}\right)_{\alpha\in A}$ in $P$ \emph{directed order converges} to $p\in P$ (denoted $p_{\alpha}\todo p$) if there exist an increasing net $\left(q_{\beta}\right)_{\beta\in B}$ and a decreasing net $\left(r_{\beta}\right)_{\beta\in B}$ such that $\bigvee_{\beta\in B}q_{\beta}=p=\bigwedge_{\beta\in B}r_{\beta}$ and for every $\beta\in B$ there is $\alpha_{0}\in A$ such that $q_{\beta}\le p_{\alpha}\le r_{\beta}$, as soon as $\alpha\ge\alpha_{0}$. The term ``directed order convergence'' is justified by the following result.

\begin{proposition}\label{do}
$p_{\alpha}\todo p$ if and only if there are a directed upward $\varnothing\ne Q\subseteq P$ and a directed downward $\varnothing\ne R\subseteq P$ such that $\bigvee Q=p=\bigwedge R$ and for every $q\in Q$ and $r\in R$ there is $\alpha_{0}$ such that $p_{\alpha}\in\left[q,r\right]$, as soon as $\alpha\ge\alpha_{0}$.
\end{proposition}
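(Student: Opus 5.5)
For the forward direction, given the increasing net $\left(q_{\beta}\right)_{\beta\in B}$ and decreasing net $\left(r_{\beta}\right)_{\beta\in B}$ from the definition, I take $Q:=\left\{q_{\beta}:\beta\in B\right\}$ and $R:=\left\{r_{\beta}:\beta\in B\right\}$. Both are nonempty because a directed set is nonempty. $Q$ is directed upward: for $q_{\beta_{1}},q_{\beta_{2}}$, choose $\beta\ge\beta_{1},\beta_{2}$, and then $q_{\beta}$ dominates both. Dually, $R$ is directed downward. The supremum and infimum conditions carry over verbatim.

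For the tail condition, fix $q=q_{\beta_{1}}$ and $r=r_{\beta_{2}}$, and pick $\beta\ge\beta_{1},\beta_{2}$. By the definition there is $\alpha_{0}$ with $q_{\beta}\le p_{\alpha}\le r_{\beta}$ for $\alpha\ge\alpha_{0}$. By monotonicity, $q\le q_{\beta}$ and $r_{\beta}\le r$, so $p_{\alpha}\in\left[q,r\right]$ for all $\alpha\ge\alpha_{0}$.

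For the converse, given $Q$ and $R$, I index by the product $B:=Q\times R$ with the order $(q,r)\le(q',r')$ iff $q\le q'$ and $r\ge r'$. This $B$ is directed because $Q$ is upward directed and $R$ is downward directed. Put $q_{(q,r)}:=q$ and $r_{(q,r)}:=r$; these nets are increasing and decreasing, respectively. Their ranges are exactly $Q$ and $R$, so $\bigvee_{\beta}q_{\beta}=\bigvee Q=p$ and $\bigwedge_{\beta}r_{\beta}=\bigwedge R=p$. The tail condition for $\beta=(q,r)$ is precisely the hypothesis. Hence $p_{\alpha}\todo p$.

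There is no real obstacle here. The only point needing care is that the product must carry the reversed order in the $R$-coordinate for $\beta\mapsto r_{\beta}$ to be decreasing. One should also check that supremum and infimum are taken over the range of the net, so that they coincide with $\bigvee Q$ and $\bigwedge R$.
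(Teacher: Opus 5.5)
Your proposal is correct and follows essentially the same route as the paper: for necessity take the ranges $Q=\{q_{\beta}\}$ and $R=\{r_{\beta}\}$, and for sufficiency index by $Q\times R^{\mathrm{op}}$ with the coordinate projections. You also spell out the tail verification and the directedness checks, which the paper leaves implicit.
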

\begin{proof}
For necessity it is enough to take $Q:=\left\{q_{\beta}\right\}_{\beta\in B}$ and $R:=\left\{r_{\beta}\right\}_{\beta\in B}$. For sufficiency, let $B:=Q\times R^{\mathrm{op}}$, and for $\beta=\left(q,r\right)\in B$ define $q_{\beta}:=q$, and $r_{\beta}:=r$.
\end{proof}

Motivated by this result we say that a net $\left(p_{\alpha}\right)_{\alpha\in A}\subseteq P$ converges \emph{in order} to $p\in P$ (denoted $p_{\alpha}\too p$) if there are $\varnothing\ne Q,R\subseteq P$ such that $\bigvee Q=p=\bigwedge R$ and for every $q\in Q$ and $r\in R$ there is $\alpha_{0}$ such that $p_{\alpha}\in\left[q,r\right]$, as soon as $\alpha\ge\alpha_{0}$. We will say that $Q,R$ in the definition \emph{witness} $p_{\alpha}\too p$. Similarly, in the setting of Proposition~\ref{do} we will say that $Q,R$ \emph{witness} $p_{\alpha}\todo p$.

Clearly, an order convergent net is eventually order bounded. Directed order convergence is \emph{stronger} than order convergence (denoted $\mathrm{do}\ge \mathrm{o}$ or $\mathrm{o}\le \mathrm{do}$), which means that directed order convergence of a net implies order convergence of this net to the same limit. It is easy to see that the two notions of convergence coincide if $P$ is a lattice. Indeed, in this case $Q,R$ in the definition of order convergence may be replaced with $Q^{\vee}:=\left\{q_{1}\vee\dots\vee q_{n}: q_{1},\dots, q_{n}\in Q,~n\in\N\right\}$ and $R^{\wedge}$ defined analogously. However, in general the two convergences may differ (see e.g.\ \cite[Example~3]{acw} and part~(iii) of Theorem~\ref{iceord}).

We call a map $\varphi\colon P\to Q$ between partially ordered sets \emph{(directed) order continuous} if it preserves (directed) order convergence of nets. We will say that $\varphi$ \emph{preserves all suprema} if $r=\bigvee R$ implies $\varphi\left(r\right)=\bigvee\varphi\left(R\right)$, for $r\in P$ and $\varnothing\ne R\subseteq P$ (and the same for infima). The next result follows directly from the definitions.

\begin{proposition}\label{oco}
Every order preserving map that preserves all suprema and infima (resp. extrema of monotone nets) is order (resp. directed order) continuous. The identity map is order- and directed order continuous from $\left(P,\le\right)$ into $\left(P,\ge\right)$.
\end{proposition}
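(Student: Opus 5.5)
Let $\varphi\colon P\to P'$ be order preserving and let $p_{\alpha}\too p$ in $P$, witnessed by $Q,R$. I will show that $\varphi(Q),\varphi(R)$ witness $\varphi(p_{\alpha})\too\varphi(p)$. First, since $\bigvee Q=p=\bigwedge R$ with $Q,R$ nonempty, the assumption that $\varphi$ preserves all suprema and infima gives $\bigvee\varphi(Q)=\varphi(p)=\bigwedge\varphi(R)$; these sets are nonempty. Next, take $q\in Q$ and $r\in R$, and choose $\alpha_{0}$ with $p_{\alpha}\in[q,r]$ for $\alpha\ge\alpha_{0}$. Monotonicity of $\varphi$ then yields $\varphi(p_{\alpha})\in[\varphi(q),\varphi(r)]$ for the same $\alpha$. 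Since every element of $\varphi(Q)$ and of $\varphi(R)$ has this form, this is exactly the definition of order convergence.

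For the directed variant I will use the net formulation from the definition. Let increasing $(q_{\beta})$ and decreasing $(r_{\beta})$ witness $p_{\alpha}\todo p$. Because $\varphi$ is order preserving, $(\varphi(q_{\beta}))$ is increasing and $(\varphi(r_{\beta}))$ is decreasing. Their supremum and infimum equal $\varphi(p)$, since $\varphi$ preserves extrema of monotone nets. The eventual sandwich property transfers by monotonicity exactly as in the first part. Proposition~\ref{do} is not even needed here, because the definition is already phrased in terms of nets.

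For the last claim, reversing the order swaps upper and lower bounds and swaps suprema and infima. If $Q,R$ witness $p_{\alpha}\too p$ in $(P,\le)$, then $R,Q$ witness it in $(P,\ge)$. The interval $[q,r]$ in $\le$ is the interval $[r,q]$ in $\ge$, and $\bigwedge R$ computed in $\le$ is $\bigvee R$ computed in $\ge$. In the directed case, increasing and decreasing nets swap roles, so the pair $(r_{\beta}),(q_{\beta})$ works.

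I do not expect a real obstacle; the statement is a direct unwinding of the definitions. The one point needing care is that the interval $[\varphi(q),\varphi(r)]$ must be formed in the codomain, and order preservation is used only to place $\varphi(p_{\alpha})$ inside it.
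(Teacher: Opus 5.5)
Your proof is correct and is the same direct unwinding of the definitions that the paper has in mind; the paper gives no argument beyond saying the result follows directly from the definitions. Specifically, the images of the witnessing sets (resp.\ monotone nets) witness convergence of the image net, and passing from $\left(P,\le\right)$ to $\left(P,\ge\right)$ just swaps the roles of the two witnesses.
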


We will say that $q$ is an \emph{eventual upper bound} for $\left(p_{\alpha}\right)_{\alpha\in A}$ if $\left(q\right]$ contains a tail of $\left(p_{\alpha}\right)_{\alpha\in A}$, that is, there is $\alpha_{0}$ such that $p_{\alpha}\le q$, for every $\alpha\ge\alpha_{0}$. Let $\left(p_{\alpha}\right)_{\alpha\in A}^{\nearrow}$ be the set of all eventual upper bounds for $\left(p_{\alpha}\right)_{\alpha\in A}$. Eventual lower bounds are defined analogously; their set is denoted $\left(p_{\alpha}\right)_{\alpha\in A}^{\searrow}$. Note that for any net we have $\left(p_{\alpha}\right)_{\alpha\in A}^{\searrow}\le \left(p_{\alpha}\right)_{\alpha\in A}^{\nearrow}$, but these sets may be empty. The eventual order bounds allow for the following characterization of order convergence.

\begin{proposition}\label{oc}
A net $\left(p_{\alpha}\right)_{\alpha\in A}\subseteq P$ order converges to $p\in P$ if and only if
\[\bigvee \left(p_{\alpha}\right)_{\alpha\in A}^{\searrow}=p=\bigwedge \left(p_{\alpha}\right)_{\alpha\in A}^{\nearrow}.\]
In particular, order limits are unique. Order intervals and principal upper and lower sets are closed with respect to order convergence.
\end{proposition}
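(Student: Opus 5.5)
The plan is to compare the definition of order convergence directly with the two sets of eventual bounds. Throughout write $L:=\left(p_{\alpha}\right)_{\alpha\in A}^{\searrow}$ and $U:=\left(p_{\alpha}\right)_{\alpha\in A}^{\nearrow}$.

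For sufficiency, assume $\bigvee L=p=\bigwedge U$. Then $L$ and $U$ are nonempty, since the supremum and infimum are attained by the element $p\in P$; the empty set has supremum only if $P$ has a least element, but in any case we need only that $p$ exists. For $q\in L$ and $r\in U$ there are tails with $p_{\alpha}\ge q$ and $p_{\alpha}\le r$. Since $A$ is directed, a common tail lies in $\left[q,r\right]$. So $Q:=L$ and $R:=U$ witness $p_{\alpha}\too p$. The one subtlety is nonemptiness. If $L=\varnothing$, then $\bigvee\varnothing=p$ means $p$ is the least element of $P$, and then $p\in L$, a contradiction. The same argument applies to $U$.

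For necessity, let $Q,R$ witness $p_{\alpha}\too p$. Each $q\in Q$ is an eventual lower bound, so $Q\subseteq L$; likewise $R\subseteq U$. Every element of $L$ is below every element of $U$: given $l\in L$ and $u\in U$, pick a common tail and an index $\alpha$ in it, so $l\le p_{\alpha}\le u$. Hence every $l\in L$ is a lower bound of $R$ and thus $l\le\bigwedge R=p$, so $p$ is an upper bound of $L$. Any upper bound of $L$ bounds $Q\subseteq L$, hence dominates $\bigvee Q=p$. Therefore $\bigvee L=p$, and dually $\bigwedge U=p$.

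Uniqueness is immediate, since $L$ depends only on the net and $\bigvee L$ is unique. For closedness, suppose $p_{\alpha}\too p$ with all $p_{\alpha}\in\left[a,b\right]$. Then $a\in L$ and $b\in U$, so $a\le\bigvee L=p=\bigwedge U\le b$. Principal upper and lower sets are handled the same way using only one bound. The argument is essentially routine; the main point needing care is the nonemptiness and the use of a common tail via directedness of $A$.
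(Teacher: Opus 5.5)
Your proof is correct and follows essentially the same route as the paper: you take the sets of eventual lower and upper bounds as witnesses, with the same least-element argument for nonemptiness. For necessity you sandwich $p$ using $Q\subseteq L\le U\supseteq R$, and you read off closedness from $a\in L$ and $b\in U$; the only blemish is the opening sentence of your sufficiency paragraph, which does not by itself justify nonemptiness, though your explicit argument that immediately follows does.
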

\begin{proof}
Sufficiency: Take $Q:=\left(p_{\alpha}\right)_{\alpha\in A}^{\searrow}$ and $R:=\left(p_{\alpha}\right)_{\alpha\in A}^{\nearrow}$. We only need to show that they are nonempty. Indeed, if $Q=\varnothing$, then $p=\bigvee \varnothing$ implies that $p$ is the least element of $P$, and so in particular, $p\in Q$, contradiction. Similarly, $R\ne\varnothing$.

Necessity: Let $Q,R$ be as in the definition of order convergence. Since we have that $Q\subseteq \left(p_{\alpha}\right)_{\alpha\in A}^{\searrow}\le \left(p_{\alpha}\right)_{\alpha\in A}^{\nearrow}$, it follows that $Q\le \left(p_{\alpha}\right)_{\alpha\in A}^{\nearrow}$, and since $\bigvee Q=p$ we conclude that $p\le \left(p_{\alpha}\right)_{\alpha\in A}^{\nearrow}$. This together with $R\subseteq \left(p_{\alpha}\right)_{\alpha\in A}^{\nearrow}$ and $\bigwedge R=p$ yields $p= \bigwedge\left(p_{\alpha}\right)_{\alpha\in A}^{\nearrow}$. The other equality is proved similarly.

For the last claim assume that $\left(p_{\alpha}\right)_{\alpha\in A}\subseteq\left[q\right)$, for some $q\in P$. Then, $q\in \left(p_{\alpha}\right)_{\alpha\in A}^{\searrow}$, from where $p=\bigvee \left(p_{\alpha}\right)_{\alpha\in A}^{\searrow}\ge q$. Hence, $\left[q\right)$ is order closed, and similarly $\left(q\right]$ is also order closed. For $p\le q$ we have that $\left[p,q\right]=\left[p\right)\cap \left(q\right]$ is order closed as an intersection of order closed sets.
\end{proof}

\begin{corollary}
If $\left(p_{\alpha}\right)_{\alpha\in A}\subseteq P$ is increasing, then $p=\bigvee p_{\alpha}$ if and only if $p_{\alpha}\todo p$ if and only if $p_{\alpha}\too p$ (and the same for decreasing nets and infimum).
\end{corollary}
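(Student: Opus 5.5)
The plan is to prove the chain of implications $p=\bigvee p_{\alpha}\Rightarrow p_{\alpha}\todo p\Rightarrow p_{\alpha}\too p\Rightarrow p=\bigvee p_{\alpha}$ for an increasing net $\left(p_{\alpha}\right)_{\alpha\in A}$. The decreasing case then follows by applying the result to $\left(P,\ge\right)$. This is legitimate because both convergences are self-dual, as the identity map $\left(P,\le\right)\to\left(P,\ge\right)$ preserves them by Proposition~\ref{oco}.

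For the first implication, assume $p=\bigvee_{\alpha}p_{\alpha}$. I take $B:=A$, $q_{\beta}:=p_{\beta}$, and the constant net $r_{\beta}:=p$. Then $\left(q_{\beta}\right)$ is increasing with supremum $p$, and $\left(r_{\beta}\right)$ is decreasing with infimum $p$. For each $\beta$, monotonicity gives $q_{\beta}=p_{\beta}\le p_{\alpha}\le p=r_{\beta}$ for all $\alpha\ge\beta$. So $p_{\alpha}\todo p$. The second implication holds because directed order convergence is stronger than order convergence, which is immediate from Proposition~\ref{do}.

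For the last implication, assume $p_{\alpha}\too p$ and use Proposition~\ref{oc}, so $p=\bigwedge\left(p_{\alpha}\right)^{\nearrow}=\bigvee\left(p_{\alpha}\right)^{\searrow}$. This implication is the only one needing care. Since the net is increasing, each $p_{\alpha_0}$ lies below every tail from $\alpha_0$ on, so $p_{\alpha_0}$ is an eventual lower bound. Hence $p_{\alpha_0}\le\bigvee\left(p_{\alpha}\right)^{\searrow}=p$, and $p$ is an upper bound of the net.

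Next, let $u$ be any upper bound of the net; I must show $p\le u$. Such a $u$ is an eventual upper bound, so $p=\bigwedge\left(p_{\alpha}\right)^{\nearrow}\le u$. Thus $p=\bigvee p_{\alpha}$, which completes the chain.
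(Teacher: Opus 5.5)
Your proof is correct and takes essentially the same approach as the paper. The forward implications are the routine ones the paper calls straightforward, and for $p_{\alpha}\too p\Rightarrow p=\bigvee p_{\alpha}$ you apply the eventual-bound characterization of Proposition~\ref{oc} directly, whereas the paper uses its consequence that $\left[p_{\alpha}\right)$ and $\left(q\right]$ are order closed (applied to tails of the net), which amounts to the same argument.
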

\begin{proof}
The direct implications are straightforward. Assume that $p_{\alpha}\too p$. For every $\alpha\in A$ we have that $p_{\alpha}\le p_{\beta}$, for each $\beta\ge\alpha$, and as the tail $\left(p_{\beta}\right)_{\beta\ge\alpha}\subseteq\left[p_{\alpha}\right)$ order converges to $p$, by Proposition~\ref{oc} we have $p\in \left[p_{\alpha}\right)$. Hence, $p\ge p_{\alpha}$, for every $\alpha\in A$. Assume that $q\ge p_{\alpha}$, for every $\alpha\in A$. Then, by Proposition~\ref{oc}, $\left(q\right]\ni p_{\alpha}\too p$ yields $p\le q$. Hence, $p=\bigvee p_{\alpha}$.
\end{proof}

Since directed order convergence is stronger than order convergence, Proposition~\ref{oc} has an immediate consequence.

\begin{proposition}
A directed order limit is unique. Order intervals and principal upper and lower sets are directed order closed.
\end{proposition}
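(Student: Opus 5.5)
The plan is to deduce both claims from Proposition~\ref{oc}, using that directed order convergence implies order convergence to the same limit ($\mathrm{do}\ge\mathrm{o}$).

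\emph{Uniqueness.} Suppose $p_{\alpha}\todo p$ and $p_{\alpha}\todo p'$. Since $\mathrm{do}\ge\mathrm{o}$, we get $p_{\alpha}\too p$ and $p_{\alpha}\too p'$. By Proposition~\ref{oc},
\[p=\bigwedge\left(p_{\alpha}\right)_{\alpha\in A}^{\nearrow}=p'.\]
So directed order limits are unique.

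\emph{Closedness.} Let $S$ be an order interval $\left[q,r\right]$ or a principal set $\left[q\right)$ or $\left(q\right]$. Take a net $\left(p_{\alpha}\right)_{\alpha\in A}\subseteq S$ with $p_{\alpha}\todo p$. Then $p_{\alpha}\too p$, and Proposition~\ref{oc} says $S$ is order closed, so $p\in S$. Thus $S$ is directed order closed.

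The only conceptual point is that "closed" here means sequentially (net-wise) closed: the set contains the limits of all its convergent nets. Under that reading, any convergence finer than $\mathrm{o}$ has at least as many closed sets, so no step is a real obstacle; I would simply record this monotonicity explicitly.
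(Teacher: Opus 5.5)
Your proof is correct and is essentially the paper's own argument: the paper presents this proposition as an immediate consequence of Proposition~\ref{oc} via $\mathrm{do}\ge\mathrm{o}$. That is exactly what you do for both uniqueness and closedness.
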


If $R\subseteq Q\subseteq P$, then $\bigwedge_{Q}R$ and $\bigvee_{Q} R$ are the infimum and supremum of $R$ taken in the partially ordered set $Q$. In general, they may differ from $\bigwedge_{P}R$ and $\bigvee_{P} R$, respectively; in particular they exist independently from one another. Note, however, that if $q\in Q$ is such that $q=\bigwedge_{P}R$, then it also follows that $q=\bigwedge_{Q}R$. Similarly, it is not always true that order convergence in $Q$ is the restriction of order convergence in $P$. Let us consider the case when this in fact happens. We say that $Q$ is \emph{supremum-dense} in $P$ if $p=\bigvee\left(p\right]_{Q}$ for every $p\in P$, or equivalently, if for every $p\in P$ there is $R\subseteq Q$ with $p=\bigvee R$. \emph{Infimum-denseness} is defined analogously.

\begin{proposition}\label{ocp}
If $Q\subseteq P$ is supremum- and infimum-dense, then the embedding from $Q$ into $P$ preserves all suprema and infima and is directed order continuous. Moreover, order convergence in $Q$ coincides with order convergence in $P$ restricted to $Q$.
\end{proposition}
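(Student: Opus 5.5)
The argument has three steps: suprema and infima are preserved, directed order continuity then follows from Proposition~\ref{oco}, and the comparison of the two order convergences is made through the characterization of Proposition~\ref{oc}.

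\textbf{Preservation of suprema.} Let $R\subseteq Q$ be nonempty with $q=\bigvee_{Q}R$. Clearly $q$ is an upper bound of $R$ in $P$. Let $p\in P$ be any upper bound of $R$ in $P$. By infimum-denseness, $p=\bigwedge [p)_{Q}$. Every $s\in[p)_{Q}$ lies in $Q$ and is an upper bound of $R$, so $s\ge q$. Hence $q$ is a lower bound of $[p)_{Q}$, which gives $q\le p$. Thus $q=\bigvee_{P}R$. The argument for infima is dual and uses supremum-denseness. Since the embedding is order preserving and preserves all suprema and infima, in particular those of monotone nets, Proposition~\ref{oco} shows that it is directed order continuous. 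It is also order continuous, which gives one inclusion of the second claim: $q_\alpha\too q$ in $Q$ implies $q_\alpha\too q$ in $P$.

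\textbf{Converse direction.} Let $(q_\alpha)\subseteq Q$ and $q\in Q$ with $q_\alpha\too q$ in $P$. By Proposition~\ref{oc},
\[\bigvee_P (q_\alpha)^{\searrow}_P = q = \bigwedge_P (q_\alpha)^{\nearrow}_P,\]
where the subscript $P$ indicates that eventual bounds are taken in $P$. I would show that
\[\bigwedge_P (q_\alpha)^{\nearrow}_Q = q,\]
where $(q_\alpha)^{\nearrow}_Q=(q_\alpha)^{\nearrow}_P\cap Q$. This set is a subset of $(q_\alpha)^{\nearrow}_P$, so $q$ is a lower bound of it. Now let $p\in P$ be a lower bound of $(q_\alpha)^{\nearrow}_Q$. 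Take any $r\in (q_\alpha)^{\nearrow}_P$. Every $s\in[r)_Q$ is again an eventual upper bound of the net lying in $Q$, so $s\ge p$. Since $r=\bigwedge[r)_Q$ by infimum-denseness, we get $r\ge p$. So $p$ is a lower bound of $(q_\alpha)^{\nearrow}_P$, hence $p\le q$. Since $q\in Q$, the infimum in $P$ is also the infimum in $Q$. The same argument applied to eventual lower bounds gives $q=\bigvee_Q (q_\alpha)^{\searrow}_Q$. Proposition~\ref{oc}, applied in $Q$, then yields $q_\alpha\too q$ in $Q$.

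\textbf{Limits outside $Q$.} It remains to rule out a net in $Q$ that order converges in $P$ to some $p\notin Q$ while having no order limit in $Q$. I read ``coincides with the restriction'' as referring to nets in $Q$ with limits in $Q$, so this case is not needed. The only delicate point in the whole proof is the step in the converse direction that passes from eventual upper bounds in $P$ to eventual upper bounds in $Q$, and infimum-denseness handles it directly.
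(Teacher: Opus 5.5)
Your proof is correct and follows the paper's argument essentially verbatim. Both use the same infimum-denseness trick to preserve suprema, Proposition~\ref{oco} for (directed) order continuity, and Proposition~\ref{oc} for the converse; you work with eventual upper bounds and infimum-denseness, while the paper works with eventual lower bounds and supremum-denseness, writing $\left(q_{\alpha}\right)^{\searrow_{Q}}$ as the union of the sets $\left(p\right]_{Q}$. Your reading of ``restriction'' (nets in $Q$ with limits in $Q$) is also the one the paper proves, but note that nets in $Q$ converging in $P$ to points outside $Q$ genuinely exist (e.g.\ rationals converging to $\sqrt{2}$ inside the MacNeille completion of $\Q$). So that case is not something that could be ``ruled out''; it simply lies outside the claim.
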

\begin{proof}
First, let us prove that the inclusion map from $Q$ into $P$ preserves all suprema and infima. Assume that $r\in Q$ and $R\subseteq Q$ are such that $r=\bigvee_{Q} R$. If $p\in P$ is such that $p\ge R$, then for every $q\in \left[p\right)_{Q}$ we have $R\le p\le q$, therefore $r\le q$; hence $\left[p\right)_{Q}\ge r$, from where $p=\bigwedge\left[p\right)_{Q}\ge r$. As $p$ was chosen arbitrarily, we conclude that $r=\bigvee_{P} R$. Similarly, if $r=\bigwedge_{Q} R$, then $r=\bigwedge_{P} R$. By virtue of Proposition~\ref{oco} we conclude that the inclusion map is order- and directed order continuous.

On the other hand, assume that $\left(q_{\alpha}\right)_{\alpha\in A}\subseteq Q$ order converges in $P$ to $q\in Q$, that is, $\bigvee \left(q_{\alpha}\right)_{\alpha\in A}^{\searrow_{P}}=q=\bigwedge \left(q_{\alpha}\right)_{\alpha\in A}^{\nearrow_{P}}$. For every $p\in \left(q_{\alpha}\right)_{\alpha\in A}^{\searrow_{P}}$ we have that $\left(p\right]_{Q}\subseteq \left(q_{\alpha}\right)_{\alpha\in A}^{\searrow_{Q}}$ and $p=\bigvee\left(p\right]_{Q}$. It follows that
\[\left(q_{\alpha}\right)_{\alpha\in A}^{\searrow_{Q}}=\bigcup_{p\in \left(q_{\alpha}\right)_{\alpha\in A}^{\searrow_{P}}}\left(p\right]_{Q},\quad\text{and so}\quad \bigvee \left(q_{\alpha}\right)_{\alpha\in A}^{\searrow_{Q}}=\bigvee_{p\in \left(q_{\alpha}\right)_{\alpha\in A}^{\searrow_{P}}}\left(p\right]_{Q}=q\]
(supremum in $P$, hence in $Q$), and similarly $q=\bigwedge \left(q_{\alpha}\right)_{\alpha\in A}^{\nearrow_{Q}}$. Thus, $q_{\alpha}\too q$ in $Q$.
\end{proof}

We say that $P$ is a \emph{complete lattice} if every subset of $P$ has a supremum (equivalently, infimum). In particular, $\bigvee P$ is the greatest element of $P$, while $\bigvee\varnothing$ is the least element. A weaker condition is \emph{Dedekind completeness}: every nonempty subset of $P$ which is bounded from above has a supremum (equivalently, every nonempty subset of $P$ which is bounded from below has an infimum). Note that an upward and downward directed Dedekind complete poset is always a lattice. We also call $P$ \emph{monotone complete} if every increasing net which is bounded from above has a supremum, and the same for infimum.

If $P$ is a Dedekind complete lattice and $\left(p_{\alpha}\right)_{\alpha\in A}\subseteq P$ is bounded from above, it is easy to see that
\[\left(p_{\alpha}\right)_{\alpha\in A}^{\nearrow}=\bigcup_{\alpha}\left[\bigvee_{\beta\ge\alpha}p_{\beta}\right)\]
(and dually for the eventual lower bounds), which allows us to recover the following well-known fact from Proposition~\ref{oc}.

\begin{corollary}\label{dcoc}
If $P$ is a Dedekind complete lattice, then a net $\left(p_{\alpha}\right)_{\alpha\in A}$ order converges to $p\in P$ if and only if there is $\alpha_{0}$ such that $\left\{p_{\alpha}\right\}_{\alpha\ge\alpha_{0}}$ is order bounded and
\[\bigwedge_{\alpha\ge\alpha_{0}}\bigvee_{\beta\ge\alpha}p_{\beta}=p=\bigvee_{\alpha\ge\alpha_{0}}\bigwedge_{\beta\ge\alpha}p_{\beta}.\]
\end{corollary}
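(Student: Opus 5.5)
We will deduce Corollary~\ref{dcoc} from Proposition~\ref{oc} together with the displayed description of eventual upper (and, dually, lower) bounds in a Dedekind complete lattice. The plan is to translate the condition $\bigwedge(p_{\alpha})^{\nearrow}=p$ into $\bigwedge_{\alpha\ge\alpha_0}\bigvee_{\beta\ge\alpha}p_\beta=p$, and dually for the lower bounds.

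\emph{Necessity.} Suppose $p_\alpha\too p$. An order convergent net is eventually order bounded, so we may choose $\alpha_0$ with $\{p_\alpha\}_{\alpha\ge\alpha_0}\subseteq[a,b]$. Since $P$ is Dedekind complete, for every $\alpha\ge\alpha_0$ the elements $u_\alpha:=\bigvee_{\beta\ge\alpha}p_\beta$ and $l_\alpha:=\bigwedge_{\beta\ge\alpha}p_\beta$ exist. We apply the displayed identity to the tail $(p_\alpha)_{\alpha\ge\alpha_0}$, which has the same eventual bounds as the whole net because tails of a net and its $\alpha_0$-tail are cofinal in each other. This gives
\[(p_\alpha)^{\nearrow}=\bigcup_{\alpha\ge\alpha_0}[u_\alpha).\]
The infimum of a union of principal upper sets $[u_\alpha)$ equals the infimum of the generators: any lower bound of $\{u_\alpha\}$ bounds every $[u_\alpha)$ from below, and conversely each $u_\alpha$ belongs to the union. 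Hence $\bigwedge(p_\alpha)^{\nearrow}=\bigwedge_{\alpha\ge\alpha_0}u_\alpha$, where one side exists if and only if the other does. Proposition~\ref{oc} now yields $\bigwedge_{\alpha\ge\alpha_0}u_\alpha=p$, and dually $\bigvee_{\alpha\ge\alpha_0}l_\alpha=p$.

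\emph{Sufficiency.} Given $\alpha_0$ with an order bounded tail and the two displayed equalities, the same identity of sets reverses the computation. It gives $\bigwedge(p_\alpha)^{\nearrow}=p=\bigvee(p_\alpha)^{\searrow}$, and Proposition~\ref{oc} then gives $p_\alpha\too p$.

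The only point requiring care is the displayed identity for $(p_\alpha)^{\nearrow}$, which the paper calls easy, together with the reduction to the tail. For the identity:
\begin{itemize}
\item If $q\ge u_\alpha$, then $q\ge p_\beta$ for all $\beta\ge\alpha$, so $q$ is an eventual upper bound.
\item Conversely, if $q\ge p_\beta$ for all $\beta\ge\alpha_1$, pick $\alpha\ge\alpha_0,\alpha_1$. Then $q$ is an upper bound of $\{p_\beta\}_{\beta\ge\alpha}$, so $q\ge u_\alpha$.
\end{itemize}
Directedness of the index set is what lets us pass to $\alpha\ge\alpha_0$ here. Dedekind completeness is used precisely to guarantee that $u_\alpha$ and $l_\alpha$ exist once the tail is bounded. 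The lattice hypothesis is not actually needed beyond this.
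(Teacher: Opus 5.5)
Your proof is correct and follows the paper's argument. The paper also obtains this corollary from Proposition~\ref{oc} via the identity $\left(p_{\alpha}\right)_{\alpha\in A}^{\nearrow}=\bigcup_{\alpha}\left[\bigvee_{\beta\ge\alpha}p_{\beta}\right)$ (and its dual), valid once the net is bounded; you verify this identity directly on a bounded tail, note that a union of principal upper sets has the same lower bounds as its generators, and correctly observe that only Dedekind completeness of the poset is used.
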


Recall that the \emph{MacNeille completion}\footnote{The other terms used in order theory are ``Dedekind completion'' and ``Dedekind--MacNeille completion'', and the usual notation would be $P^{\delta}$. However, we will reserve this notation and the term ``Dedekind completion'' for the case of a directed Archimedean ordered vector space. Note that the two completions are distinct aside from the trivial case: $P^{\partial}$ has a least and a greatest element; in the case of a vector space we get a Dedekind complete vector lattice, which has neither a greatest nor a least element.} $P^{\partial}$ of $P$ is the collection of all \emph{(lower) Dedekind cuts} on $P$, i.e.\ subsets $Q\subseteq P$ such that $Q=Q^{\uparrow\downarrow}$. It is the essentially unique complete lattice containing $P$ as a supremum- and infimum-dense subset (see \cite[Theorem~7.41]{dp}). The next result therefore is an immediate consequence of this definition and Proposition~\ref{ocp}.

\begin{corollary}\label{macneille}
Order convergence in $P$ is the restriction of order convergence in $P^{\partial}$.
\end{corollary}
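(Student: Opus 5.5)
The plan is to deduce Corollary~\ref{macneille} directly from Proposition~\ref{ocp}, applied with $Q:=P$ and with $P^{\partial}$ in the role of the ambient poset. The only thing to verify is the hypothesis of that proposition: $P$, identified with its image in $P^{\partial}$, is both supremum-dense and infimum-dense. Once this is established, the second part of Proposition~\ref{ocp} states exactly that order convergence in $P$ coincides with the restriction of order convergence in $P^{\partial}$.

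I would first fix the embedding. Send $p\in P$ to the principal cut $\left(p\right]$. This is a Dedekind cut, since $\left(p\right]^{\uparrow}=\left[p\right)$ and $\left[p\right)^{\downarrow}=\left(p\right]$. The map is an order embedding because $\left(p\right]\subseteq\left(q\right]$ holds if and only if $p\le q$.

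Next I would check the two density conditions.

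\emph{Supremum-density.} Let $C=C^{\uparrow\downarrow}$ be a cut. Then $C=\bigcup_{c\in C}\left(c\right]$. I claim that $C$ is the supremum, in $P^{\partial}$, of the family $\left\{\left(c\right]\right\}_{c\in C}$. Clearly each $\left(c\right]\subseteq C$. Conversely, any cut $D$ containing all $\left(c\right]$ contains $C$, so $C$ is the least cut containing them.

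\emph{Infimum-density.} The principal cuts above $C$ are the $\left(u\right]$ with $u\in C^{\uparrow}$. Their intersection is $C^{\uparrow\downarrow}=C$, and intersections of cuts are cuts, so this intersection is the infimum in $P^{\partial}$.

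There are two edge cases. The empty cut (when $P$ has no least element) is the supremum of the empty subfamily, which is allowed in the definition of supremum-density via "there is $R\subseteq Q$". Dually, the full cut $P$ is the infimum of the empty family.

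Alternatively, I would simply cite \cite[Theorem~7.41]{dp}, which states that $P^{\partial}$ contains $P$ as a supremum- and infimum-dense subset. The main (minor) obstacle is only the bookkeeping above, in particular making sure the empty and full cuts are handled so that Proposition~\ref{ocp} applies verbatim.
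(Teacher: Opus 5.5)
Your proposal is correct and matches the paper's argument: the paper also derives the corollary immediately from Proposition~\ref{ocp}, citing \cite[Theorem~7.41]{dp} for the fact that $P$ is supremum- and infimum-dense in $P^{\partial}$. Your direct check of the two density conditions is a valid self-contained substitute for that citation, and it handles the empty and full cuts correctly.
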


Note that $P$ embeds into $P^{\partial}$ via $p\mapsto\left(p\right]$. The intersection of a collection of elements in $P^{\partial}$ (each of which is a subset of $P$) is their infimum. If $\mathcal{Q}$ is a collection of elements of $P^{\partial}$, then its supremum is $\left(\bigcup \mathcal{Q}\right)^{\uparrow\downarrow}$.

Also note that $\left(P^{\partial}\right)^{\mathrm{op}}$ can be identified with the collection of all \emph{upper Dedekind cuts} on $P$, i.e.\ subsets $Q\subseteq P$ such that $Q=Q^{\downarrow\uparrow}$. In particular, $Q\mapsto Q^{\uparrow}$ is an order reversing map from $P^{\partial}$ into $\left(P^{\partial}\right)^{\mathrm{op}}$, whose inverse is $R\mapsto R^{\downarrow}$. Most of the aforementioned results rely on the fact that if $R\subseteq P$, then $R^{\uparrow\downarrow\uparrow}=R^{\uparrow}$ and $R^{\downarrow\uparrow\downarrow}=R^{\downarrow}$.

We proceed with a characterization of order convergence in $P^{\partial}$. Condition (iii) in the following result may be viewed as the definition of an order Cauchy net in $P$ (cf.\ Proposition~\ref{ocauchy}). Recall that elements of $P^{\partial}$ are subsets of $P$, but we also view $P\subseteq P^{\partial}$.

\begin{proposition}
If $\left(p_{\alpha}\right)_{\alpha\in A}\subseteq P$, then:
\item[(i)] $\bigwedge_{\alpha}\bigvee_{\beta\ge\alpha}p_{\beta}=\left(p_{\alpha}\right)_{\alpha\in A}^{\nearrow\downarrow}$ in $P^{\partial}$.
\item[(ii)] $\bigwedge_{\alpha}\bigvee_{\beta\ge\alpha}p_{\beta}=\left(p_{\alpha}\right)_{\alpha\in A}^{\searrow\uparrow}$ in $\left(P^{\partial}\right)^{\mathrm{op}}$.
\item[(iii)] There is an order limit of $\left(p_{\alpha}\right)_{\alpha\in A}$ in $P^{\partial}$ if and only if there is one in $\left(P^{\partial}\right)^{\mathrm{op}}$ if and only if $\left(p_{\alpha}\right)_{\alpha\in A}^{\nearrow\downarrow}=\left(p_{\alpha}\right)_{\alpha\in A}^{\searrow\uparrow\downarrow}$ if and only if $\left(p_{\alpha}\right)_{\alpha\in A}^{\nearrow\downarrow\uparrow}=\left(p_{\alpha}\right)_{\alpha\in A}^{\searrow\uparrow}$.
\end{proposition}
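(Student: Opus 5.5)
We compute directly in $P^{\partial}$ and lean on two facts stated above: $P$ is supremum- and infimum-dense in $P^{\partial}$, and infima in $P^{\partial}$ are intersections while suprema are $(\bigcup\mathcal Q)^{\uparrow\downarrow}$. Write $N:=\left(p_{\alpha}\right)^{\nearrow}$ and $S:=\left(p_{\alpha}\right)^{\searrow}$, both taken in $P$.

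For (i), fix $\alpha$. The element $\bigvee_{\beta\ge\alpha}p_{\beta}$ of $P^{\partial}$ is $\{p_{\beta}:\beta\ge\alpha\}^{\uparrow\downarrow}=T_{\alpha}^{\uparrow\downarrow}$. Here $T_{\alpha}^{\uparrow}$ is the set of $q\in P$ bounding the $\alpha$-tail from above, and $N=\bigcup_{\alpha}T_{\alpha}^{\uparrow}$. Intersecting over $\alpha$ gives
\[\bigcap_{\alpha}T_{\alpha}^{\uparrow\downarrow}=\Bigl(\bigcup_{\alpha}T_{\alpha}^{\uparrow}\Bigr)^{\downarrow}=N^{\downarrow},\]
since a lower bound of a union is exactly a common lower bound of the pieces. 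This is (i).

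For (ii), I read the statement as the dual one: $\bigvee_{\alpha}\bigwedge_{\beta\ge\alpha}p_{\beta}$, computed in the upper-cut picture $(P^{\partial})^{\mathrm{op}}$, equals $S^{\uparrow}$. This follows from the same computation with the order reversed: $\bigwedge_{\beta\ge\alpha}p_{\beta}$ corresponds to the upper cut $T_{\alpha}^{\downarrow\uparrow}$, and $\bigcap_{\alpha}T_{\alpha}^{\downarrow\uparrow}=S^{\uparrow}$. Translated back to lower cuts via $R\mapsto R^{\downarrow}$, the $\liminf$ in $P^{\partial}$ is $S^{\uparrow\downarrow}$.

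For (iii), the main step is to apply Corollary~\ref{dcoc} in the complete lattice $P^{\partial}$. Every set is order bounded there, so a net in $P^{\partial}$ has an order limit if and only if $\liminf=\limsup$. By (i) and (ii) this condition reads $S^{\uparrow\downarrow}=N^{\downarrow}$. Applying $\uparrow$, and using $R^{\downarrow\uparrow\downarrow}=R^{\downarrow}$, this is equivalent to $N^{\downarrow\uparrow}=S^{\uparrow}$. The statement for $(P^{\partial})^{\mathrm{op}}$ is the same condition, because $Q\mapsto Q^{\uparrow}$ is an order isomorphism onto $(P^{\partial})^{\mathrm{op}}$ carrying $P$ to $P$; alternatively, the condition is visibly symmetric under reversing the order.

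The main obstacle is bookkeeping: keeping straight in which lattice each extremum lives and how $P$ sits inside the upper-cut picture, so that the $\liminf$ is correctly identified as $S^{\uparrow\downarrow}$.
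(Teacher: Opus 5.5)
Your proof is correct and uses the same ingredients as the paper. Infima in $P^{\partial}$ are intersections and suprema are $\left(\bigcup\mathcal{Q}\right)^{\uparrow\downarrow}$; (ii) is the order dual of (i); and (iii) is Corollary~\ref{dcoc} in the complete lattice $P^{\partial}$ combined with $R^{\uparrow\downarrow\uparrow}=R^{\uparrow}$ and $R^{\downarrow\uparrow\downarrow}=R^{\downarrow}$. The only difference is in (i): you compute the $\limsup$ directly via $\bigcap_{\alpha}T_{\alpha}^{\uparrow\downarrow}=\bigl(\bigcup_{\alpha}T_{\alpha}^{\uparrow}\bigr)^{\downarrow}$, whereas the paper computes $\bigvee_{\alpha}\bigwedge_{\beta\ge\alpha}$ in the upper-cut picture (obtaining $N^{\downarrow\uparrow}$) and transfers it back through $R\mapsto R^{\downarrow}$ to get $N^{\downarrow\uparrow\downarrow}=N^{\downarrow}$, so your route to (i) is slightly more direct.
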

\begin{proof}
(i): Let $J$ be the embedding from $P$ into $P^{\partial}$ via $p\mapsto\left(p\right]$, and let $I\colon P\to \left(P^{\partial}\right)^{\mathrm{op}}$ be defined via $p\mapsto\left[p\right)$. Note that $I$ is order reversing. For every $\alpha$ we have that $\bigwedge_{\beta\ge\alpha}Jp_{\beta}=\bigcap_{\beta\ge\alpha}\left(p_{\beta}\right]$. Hence, in $P^{\partial}$ we have
\[\bigvee_{\alpha}\bigwedge_{\beta\ge\alpha}Jp_{\beta}=\left(\bigcup_{\alpha}\bigcap_{\beta\ge\alpha}\left(p_{\beta}\right]\right)^{\uparrow\downarrow}=\left(p_{\alpha}\right)_{\alpha\in A}^{\searrow\uparrow\downarrow}.\]
Similarly, $\bigvee_{\alpha}\bigwedge_{\beta\ge\alpha}Ip_{\beta}=\left(p_{\alpha}\right)_{\alpha\in A}^{\nearrow\downarrow\uparrow}$.
Now
\[\bigwedge_{\alpha}\bigvee_{\beta\ge\alpha}Jp_{\beta}=\left(\bigvee_{\alpha}\bigwedge_{\beta\ge\alpha}Ip_{\beta}\right)^{\downarrow}=\left(p_{\alpha}\right)_{\alpha\in A}^{\nearrow\downarrow\uparrow\downarrow}=\left(p_{\alpha}\right)_{\alpha\in A}^{\nearrow\downarrow}.\]

(ii) is proved similarly.

(iii): It follows from Corollary~\ref{dcoc} that there is an order limit of $\left(p_{\alpha}\right)_{\alpha\in A}$ in $P^{\partial}$ if and only if
\[\bigwedge_{\alpha}\bigvee_{\beta\ge\alpha}p_{\beta}=\bigvee_{\alpha}\bigwedge_{\beta\ge\alpha}p_{\beta},\]
which is equivalent to $\left(p_{\alpha}\right)_{\alpha\in A}^{\nearrow\downarrow}=\left(p_{\alpha}\right)_{\alpha\in A}^{\searrow\uparrow\downarrow}$, and also equivalent to $\left(p_{\alpha}\right)_{\alpha\in A}^{\nearrow\downarrow\uparrow}=\left(p_{\alpha}\right)_{\alpha\in A}^{\searrow\uparrow}$.
\end{proof}

We conclude this section with an example that demonstrates that order and directed order convergence can behave quite pathologically in general posets (cf.\ Proposition~\ref{ocl} and the discussion above it).

\begin{example}\label{pat}
Let $P=\left\{\left(0,t\right): 0\le t\le 1\right\}\cup \left\{\left(t,0\right): 0\le t\le 1\right\}\cup\left\{\left(1,1\right)\right\}\subseteq\R^{2}$. For $n\in\N$ define $p_{n}:=\left(0,\frac{1}{n}\right)$ and $q_{n}:=\left(\frac{1}{n},0\right)$. We have $p_{n},q_{n}\downarrow \left(0,0\right)$, hence $p_{n},q_{n}\too \left(0,0\right)$ and $p_{n},q_{n}\todo \left(0,0\right)$, but the only eventual upper bound for $p_{1},q_{1},p_{2},q_{2},\dots$ is $\left(1,1\right)$, which means that the sequence does not order converge (and hence does not directed order converge) to $\left(0,0\right)$.
\qed\end{example}

\subsection{Order convergence in an ordered vector space}\label{ss22}

In this section, $E$ is an ordered vector space with positive cone $E_{+}$. Since $E$ is a poset, it carries both order and directed order convergences. These convergences have no a priori relation to the linear structure of $E$. In order to describe these relations we need to consider some preliminary facts.

\begin{lemma}\label{sum}
If $P,Q\subseteq E$ with $g=\bigwedge P$ and $h=\bigwedge Q$, then $g+h=\bigwedge\left(P+Q\right)$.
\end{lemma}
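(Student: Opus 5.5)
We verify the two halves of the definition of infimum directly: first that $g+h$ is a lower bound of $P+Q$, then that every lower bound of $P+Q$ lies below $g+h$. Translation by a fixed vector is an order automorphism of $E$, and this is what will transfer information between the two sets.

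The lower bound property is immediate. For $p\in P$ and $q\in Q$ we have $g\le p$ and $h\le q$, so $g+h\le p+q$, since the positive cone is closed under addition.

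For the greatest lower bound property, let $f\in E$ satisfy $f\le p+q$ for all $p\in P$ and $q\in Q$. Fix $q\in Q$. Then $f-q\le p$ for every $p\in P$, so $f-q$ is a lower bound of $P$, and hence $f-q\le g$. Rewriting this as $f-g\le q$, and noting that $q\in Q$ was arbitrary, we see that $f-g$ is a lower bound of $Q$. Therefore $f-g\le h$, that is, $f\le g+h$. Together with the first step, this gives $g+h=\bigwedge\left(P+Q\right)$.

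There is no real obstacle in this argument. The only point requiring care is the order of the steps: we first fix $q$ and use $g=\bigwedge P$, and only then vary $q$ and use $h=\bigwedge Q$. This two-stage argument is what allows the proof to avoid any lattice or directedness assumption on $E$, $P$, or $Q$.
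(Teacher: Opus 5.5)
Your proof is correct and is essentially the paper's argument: the paper shows $g+h$ is a lower bound, then takes a lower bound $e$ of $P+Q$ and applies the two infimum properties in succession. The only difference is that the paper fixes $p$ first and uses $h=\bigwedge Q$ before $g=\bigwedge P$, whereas you fix $q$ first and use $g$ before $h$, which is immaterial by symmetry.
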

\begin{proof}
It is easy to see that $g+h\le P+Q$. If $e\in E$ is such that $e\le P+Q$, then, for every $p\in P$ and $q\in Q$ we have that $e-p\le q$. Hence, $e-p\le Q$, from where $e-p\le h$, or $e-h\le P$. Thus, $e-h\le g$, which completes the proof.
\end{proof}

We also note that if $P$ and $Q$ are directed downwards, then so is $P+Q$.

\needspace{6\baselineskip}
\begin{proposition}\label{ocv}
\item[(i)] A net $\left(e_{\alpha}\right)_{\alpha\in A}\subseteq E$ converges in order to $e\in E$ if and only if there is $P\subseteq E$ with $\bigwedge P=0_{E}$ which dominates the tails of $\left(e_{\alpha}-e\right)_{\alpha\in A}$, i.e.\ for each $p\in P$ there is $\alpha_{0}$ such that $-p\le e_{\alpha}-e\le p$, as soon as $\alpha\ge \alpha_{0}$.
\item[(ii)] Addition and taking the additive inverse are order continuous operations on $E$.
\end{proposition}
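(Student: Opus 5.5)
For (i), I would argue both directions directly from the definition of order convergence, using Lemma~\ref{sum} together with the fact that translation and multiplication by $-1$ are order isomorphisms of $E$ (the latter order reversing). Throughout I use that $\bigwedge(S-e)=\bigwedge S-e$ and $\bigvee(-S)=-\bigwedge S$.

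\emph{Necessity.} Assume $e_{\alpha}\too e$, and let $Q,R$ witness this, so $\bigvee Q=e=\bigwedge R$. Put $R':=R-e$ and $Q':=e-Q$. Then $\bigwedge R'=0=\bigwedge Q'$. Define
\[P:=R'+Q'=\left\{r-q: r\in R,\ q\in Q\right\}.\]
By Lemma~\ref{sum}, $\bigwedge P=0$. Now fix $p=(r-e)+(e-q)$ with $r\in R$ and $q\in Q$. Eventually $q\le e_{\alpha}\le r$, so $e_{\alpha}-e\le r-e\le p$, because $e-q\ge 0$. Likewise $e_{\alpha}-e\ge q-e\ge -p$, because $r-e\ge 0$. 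The positivity of $r-e$ and $e-q$ holds since $q\le e\le r$.

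\emph{Sufficiency.} Given such a $P$, set $R:=e+P$ and $Q:=e-P$. Then $\bigwedge R=e$, and $\bigvee Q=e-\bigwedge P=e$. Each pair $q=e-p_{1}$, $r=e+p_{2}$ eventually brackets $e_{\alpha}$: take $\alpha_{0}$ beyond the two indices obtained for $p_{1}$ and $p_{2}$, using that $A$ is directed. Also $P\neq\varnothing$, since $\bigwedge\varnothing$ would be a greatest element and $E$ has none. Hence $Q,R$ witness $e_{\alpha}\too e$.

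For (ii), suppose $e_{\alpha}\too e$ and $f_{\beta}\too f$, with dominating sets $P_{1}$ and $P_{2}$ from (i). I would show that the net $(e_{\alpha}+f_{\beta})$, indexed by $A\times B$ with the product order, converges to $e+f$, dominated by $P_{1}+P_{2}$. This set has infimum $0$ by Lemma~\ref{sum}. For $p_{1}+p_{2}$, choose $\alpha_{0}$ and $\beta_{0}$ for $p_{1}$ and $p_{2}$ respectively; for $(\alpha,\beta)\ge(\alpha_{0},\beta_{0})$ the triangle-type estimate
\[-(p_{1}+p_{2})\le (e_{\alpha}+f_{\beta})-(e+f)\le p_{1}+p_{2}\]
follows by adding the two inequalities. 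If instead the nets are indexed by a common set, the same argument applies with a common tail. For the additive inverse, the same $P$ dominates $(-e_{\alpha})-(-e)$, since the condition $-p\le x\le p$ is symmetric in $x\mapsto -x$.

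There is no real obstacle here. The only points to watch are the nonemptiness of the witnesses and the choice of $P=(R-e)+(e-Q)$ in the necessity direction, so that a single set controls both the upper and the lower bounds.
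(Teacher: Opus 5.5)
Your proof is correct and follows the paper's argument. Your $P=(R-e)+(e-Q)$ is exactly the paper's $P=R-Q$, the sufficiency direction uses the same witnesses $e\pm P$, and (ii) is the same application of Lemma~\ref{sum} to $P_{1}+P_{2}$. One small quibble: the trivial space $E=\{0_{E}\}$ does have a greatest element, so your nonemptiness remark should set that case aside, where the statement holds trivially anyway.
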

\begin{proof}
(i): $e_{\alpha}\too e$ if and only if there are $Q,R\subseteq E$ with $\bigvee Q=e=\bigwedge R$, and such that for every $q\in Q$ and $r\in R$ there is $\alpha_{0}$ such that $\left\{e_{\alpha}\right\}_{\alpha\ge\alpha_{0}}\subseteq\left[q,r\right]$. Note that if $f\in\left[q,r\right]$, then $q-r\le f-e\le r-q$. Hence, $P:=R-Q$ dominates the tails of $\left(e_{\alpha}-e\right)_{\alpha\in A}$, and satisfies $\bigwedge P=0_{E}$ by Lemma~\ref{sum}.

Conversely, if $P$ dominates the tails of $\left(e_{\alpha}-e\right)_{\alpha\in A}$, let $Q:=e-P$ and $R:=e+P$. Clearly, $\bigvee Q=e=\bigwedge R$. If $q\in Q$ and $r\in R$, there are $g,h\in P$ such that $q=e-g$ and $r=e+h$. Find $\alpha_{1},\alpha_{2}$ such that $-g\le e_{\alpha}-e\le g$, for every $\alpha\ge \alpha_{1}$, and $-h\le e_{\alpha}-e\le h$, for every $\alpha\ge \alpha_{2}$. Take $\alpha_{0}\ge\alpha_{1},\alpha_{2}$. For every $\alpha\ge\alpha_{0}$ we have $q=e-g\le e_{\alpha}\le e+h=r$.

(ii): Assume that $e_{\alpha}\too e$ and $f_{\alpha}\too f$, which is witnessed by $P$ and $Q$, respectively. Using Lemma~\ref{sum}, it is easy to show that $P+Q$ witnesses $e_{\alpha}+f_{\alpha}\too e+f$. Order continuity of the map $e\mapsto -e$ is straightforward.
\end{proof}

A net $\left(e_{\alpha}\right)_{\alpha\in A}\subseteq E$ is \emph{(directed) order Cauchy} if the doubly indexed net $\left(e_{\alpha}-e_{\beta}\right)_{\alpha,\beta\in A}$ is (directed) order-null. It is clear that any order Cauchy net is eventually order bounded. Order Cauchy nets can be characterized as follows.

\begin{proposition}\label{ocauchy}
For a net $\left(e_{\alpha}\right)_{\alpha\in A}\subseteq E$ the following conditions are equivalent:
\item[(i)] $\left(e_{\alpha}\right)_{\alpha\in A}$ is order Cauchy;
\item[(ii)] There are $Q,R\subseteq E$ with $\bigwedge \left(R-Q\right)=0_{E}$ and such that for all $q\in Q$ and $r\in R$ there is $\alpha_{0}$ such that $e_{\alpha}\in\left[q,r\right]$, as soon as $\alpha\ge \alpha_{0}$;
\item[(iii)] $\bigwedge\left(\left(e_{\alpha}\right)_{\alpha\in A}^{\nearrow}-\left(e_{\alpha}\right)_{\alpha\in A}^{\searrow}\right)=0_{E}$.
\end{proposition}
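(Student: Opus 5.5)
We prove the cycle (i)$\Rightarrow$(iii)$\Rightarrow$(ii)$\Rightarrow$(i). Throughout, the doubly indexed net $\left(e_{\alpha}-e_{\beta}\right)_{(\alpha,\beta)\in A\times A}$ carries the product order.

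\textbf{(iii)$\Rightarrow$(ii).} Take $Q:=\left(e_{\alpha}\right)^{\searrow}$ and $R:=\left(e_{\alpha}\right)^{\nearrow}$. For $q\in Q$ and $r\in R$ there are indices $\alpha_1,\alpha_2$ whose tails lie above $q$ and below $r$, respectively. Any $\alpha_0\ge\alpha_1,\alpha_2$ then satisfies $e_{\alpha}\in[q,r]$ for all $\alpha\ge\alpha_0$. Condition (iii) states exactly $\bigwedge(R-Q)=0_E$. The only extra point is that $Q,R$ are nonempty, which is implicit because an infimum is taken over $R-Q$: if one of them were empty, $R-Q=\varnothing$, and $\bigwedge\varnothing=0_E$ would force $0_E$ to be a greatest element of $E$. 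That is impossible unless $E=\{0\}$, a trivial case.

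\textbf{(ii)$\Rightarrow$(i).} Set $P:=R-Q$, so $\bigwedge P=0_E$. Let $p=r-q\in P$, and choose $\alpha_0$ with $e_{\alpha}\in[q,r]$ for $\alpha\ge\alpha_0$. Then for $(\alpha,\beta)\ge(\alpha_0,\alpha_0)$ we get
\[-p=q-r\le e_{\alpha}-e_{\beta}\le r-q=p.\]
By Proposition~\ref{ocv}(i) this shows $e_{\alpha}-e_{\beta}\too 0_E$, i.e.\ the net is order Cauchy.

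\textbf{(i)$\Rightarrow$(iii).} This is the main step. By Proposition~\ref{ocv}(i) there is $P$ with $\bigwedge P=0_E$ such that for each $p\in P$ there is $\alpha_p$ with $-p\le e_{\alpha}-e_{\beta}\le p$ whenever $\alpha,\beta\ge\alpha_p$. Fixing $\beta=\alpha_p$ gives
\[e_{\alpha_p}-p\le e_{\alpha}\le e_{\alpha_p}+p\quad\text{for all }\alpha\ge\alpha_p .\]
Hence $e_{\alpha_p}+p\in\left(e_{\alpha}\right)^{\nearrow}$ and $e_{\alpha_p}-p\in\left(e_{\alpha}\right)^{\searrow}$, so $2p$ belongs to $D:=\left(e_{\alpha}\right)^{\nearrow}-\left(e_{\alpha}\right)^{\searrow}$.

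Every element of $D$ is $\ge 0_E$, because eventual lower bounds are below eventual upper bounds. So $0_E$ is a lower bound of $D$. Now let $h$ be any lower bound of $D$. Then $h\le 2p$ for all $p\in P$, so $h/2\le P$, giving $h/2\le\bigwedge P=0_E$ and hence $h\le 0_E$. Therefore $\bigwedge D=0_E$, which is (iii).

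The only subtle point is the choice $\beta=\alpha_p$, which turns the Cauchy control into genuine eventual bounds. Everything else is routine bookkeeping.
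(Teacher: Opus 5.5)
Your proof is correct and is essentially the paper's argument. In both, the decisive step is fixing $\beta=\alpha_{p}$ to turn the Cauchy control into the eventual bounds $e_{\alpha_{p}}\pm p$, whose difference $2p$ forces the infimum to be $0_{E}$; your (ii)$\Rightarrow$(i) is the same computation as in Proposition~\ref{ocv}. The only difference is bookkeeping: you run the cycle (i)$\Rightarrow$(iii)$\Rightarrow$(ii)$\Rightarrow$(i), whereas the paper proves (i)$\Rightarrow$(ii) with $Q=\{e_{\alpha_{p}}-p\}$, $R=\{e_{\alpha_{p}}+p\}$ and then (ii)$\Leftrightarrow$(iii). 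Your ordering gives positivity of the difference set for free from $\left(e_{\alpha}\right)^{\searrow}\le\left(e_{\alpha}\right)^{\nearrow}$, instead of checking $e_{\alpha_{p}}-p\le e_{\alpha_{q}}+q$ pairwise.
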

\begin{proof}
(i)$\Rightarrow$(ii): Let $P\subseteq E$ with $\bigwedge P=0_{E}$ dominate the tails of $\left(e_{\alpha}-e_{\beta}\right)_{\alpha,\beta\in A}$, so that for every $p\in P$ we can choose $\alpha_{p}\in A$ such that $-p\le e_{\alpha}-e_{\beta}\le p$, as soon as $\alpha,\beta\ge\alpha_{p}$. For all $p,q\in P$ we can find $\alpha\ge \alpha_{p},\alpha_{q}$; then $e_{\alpha_{p}}-e_{\alpha_{q}}=e_{\alpha_{p}}-e_{\alpha}+e_{\alpha}-e_{\alpha_{q}}\le p+q$, from where $e_{\alpha_{p}}-p\le e_{\alpha_{q}}+q$. Then, $Q:=\left\{e_{\alpha_{p}}-p:p\in P\right\}$ and $R:=\left\{e_{\alpha_{p}}+p:p\in P\right\}$ satisfy $\bigwedge \left(R-Q\right)=0_{E}$. Moreover, for $p,q\in P$ and $\alpha\ge\alpha_{p},\alpha_{q}$ we have $e_{\alpha_{p}}-p\le e_{\alpha}\le e_{\alpha_{q}}+q$, hence $Q, R$ are as required.

(ii)$\Rightarrow$(i) is proved similarly to Proposition~\ref{ocv}. (ii)$\Leftrightarrow$(iii) is proved similarly to Proposition~\ref{oc}.
\end{proof}

Analogously to Propositions~\ref{ocv} and~\ref{ocauchy}, one can prove the following.

\begin{proposition}\label{todovs}
\item[(i)] A net $\left(e_{\alpha}\right)_{\alpha\in A}\subseteq E$ directed order converges to $e\in E$ if and only if there is a downward directed $P\subseteq E$ with $\bigwedge P=0_{E}$ which dominates the tails of $\left(e_{\alpha}-e\right)_{\alpha\in A}$.
\item[(ii)] If for a net $\left(e_{\alpha}\right)_{\alpha\in A}\subseteq E$ there are an upward directed $Q\subseteq E$ and a downward directed $R\subseteq E$ with $\bigwedge \left(R-Q\right)=0_{E}$ such that for all $q\in Q$ and $r\in R$ there is $\alpha_{0}$ such that $e_{\alpha}\in\left[q,r\right]$, as soon as $\alpha\ge \alpha_{0}$, then this net is directed order Cauchy.
\item[(iii)] Addition and taking the additive inverse are directed order continuous on $E$.
\end{proposition}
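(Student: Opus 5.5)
The plan is to repeat the arguments of Propositions~\ref{ocv} and~\ref{ocauchy} and check at each step that directedness survives. The facts needed are elementary. A translate $e+P$ of a downward directed set is downward directed, and $-P$ is upward directed. Sums of downward directed sets are downward directed, as noted after Lemma~\ref{sum}. If $Q$ is upward directed and $R$ is downward directed, then $R-Q$ is downward directed. Finally, Lemma~\ref{sum} gives the infimum of a sum.

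For (i), suppose first that $e_{\alpha}\todo e$, and take $Q,R$ as in Proposition~\ref{do}, with $Q$ upward directed and $R$ downward directed. Set $P:=R-Q$. Then $P$ is downward directed, since $R+(-Q)$ is a sum of downward directed sets. By Lemma~\ref{sum}, $\bigwedge P=e-e=0_{E}$. The inclusion $\left[q,r\right]\ni f$ gives $q-r\le f-e\le r-q$, so $P$ dominates the tails of $\left(e_{\alpha}-e\right)$.

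Conversely, let $P$ be downward directed with $\bigwedge P=0_{E}$ and dominating the tails of $\left(e_{\alpha}-e\right)$. Put $Q:=e-P$, which is upward directed, and $R:=e+P$, which is downward directed. Then $\bigvee Q=e=\bigwedge R$. The argument from Proposition~\ref{ocv}, choosing $\alpha_{0}$ beyond the two indices, puts the tails in $\left[q,r\right]$. By Proposition~\ref{do}, $e_{\alpha}\todo e$.

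For (ii), I will mimic (ii)$\Rightarrow$(i) of Proposition~\ref{ocauchy}. The doubly indexed net is $\left(e_{\alpha}-e_{\beta}\right)_{(\alpha,\beta)\in A\times A}$, and the dominating set is $P:=R-Q$. This set is downward directed with $\bigwedge P=0_{E}$ by hypothesis. Given $p=r-q$, choose $\alpha_{0}$ such that $e_{\alpha}\in\left[q,r\right]$ for $\alpha\ge\alpha_{0}$. Then for $\alpha,\beta\ge\alpha_{0}$ we get $q-r\le e_{\alpha}-e_{\beta}\le r-q$, that is, $-p\le e_{\alpha}-e_{\beta}\le p$. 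Part (i) with $e=0$ then shows the double net is directed order-null.

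For (iii), suppose $P_{1}$ witnesses $e_{\alpha}\todo e$ and $P_{2}$ witnesses $f_{\alpha}\todo f$ in the sense of (i). Then $P_{1}+P_{2}$ is downward directed, has infimum $0_{E}$ by Lemma~\ref{sum}, and dominates the tails of $\left(e_{\alpha}+f_{\alpha}-e-f\right)$, using a common index beyond the two. Negation is immediate from (i), because the condition $-p\le e_{\alpha}-e\le p$ is symmetric under $e_{\alpha}-e\mapsto -(e_{\alpha}-e)$.

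I expect no real obstacle here; the only care needed is verifying directedness of $R-Q$, which follows by combining an upper bound in $Q$ with a lower bound in $R$.
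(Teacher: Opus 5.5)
Your proposal is correct and is essentially the paper's own approach: the paper proves this result only by noting that it follows analogously to Propositions~\ref{ocv} and~\ref{ocauchy}. You repeat those arguments and check that directedness is preserved, namely that $R-Q$, $e\pm P$ and $P_{1}+P_{2}$ are appropriately directed, which is exactly the adaptation required.
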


The following result is a special case of Proposition~\ref{oco}.

\begin{proposition}\label{docont}
If $E$ and $F$ are ordered vector spaces, then a positive linear map $T\colon E\to F$ is directed order continuous if and only if for any $P\subseteq E$ with $P\downarrow 0_{E}$ it follows that $TP\downarrow 0_{F}$.
\end{proposition}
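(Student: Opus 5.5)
We prove the two implications separately, working with the characterization of directed order convergence in Proposition~\ref{todovs}(i) and the monotone-net form of the definition. Here $P\downarrow 0_{E}$ means that $P$ is directed downward with $\bigwedge P=0_{E}$. For $TP$ this is the same as saying that the net $(Tp)_{p\in P}$ decreases to $0_F$, with $P$ indexed by the reverse order.

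\emph{Necessity.} Let $T$ be directed order continuous and $P\downarrow 0_{E}$. Index $P$ by itself with the reverse order. Since $P$ is directed downward, $p\mapsto p$ is then a decreasing net with infimum $0_{E}$. By the corollary following Proposition~\ref{oc}, this net directed order converges to $0_{E}$. Hence $(Tp)_{p\in P}\todo T0_{E}=0_{F}$. Positivity of $T$ makes $(Tp)_{p\in P}$ a decreasing net, and $TP$ is directed downward because $T$ preserves order. Applying the same corollary to this decreasing net in $F$ gives $\bigwedge TP=0_{F}$, so $TP\downarrow 0_{F}$.

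\emph{Sufficiency.} Assume $T$ maps every set $P\downarrow 0_{E}$ to a set $TP\downarrow 0_{F}$, and let $e_{\alpha}\todo e$. By Proposition~\ref{todovs}(i) there is a downward directed $P$ with $\bigwedge P=0_{E}$ dominating the tails of $(e_{\alpha}-e)$. So for each $p\in P$ there is $\alpha_{0}$ with $-p\le e_{\alpha}-e\le p$ for all $\alpha\ge\alpha_{0}$. Positivity and linearity of $T$ give $-Tp\le Te_{\alpha}-Te\le Tp$ for the same $\alpha$. Thus $TP$ dominates the tails of $(Te_{\alpha}-Te)$. By hypothesis $TP$ is directed downward with $\bigwedge TP=0_{F}$, so Proposition~\ref{todovs}(i) yields $Te_{\alpha}\todo Te$.

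Alternatively, sufficiency follows from Proposition~\ref{oco}, as the paper suggests. The hypothesis together with linearity shows that $T$ preserves infima of decreasing nets: if $r_{\beta}\downarrow r$, then $r_{\beta}-r\downarrow 0_{E}$. Applying the same to $-T$ shows that $T$ preserves suprema of increasing nets. There is no real obstacle in either approach. The only point needing care is passing between downward directed sets and decreasing nets, which is handled by indexing $P$ by itself with the reverse order.
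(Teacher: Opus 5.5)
Your proof is correct, but your main sufficiency argument takes a different route from the paper. The paper only says the statement is a special case of Proposition~\ref{oco}. By translation ($r_\beta\downarrow r$ if and only if $r_\beta-r\downarrow 0_E$), the hypothesis says exactly that $T$ preserves extrema of monotone nets. Proposition~\ref{oco} then squeezes $Te_\alpha$ between the images of the increasing and decreasing controlling nets. Necessity is left implicit there.

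You instead use the dominating-set characterization of Proposition~\ref{todovs}(i). The witness $P$ it produces already satisfies $P\downarrow 0_E$, so your hypothesis applies verbatim, and positivity carries the inequalities $-p\le e_\alpha-e\le p$ over to $F$. This avoids first upgrading the hypothesis to arbitrary monotone nets.

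The trade-off is as follows. The paper's route is purely order-theoretic: Proposition~\ref{oco} holds for any order preserving map between posets, and linearity enters only through translation. Yours uses the linear structure more directly, through the Lemma~\ref{sum} argument behind Proposition~\ref{todovs}. You also write out the necessity direction via the corollary on monotone nets, which the paper does not.

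There is one small slip in your alternative sketch. You cannot ``apply the same to $-T$'', since $-T$ is not positive. What you want is to apply the infimum statement to the decreasing net $(-r_\beta)$ (or to $(r-r_\beta)$) when $r_\beta\uparrow r$, and then negate.
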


\subsection{Order convergence in an Archimedean ordered vector space}\label{ss23}

Recall that $E$ is called \emph{Archimedean} if $ne\le f$, for every $n\in\N$, implies $e\le 0_{E}$. By replacing $f$ with $f-e$ if needed, only positive $f$ need to be checked, and so $E$ is Archimedean if and only if $\bigwedge_{n\in\N}\frac{1}{n}f=0_{E}$ for every $f\in E_{+}$. It follows that any linear subspace of an Archimedean ordered vector space is Archimedean. In some cases, the Archimedean property passes up, as we show next. For the proof of the following result we note that a linear subspace $F\subseteq E$ is supremum-dense in $E$ if and only if it is infimum-dense.

\begin{proposition}\label{arch0}
If an ordered vector space $F$ contains an Archimedean ordered vector space $E$ as a supremum-dense subspace, then $F$ is Archimedean.
\end{proposition}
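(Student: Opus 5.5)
We need to show: if $f\in F_{+}$ then $\bigwedge_{n\in\N}\frac{1}{n}f=0_{F}$. By the remark preceding the statement, this suffices for $F$ to be Archimedean.

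The plan is to use infimum-density of $E$ in $F$, which holds by the note preceding the statement, and transfer the Archimedean property from $E$. Fix $f\in F_{+}$. Clearly $0_{F}$ is a lower bound of $\left\{\frac{1}{n}f\right\}_{n\in\N}$. Let $g\in F$ be any lower bound, i.e.\ $g\le\frac{1}{n}f$ for all $n$. We must show $g\le 0_{F}$.

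Since $E$ is supremum-dense, $g=\bigvee\left(g\right]_{E}$. It therefore suffices to show that every $e\in E$ with $e\le g$ satisfies $e\le 0_{F}$. Since $0_{F}\in E$, this then gives $g\le 0_{F}$.

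So take $e\in E$ with $e\le g$; then $ne\le f$ for all $n$. Now use infimum-density of $E$: we have $f=\bigwedge\left[f\right)_{E}$. The set $\left[f\right)_{E}$ is nonempty, since otherwise $f$ would be the greatest element of $F$, which is impossible in a nonzero vector space. If $E=\{0\}$ then $F=\{0\}$ by density and the claim is trivial. Choose $h\in E$ with $h\ge f$. Then $ne\le h$ for all $n$, with $e,h\in E$. Since $E$ is Archimedean, $e\le 0_{E}$. As the order on $E$ is the restriction of that on $F$, we get $e\le 0_{F}$, which completes the argument.

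The only delicate point is justifying the use of infimum-density when only supremum-density is assumed. Here I rely on the note stated just before the proposition. If needed, it follows directly: $x=\bigvee R$ with $R\subseteq E$ gives $-x=\bigwedge(-R)$ with $-R\subseteq E$, since $E$ is a linear subspace. Applying this to $-f$ yields $f$ as an infimum of a subset $S$ of $E$. Any $h\in S$ then satisfies $h\ge f$, and $S$ is nonempty unless $f$ is the greatest element of $F$. So I don't expect a real obstacle; the proof is a short two-sided density argument.
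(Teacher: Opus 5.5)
Your proposal is correct and follows the paper's proof: majorize $f$ by some element of $E$, apply the Archimedean property of $E$ to elements of $E$ lying below the given lower bound, and conclude via $g=\bigvee\left(g\right]_{E}\le 0_{F}$. Your derivation of infimum-density via $-x=\bigwedge(-R)$ simply spells out what the paper takes from the note preceding the statement. The remark ``since $0_{F}\in E$'' is unnecessary, because the supremum is taken in $F$ and $0_{F}$ is an upper bound of $\left(g\right]_{E}$ there.
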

\begin{proof}
Let $f\in F_{+}$. There is $e\in E$ such that $e\ge f$. Assume that $h\in F$ is such that $h\le \frac{1}{n}f$, for every $n\in\N$. Then, for every $g\in E$ such that $g\le h$ and $n\in\N$ we have $g\le h\le \frac{1}{n}f\le \frac{1}{n}e$. Since $E$ is Archimedean, it follows that $g\le 0_{E}$. Then, $h=\bigvee\left(h\right]_{E}\le 0_{F}$, which shows that $\bigwedge_{n\in\N}\frac{1}{n}f=0_{F}$.
\end{proof}

\begin{lemma}\label{arch}Assume that $E$ is Archimedean and $\varnothing\ne Q\subseteq E$ is bounded from below. Then:
\item[(i)] If $f\in E$ is such that $Q-f\subseteq Q$, then $f\le0_{E}$.
\item[(ii)] $\bigwedge \left(Q-Q^{\downarrow}\right)=0_{E}$.
\end{lemma}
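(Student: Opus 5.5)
For part (i), the plan is to iterate the hypothesis and then use the Archimedean property. Fix $q_{0}\in Q$, which is possible since $Q\ne\varnothing$, and let $g$ be a lower bound of $Q$. Since $Q-f\subseteq Q$, induction gives $q_{0}-nf\in Q$ for every $n\in\N$. Hence $g\le q_{0}-nf$, that is, $nf\le q_{0}-g$ for all $n\in\N$. The Archimedean property then gives $f\le 0_{E}$.

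For part (ii), the first step is to show that $0_{E}$ is a lower bound of $Q-Q^{\downarrow}$. The set $Q^{\downarrow}$ is nonempty because $Q$ is bounded from below. For $q\in Q$ and $g\in Q^{\downarrow}$ we have $g\le q$, so $q-g\ge 0_{E}$.

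The second step is to show that $0_{E}$ is the greatest lower bound. Let $h\in E$ with $h\le Q-Q^{\downarrow}$; we must show $h\le 0_{E}$. For every $q\in Q$ and $g\in Q^{\downarrow}$ we have $h\le q-g$, i.e.\ $g+h\le q$. Thus $g+h\in Q^{\downarrow}$ whenever $g\in Q^{\downarrow}$, so $Q^{\downarrow}+h\subseteq Q^{\downarrow}$.

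The final step is to apply (i) to $Q^{\downarrow}$ with $f:=-h$. This requires $Q^{\downarrow}$ to be nonempty, which was noted above, and bounded from below. The latter is the one point needing care, since $Q^{\downarrow}$ is typically not bounded from below. So I would instead run the argument of (i) directly, using the symmetric bound. Fix $g_{0}\in Q^{\downarrow}$ and $q_{0}\in Q$. Iterating $Q^{\downarrow}+h\subseteq Q^{\downarrow}$ gives $g_{0}+nh\in Q^{\downarrow}$ for all $n\in\N$. Therefore $g_{0}+nh\le q_{0}$, that is, $nh\le q_{0}-g_{0}$ for all $n$, and the Archimedean property yields $h\le 0_{E}$. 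This is just (i) applied in the dual order (with $-f$ replacing $f$ and upper bounds replacing lower bounds), where $Q^{\downarrow}$ is bounded from above by any element of $Q$. Hence $\bigwedge\left(Q-Q^{\downarrow}\right)=0_{E}$.
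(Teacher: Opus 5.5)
Your proof is correct and follows the paper's argument. The paper does the same two things: it iterates $Q-f\subseteq Q$ (after shifting $Q$ into $E_{+}$ rather than fixing an explicit lower bound), and for (ii) it shows $Q^{\downarrow}+e\subseteq Q^{\downarrow}$ and then invokes (i) in its dual form, using that $Q^{\downarrow}$ is bounded above by $Q$, which is exactly your direct iteration. Your abandoned aside was also worth dropping for a second reason: even if applying (i) to $Q^{\downarrow}$ with $f:=-h$ were legitimate, it would only give $h\ge 0_{E}$, the wrong direction.
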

\begin{proof}
First, by shifting $Q$ if needed we may assume that $Q\subseteq E_{+}$.

(i): Since $Q$ is nonempty, there is $g\in Q$. Then, $g-f\in Q$. Arguing by induction, it follows that $g-nf\in Q\subseteq E_{+}$, hence
$g\ge nf$, for every $n\in\N$. As $E$ is Archimedean, we conclude that $f\le 0_{E}$.

(ii): First, $0_{E}\in Q^{\downarrow}$, and so $Q^{\downarrow}\ne\varnothing$; moreover $Q\ge Q^{\downarrow}$, hence $Q- Q^{\downarrow}\subseteq E_{+}$. Assume that $e\in E$ is such that $Q-Q^{\downarrow}\ge e$. Then, for every $g\in Q$ and $h\in Q^{\downarrow}$ it follows that $g-h\ge e$, from where $g\ge h+e$, and so $h+e\in Q^{\downarrow}$. Hence, $Q^{\downarrow}+e\subseteq Q^{\downarrow}$. Since $Q^{\downarrow}$ is bounded from above, it is easy to deduce from (i) that $e\le 0_{E}$, and so the claim follows.
\end{proof}

Recall that an ordered vector space $E$ is directed if and only if $E=E_{+}-E_{+}$ (see \cite[Proposition~1.1.7]{kv}). If in this case $F\subseteq E$ is a \emph{majorizing} subspace (i.e.\ such that for every $e\in E$ there is $f\in F$ with $f\ge e$), then $F$ is also directed. Any directed Archimedean vector space has a \emph{Dedekind completion} $E^{\delta}$, which is the essentially unique Dedekind complete vector lattice which contains $E$ as a supremum- and infimum-dense subspace (see \cite[Section~2.1]{kv}). If $E\ne \left\{0_{E}\right\}$, it is constructed by removing the top and the bottom elements from $E^{\partial}$ and introducing a linear structure on the remainder.

A net $\left(e_{\alpha}\right)_{\alpha\in A}\subseteq E$ converges \emph{relatively uniformly} to $e\in E$ (we denote this by $e_{\alpha}\xrightarrow[]{\mathrm{ru}}e$) if there is $h\in E_{+}$ such that for every $\varepsilon>0$ there is $\alpha_{0}$ such that $-\varepsilon h\le e-e_{\alpha}\le\varepsilon h$, as soon as $\alpha\ge\alpha_{0}$. In this case $h$ is called a \emph{regulator} of convergence.

\begin{proposition}\label{rucauchy}
A net $\left(e_{\alpha}\right)_{\alpha\in A}\subseteq E$ is relatively uniformly Cauchy if and only if there are $h\in E_{+}$, and sequences $\left(f_{n}\right)_{n\in\N},\left(g_{n}\right)_{n\in\N}\subseteq E$ and $\left(\alpha_{n}\right)_{n\in\N}\subseteq A$ such that $f_{n}\le f_{n+1}$, $g_{n+1}\le g_{n}$, $g_{n}-f_{n}\le\frac{1}{n}h$, and $f_{n}\le e_{\alpha}\le g_{n}$, for every $n\in\N$ and $\alpha\ge\alpha_{n}$.
\end{proposition}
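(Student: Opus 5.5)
We prove both directions directly from the definition of relative uniform Cauchy nets: a net is ru-Cauchy if the doubly indexed net $\left(e_{\alpha}-e_{\beta}\right)_{\alpha,\beta\in A}$ converges relatively uniformly to $0_{E}$. That is, there is $h\in E_{+}$ such that for every $\varepsilon>0$ there is $\alpha_{0}$ with $-\varepsilon h\le e_{\alpha}-e_{\beta}\le\varepsilon h$ for all $\alpha,\beta\ge\alpha_{0}$.

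\emph{Sufficiency.} Assume $h,f_{n},g_{n},\alpha_{n}$ are as in the statement. Given $\varepsilon>0$, pick $n$ with $\frac{1}{n}\le\varepsilon$. For $\alpha,\beta\ge\alpha_{n}$ we have $f_{n}\le e_{\alpha},e_{\beta}\le g_{n}$. Hence
\[-\tfrac{1}{n}h\le f_{n}-g_{n}\le e_{\alpha}-e_{\beta}\le g_{n}-f_{n}\le\tfrac{1}{n}h,\]
so $h$ is a regulator for the Cauchy property. Monotonicity of $f_{n},g_{n}$ is not even needed for this direction.

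\emph{Necessity.} Let $h$ be a regulator. For each $k\in\N$ choose $\beta_{k}$ such that $-\frac{1}{k}h\le e_{\alpha}-e_{\beta}\le\frac{1}{k}h$ for all $\alpha,\beta\ge\beta_{k}$. Using directedness of $A$, we may choose the $\beta_{k}$ inductively so that $\beta_{k+1}\ge\beta_{k}$. The unmonotonized candidates are $e_{\beta_{k}}\pm\frac{1}{k}h$: for $\alpha\ge\beta_{k}$ we have $e_{\beta_{k}}-\frac{1}{k}h\le e_{\alpha}\le e_{\beta_{k}}+\frac{1}{k}h$.

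To make them monotone without taking lattice operations, which are unavailable, we enlarge the bands. Set
\[f_{k}:=e_{\beta_{k}}-\tfrac{2}{k}h,\qquad g_{k}:=e_{\beta_{k}}+\tfrac{2}{k}h.\]
Since $\beta_{k+1}\ge\beta_{k}$, we have $e_{\beta_{k+1}}-e_{\beta_{k}}\ge-\frac{1}{k}h$. Therefore
\[f_{k+1}-f_{k}=e_{\beta_{k+1}}-e_{\beta_{k}}+\left(\tfrac{2}{k}-\tfrac{2}{k+1}\right)h\ge\left(\tfrac{1}{k}-\tfrac{2}{k+1}\right)h.\]
This is nonnegative only when $k\ge1$ satisfies $k+1\ge2k$, so the $\frac{1}{k}$ bands do not work. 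The fix is to use geometrically shrinking bands. Choose $\beta_{k}$ increasing with $-2^{-k}h\le e_{\alpha}-e_{\beta}\le2^{-k}h$ for $\alpha,\beta\ge\beta_{k}$, and put
\[f_{k}:=e_{\beta_{k}}-2^{-k+1}h,\qquad g_{k}:=e_{\beta_{k}}+2^{-k+1}h.\]
Then
\[f_{k+1}-f_{k}=e_{\beta_{k+1}}-e_{\beta_{k}}+\left(2^{-k+1}-2^{-k}\right)h\ge-2^{-k}h+2^{-k}h=0_{E},\]
and symmetrically $g_{k+1}\le g_{k}$. For $\alpha\ge\beta_{k}$,
\[f_{k}\le e_{\beta_{k}}-2^{-k}h\le e_{\alpha}\le e_{\beta_{k}}+2^{-k}h\le g_{k},\]
and $g_{k}-f_{k}=2^{-k+2}h$.

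Finally, we reindex to obtain the stated bound. Replace $h$ by $4h$, and use $2^{-k}\le\frac{1}{k}$, so that $g_{k}-f_{k}=2^{-k}(4h)\le\frac{1}{k}(4h)$. Setting $\alpha_{n}:=\beta_{n}$ then gives all the required properties.

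The main obstacle is the monotonization step, since $E$ is not a lattice. It is handled by choosing the bandwidths summably (geometrically), so that the slack in the bands absorbs the drift of $e_{\beta_{k}}$.
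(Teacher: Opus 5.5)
Your proof is correct and takes the paper's approach: geometrically shrinking bands $2^{-k}h$ along an increasing sequence of indices, the enlarged envelopes $e_{\beta_k}\pm 2^{-k+1}h$ so that the slack absorbs the drift of $e_{\beta_k}$, and a final adjustment to get the $\frac{1}{n}h$ bound. The only difference is that you spell out the step the paper calls ``some relabelling'', by replacing $h$ with $4h$ and using $2^{-k}\le\frac{1}{k}$.
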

\begin{proof}
It is easy to see that if the conditions in the proposition are satisfied, then the net $\left(e_{\alpha}-e_{\beta}\right)_{\alpha,\beta\in A}$ is relatively uniformly null with regulator $h$. Hence, we only need to prove the converse. For every $n\in\N$ there is $\alpha_{n}\in A$ such that $-\frac{1}{2^{n}}h\le e_{\alpha}-e_{\beta}\le \frac{1}{2^{n}}h$, for every $\alpha,\beta\ge\alpha_{n}$. Furthermore, we may assume that $\left(\alpha_{n}\right)_{n\in\N}$ is increasing.

Fix $n\in\N$ and define $f_{n}:=e_{\alpha_{n}}-\frac{1}{2^{n-1}}h$ and $g_{n}:=e_{\alpha_{n}}+\frac{1}{2^{n-1}}h$. For $\alpha\ge\alpha_{n}$ we have $-\frac{1}{2^{n}}h\le e_{\alpha}-e_{\alpha_{n}}\le \frac{1}{2^{n}}h$, hence $f_{n}\le e_{\alpha_{n}}-\frac{1}{2^{n}}h\le e_{\alpha}\le e_{\alpha_{n}}+\frac{1}{2^{n}}h\le g_{n}$. In particular, $f_{n}+\frac{1}{2^{n}}h=e_{\alpha_{n}}-\frac{1}{2^{n}}h\le e_{\alpha_{n+1}}$, hence $f_{n}\le e_{\alpha_{n+1}}-\frac{1}{2^{n}}h=f_{n+1}$ and similarly $g_{n+1}\le g_{n}$. Finally, $g_{n}-f_{n}=\frac{1}{2^{n-2}}h$, and so with some relabelling it is easy to achieve the desired condition.
\end{proof}

Condition (ii) in the following result for the case of vector lattices appeared in \cite[Lemma~2.2]{aeg}.

\begin{proposition}\label{oca}
The following conditions are equivalent:
\item[(i)] $E$ is Archimedean;
\item[(ii)] Directed order convergence is weaker than relative uniform convergence;
\item[(iii)] Order convergence is weaker than relative uniform convergence;
\item[(iv)] Any increasing bounded from above net is order Cauchy.\medskip

If, additionally, $E$ is directed, then order convergence in $E$ is the restriction of order convergence in $E^{\delta}$.
\end{proposition}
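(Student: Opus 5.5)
I will prove the cycle (i)$\Rightarrow$(ii)$\Rightarrow$(iii)$\Rightarrow$(i), then (i)$\Rightarrow$(iv)$\Rightarrow$(i), and finally the statement about $E^{\delta}$.

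\textbf{(i)$\Rightarrow$(ii).} Suppose $e_{\alpha}\toru e$ with regulator $h$. I take $P:=\left\{\frac{1}{n}h:n\in\N\right\}$. This set is downward directed, since it is a decreasing sequence, and $\bigwedge P=0_{E}$ because $E$ is Archimedean. For $\varepsilon=\frac{1}{n}$ the definition of ru convergence says that $P$ dominates the tails of $\left(e_{\alpha}-e\right)_{\alpha\in A}$. Proposition~\ref{todovs}(i) then gives $e_{\alpha}\todo e$.

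\textbf{(ii)$\Rightarrow$(iii).} This holds because $\mathrm{do}\ge\mathrm{o}$.

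\textbf{(iii)$\Rightarrow$(i).} Let $f\in E_{+}$. The sequence $\frac{1}{n}f$ converges relatively uniformly to $0_{E}$ with regulator $f$, so by (iii) it order converges to $0_{E}$. It is decreasing, so the Corollary after Proposition~\ref{oc} gives $\bigwedge_{n}\frac{1}{n}f=0_{E}$. Hence $E$ is Archimedean.

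\textbf{(i)$\Rightarrow$(iv).} Let $\left(e_{\alpha}\right)$ be increasing and bounded from above. I apply Proposition~\ref{ocauchy}(ii) with $R:=\left\{e_{\alpha}\right\}^{\uparrow}$, the set of upper bounds, which is nonempty, and $Q:=\left\{e_{\alpha}\right\}$. For $q=e_{\beta}\in Q$ and $r\in R$, every $\alpha\ge\beta$ gives $e_{\alpha}\in[q,r]$. It remains to show $\bigwedge(R-Q)=0_{E}$. Note $Q\subseteq R^{\downarrow}$, and $R$ is nonempty and bounded from below. Lemma~\ref{arch}(ii) gives $\bigwedge(R-R^{\downarrow})=0_{E}$, but we need the infimum over the smaller set $R-Q$. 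This is the point needing care. I argue directly in the spirit of Lemma~\ref{arch}: suppose $e\le r-e_{\alpha}$ for all $r\in R$ and all $\alpha$. Then $e_{\alpha}+e\le r$ for all $\alpha$, so $r-e\in R$ for every $r\in R$, i.e.\ $R-e\subseteq R$. Since $R$ is bounded from below, Lemma~\ref{arch}(i) gives $e\le 0_{E}$.

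\textbf{(iv)$\Rightarrow$(i).} Let $f\in E_{+}$. The net $-\frac{1}{n}f$ is increasing and bounded above by $0_{E}$, so by (iv) it is order Cauchy. By Proposition~\ref{ocauchy}(iii),
\[\bigwedge\Big(\big(-\tfrac{1}{n}f\big)^{\nearrow}-\big(-\tfrac{1}{n}f\big)^{\searrow}\Big)=0_{E}.\]
Suppose $g\le\frac{1}{n}f$ for all $n$, so that $-g$ is an upper bound for the sequence. For every eventual upper bound $u$ and eventual lower bound $l$, the sequence is eventually increasing, so $u$ bounds all terms from above; hence $u\ge -g$. Also $l\le-\frac{1}{m}f$ for some $m$, hence $l\le 0_{E}$ since $f\ge 0_{E}$. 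More precisely, taking $u=-g+\frac{1}{k}f$ is not directly available, so I instead use the following. Since $u\ge -\frac1n f$ for all $n$ and $l\le -\frac1m f$, we have $u-l\ge -g-l$. Also $l-\frac1m f$ is again an eventual lower bound for each shift. The cleanest route is this: $Q:=(\cdot)^{\searrow}$ satisfies $Q+\frac{1}{k}f$ with $g$... I expect this bookkeeping to be the main obstacle. The plan is to show that $(-g)-(\cdot)^{\searrow}$ is bounded below by $g$ in the right way, namely to deduce $g\le u-l$ for all pairs. One then concludes $g\le 0_{E}$ from the displayed infimum being $0_{E}$, possibly after replacing $g$ by $g$ minus a suitable multiple of $f$.

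\textbf{Final claim.} Suppose additionally $E$ is directed. The Dedekind completion $E^{\delta}$ contains $E$ as a supremum- and infimum-dense subspace, so Proposition~\ref{ocp} applies directly and gives that order convergence in $E$ is the restriction of order convergence in $E^{\delta}$.
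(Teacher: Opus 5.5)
\textbf{Gap in (iv)$\Rightarrow$(i).} Most of your proposal matches the paper. This covers (i)$\Rightarrow$(ii)$\Rightarrow$(iii)$\Rightarrow$(i), the final claim via Proposition~\ref{ocp}, and (i)$\Rightarrow$(iv). In (i)$\Rightarrow$(iv), your direct use of part~(i) of Lemma~\ref{arch} is what the paper's ``deduced from part~(ii)'' amounts to, because $R-Q$ and $R-R^{\downarrow}$ have the same lower bounds. The real gap is in (iv)$\Rightarrow$(i): that argument is not finished.

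Write $U$ and $L$ for the sets of eventual upper and eventual lower bounds of $\left(-\frac{1}{n}f\right)_{n\in\N}$. You never show that $g$ is a lower bound of $U-L$. The one concrete inequality you state on the way, ``$u\ge -g$'', is false. Both $u$ and $-g$ are upper bounds of the sequence, but nothing forces $u$ to dominate $-g$. For instance, in $\R$ with $f=1$, $g=-1$ and $u=0$, you have $u\not\ge -g$. The suggestion of replacing $g$ by $g$ minus a multiple of $f$ is not needed.

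The missing idea is to compare the terms at indices $m$ and $2m$, just as the paper compares $n_{p}$ and $2n_{p}$.
\begin{itemize}
\item Take any $l\in L$. Then $l\le -\frac{1}{m}f$ for some $m\in\N$.
\item Take any $u\in U$. Since the sequence is increasing, $u$ is an upper bound of all its terms, so in particular $u\ge -\frac{1}{2m}f$.
\item Hence $u-l\ge \frac{1}{m}f-\frac{1}{2m}f=\frac{1}{2m}f\ge g$.
\end{itemize}
So $g\le U-L$. Proposition~\ref{ocauchy} gives $\bigwedge\left(U-L\right)=0_{E}$, hence $g\le 0_{E}$. Together with $0_{E}\le\frac{1}{n}f$ for all $n$, this yields $\bigwedge_{n}\frac{1}{n}f=0_{E}$, so $E$ is Archimedean.

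With this step filled in, your route works. Your choice of the increasing sequence $-\frac{1}{n}f$ also matches the hypothesis in (iv) more literally than the paper's use of the decreasing sequence $\frac{1}{n}h$, which tacitly relies on the symmetric version of (iv).
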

\begin{proof}
(i)$\Rightarrow$(ii) follows from the fact that if $h$ is a regulator of convergence for a net, then $\left\{\frac{1}{n}h\right\}_{n\in\N}$ witnesses directed order convergence of this net. (ii)$\Rightarrow$(iii) follows from $\mathrm{o}\le\mathrm{do}$. (iii)$\Rightarrow$(i) follows from the facts that $\left(\frac{1}{n}h\right)_{n\in\N}$ is relatively uniformly null, for every $h\in E_{+}$, and that the order limit of a decreasing net is its infimum.

(i)$\Rightarrow$(iv): Let $\left(e_{\alpha}\right)_{\alpha\in A}$ be an order bounded increasing net in $E$, let $Q:=\left\{e_{\alpha}\right\}_{\alpha\in A}$ and let $R:=Q^{\uparrow}$. It can be deduced from part~(ii) of Lemma~\ref{arch} that $\bigwedge\left(R-Q\right)=0_{E}$. Hence, according to Proposition~\ref{ocauchy}, the net $\left(e_{\alpha}\right)_{\alpha\in A}$ is order Cauchy.

(iv)$\Rightarrow$(i): Fix $h\in E_{+}$ and consider the order bounded decreasing sequence $\left(\frac{1}{n}h\right)_{n\in\N}$, which is order Cauchy by (iv). It follows that there is $P\subseteq E$ such that $\bigwedge P=0_{E}$ and for every $p\in P$ there is $n_{p}$ so that $\left|\frac{1}{n}h-\frac{1}{m}h\right|\le p$, for all $m,n\ge n_{p}$. In particular, taking $n:=n_{p}$ and $m:=2n_{p}$ yields $\frac{1}{2n_{p}}h\le p$. We now conclude from $\bigwedge P=0_{E}$ that $\bigwedge_{n\in\N}\frac{1}{n}h=0_{E}$. Thus, $E$ is Archimedean.

The last claim follows immediately from Proposition~\ref{ocp}.
\end{proof}

Using the last claim of Proposition~\ref{oca} and the fact that Dedekind complete vector lattices are complete with respect to order convergence (see \cite[Proposition~9.10]{btw})\footnote{In fact, a vector lattice is Dedekind complete if and only if it is Archimedean and complete with respect to order convergence. For this reason some authors use ``order complete'' and ``Dedekind complete'' interchangeably. However, the mentioned equivalence does not hold for directed Archimedean ordered vector spaces (see Corollary~\ref{iceord1}), and so we will not conflate these terms.}, we draw the following conclusion.

\begin{corollary}
A directed Archimedean ordered vector space $E$ is complete with respect to order convergence if and only if it is closed with respect to order convergence in $E^{\delta}$.
\end{corollary}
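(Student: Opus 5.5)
The idea is to reduce completeness of $E$ to a closure property inside $E^{\delta}$, using two earlier facts. By the last claim of Proposition~\ref{oca}, order convergence in $E$ is the restriction of order convergence in $E^{\delta}$. Dedekind complete vector lattices are order complete \cite[Proposition~9.10]{btw}.

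The first step is to check that a net in $E$ is order Cauchy in $E$ if and only if it is order Cauchy in $E^{\delta}$. By definition, $(e_\alpha)$ is order Cauchy when the doubly indexed net $(e_\alpha-e_\beta)_{\alpha,\beta}$ is order null. This net lies in $E$ and has the candidate limit $0_E\in E$. So, by the restriction statement (Proposition~\ref{ocp} applied to the supremum- and infimum-dense subspace $E\subseteq E^{\delta}$), it is order null in $E$ exactly when it is order null in $E^{\delta}$.

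For ($\Leftarrow$), suppose $E$ is order closed in $E^{\delta}$ and $(e_\alpha)\subseteq E$ is order Cauchy in $E$. By the first step it is order Cauchy in $E^{\delta}$. Since $E^{\delta}$ is order complete, the net order converges in $E^{\delta}$ to some $x$. Closedness gives $x\in E$. The restriction statement then gives $e_\alpha\too x$ in $E$, so $E$ is order complete.

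For ($\Rightarrow$), suppose $E$ is order complete and $(e_\alpha)\subseteq E$ satisfies $e_\alpha\too x$ in $E^{\delta}$. This net is order Cauchy in $E^{\delta}$, since by order continuity of addition (Proposition~\ref{ocv}(ii)) we have $e_\alpha-e_\beta\too x-x=0$. By the first step it is order Cauchy in $E$, so completeness yields $y\in E$ with $e_\alpha\too y$ in $E$. By the restriction statement, also $e_\alpha\too y$ in $E^{\delta}$, and uniqueness of order limits (Proposition~\ref{oc}) gives $x=y\in E$. Hence $E$ is order closed in $E^{\delta}$.

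The only point needing care is the doubly indexed Cauchy net. The restriction statement must be applied to a net in $E$ with limit $0_E\in E$, which holds here, and addition in $E^{\delta}$ must be order continuous in the doubly indexed sense, which follows from Proposition~\ref{ocv}(ii) applied in $E^{\delta}$. I expect this to be the only step needing any attention; the rest is routine.
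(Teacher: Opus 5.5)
Your proof is correct and takes the same approach as the paper, which states this corollary as an immediate consequence of the last claim of Proposition~\ref{oca} (order convergence in $E$ is the restriction of order convergence in $E^{\delta}$) together with order completeness of Dedekind complete vector lattices \cite[Proposition~9.10]{btw}. You have written out the details the paper leaves implicit, including the transfer of the order Cauchy property through the doubly indexed net $\left(e_{\alpha}-e_{\beta}\right)_{\alpha,\beta}$, whose limit $0_{E}$ lies in $E$.
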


We now consider the relationships between monotone completeness and other properties.

\begin{corollary}\label{monc}
\item[(i)] Every monotone complete ordered vector space is Archimedean and relatively uniformly complete. In such spaces any net which satisfies the conditions of part~(ii) of Proposition~\ref{todovs} is directed order convergent.
\item[(ii)] Every Archimedean ordered vector space which is complete with respect to order convergence is monotone complete and directed order complete.
\end{corollary}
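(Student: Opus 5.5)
For part~(i), let $E$ be monotone complete. First we show that $E$ is Archimedean. Fix $h\in E_{+}$. The sequence $\left(\frac{1}{n}h\right)_{n\in\N}$ is decreasing and bounded from below by $0_{E}$, so it has an infimum $g\ge 0_{E}$. Because $\left(\frac{1}{2n}h\right)_{n\in\N}$ is a subsequence of a decreasing sequence, it has the same infimum. Scaling by $2$ is an order automorphism, so $2g=\bigwedge_{n}\frac{1}{n}h=g$. Hence $g=0_{E}$, and $E$ is Archimedean.

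Relative uniform completeness then follows from the statement about nets satisfying part~(ii) of Proposition~\ref{todovs}. Indeed, Proposition~\ref{rucauchy} turns a relatively uniformly Cauchy net into an increasing sequence $(f_{n})$ and a decreasing sequence $(g_{n})$ with $g_{n}-f_{n}\le\frac{1}{n}h$, eventually trapping the net. Once we know the net directed order converges to some $e$, we get $f_{n}\le e\le g_{n}$, since intervals are directed order closed. Then for $\alpha\ge\alpha_{n}$ we have $-\frac{1}{n}h\le e_{\alpha}-e\le\frac{1}{n}h$, which is relative uniform convergence.

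For the main claim, take $Q$ upward directed and $R$ downward directed with $\bigwedge(R-Q)=0_{E}$, trapping the net as described. Since $Q\le R$ (any $q,r$ eventually sandwich a common $e_{\alpha}$) and $R\neq\varnothing$, the set $Q$, viewed as an increasing net indexed by itself, is bounded above. Monotone completeness gives $e_{1}:=\bigvee Q$, and dually $e_{2}:=\bigwedge R$, with $e_{1}\le e_{2}$. The key step is to show $e_{1}=e_{2}$. For every $q\in Q$ and $r\in R$ we have $e_{2}-e_{1}\le r-q$, so $e_{2}-e_{1}$ is a lower bound of $R-Q$, hence $e_{2}-e_{1}\le 0_{E}$. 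Thus $\bigvee Q=e=\bigwedge R$, and Proposition~\ref{do} gives $e_{\alpha}\todo e$. The step requiring the most care is checking that $Q\le R$, which is what makes the suprema and infima exist.

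For part~(ii), let $E$ be Archimedean and complete with respect to order convergence. An increasing net bounded from above is order Cauchy by Proposition~\ref{oca}(iv), so it order converges to some $e$. By the Corollary following Proposition~\ref{oc}, $e$ is its supremum; decreasing nets are handled dually. So $E$ is monotone complete. Now let a net be directed order Cauchy. Then $(e_{\alpha}-e_{\beta})$ directed order converges to $0$, hence order converges to $0$ since $\mathrm{o}\le\mathrm{do}$. So the net is order Cauchy, and by completeness $e_{\alpha}\too e$. The obstacle here is upgrading this to directed order convergence, and I would not try to do so from $e_{\alpha}\too e$ directly. Instead, I would invoke the relevant property from part~(i) in the monotone complete space $E$. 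Take a downward directed $P$ with $\bigwedge P=0_{E}$ dominating the tails of $e_{\alpha}-e_{\beta}$, and for each $p\in P$ pick $\alpha_{p}$. As in the proof of Proposition~\ref{ocauchy}, set $Q:=\{e_{\alpha_{p}}-p\}$ and $R:=\{e_{\alpha_{p}}+p\}$. These trap the net, and $R-Q$ dominates $2P$ in the required sense, so $\bigwedge(R-Q)=0_{E}$ by an argument like the one in Lemma~\ref{sum}.

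The remaining difficulty is directedness of $Q$ and $R$. To get it, choose $\alpha_{p}$ monotonically in $p$, which is possible by passing to the index set $P\times A$. Alternatively, replace $Q$ by the increasing net of suprema of tails, which exist by monotone completeness. With $Q$ and $R$ directed, part~(i) yields directed order convergence, so $E$ is directed order complete.
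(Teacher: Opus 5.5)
Your part~(i) is correct. The halving argument ($g=\bigwedge_n\frac{1}{2n}h$, hence $2g=g$) is a valid and more elementary substitute for the paper's route, which passes from ``monotone bounded nets have extrema'' to order Cauchy and then uses Proposition~\ref{oca}(iv)$\Rightarrow$(i). Deriving relative uniform completeness from the trapping claim also works. You only need to record that $\bigwedge(R-Q)=0_E$ for $Q=\{f_n\}$, $R=\{g_n\}$, which follows from $g_m-f_n\ge 0_E$ (both trap a common $e_\alpha$), $g_n-f_n\le\frac1n h$, and the Archimedean property. The monotone completeness half of (ii) matches the paper.

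The gap is in directed order completeness in (ii): $Q=\{e_{\alpha_p}-p\}$ and $R=\{e_{\alpha_p}+p\}$ need not be directed, and neither proposed fix makes them so.

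\begin{itemize}
\item Choosing $\alpha_p$ monotonically does not help. If $p'\le p$ and $\alpha_{p'}\ge\alpha_p$, you only get $e_{\alpha_{p'}}-e_{\alpha_p}\ge -p$. But $e_{\alpha_{p'}}-p'\ge e_{\alpha_p}-p$ requires $e_{\alpha_{p'}}-e_{\alpha_p}\ge p'-p$, and the extra $p'$ cannot be absorbed. Proposition~\ref{rucauchy} absorbs it with a halving trick, but a general $P\downarrow 0_E$ admits no halving. For example, $P=\{\1_{[n,\8)}\}_{n\in\N}$ in $\ell_\8$ has no $p,p''\in P$ with $2p''\le p$.
\item ``Suprema of tails'' do not exist in general. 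The tails $\{e_\beta:\beta\ge\alpha\}$ are not directed, and monotone completeness only supplies suprema of bounded directed sets. In a non-lattice space such tails typically have no supremum.
\end{itemize}

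So you never reach the hypotheses of the trapping claim from part~(i).

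The missing idea is that the upgrade from $e_\alpha\too e$ is immediate, and this is exactly the paper's argument. Take $P\downarrow 0_E$ and $\alpha_p$ as you chose them, and fix $\alpha\ge\alpha_p$. The tail $(e_\alpha-e_\beta)_{\beta\ge\alpha_p}$ lies in $[-p,p]$ and order converges to $e_\alpha-e$ by Proposition~\ref{ocv}(ii). Order intervals are order closed (Proposition~\ref{oc}), so $-p\le e_\alpha-e\le p$ for all $\alpha\ge\alpha_p$. Hence the downward directed set $P$ itself witnesses $e_\alpha\todo e$ by Proposition~\ref{todovs}(i), and no directed $Q,R$ are needed.
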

\begin{proof}
(i): Every monotone order bounded net has an extremum, hence is order convergent, and so order Cauchy. This implies the Archimedean property by Proposition~\ref{oca}.

Assume that $\left(e_{\alpha}\right)_{\alpha\in A}\subseteq E$ is relatively uniformly Cauchy and let $h\in E_{+}$, $\left(f_{n}\right)_{n\in\N}$,\ $\left(g_{n}\right)_{n\in\N}\subseteq E$ and $\left(\alpha_{n}\right)_{n\in\N}\subseteq A$ be as in Proposition~\ref{rucauchy}. Since $\left(f_{n}\right)_{n\in\N},\left(g_{n}\right)_{n\in\N}$ are monotone and order bounded, there are $f,g\in E$ such that $f_{n}\uparrow f$ and $g_{n}\downarrow g$. It follows from the Archimedean property that $f=g=:e$, and for every $n\in\N$ and $\alpha\ge\alpha_{n}$ we have $e_{\alpha}-e\le g_{n}-f_{n}\le \frac{1}{n}h$, and similarly $e_{\alpha}-e\ge -\frac{1}{n}h$, thus justifying $e_{\alpha}\xrightarrow[]{\mathrm{ru}}e$.

Assume that $\left(e_{\alpha}\right)_{\alpha\in A}\subseteq E$ satisfies the conditions of part~(ii) of Proposition~\ref{todovs}. Let $Q,R$ be the corresponding controlling sets. Since $Q$ is upward directed and bounded from above, we may set $q_{0}:=\bigvee Q$. Similarly, we may set $r_{0}:=\bigwedge R$. Then, we have $r_{0}-q_{0}=\bigwedge\left(R-Q\right)=0_{E}$, and so $q_{0}=r_{0}=:e$. It then follows from Proposition~\ref{do} that $e_{\alpha}\todo e$.

(ii): By part~(iv) of Proposition~\ref{oca} every monotone order bounded net is order Cauchy, hence order convergent, thus has an extremum.

Assume that $\left(e_{\alpha}\right)_{\alpha\in A}\subseteq E$ is directed order Cauchy, so that $e_{\alpha}-e_{\beta}\todo 0_{E}$. Since $\mathrm{do}\ge\mathrm{o}$, we have $e_{\alpha}-e_{\beta}\too 0_{E}$, and as $E$ is complete with respect to order convergence, there is $e\in E$ such that $e_{\alpha}\too e$. There is $P\downarrow 0_{E}$ which witnesses $e_{\alpha}-e_{\beta}\todo 0_{E}$. That is, for every $p\in P$ there is $\alpha_{0}$ such that $-p\le e_{\alpha}-e_{\beta}\le p$, for every $\alpha,\beta\ge\alpha_{0}$. Taking the order limit in the second variable yields $-p\le e_{\alpha}-e\le p$, for every $\alpha\ge\alpha_{0}$. Thus, $e_{\alpha}\todo e$, and so $E$ is directed order complete.
\end{proof}

Note that the converse to part~(ii) of Corollary~\ref{monc} is false (see Corollary~\ref{iceord1}). We will also see in Example~\ref{jochen} an ordered vector space which is complete with respect to directed order convergence, but not monotone complete. An example of a monotone complete ordered vector space which fails to be directed order complete is as follows.

\begin{example}\label{monodo}
Recall that $\ell_{1}^{*}=\ell_{\8}$, and so we may view $\ell_{1}$ isometrically embedded into $\ell_{\8}\left(X\right)$, where $X:=\Ba_{\ell_{\8}}$. Let $E:=\left\{\left(e,f\right)\in \ell_{\8}\left(X\right)^{2}: e-f\in\ell_{1}\right\}$. We claim that $E$ is monotone complete, but not complete with respect to directed order convergence. To show the former let $P\subseteq E_{+}$ be directed and such that $P\le \left(e_{0},f_{0}\right)\in E$. For every $p\in P$ we have $p=\left(e_{p},f_{p}\right)$, where $\0\le e_{p}\le e_{0}$, $\0\le f_{p}\le f_{0}$ and $g_{p}:=e_{p}-f_{p}\in\ell_{1}$.

Let us show that $\left(g_{p}\right)_{p\in P}$ is norm Cauchy in $\ell_{1}$. If this is not the case, there are $\varepsilon>0$ and an increasing sequence $\left(p_{n}\right)_{n\in\N}\subseteq P$ such that $\left\|g_{p_{n+1}}-g_{p_{n}}\right\|\ge\varepsilon$. However, for every $x\in X$ we have that $\left(e_{p}\left(x\right)\right)_{p\in P}$ and $\left(f_{p}\left(x\right)\right)_{p\in P}$ are increasing nets bounded in $\R$. Hence, $\left(e_{p_{n}}\left(x\right)\right)_{n\in \N}$ and $\left(f_{p_{n}}\left(x\right)\right)_{n\in \N}$ are increasing bounded sequences in $\R$, and so they converge. It follows that $\left(g_{p_{n}}\left(x\right)\right)_{n\in \N}$ also converges, and so $g_{p_{n+1}}\left(x\right)-g_{p_{n}}\left(x\right)\to 0$, for every $x\in X$. Since $X=\Ba_{\ell_{\8}}$, we conclude that $g_{p_{n+1}}-g_{p_{n}}\tow \0$ in $\ell_1$, which by Schur's theorem implies $g_{p_{n+1}}-g_{p_{n}}\to \0$, contradicting $\left\|g_{p_{n+1}}-g_{p_{n}}\right\|\ge\varepsilon$.

It follows that there is $g\in \ell_{1}$ such that $g_{p}\to g$ in norm, hence $g_{p}\left(x\right)\to g\left(x\right)$, for every $x\in X$. We also have that for every $x\in X$ there are $e\left(x\right)\le e_{0}\left(x\right)$ and $f\left(x\right)\le f_{0}\left(x\right)$ such that $e_{p}\left(x\right)\uparrow e\left(x\right)$ and $f_{p}\left(x\right)\uparrow f\left(x\right)$. We can now conclude that $e,f\in \ell_{\8}\left(X\right)$, $\left(e,f\right)=\bigvee P$ in $\ell_{\8}\left(X\right)^{2}$, and $e-f=g\in\ell_{1}$. Hence, $\left(e,f\right)\in E$ and $\left(e,f\right)=\bigvee P$ in $E$.

Let us now prove that $E$ is not directed order complete. Since $\ell_{1}$ is not reflexive, there are $g\in \ell_{1}^{**}\setminus \ell_{1}$ and a net $\left(g_{\alpha}\right)_{\alpha\in A}\subseteq\Ba_{\ell_{1}}$ which $\mathrm{w}^{*}$-converges to $g$ in $\ell_{1}^{**}$. Viewing $\ell_{1}^{**}$ as isometrically embedded into $\ell_{\8}\left(X\right)$, we have that $g_{\alpha}\left(x\right)\to g\left(x\right)$, for every $x\in X$.

For every $\alpha$ and $x\in X$ put $h_{\alpha}\left(x\right):=\bigvee_{\beta,\gamma\ge\alpha}\left|g_{\beta}\left(x\right)-g_{\gamma}\left(x\right)\right|\le 2$. Then, $h_{\alpha}\left(x\right)\downarrow\0$, for every $x\in X$, and so $f_{\alpha}:=\left(h_{\alpha},h_{\alpha}\right)\downarrow\left(\0,\0\right)$ in $E$. For every $\alpha$ let $e_{\alpha}:=\left(g_{\alpha},\0\right)$; for every $\beta,\gamma\ge\alpha$ we have $-f_{\alpha}\le e_{\beta}-e_{\gamma}\le f_{\alpha}$. We conclude that $\left(e_{\alpha}\right)_{\alpha\in A}$ is directed order Cauchy.

Assume that there is $e\in E$ with $e=\left(u,v\right)$ such that $e_{\alpha}\todo e$. For every $\beta\ge \alpha$ taking the limit in $-f_{\alpha}\le e_{\beta}-e_{\gamma}\le f_{\alpha}$ with respect to $\gamma$ yields $-f_{\alpha}\le e_{\beta}-e\le f_{\alpha}$. In particular, for every $\alpha$ and $x\in X$ we have $\left|v\left(x\right)\right|\le h_{\alpha}\left(x\right)\downarrow 0$ and $\left|g_{\alpha}\left(x\right)-u\left(x\right)\right|\le h_{\alpha}\left(x\right)\downarrow 0$, thus $v=\0$ and $g_{\alpha}\left(x\right)\to u\left(x\right)$, for every $x\in X$. We conclude that $g=u=u-v\in\ell_{1}$, contradiction.
\qed\end{example}

\subsection{Order convergence as a convergence structure}\label{ss24}

Despite their names, Example~\ref{pat} shows that order and directed order convergences on a general poset may fail to be a convergence structure in the sense of \cite{bb} and \cite{bctv}. Nevertheless, in the case of an ordered vector space all the requirements to be a convergence structure are fulfilled. Part~(ii) in the following result for the case of vector lattices appeared in \cite[Proposition~2.3]{aeg}. For the notion of a bounded set in a convergence vector space see \cite[Section~6]{bctv}.

\begin{proposition}\label{ocl}Let $E$ be an ordered vector space.
\item[(i)] Order convergence and directed order convergence in $E$ are convergence structures, which are Hausdorff, meaning that convergent nets have unique limits.
\item[(ii)] Order convergence is linear if and only if directed order convergence is linear if and only if $E$ is directed and Archimedean.
\item[(iii)] If $E$ is directed and Archimedean, relative uniform convergence is a linear Hausdorff convergence structure. Moreover, boundednesses with respect to order, directed order and relative uniform convergences are equivalent to order boundedness.
\end{proposition}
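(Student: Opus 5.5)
The plan is to treat each part separately, with part~(i) carrying the only real difficulty.

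For part~(i) I will check the axioms of a convergence structure as in \cite{bctv}. These are: constant nets converge; every (quasi-)subnet of a net converging to $e$ converges to $e$; and if two nets converge to $e$, then so does their ``mixing'' (the net built from both). The first two axioms are immediate from Proposition~\ref{ocv}(i) and Proposition~\ref{todovs}(i), since a witnessing set $P$ still dominates the tails of any subnet. Hausdorffness is Proposition~\ref{oc} for order convergence, and the corresponding uniqueness proposition for directed order convergence.

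The main obstacle is the mixing axiom, which is exactly what fails in Example~\ref{pat}. Here the linear structure will be used. If $P$ and $P'$ witness $e_{\alpha}\too e$ and $e'_{\beta}\too e$, then automatically $P,P'\subseteq E_{+}$, because $-p\le e_{\alpha}-e\le p$ forces $p\ge 0_{E}$. Hence every $p+p'\in P+P'$ dominates both $p$ and $p'$, and so it controls the tails of both nets simultaneously. By Lemma~\ref{sum} we have $\bigwedge(P+P')=0_{E}$. If $P$ and $P'$ are downward directed, then so is $P+P'$. This settles the mixing axiom for both o and do.

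For part~(ii), I first prove necessity. If scalar multiplication is continuous, then $\frac1n e\to 0_{E}$ for every $e$, so $\frac1ne$ is eventually order bounded. From $-p\le \frac1n e\le p$ we get $e=np-(np-e)\in E_{+}-E_{+}$, so $E$ is directed. Taking $h\in E_{+}$, the decreasing sequence $\frac1n h$ converges to its infimum, which must be $0_{E}$; this gives the Archimedean property. Since $\mathrm{o}\le\mathrm{do}$, this argument works under either hypothesis.

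For sufficiency, addition is already handled by Propositions~\ref{ocv}(ii) and~\ref{todovs}(iii). For scalar multiplication, let $\lambda_{\alpha}\to\lambda$ and $e_{\alpha}\to e$, and write
\[\lambda_{\alpha}e_{\alpha}-\lambda e=\lambda_{\alpha}(e_{\alpha}-e)+(\lambda_{\alpha}-\lambda)e.\]
For the first term, if $P$ witnesses the convergence of $e_{\alpha}$, then $CP$ witnesses it, where $C$ bounds $|\lambda_{\alpha}|$ eventually. For the second term, choose $u\ge \pm e$ using directedness. Then the term is relatively uniformly null with regulator $u$, hence directed order null by Proposition~\ref{oca}. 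Adding the two terms gives the claim for both convergences.

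For part~(iii), relative uniform convergence is handled the same way. Mixing uses the regulator $u+u'$, linearity follows from the same decomposition, and Hausdorffness follows from the Archimedean property, since if $e\pm\varepsilon u\ge0$ for all $\varepsilon$ then $e=0$. For boundedness, I will use the chain order bounded $\Rightarrow$ ru-bounded $\Rightarrow$ do-bounded $\Rightarrow$ o-bounded. The first implication is direct, and the other two follow from $\mathrm{ru}\ge\mathrm{do}\ge\mathrm{o}$ via Proposition~\ref{oca}.

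For the converse, suppose $B$ is o-bounded. Index the net $\frac1n b$ by $B\times\N$, where $B$ carries the trivial preorder in which all pairs are comparable, so that tails contain every $b$. This net is o-null, so it is eventually order bounded by some $u$ for $n\ge n_{0}$. That gives $\pm b\le n_{0}u$ for all $b\in B$, so $B$ is order bounded.
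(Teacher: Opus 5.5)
Your proof is correct. Its skeleton matches the paper's: for (i), the three axioms together with Proposition~\ref{oc}; for necessity in (ii), the net $\frac1n e$; and for sufficiency in (ii), reducing the scalar-multiplication part to relative uniform convergence via Proposition~\ref{oca}.

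The differences are local:
\begin{itemize}
\item \emph{Mixing axiom.} The paper reduces it to additivity: it notes $g_\alpha-f_\alpha\in\{e_\alpha-f_\alpha,0_E\}$ and applies Proposition~\ref{ocv}(ii) twice. You instead observe that witnesses are automatically positive, so that $P+P'$ controls both nets at once. This is slightly more direct and does not depend on whether the two nets share an index set.
\item \emph{Scalar multiplication.} The paper writes $r_\alpha e_\alpha=(r_\alpha-r)e_\alpha+re_\alpha$, which needs the whole net to lie eventually in some $[-u,u]$. Your decomposition $\lambda_\alpha(e_\alpha-e)+(\lambda_\alpha-\lambda)e$ rescales the witness set for the first term. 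For the second term it only needs a single $u\ge\pm e$.
\item \emph{Part (iii).} The paper cites \cite[Section~10]{btw} for linearity of $\mathrm{ru}$. For ``o-bounded $\Rightarrow$ order bounded'' it cites \cite[Corollary~8.3]{bctv} together with local order boundedness of order convergence. You prove both directly. The second uses the net $(\frac1n b)$ indexed by $B\times\N$, preordered by the second coordinate. This is exactly the device the paper itself uses in the proof of (i)$\Rightarrow$(ii) of Theorem~\ref{ordprec}, so it is consistent with the paper's conventions on index sets.
\end{itemize}
Your version is fully self-contained, at the cost of a few more routine verifications.

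Two small slips:
\begin{itemize}
\item For Hausdorffness of $\mathrm{ru}$, the Archimedean condition should read $\varepsilon u\pm e\ge 0_E$ for all $\varepsilon>0$, not $e\pm\varepsilon u\ge 0_E$.
\item In the necessity part, the logic is as follows. Linearity gives $\frac1n h\too 0_E$, and the order limit of a decreasing net is its infimum, so $\bigwedge_n\frac1n h=0_E$. Saying the sequence ``converges to its infimum'' presupposes that the infimum exists.
\end{itemize}
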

\begin{proof}
We only treat order convergence. The case of directed order convergence is similar.

(i): It is clear that a constant net order converges to its value, and that a quasi-subnet of an order convergent net order converges to the same limit. Assume that $e_{\alpha}\too e$, $f_{\alpha}\too e$, and $g_{\alpha}\in\left\{e_{\alpha},f_{\alpha}\right\}$, for every $\alpha$. It follows from part~(ii) of Proposition~\ref{ocv} that $e_{\alpha}-f_{\alpha}\too 0_{E}$. Let $P$ be as in part~(i) of Proposition~\ref{ocv} for $\left(e_{\alpha}-f_{\alpha}\right)_{\alpha\in A}$. For every $\alpha$ we have that $g_{\alpha}-f_{\alpha}\in\left\{e_{\alpha}-f_{\alpha}, 0_{E}\right\}$, and so it is easy to see that $P$ witnesses $g_{\alpha}-f_{\alpha}\too 0_{E}$. Using part~(ii) of Proposition~\ref{ocv} again we conclude that $g_{\alpha}=g_{\alpha}-f_{\alpha}+f_{\alpha}\too 0_{E}+e=e$. Hausdorffness follows immediately from Proposition~\ref{oc}.

(ii): Necessity: The fact that $E$ is Archimedean is proved similarly to the implication (iii)$\Rightarrow$(i) in Proposition~\ref{oca}. Let $e\in E$; it follows from linearity of order convergence that $\frac{1}{n}e\too 0_{E}$, witnessed by $P\subseteq E_{+}$ with $\bigwedge P=0_{E}$. It follows that there are $0_{E}\le p\in P$ and $n\in\N$ such that $\frac{1}{n}e\le p$. Then, $np\ge e,0_{E}$, showing that $E$ is directed.

Sufficiency: In the light of part~(ii) of Proposition~\ref{ocv}, we only need to prove that if $e_{\alpha}\too e$ and $\R\ni r_{\alpha}\to r$, then $r_{\alpha}e_{\alpha}\too re$. It is easy to see that $re_{\alpha}\too re$, and so without loss of generality we may assume that $r=0$. By passing to a tail, we may also assume that $\left(e_{\alpha}\right)_{\alpha\in A}\subseteq\left[-u,u\right]$, for $u\in E_{+}$. Then, $r_{\alpha}e_{\alpha}\toru 0_{E}$ with regulator $u$, which implies $r_{\alpha}e_{\alpha}\too 0_{E}$, according to Proposition~\ref{oca}.

The case of directed order convergence is analogous.

(iii): The fact that $\mathrm{ru}$ is a linear convergence structure is proven in \cite[Section~10]{btw} (note that it is immaterial for that result whether $E$ is a lattice).

Assume that $B\subseteq\left[-u,u\right]$, for $u\in E_{+}$. Let $\left(e_{\alpha}\right)_{\alpha\in A}\subseteq B$ and let $\left(r_{\alpha}\right)_{\alpha\in A}\subseteq \R$ converge to $0$. Arguing as in the previous step, we see that $r_{\alpha}e_{\alpha}\too 0_{E}$. We conclude that $E$ is bounded with respect to order convergence. The converse follows from the fact that order convergence is locally order bounded and \cite[Corollary~8.3]{bctv} (note that it is immaterial for that result whether $E$ is a lattice).
\end{proof}

While order convergence may be viewed as more natural due to the last claim in Proposition~\ref{oca}, directed order convergence also has a claim to ``naturalness''. Indeed, analogously to \cite[Theorem~4.1]{erz} one can show that directed order convergence is the strongest locally full convergence (i.e., satisfies the Sandwich Theorem) on $E$ in which every net which decreases to zero converges to zero.

Also, note that $\mathrm{ru}$ is the strongest locally full linear convergence on $E$, and $\mathrm{ru}$-completeness follows from completeness with respect to any locally full convergence for which $E_{+}$ is closed (the proofs of \cite[Proposition~5.1 and Theorem~5.2]{bctv} can be easily adapted).

\begin{proposition}\label{do=ru}
Let $E$ be an ordered Banach space such that the norm induces a locally full convergence\footnote{This condition is usually described as ``$E$ has a normal cone'', see \cite[Theorem~2.23]{at}.} such that every net which decreases to $0_{E}$ converges in norm (to $0_{E}$). Then, $\mathrm{ru}=\mathrm{do}\ge\|\cdot\|$ on $E$.
\end{proposition}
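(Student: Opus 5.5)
We prove three inclusions: $\mathrm{ru}\Rightarrow\mathrm{do}$, $\mathrm{do}\Rightarrow\|\cdot\|$, and $\mathrm{do}\Rightarrow\mathrm{ru}$. Together they give $\mathrm{ru}=\mathrm{do}\ge\|\cdot\|$.

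First we check that $E$ is Archimedean. For $h\in E_{+}$, let $g$ be a lower bound of $\{\frac1n h\}_{n\in\N}$; note $g\le \frac1n h$ for all $n$. Fullness (normality) of the norm says that if $a_\alpha\le b_\alpha\le c_\alpha$ with $a_\alpha,c_\alpha\to x$ in norm, then $b_\alpha\to x$. In particular the positive cone is norm closed. Since $\frac1n h-g\ge 0$ and $\frac1n h-g\to -g$ in norm, closedness gives $-g\ge0$. Hence $\bigwedge_n \frac1n h=0_{E}$. By Proposition~\ref{oca} we then get $\mathrm{ru}\Rightarrow\mathrm{do}$.

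\textbf{$\mathrm{do}\Rightarrow\|\cdot\|$.} Let $e_\alpha\todo e$. By Proposition~\ref{todovs}(i) there is a downward directed $P$ with $\bigwedge P=0_E$ such that each $p\in P$ dominates a tail of $(e_\alpha-e)$. View $P$ as a net indexed by itself (reverse order). It decreases to $0_E$, so by hypothesis $p\to 0_E$ in norm. Index by pairs $(\alpha,p)$ with $\alpha\ge\alpha_p$ to get the sandwich $-p\le e_\alpha-e\le p$. Local fullness then gives $e_\alpha\to e$ in norm. The subnet bookkeeping is routine: for every norm neighbourhood, choose $p$ small and apply fullness on the tail $\alpha\ge \alpha_p$.

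\textbf{$\mathrm{do}\Rightarrow\mathrm{ru}$.} This is the key step. Keep $P\downarrow 0_E$ as above. By the previous paragraph, for each $n$ there is $p_n\in P$ with $\|p_n\|\le 4^{-n}$. By directedness we may take $(p_n)$ decreasing. Since the space is complete and the cone is closed, set $h:=\sum_n 2^n p_n\in E_+$; the series converges absolutely. Then $2^n p_n\le h$, i.e.\ $p_n\le 2^{-n}h$. For $\alpha\ge\alpha_{p_n}$ we get $-2^{-n}h\le e_\alpha-e\le 2^{-n}h$. Thus $e_\alpha\toru e$ with regulator $h$.

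The main obstacle is this last step. Producing a single regulator from a downward directed family that is merely norm-null requires the summation trick, and it requires closedness of the cone so that partial sums are dominated by the full sum. Both come from completeness and normality of the norm; everything else is formal.
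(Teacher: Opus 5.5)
Your proof is correct in structure and follows the paper's route, but made self-contained. The paper gets $\mathrm{ru}\ge\mathrm{do}$ from Archimedeanity and Proposition~\ref{oca}. It gets $\mathrm{do}\ge\|\cdot\|$ from the remark that $\mathrm{do}$ is the strongest locally full convergence in which nets decreasing to $0_E$ converge; that remark is exactly your sandwich argument over the pairs $(\alpha,p)$ with $\alpha\ge\alpha_p$. It gets $\mathrm{do}\Rightarrow\mathrm{ru}$ by the proof of \cite[Lemma~1.1]{tt}, which is your series $h=\sum_n 2^n p_n$.

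\textbf{The one step to fix.} You claim that local fullness implies that $E_+$ is norm closed, and you say the same in your final paragraph. This is false. Take $\R^{2}$ with the sup norm and the cone $C:=\{(s,t):s,t>0\}\cup\{(0,0)\}$.
\begin{itemize}
\item The sandwich property holds, since this order is finer than the coordinatewise one, but $C$ is not closed.
\item Every hypothesis of the proposition except closedness of the cone holds, yet the conclusion fails. A set $P\subseteq C$ with $(0,0)\notin P$ has the lower bound $(0,-1)$, and $(0,-1)\not\le(0,0)$. So $\bigwedge P=0$ forces $(0,0)\in P$.
\item Hence nets decreasing to $0$ are eventually $0$, and $\mathrm{do}$-convergence means eventual constancy. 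Yet $\frac{1}{n}(1,1)\toru 0$ with regulator $(1,1)$.
\end{itemize}

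So closedness of the cone is an independent ingredient; it does not follow from normality. It is supplied by the term ``ordered Banach space,'' meaning a Banach space ordered by a closed cone as in \cite{at}. This is also what the paper uses to get Archimedeanity via \cite[Lemma~2.3]{at}. Take closedness from that hypothesis rather than from local fullness. Then your Archimedean argument is justified, as are the facts $h\in E_+$ and $h-2^n p_n\in E_+$ used in the last step, and the proof is complete.
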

\begin{proof}
First, as an ordered Banach space, $E$ is Archimedean (see \cite[Lemma~2.3]{at}), and so $\mathrm{ru}\ge \mathrm{do}$, by Proposition~\ref{oca}. Since the norm convergence is locally full and every net which decreases to $0_{E}$ converges in norm to $0_{E}$, it follows from the comment above that $\mathrm{do}\ge\|\cdot\|$ on $E$.

Assume that $e_{\alpha}\todo 0_{E}$, witnessed by $P\downarrow 0_{E}$. Then, $P\to 0_{E}$ in norm, and so there are $h\in E_{+}$ and $\left(p_{n}\right)_{n\in\N}\subseteq P$ such that $p_{n}\le\frac{1}{n}h$ (the proof of \cite[Lemma~1.1]{tt} goes through, since the elements of the net are positive). For every $n\in\N$ there is $\alpha_{n}$ such that $-\frac{1}{n}h\le -p_{n}\le e_{\alpha} \le p_{n}\le \frac{1}{n}h$, whenever $\alpha\ge\alpha_{n}$. We conclude that $e_{\alpha}\toru 0_{E}$ with regulator $h$.
\end{proof}

We now present an example of a $\mathrm{do}$-complete directed Archimedean ordered vector space which is not monotone complete. We moreover show that an order bounded increasing sequence need not be directed order Cauchy, in contrast to Proposition~\ref{oca}.

\begin{example}\label{jochen}
Consider the ordered Banach space $F$ presented in \cite[Example~2.6]{glu}, which has an order unit (hence directed), and satisfies the condition of Proposition~\ref{do=ru}, but contains an increasing order bounded sequence $\left(f_{n}\right)_{n\in\N}\subseteq F$ which is not norm-Cauchy. Then, this sequence is not $\mathrm{do}$-Cauchy, since otherwise it would follow that $f_{n}-f_{m}\todo 0_{F}$, and hence $f_{n}-f_{m}\to 0_{F}$, and it does not have a supremum, because $f_{n}\uparrow f$ would imply $f_{n}\todo f$, and hence $f_{n}\to f$.
\qed\end{example}

Since there is only one linear Hausdorff convergence structure on a finite-dimensional vector space (see \cite[Theorem~3.3.19]{bb}), it follows from Proposition~\ref{ocl} that if $E$ is directed and Archimedean with $\dim E<\8$, then order and directed order convergences coincide. A large class of directed Archimedean ordered vector spaces (which includes vector lattices) on which order and directed order convergences coincide will be considered in the next section. See part~(iii) of Theorem~\ref{iceord} for a class of spaces for which order and directed order convergences differ. That example also demonstrates the failure of the last claim in Proposition~\ref{oca} for directed order convergence, since the two coincide on the Dedekind completion. It also follows that if $E$ is directed and Archimedean with $\dim E<\8$, then it is complete with respect to both order and directed order convergence. Note that in this case, unless $E$ is order isomorphic to $\R^{n}$ (with the standard ordering) for some $n\in\N$, it is not a lattice (see \cite[Theorem~1.7.8]{kv}).

\section{Pervasive ordered vector spaces}\label{s3}

\subsection{Characterizations and basic properties of pervasive OVS's}\label{ss31}

In this section we study the class of non-lattice ordered vector spaces which are closest to vector lattices in many regards. First, we investigate the behavior of ``large enough'' subspaces of vector lattices. Recall that a subspace $E$ of an ordered vector space $F$ is called \emph{order dense}\footnote{We use this term according to the convention from vector lattice theory. In \cite{kv} this term is used for what we call ``supremum-dense'', whereas what we call ``order dense'' is called ``pervasive'' there.} if for every $f>0_{F}$ we have $\left(0_{F},f\right]\cap E\ne\varnothing$.

\begin{proposition}\label{pervdense}
If $F$ is an Archimedean vector lattice and $E\subseteq F$ is an order dense subspace, then $f=\bigvee\left[0_{F},f\right]_{E}$, for every $f\in F_{+}$.
\end{proposition}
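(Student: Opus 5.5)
We will show that $f$ is the least upper bound of $S:=\left[0_{F},f\right]_{E}$, arguing by contradiction. Since $0_{F}\in S$, the set is nonempty, and clearly $f\ge S$. So take any upper bound $g\in F$ of $S$ and aim to prove $g\ge f$.

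Replacing $g$ by $g\wedge f$, which is still an upper bound of $S$ because $S\le f$, we may assume $0_{F}\le g\le f$. Suppose $g\ne f$ and put $h:=f-g>0_{F}$. By order density there is $e_{1}\in E$ with $0_{F}<e_{1}\le h$.

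The key step is to show that $S+e_{1}\subseteq S$, or at least that $S$ absorbs enough translates to trigger the Archimedean property. Let $s\in S$. We have $s\le g$, so $s+e_{1}\le g+h=f$. Also $s+e_{1}\ge 0_{F}$, and $s+e_{1}\in E$. Hence $s+e_{1}\in S$, and therefore $S+e_{1}\subseteq S$.

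Starting from $0_{F}\in S$, induction gives $ne_{1}\in S$ for every $n\in\N$, so $ne_{1}\le g\le f$ for all $n$. Since $F$ is Archimedean, this forces $e_{1}\le 0_{F}$, contradicting $e_{1}>0_{F}$. Alternatively, one can invoke part~(i) of Lemma~\ref{arch} with $Q:=S$, which is bounded from below, applied to the element $-e_{1}$. Either way $g=f$, so every upper bound of $S$ dominates $f$, and $f=\bigvee S$.

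The only delicate point is the reduction $g\mapsto g\wedge f$, which makes $f-g$ positive so that order density applies. This is where the lattice structure of $F$ is used. Everything else is routine.
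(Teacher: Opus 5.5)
Your argument is correct and is essentially the paper's proof. The paper likewise uses the lattice structure of $F$ to reduce to showing that $f$ is a minimal upper bound of $\left[0_{F},f\right]_{E}$, which is exactly your $g\mapsto g\wedge f$ step, and then derives a contradiction from $\left[0_{F},f\right]_{E}+h\subseteq\left[0_{F},f\right]_{E}$ via the Archimedean property.

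There is one small slip in your aside. Part~(i) of Lemma~\ref{arch}, applied to $S$ (bounded from below) and the element $-e_{1}$, only yields $e_{1}\ge 0_{F}$, so it gives no contradiction. You would need the version for sets bounded from above, e.g.\ apply it to $-S$ and $e_{1}$. Your direct induction $ne_{1}\le f$ already settles the matter, so the main argument is unaffected.
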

\begin{proof}
As $F$ is a lattice, it is enough to show that $f$ is a minimal upper bound for $\left[0_{F},f\right]_{E}$. Assume that there is $f>g\ge\left[0_{F},f\right]_{E}$. Then, there is $h\in \left(0_{F},f-g\right]\cap E$, but then $\left[0_{F},f\right]_{E}+h\subseteq\left[0_{F},f\right]_{E}$ and part~(i) of Lemma~\ref{arch} yields that $h=0_{F}$, contradiction.
\end{proof}

\begin{corollary}\label{odm}
If $F$ is an Archimedean vector lattice and $E\subseteq F$ is an order dense majorizing subspace, then $E$ is supremum-dense in $F$. In particular, the embedding of $E$ into $F$ preserves arbitrary infima and suprema.
\end{corollary}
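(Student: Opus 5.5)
We need to show that every $f\in F$ is the supremum of $\left(f\right]_{E}$, and then quote Proposition~\ref{ocp} for the preservation claim. Note first that Proposition~\ref{ocp} needs both supremum- and infimum-denseness, but for linear subspaces these are equivalent (as remarked before Proposition~\ref{arch0}): $\left[f\right)_{E}=-\left(-f\right]_{E}$. So it suffices to prove supremum-denseness.

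Fix $f\in F$. Since $E$ is majorizing, choose $e\in E$ with $e\ge -f$, i.e.\ $f+e\ge 0_{F}$. By Proposition~\ref{pervdense} applied to $f+e\in F_{+}$, we have $f+e=\bigvee\left[0_{F},f+e\right]_{E}$. Translation by $-e$ is an order automorphism of $F$ mapping $E$ onto $E$, so
\[f=\bigvee\left(\left[0_{F},f+e\right]_{E}-e\right)=\bigvee\left[-e,f\right]_{E}.\]
Now $\left[-e,f\right]_{E}\subseteq\left(f\right]_{E}$, and every element of $\left(f\right]_{E}$ is below $f$. Hence $f$ is an upper bound of $\left(f\right]_{E}$, and any upper bound of $\left(f\right]_{E}$ bounds $\left[-e,f\right]_{E}$, so it is $\ge f$. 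Thus $f=\bigvee\left(f\right]_{E}$. This gives supremum-denseness, and the last claim then follows directly from Proposition~\ref{ocp}.

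The only subtle point is the use of majorization to shift an arbitrary $f$ into the positive cone, so that Proposition~\ref{pervdense} applies. The shift must be by an element of $E$, so that the shifted interval stays inside $E$. Everything else is routine.
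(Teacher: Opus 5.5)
Your proof is correct and is essentially the paper's argument. The paper takes $e\in E$ with $e\ge f$ and applies Proposition~\ref{pervdense} to $e-f$ to get $f=\bigwedge\left[f,e\right]_{E}$, i.e.\ infimum-density, whereas you take $e\ge -f$ and get supremum-density directly, which is just the mirror image; both then use the equivalence of the two densities for subspaces and Proposition~\ref{ocp}.
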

\begin{proof}
For $f\in F$ find $e\in E$ with $e\ge f$. Then, $e-f=\bigvee \left[0_{F},e-f\right]_{E}$, hence
\[f=e-\bigvee\left[0_{F},e-f\right]_{E}=\bigwedge\left[f,e\right]_{E},\]
which implies $f=\bigwedge \left[f\right)_{E}$. Thus, $E$ is infimum-dense in $F$, which is equivalent to being supremum-dense. The last claim follows from Proposition~\ref{ocp}.
\end{proof}

The following result extends \cite[Lemma~3.7.11]{kv}.

\begin{proposition}\label{pervdir}
Let $F$ be an Archimedean vector lattice and let $E\subseteq F$ be an order dense subspace. Let $P\subseteq F_{+}$ be upward directed and let $g\in F$ be such that $g=\bigvee P$. Then there is an upward directed $Q\subseteq \bigcup_{p\in P}\left[0_{F},p\right]_{E}$ such that $g=\bigvee Q$.
\end{proposition}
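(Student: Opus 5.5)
The natural candidate is to take all finite sums of small pieces from $E$. Let $S:=\bigcup_{p\in P}\left[0_{F},p\right]_{E}$. This set is in general not upward directed, since $E$ is not a lattice. So I will pass to a directed subfamily built from elements lying under a \emph{single} $p$. Concretely, I would define
\[Q:=\left\{e\in E:\ 0_{F}\le e\le p \text{ for some } p\in P\right\}=S\]
and try to show directly that $S$ is upward directed up to a cofinal modification. The honest fix is to use sums. For $e_{1}\le p_{1}$ and $e_{2}\le p_{2}$ with $e_{i}\in S$, choose $p_{3}\ge p_{1},p_{2}$ in $P$ by directedness. Then $e_{1},e_{2}\le p_{3}$, but $e_{1}+e_{2}$ need not be $\le p_{3}$. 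So instead I would index by "scaled" pieces: let
\[Q:=\left\{e\in E:\ 0_{F}\le e\le (1-\tfrac{1}{n})p \text{ for some } p\in P,\ n\in\N\right\}.\]
This does not immediately help either. So the plan is to exploit Proposition~\ref{pervdense} together with the Riesz decomposition in $F$.

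The key steps are as follows. (1) By Proposition~\ref{pervdense}, every $p\in P$ satisfies $p=\bigvee\left[0_{F},p\right]_{E}$. Hence $g=\bigvee P=\bigvee S$, because any upper bound of $S$ bounds each $p$. Thus $S$ already has supremum $g$, and only directedness is missing. (2) To obtain directedness, replace $S$ by the set $Q$ of those $e\in S$ for which there is $p\in P$ with $e\le p$ and $p-e\in F_{+}$ "large". Equivalently, and more cleanly, define $Q$ as the set of $e\in E_{+}$ such that $e\le p-u$ for some $p\in P$ and some $u\in F_{+}$ satisfying $\left(0_{F},u\right]\cap E\ne\varnothing$ in a controlled way. (3) Given $e_{1},e_{2}\in Q$, I would find a common upper bound in $Q$ as follows. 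Use $p\ge p_{1},p_{2}$, and note that $e_{1}\vee e_{2}\le p$ in $F$. Then use order density to find $e\in E$ above both $e_{i}$ but below $p$. This requires that $e_{1}\vee e_{2}<p$ strictly with room to spare, and that $E$ contain an element between $e_{1}\vee e_{2}$ and $p$.

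The main obstacle is step (3): an order dense subspace only yields elements of $E$ below a given positive element of $F$, not elements of $E$ sandwiched above $e_{1}\vee e_{2}$. What I would try first is a Zorn/maximality alternative. Consider the set $\mathcal{D}$ of all upward directed subsets of $S$. This set is nonempty, for instance $\{0_{F}\}$, and is closed under increasing unions, so it has maximal elements. I would then argue that the family of suprema of directed subsets of $S$ already reaches $g$. Let $D\subseteq S$ be directed with $d:=\bigvee D$, which exists in a Dedekind completion. It is safe to pass to $F^{\delta}$, and the conclusion transfers back because $g\in F$ and suprema in $F$ agree with those in $F^{\delta}$ for subsets of $F$ by Corollary~\ref{odm} applied to $F\subseteq F^{\delta}$.

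Suppose $d<g$. Then $g-d>0$, so $p\wedge(g-d)>0$ for some $p\in P$, and order density provides some $0<h\in E$ with $h\le p\wedge(g-d)$. One then wants to enlarge $D$ by elements $e+h$ while staying inside $S$. This enlargement step, keeping $e+h$ under some element of $P$, is where the Archimedean property via Lemma~\ref{arch}(i) should be used, as in the proof of Proposition~\ref{pervdense}. I expect this to be the crux.
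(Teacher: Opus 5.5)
You have the paper's strategy: apply Zorn's lemma to the directed subsets of $S=\bigcup_{p\in P}\left[0_{F},p\right]_{E}$, then derive a contradiction if a maximal one does not have supremum $g$. But there is a genuine gap. You stop exactly at the step that carries the proof, and the $h$ you propose cannot make that step work. From $e\le d$ and $0_{F}<h\le p\wedge(g-d)$ you only get $e+h\le g$. Being below $g$ says nothing about being below an element of $P$.

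Concretely, let $F=E=\R^{2}$, $P=\{(1-\frac{1}{n})(1,1)\}_{n\in\N}$ and $g=(1,1)$. Take $D=\{(\frac12,\frac12)\}$, $d=(\frac12,\frac12)$, $p=(\frac{9}{10},\frac{9}{10})$ and $h=(\frac12,\frac12)$. All your constraints hold, yet $(\frac12,\frac12)+h=g$ lies under no element of $P$. So $D\cup(D+h)\not\subseteq S$, and no contradiction with maximality can be extracted this way.

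The missing idea is to measure the room for $h$ against an element of $P$, not against $g$.

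First, you do not need $F^{\delta}$, where order density of $E$ is not directly available anyway. If $g\ne\bigvee D$, then, since $D\le g$ and $F$ is a lattice, some upper bound $f$ of $D$ satisfies $f<g$: take $f'\wedge g$ for an upper bound $f'$ with $g\not\le f'$.

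Since $g=\bigvee P$, there is $p_{1}\in P$ with $p_{1}\not\le f$. Order density then gives $h\in E$ with $0_{F}<h\le(p_{1}-f)^{+}$.

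Now take $e\in D$, choose $p_{2}\in P$ with $e\le p_{2}$, and choose $p_{3}\in P$ with $p_{3}\ge p_{1},p_{2}$. Then $e\le p_{3}\wedge f$ and $h\le(p_{3}-f)^{+}$, so $e+h\le p_{3}\wedge f+(p_{3}-f)^{+}=p_{3}$, that is, $e+h\in S$.

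Hence $D\cup(D+h)$ is a directed subset of $S$: to dominate $a\in D$ and $b+h$, use $c+h$ with $c\in D$, $c\ge a,b$. Maximality yields $D+h\subseteq D$. Only at this point is the Archimedean property used. $D$ is nonempty and bounded above, so Lemma~\ref{arch}(i), applied to $-D$, gives $h\le 0_{F}$, a contradiction.

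So, contrary to your expectation, keeping $e+h$ under an element of $P$ is not an Archimedean argument. It is the lattice identity $p\wedge f+(p-f)^{+}=p$, combined with choosing $h$ below $(p_{1}-f)^{+}$ rather than below $g-d$.
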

\begin{proof}
The family of all nonempty directed subsets of $R:=\bigcup_{p\in P}\left[0_{F},p\right]_{E}$ is nonempty (as it contains $\left\{0_{F}\right\}$) and every chain of such subsets has an upper bound (its union). By Zorn's lemma there is a maximal member $Q$ of this family. We will show that $g=\bigvee Q$. Assume that this is false, and so there is $f\in F$ with $Q\le f< g$. Since $g=\bigvee P$, we can find $P\ni g_{1}\not\le f$. As $E$ is order dense in $F$, there is $e\in E\cap \left(0_{F},\left(g_{1}-f\right)^{+}\right]$.

For a fixed $h\in Q$ let us show that $h+e\in R$. There is $g_{2}\in P$ such that $h\in \left[0_{F},g_{2}\right]$. Since $P$ is directed, it contains $g_{3}\ge g_{1},g_{2}$. Then, $h\le g_{2}\le g_{3}$ and $h\in Q\le f$ yield $h\le g_{3}\wedge f$. Hence,
\[h+e\le g_{3}\wedge f+\left(g_{1}-f\right)^{+}\le g_{3}\wedge f+\left(g_{3}-f\right)^{+}=g_{3},\]
and so $h+e\in \left[0_{F},g_{3}\right]_{E}\subseteq R$.

It follows that $Q\cup \left(Q+e\right)\subseteq R$. Furthermore, $Q\cup \left(Q+e\right)$ is directed: if $h_{1}\in Q$ and $h_{2}\in Q+e$, find $Q\ni h_{3}\ge h_{1},h_{2}-e$; then $h_{3}+e\ge h_{1},h_{2}$ (other cases are obvious). Due to maximality of $Q$, we have $Q+e\subseteq Q$. As $Q$ is bounded from above and $F$ is Archimedean, $e\le 0_{F}$ by virtue of Lemma~\ref{arch}, contradicting the assumption on $e$.
\end{proof}

\begin{corollary}\label{pervdird}
Let $F$ be an Archimedean vector lattice and let $E\subseteq F$ be an order dense subspace. Let $P\subseteq F_{+}$ be directed downward with $\bigwedge P=0_{F}$ such that there are $p_{0}\in P$ and $e_{0}\in E$ with $p_{0}\le e_{0}$. Then, there is $Q\subseteq \left[0_{F},e_{0}\right]_{E}\cap \left(P+F_{+}\right)$ which is directed downward and such that $\bigwedge Q=0_{F}$.
\end{corollary}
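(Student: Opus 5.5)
The plan is to derive this from Proposition~\ref{pervdir} by reflecting around $e_{0}$. First replace $P$ by $P_{0}:=\left\{p\in P: p\le p_{0}\right\}$. This set is still directed downward, since $P$ is. It still has $\bigwedge P_{0}=0_{F}$, because $P_{0}$ is coinitial in $P$: for any $p\in P$ there is an element of $P$ below both $p$ and $p_{0}$. Moreover $P_{0}\subseteq\left[0_{F},e_{0}\right]$.

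Next consider $P':=e_{0}-P_{0}\subseteq\left[0_{F},e_{0}\right]\subseteq F_{+}$. It is directed upward with $\bigvee P'=e_{0}-\bigwedge P_{0}=e_{0}$. Applying Proposition~\ref{pervdir} to $P'$ and $g:=e_{0}$ gives an upward directed set
\[Q'\subseteq\bigcup_{p\in P_{0}}\left[0_{F},e_{0}-p\right]_{E}\]
with $\bigvee Q'=e_{0}$.

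Now put $Q:=e_{0}-Q'$. Since $e_{0}\in E$ and $Q'\subseteq E$, we have $Q\subseteq E$, and $Q$ is directed downward with $\bigwedge Q=e_{0}-\bigvee Q'=0_{F}$. Each $q'\in Q'$ satisfies $0_{F}\le q'\le e_{0}-p$ for some $p\in P_{0}$. Hence $q:=e_{0}-q'$ satisfies $p\le q\le e_{0}$, so $q\in\left[0_{F},e_{0}\right]_{E}$ and $q-p\in F_{+}$. That is, $q\in P+F_{+}$, and this gives the required set $Q$.

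There is no real obstacle here. The only points needing care are the reduction to $P_{0}$, which guarantees $e_{0}-p\ge 0_{F}$ and keeps the infimum equal to $0_{F}$, and the fact that translating by $e_{0}\in E$ preserves membership in $E$.
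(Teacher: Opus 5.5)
Your proposal is correct and is essentially the paper's proof: restrict $P$ to $[0_{F},p_{0}]\cap P$, reflect through $e_{0}$, apply Proposition~\ref{pervdir} to $e_{0}-P$, and set $Q:=e_{0}-S$. You also write out the small verifications that the paper leaves implicit, namely that the restricted set is coinitial in $P$ (so its infimum is still $0_{F}$) and that $Q\subseteq P+F_{+}$.
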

\begin{proof}
By replacing $P$ with $\left[0_{F},p_{0}\right]\cap P$ if needed, we may assume that $P\le p_{0}\le e_{0}$. Let $R:=e_{0}-P$, which is upward directed and satisfies $e_{0}=\bigvee R$. According to Proposition~\ref{pervdir}, there is an upward directed $S\subseteq \bigcup_{r\in R}\left[0_{F},r\right]_{E}$ such that $e_{0}=\bigvee S$. Finally, $Q:=e_{0}-S$ fulfills the requirements.
\end{proof}

A directed ordered vector space is called \emph{pervasive} if it embeds as an order dense subspace of a vector lattice. Equivalence of the conditions (i), (vii) and (ix) in the following result is \cite[Theorem~7]{km}.

\begin{theorem}\label{pervasive}
For a directed Archimedean ordered vector space $E$ the following conditions are equivalent:
\item[(i)] $E$ is pervasive;
\item[(ii)] The image of $E$ under the natural embedding into $E^{\delta}$ is order dense;
\item[(iii)] For every $e\in E^{\delta}_{+}$ there is a directed $Q\subseteq E_{+}$ such that $e=\bigvee Q$;
\item[(iv)] For every $e<f$ we have that $\left[e,f\right]^{\partial}\simeq \left[e,f\right]_{E^{\delta}}$;
\item[(v)] $\left(E_{+}\right)^{\partial}\simeq\left(E^{\delta}\right)_{+}\cup \left\{+\8\right\}$, where if $E=\left\{0_{E}\right\}$ we set $+\8:=0_{E}$;
\item[(vi)] The embedding of $E_{+}$ into $E$ preserves arbitrary infima;
\item[(vii)] For every $P\subseteq E_{+}$ such that $\bigwedge_{E_{+}}P=0_{E}$ we have $\bigwedge P=0_{E}$;
\item[(viii)] For every $e\not\le 0_{E}$ there is $g>0_{E}$ such that $\left\{e,0_{E}\right\}^{\uparrow}\subseteq \left\{g\right\}^{\uparrow}$;
\item[(ix)] For every $e,f\in E_{+}$ with $e\not\le f$ there is $g>f$ such that $\left\{e,f\right\}^{\uparrow}\subseteq \left\{g\right\}^{\uparrow}$;
\item[(x)] For every $e,f$ with $\left\{e,f\right\}^{\uparrow}\subseteq E_{+}\setminus\left\{0_{E}\right\}$ there is $g>0_{E}$ such that $\left\{e,f\right\}^{\uparrow}\subseteq \left\{g\right\}^{\uparrow}$.
\end{theorem}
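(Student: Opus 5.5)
We propose to organize the proof around (ii) as a hub, identify $E$ with its image in $E^{\delta}$ throughout, and recall that $E$ is supremum- and infimum-dense there.

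\textbf{The lattice-side conditions.} (ii)$\Rightarrow$(i) is trivial. For (i)$\Rightarrow$(ii), let $E$ sit order densely in a vector lattice $F$; replacing $F$ by the ideal generated by $E$ we may assume $E$ is majorizing in $F$. We may also assume $F$ is Archimedean: the space $E$ is, and the order dense hypothesis should let us pass to an Archimedean quotient or hull, though this needs checking. By Corollary~\ref{odm}, $E$ is then supremum-dense in $F$, so $F$ embeds into $E^{\delta}$ as a supremum- and infimum-dense sublattice containing $E$. Given $0<x\in E^{\delta}$, pick $f\in F$ with $0<f\le x$. This is possible because $x=\bigvee(x]_{E}$, so some $e\in E$ with $e\le x$ satisfies $e^{+}\ne 0$ in $F$, and we take $f:=e^{+}\le x$. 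Order density in $F$ then gives an element of $E$ in $(0,f]$.

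(ii)$\Rightarrow$(iii) is Proposition~\ref{pervdense} combined with Proposition~\ref{pervdir}, applied with $P=\{e\}$, which produces a directed $Q\subseteq[0,e]_E$ with supremum $e$. For (iii)$\Rightarrow$(ii), if $e>0$ and $e=\bigvee Q$ with $Q\subseteq E_{+}$, then some element of $Q$ is nonzero, and it lies in $(0,e]$.

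(iv) and (v) are reformulations via the MacNeille completion. An interval $[e,f]_{E^{\delta}}$ is a complete lattice. It contains $[e,f]_{E}$ as a supremum-dense subset exactly when each $x\in[e,f]_{E^\delta}$ is a supremum of elements of $E$ lying in $[e,f]$. After shifting by $e$, that is exactly (iii), and infimum-density follows by the symmetry $x\mapsto f+e-x$. Uniqueness of the MacNeille completion then gives (iii)$\Leftrightarrow$(iv). For the converse we check that an isomorphism $[e,f]^{\partial}\simeq[e,f]_{E^\delta}$ fixing $[e,f]_E$ forces this density. Condition (v) is handled the same way on $E_{+}$: one adjoins a top element and uses that $E_{+}$ is supremum-dense in $(E^{\delta})_{+}$ by (iii).

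\textbf{The internal conditions.} (vi)$\Leftrightarrow$(vii) follows by translation, using Lemma~\ref{sum} and the fact that infima in $E$ of sets bounded below reduce, after a shift, to infima of subsets of $E_{+}$. For (ii)$\Rightarrow$(vii), suppose $\bigwedge_{E_+}P=0$ but $P\ge x$ for some $x\in E$ with $x\not\le0$. In $E^{\delta}$ we have $P\ge x^{+}>0$, and order density yields $g\in E$ with $0<g\le x^{+}\le P$, a contradiction. For (vii)$\Rightarrow$(viii), given $e\not\le0$ consider $P:=\{e,0\}^{\uparrow}\subseteq E_{+}$. Its infimum in $E$ is not $0$, since $e$ is a lower bound. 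By (vii), $P$ has a nonzero lower bound $g$ in $E_{+}$, which is what we need. (viii)$\Rightarrow$(ix) follows by shifting with $f$. (ix)$\Rightarrow$(viii) needs directedness: write $e\le u$ with $u\in E_{+}$, and apply (ix) to suitable elements of $E_{+}$. The equivalence (viii)$\Leftrightarrow$(x) comes from rewriting $\{e,f\}^{\uparrow}=f+\{e-f,0\}^{\uparrow}$ and applying translation.

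Finally, (viii)$\Rightarrow$(ii). Take $0<x\in E^{\delta}$. Since $x=\bigvee(x]_E$, there is $e\in E$ with $e\le x$ and $e\not\le0$. Every $E$-upper bound of $\{e,0\}$ dominates $e\vee0$ computed in $E^{\delta}$, and $e\vee 0$ equals the infimum of these upper bounds by infimum-density. Condition (viii) gives $g>0$ below all of them, hence $g\le e^{+}\le x$.

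The main obstacle is the reduction in (i)$\Rightarrow$(ii) to an Archimedean majorizing $F$, together with making (iv) and (v) precise as statements about the MacNeille completion. The lattice-free equivalences are routine manipulations.
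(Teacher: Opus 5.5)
Most of your chain is sound and parallels the paper's (hub at (ii), Proposition~\ref{pervdir} for (ii)$\Rightarrow$(iii), canonical MacNeille identifications for (iv) and (v), and the same arguments for (vii)$\Rightarrow$(viii)$\Rightarrow$(ii) and (viii)$\Leftrightarrow$(ix)). Two steps, however, do not go through as written.

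\textbf{First issue: (i)$\Rightarrow$(ii).} You leave the reduction to an Archimedean $F$ unverified, and the mechanism you suggest (an Archimedean quotient or hull) is not what is needed. Once $F$ is replaced by $I_F(E)=\bigcup_{u\in E_+}[-u,u]$ (intervals in $F$), the reduction is automatic. Suppose $0_F<x\in I_F(E)$ and $nx\le y\in I_F(E)$ for all $n$. Choose $u\in E$ with $y\le u$ and $g\in E\cap(0_F,x]$. Then $ng\le u$ for all $n$, which contradicts the Archimedean property of $E$.

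More to the point, the whole detour via Corollary~\ref{odm} and $F^{\delta}=E^{\delta}$ is unnecessary. Your own argument for (ii)$\Rightarrow$(vii) uses nothing about $E^{\delta}$ except that it is a vector lattice containing $E$ order densely. So it proves (i)$\Rightarrow$(vii) verbatim with $F$ in place of $E^{\delta}$, with no Archimedean or majorizing assumption on $F$.

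The paper does exactly this, in the form (i)$\Rightarrow$(viii). For $e\not\le 0_E$ one has $e^{+}>0_F$, and order density gives $g\in E\cap(0_F,e^{+}]$. Every upper bound of $\{e,0_E\}$ dominates $e^{+}\ge g$. So the ``main obstacle'' you identify is self-inflicted.

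\textbf{Second issue: (viii)$\Rightarrow$(x).} Writing $\{e,f\}^{\uparrow}=f+\{e-f,0_E\}^{\uparrow}$ and applying (viii) to $e-f$ only produces a lower bound $f+h$ with $h>0_E$. This exceeds $f$ but need not be positive, whereas (x) requires a positive lower bound. For instance, take $\R^{2}$ with the coordinatewise order, $e=(1,-1)$ and $f=(-1,1)$. Then $h=(\varepsilon,0)$ satisfies the conclusion of (viii) for $e-f$, yet $f+h=(-1+\varepsilon,1)$ is not positive for small $\varepsilon$.

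A correct short argument is as follows. Since $0_E\notin\{e,f\}^{\uparrow}$, one of $e,f$, say $e$, satisfies $e\not\le 0_E$. Since $\{e,f\}^{\uparrow}\subseteq E_+$, we have $\{e,f\}^{\uparrow}\subseteq\{e,0_E\}^{\uparrow}$. Now apply (viii) to $e$. The paper instead proves (ii)$\Rightarrow$(x), using that $e\vee_{E^{\delta}}f>0_{E^{\delta}}$.
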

\begin{proof}
(iii)$\Rightarrow$(ii)$\Rightarrow$(i) is trivial. (ii)$\Rightarrow$(iii) follows from applying Proposition~\ref{pervdir} to $\left\{e\right\}$. (v)$\Rightarrow$(ii) follows from the fact that $E_{+}$ is supremum-dense in $\left(E_{+}\right)^{\partial}$.

(ii)$\Rightarrow$(iv): First, note that $\left[e,f\right]_{E^{\delta}}$ is a complete lattice, which contains $\left[e,f\right]$. Hence, in order to prove that $\left[e,f\right]^{\partial}\simeq \left[e,f\right]_{E^{\delta}}$ it is enough to show that $\left[e,f\right]$ is supremum- and infimum-dense in $\left[e,f\right]_{E^{\delta}}$. Since translations on ordered vector spaces are order isomorphisms, without loss of generality we may assume that $e=0_{E}$. Now the supremum density follows directly from Proposition~\ref{pervdense}, and the fact that being a supremum of a set in a poset implies being a supremum in its order interval. Infimum density can be reduced to the preceding argument by taking $f=0_{E}$ and using the fact that additive inversion is an order anti-isomorphism.

(iv)$\Rightarrow$(v): By the argument above we only need to prove that $E_{+}$ is supremum- and infimum-dense in $\left(E^{\delta}\right)_{+}\cup \left\{+\8\right\}$. For every $0_{E^{\delta}}<f\in E^{\delta}$ there is $e\in E$ such that $f<e$. By assumption, we have
\[f=\bigvee_{\left[0_{E^{\delta}},e\right]_{E^{\delta}}}\left(\left[0_{E},e\right]\cap\left[0_{E^{\delta}},f\right]_{E^{\delta}}\right)=\bigvee_{\left[0_{E^{\delta}},e\right]_{E^{\delta}}}\left(E\cap\left[0_{E^{\delta}},f\right]_{E^{\delta}}\right).\] It is easy to see that then $f=\bigvee_{E^{\delta}}\left(E\cap\left[0_{E^{\delta}},f\right]_{E^{\delta}}\right)$. This argument shows supremum density; infimum density follows from the fact that $E$ is infimum-dense in $E^{\delta}$.

(v)$\Rightarrow$(vi): Assume that $Q\subseteq E_{+}$ and $e\in E_{+}$ are such that $e=\bigwedge_{E_{+}}Q$. Then, by Proposition~\ref{ocp}, it follows that $e=\bigwedge_{\left(E_{+}\right)^{\partial}}Q$, and it follows from the assumption that $e=\bigwedge_{\left(E^{\delta}\right)_{+}}Q$. As $E^{\delta}$ is a vector lattice, this implies that $e=\bigwedge_{E^{\delta}}Q$, and as all involved elements are in $E$, we conclude that $e=\bigwedge_{E}Q$.

(vi)$\Rightarrow$(vii) is trivial.

(vii)$\Rightarrow$(viii): If $e\not\le 0_{E}$, then $0_{E}$ is not the infimum of $\left\{e,0_{E}\right\}^{\uparrow}$ in $E$. By assumption it follows that $0_{E}$ is not the infimum of $\left\{e,0_{E}\right\}^{\uparrow}$ in $E_{+}$, and so there is a nonzero lower bound $g$ for this set in $E_{+}$. It is easy to see that this $g$ satisfies the requirements.

(viii)$\Rightarrow$(ii): For $0_{E^{\delta}}<h$, the set $Q:=\left(h-E^{\delta}_{+}\right)\cap E$ is a cut in $E$ which strictly contains $-E_{+}$, and so there is $e\in Q$ such that $e\not\le 0_{E^{\delta}}$. Take $0_{E^{\delta}}<g\in E$ such that $\left\{0_{E},e\right\}^{\uparrow}\subseteq g+E_{+}$, from where $g\in\left\{0_{E},e\right\}^{\uparrow\downarrow}\subseteq Q\subseteq h-E^{\delta}_{+}$, hence $g\in \left(0_{E^{\delta}},h\right]$.

(i)$\Rightarrow$(viii): Assume that $F$ is a vector lattice, and that $E\subseteq F$ is an order dense subspace. Take $E\ni e\not\le 0_{E}$. Then, $e^{+}>0_{F}$, and so we can find $g\in E\cap \left(0_{F},e^{+}\right]$. Now if $f\ge e,0_{E}$, then $f\ge e^{+}$, and so $f\ge g$, as required.

(viii)$\Rightarrow$(ix): Applying (viii) to $e-f$ we get $h>0_{E}$ such that $\left\{e-f,0_{E}\right\}^{\uparrow}\subseteq \left\{h\right\}^{\uparrow}$. Then, $g:=h+f$ satisfies the requirements.

(ix)$\Rightarrow$(viii): Since $E$ is directed, we have that $e=u-v$, where $u,v\in E_{+}$ with $u\not\le v$. Hence, by (ix) there is $w>v$ such that $\left\{u,v\right\}^{\uparrow}\subseteq \left\{w\right\}^{\uparrow}$. Then, $g:=w-v$ satisfies the requirements.

(x)$\Rightarrow$(viii) is obtained by taking $f=0_{E}$.

(ii)$\Rightarrow$(x): As $\left\{e,f\right\}^{\uparrow}\subsetneq E_{+}$ it follows that $\left\{e,f\right\}^{\uparrow\downarrow}\supsetneq -E_{+}=0_{E^{\delta}}$, which means that $\left\{e,f\right\}^{\uparrow\downarrow}=e\vee_{E^{\delta}}f$ is a strictly positive element of $E^{\delta}$. By (ii) there is $g\in E$ such that $0_{E^{\delta}}<g\le e\vee_{E^{\delta}}f$. The latter means $g\in\left\{e,f\right\}^{\uparrow\downarrow}$, which translates into $\left\{e,f\right\}^{\uparrow}\subseteq \left\{g\right\}^{\uparrow}$.
\end{proof}

\begin{remark}\label{pervint}
Analogously to (v)$\Rightarrow$(vi) one can show that in a pervasive Archimedean ordered vector space embeddings of order intervals preserve arbitrary suprema and infima.
\qed\end{remark}

It follows from Theorem~\ref{pervasive} that if $E$ is pervasive and Archimedean, then it is dense in $E^{\delta}$ with respect to order convergence in the sense that every element of $E^{\delta}$ is the order limit of a net in $E$. We now consider other properties of pervasive spaces.

\begin{proposition}\label{arch1}
If an ordered vector space $F$ contains a pervasive Archimedean ordered vector space $E$ as a supremum-dense subspace, then $F$ is pervasive and $E$ is order dense and majorizing in $F$.
\end{proposition}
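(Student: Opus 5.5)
The plan is to first establish the Archimedean property, directedness and majorization for $F$. Then I would identify $F^{\delta}$ with $E^{\delta}$ and read off order density from Theorem~\ref{pervasive}.

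\emph{Step 1: $F$ is Archimedean, $E$ is majorizing, and $F$ is directed.} By Proposition~\ref{arch0}, $F$ is Archimedean, since $E$ is Archimedean and supremum-dense in $F$. As noted before Proposition~\ref{arch0}, a supremum-dense linear subspace is also infimum-dense. Hence every $f\in F$ satisfies $f=\bigwedge\left[f\right)_{E}$. In particular $\left[f\right)_{E}\ne\varnothing$, because the infimum of the empty set would be a greatest element, and a nonzero ordered vector space has none (the case $F=\{0_F\}$ is trivial). So $E$ is majorizing in $F$. Since $E$ is directed, for $f\in F$ we can pick $e\in E$ with $e\ge f$ and write $e=u-v$ with $u,v\in E_{+}$. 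Then $u\ge f$ and $u\ge 0_F$, so $F$ is directed. Thus $F$ is a directed Archimedean ordered vector space and its Dedekind completion $F^{\delta}$ exists.

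\emph{Step 2: identify $F^{\delta}$ with $E^{\delta}$.} I would show that $E$ is supremum- and infimum-dense in $F^{\delta}$. Take $h\in F^{\delta}$. Then $h=\bigvee(h]_{F}$, and each $g\in(h]_{F}$ satisfies $g=\bigvee_{F}(g]_{E}$. By Proposition~\ref{ocp} this supremum persists in $F^{\delta}$, because $F$ is supremum- and infimum-dense there. Every upper bound of $(h]_{E}$ in $F^{\delta}$ therefore dominates every $g\in(h]_{F}$, and hence dominates $h$; so $h=\bigvee(h]_{E}$. Infimum-density follows symmetrically, or from the equivalence for subspaces. Consequently $F^{\delta}$ is a Dedekind complete vector lattice containing $E$, with its original order, as a supremum- and infimum-dense subspace. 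By the essential uniqueness of the Dedekind completion \cite[Section~2.1]{kv}, there is an isomorphism $F^{\delta}\simeq E^{\delta}$ that restricts to the natural embedding on $E$. I expect this to be the main point needing care: one must check that the composite embedding $E\hookrightarrow F\hookrightarrow F^{\delta}$ is a bipositive linear embedding, which is clear since $E$ carries the order induced from $F$, so the uniqueness theorem applies.

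\emph{Step 3: conclude.} By Theorem~\ref{pervasive}, (i)$\Rightarrow$(ii), the image of $E$ in $E^{\delta}=F^{\delta}$ is order dense. Hence for every $f\in F$ with $f>0_{F}$ there is $e\in E$ with $0_F<e\le f$, so $E$ is order dense in $F$. Also, for every $h>0$ in $F^{\delta}$ such an $e\in E\subseteq F$ exists with $0<e\le h$. Thus $F$ is an order dense subspace of the vector lattice $F^{\delta}$, and $F$, being directed, is pervasive by definition. Together with Step 1 this gives all three claims.
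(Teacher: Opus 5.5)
Your proof is correct and follows essentially the same route as the paper. Both arguments get the Archimedean property from Proposition~\ref{arch0}, use supremum-density of $E$ in $F^{\delta}$ to identify $F^{\delta}$ with $E^{\delta}$, and then read off order density from Theorem~\ref{pervasive}. Your version also spells out, correctly, two points the paper leaves implicit: that $E$ is majorizing in $F$, and that $F$ is directed, which is needed for $F^{\delta}$ to exist.
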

\begin{proof}
First, according to Proposition~\ref{arch0}, $F$ is Archimedean, and so $F^{\delta}$ exists. Moreover, $E$ is supremum-dense in $F^{\delta}$, therefore $F^{\delta}=E^{\delta}$. By Theorem~\ref{pervasive}, $E$ is order dense in $E^{\delta}$, and since we have $E\subseteq F\subseteq E^{\delta}$, it follows that $E$ is order dense in $F$. It also follows that $F$ is order dense in $F^{\delta}$, and so by Theorem~\ref{pervasive}, $F$ is pervasive.
\end{proof}

Combining this with Corollary~\ref{odm}, we get the following refinement of the fact that $E^{\delta}$ is unique among the Dedekind complete vector lattices which contain $E$ as a supremum-dense subspace.

\begin{corollary}\label{dedeperv}
If $E$ is a pervasive Archimedean ordered vector space, then $E^{\delta}$ is the unique Dedekind complete vector lattice which contains $E$ as an order dense majorizing subspace.
\end{corollary}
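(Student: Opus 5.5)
Two things need to be shown: that $E^{\delta}$ itself has the stated property, and that any other Dedekind complete vector lattice $F$ containing $E$ as an order dense majorizing subspace is isomorphic to $E^{\delta}$ over $E$.

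\emph{Existence.} Since $E$ is pervasive and Archimedean, condition~(ii) of Theorem~\ref{pervasive} says that $E$ is order dense in $E^{\delta}$. By construction $E$ is infimum-dense in $E^{\delta}$. So for every $f\in E^{\delta}$ we have $f=\bigwedge\left[f\right)_{E}$, and in particular $\left[f\right)_{E}\neq\varnothing$. Hence $E$ is majorizing in $E^{\delta}$.

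\emph{Uniqueness.} Let $F$ be a Dedekind complete vector lattice containing $E$ as an order dense majorizing subspace. Then $F$ is Archimedean, being a Dedekind complete vector lattice. Corollary~\ref{odm} therefore applies and shows that $E$ is supremum-dense (equivalently, infimum-dense) in $F$. We now invoke the uniqueness of the Dedekind completion recalled before Proposition~\ref{rucauchy}: $E^{\delta}$ is the essentially unique Dedekind complete vector lattice containing $E$ as a supremum- and infimum-dense subspace. This gives $F\simeq E^{\delta}$, via an isomorphism fixing $E$.

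If one prefers not to quote the uniqueness of $E^{\delta}$ abstractly, it can be derived from order-theoretic uniqueness. The map $f\mapsto\left(f\right]_{E}$ embeds $F$ into $E^{\partial}$, since $E$ is supremum- and infimum-dense in $F$. Its image is $E^{\partial}$ with the top and bottom removed, because $E$ is majorizing and $F$ is Dedekind complete. This order isomorphism fixes $E$. It is additive by Lemma~\ref{sum}, applied to $f=\bigwedge\left[f\right)_{E}$ and $g=\bigwedge\left[g\right)_{E}$, together with the fact that $E\subseteq F$ is supremum-dense so that infima computed in $E$ agree with those in $F$ (Proposition~\ref{ocp}). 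Positive homogeneity follows in the same way.

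The only step that requires any care is this last point: checking that the identification with $E^{\delta}$ respects the linear structure, not merely the order. This is handled by Lemma~\ref{sum} and Proposition~\ref{ocp}, so I expect no real obstacle. The substantive input is Corollary~\ref{odm}, which is where order density is upgraded to supremum density.
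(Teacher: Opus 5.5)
Your proof is correct and follows essentially the paper's route. For existence, you use order density of $E$ in $E^{\delta}$ (Theorem~\ref{pervasive}, which the paper packages as Proposition~\ref{arch1}) together with infimum density to get majorizing. For uniqueness, you upgrade order density to supremum density via Corollary~\ref{odm} and then invoke uniqueness of the Dedekind completion; your optional MacNeille-cut re-derivation of that uniqueness is not needed, since the paper simply quotes it.
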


The following result underscores the importance of pervasiveness.

\begin{proposition}\label{o=do}
If $E$ is a pervasive Archimedean ordered vector space, then order convergence and directed order convergence agree on $E$.
\end{proposition}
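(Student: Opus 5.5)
Since $\mathrm{do}\ge\mathrm{o}$ always holds, we only need to show that order convergence in $E$ implies directed order convergence. Let $e_{\alpha}\too e$ in $E$. By translation we may assume $e=0_{E}$. By part~(i) of Proposition~\ref{ocv} there is $P\subseteq E$ with $\bigwedge P=0_{E}$ that dominates the tails of $\left(e_{\alpha}\right)$. Replacing $P$ by $P\cap E_{+}$ costs nothing: every $p$ dominating a tail satisfies $-p\le p$, so $p\ge 0_{E}$. Hence we may take $P\subseteq E_{+}$. Since $\bigwedge P=0_{E}$ in $E$, the infimum is preserved in $E^{\delta}$ (Proposition~\ref{ocp}, as $E$ is supremum- and infimum-dense in $E^{\delta}$).

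Now pass to the lattice $E^{\delta}$. Let $P^{\wedge}$ be the set of finite infima of elements of $P$, computed in $E^{\delta}$. This set is downward directed, satisfies $\bigwedge P^{\wedge}=0$ in $E^{\delta}$, and still dominates the tails of $\left(e_{\alpha}\right)$: if $-p_{i}\le e_{\alpha}\le p_{i}$ on a common tail for $i=1,\dots,n$, then $-\left(p_{1}\wedge\dots\wedge p_{n}\right)\le e_{\alpha}\le p_{1}\wedge\dots\wedge p_{n}$ there. The elements of $P^{\wedge}$ lie in $E^{\delta}$ but not in $E$, so we bring them back into $E$. Fix $p_{0}\in P^{\wedge}$ that is already in $E$, for instance any element of $P$. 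By Theorem~\ref{pervasive}(ii), $E$ is order dense in $E^{\delta}$. Applying Corollary~\ref{pervdird} to $P^{\wedge}\cap\left[0,p_{0}\right]$ with $e_{0}:=p_{0}$ gives a downward directed $Q\subseteq\left[0,p_{0}\right]_{E}\cap\left(P^{\wedge}+E^{\delta}_{+}\right)$ with $\bigwedge Q=0$ in $E^{\delta}$, and hence $\bigwedge Q=0_{E}$ in $E$.

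Each $q\in Q$ dominates some element of $P^{\wedge}$, hence dominates a tail of $\left(e_{\alpha}\right)$. Since $Q\subseteq E$ is downward directed in $E$ (directedness in $E^{\delta}$ via elements of $Q$ is directedness in $E$, as $Q$ itself is directed), part~(i) of Proposition~\ref{todovs} yields $e_{\alpha}\todo 0_{E}$.

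The main point is the middle step. Directedness is obtained for free in the lattice $E^{\delta}$, and pulling it back into $E$ while keeping infimum zero is exactly what Corollary~\ref{pervdird} does under order density. The only care needed is to arrange the hypothesis $p_{0}\le e_{0}\in E$, which holds trivially since $P\subseteq E$, and to restrict to $\left[0,p_{0}\right]$ so that everything stays bounded.
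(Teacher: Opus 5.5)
Your proof is correct and follows essentially the same route as the paper: you move the order-null witness into $E^{\delta}$, make it downward directed there using the lattice structure, and pull it back into $E$ with Corollary~\ref{pervdird}, using the order density given by Theorem~\ref{pervasive}. The only difference is presentation: the paper gets the first two steps by citing the last claim of Proposition~\ref{oca} and the coincidence of $\mathrm{o}$ and $\mathrm{do}$ in the lattice $E^{\delta}$, whereas you carry them out explicitly via Proposition~\ref{ocp} and finite infima (and you also check the hypothesis $p_{0}\le e_{0}\in E$ of Corollary~\ref{pervdird}).
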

\begin{proof}
We only need to show that order convergence is stronger than directed order convergence. In light of Proposition~\ref{oca}, it is enough to show that if $\left(e_{\alpha}\right)_{\alpha\in A}\subseteq E$ is (directed) order null in $E^{\delta}$, then it is directed order null in $E$. Let $P\subseteq E^{\delta}$ be a directed downward set which witnesses the order convergence of the net. According to Proposition~\ref{pervdird}, there is $Q\subseteq E\cap \left(P+E_{+}\right)$ which is directed downward and such that $\bigwedge Q=0_{E^{\delta}}$. For every $q\in Q$ there is $p\in P$ with $q\ge p$, and so there is $\alpha_{0}$ such that $\pm e_{\alpha}\le p\le q$, as soon as $\alpha\ge\alpha_{0}$. Hence, $Q$ witnesses $e_{\alpha}\todo 0_{E}$, as desired.
\end{proof}

We will call $E$ \emph{weakly pervasive} if whenever $e,f\in E_{+}$ are such that $e\wedge_{E_{+}}f=0_{E}$, it follows that $e\wedge_{E}f=0_{E}$ (and so it is a weakened version of the condition (vi) in Theorem~\ref{pervasive}).

\begin{lemma}\label{wp}
For an ordered vector space $E$ the following conditions are equivalent:
\item[(i)] $E$ is a vector lattice;
\item[(ii)] For every $e\in E$ there are $f,g\in E_{+}$ such that $e=f-g$ and $f\wedge_{E}g=0_{E}$;
\item[(iii)] $E$ is weakly pervasive and for every $e\in E$ there are $f,g\in E_{+}$ such that $e=f-g$ and $f\wedge_{E_{+}}g=0_{E}$.
\end{lemma}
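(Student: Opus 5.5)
We prove (i)$\Rightarrow$(ii)$\Rightarrow$(iii)$\Rightarrow$(i). For (i)$\Rightarrow$(ii), in a vector lattice we take $f:=e^{+}$ and $g:=e^{-}$. The classical identity $e^{+}\wedge e^{-}=0_{E}$ is exactly the required statement, since the infimum is computed in $E$.

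For (ii)$\Rightarrow$(iii), the decomposition part is immediate. If $f\wedge_{E}g=0_{E}$ with $f,g\in E_{+}$, then $0_{E}$ is also the infimum of $\{f,g\}$ in $E_{+}$: it lies in $E_{+}$, and every lower bound in $E_{+}$ is a lower bound in $E$. The real content is weak pervasiveness. Let $e,f\in E_{+}$ with $e\wedge_{E_{+}}f=0_{E}$, and let $h\le e,f$ in $E$; we must show $h\le 0_{E}$. Decompose $h=h_{1}-h_{2}$ by (ii), with $h_{1}\wedge_{E}h_{2}=0_{E}$. The plan is to show that $h_{1}$ is a positive lower bound of $e$ and $f$, so $h_{1}=0_{E}$ and hence $h=-h_{2}\le 0_{E}$.

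Showing $h_{1}\le e$ is the main obstacle. We know $h_{1}\le e+h_{2}$ and $h_{1}\wedge h_{2}=0_{E}$, and we want to conclude $h_{1}\le e$ without lattice operations. My first attempt is a Riesz-type argument: show that $h_{1}\wedge_{E}(e+h_{2})=h_{1}$ implies $h_{1}\le e$. Concretely, consider $e':=$ the positive part of $e-h_{1}$ given by (ii), and use translation invariance: $\{h_{1},h_{2}\}$ has infimum $0_{E}$, so $\{h_{1}-e,\,h_{2}-e\}$ has infimum $-e$. Combined with $h_{1}-e\le h_{2}$, this should force the relevant lower bounds. If that fails, I would instead show that (ii) directly makes $E$ a lattice. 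For arbitrary $e$, write $e=f-g$ with $f\wedge g=0_{E}$ and verify that $f=e\vee_{E}0_{E}$. Clearly $f\ge e,0_{E}$. If $u\ge e,0_{E}$, then $u\wedge_{E} f$ exists by translating: $f-u\le g$, and $\{f,g\}$ has infimum $0_{E}$, hence $\{f,\,g+u\}$ has a computable relation with $f$. Establishing $u\ge f$ is the crux, and I would try an induction-free Archimedean-free argument modeled on the vector lattice identity $(u-f)^{-}\le g$ together with disjointness. Once $e\vee 0_{E}$ exists for every $e$, $E$ is a vector lattice, which proves (ii)$\Rightarrow$(i). Then (i)$\Rightarrow$(iii) is trivial, because lattice infima of positive elements coincide in $E_{+}$ and in $E$.

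For (iii)$\Rightarrow$(ii), given $e=f-g$ with $f\wedge_{E_{+}}g=0_{E}$, weak pervasiveness upgrades this to $f\wedge_{E}g=0_{E}$. Thus the proof reduces to the implication (ii)$\Rightarrow$(i), that is, to showing $e\vee 0_{E}=f$ whenever $e=f-g$ with $f\wedge_{E}g=0_{E}$. This is the step on which everything hinges, and where I expect the difficulty to lie.
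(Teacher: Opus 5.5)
Your fallback plan has the right architecture, and it uses the same ingredients as the paper. You prove (ii)$\Rightarrow$(i) directly. Then (i)$\Rightarrow$(iii) is trivial: in a lattice, $e\wedge_{E}f\ge 0_{E}$ lies in $E_{+}$, so it also equals $e\wedge_{E_{+}}f$. Finally, (iii)$\Rightarrow$(ii) is just the definition of weak pervasiveness. However, the step you yourself flag as the crux, that $f=e\vee_{E}0_{E}$ whenever $e=f-g$ with $f\wedge_{E}g=0_{E}$, is never proved. Saying that ``$\{f,g+u\}$ has a computable relation with $f$'' or that you would argue ``modeled on $(u-f)^{-}\le g$'' is not an argument. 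As written, (ii)$\Rightarrow$(i) is missing, so the whole cycle is incomplete. Your first attempt (proving weak pervasiveness directly from (ii)) stalls at exactly the same point: ``showing $h_{1}\le e$'' is this same fact in disguise.

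The missing idea is elementary: translations and negation are order (anti-)isomorphisms of $E$. Take $u\ge e$ and $u\ge 0_{E}$. You already noted that $f-u\le f-e=g$. From $u\ge 0_{E}$ you also get $f-u\le f$. So $f-u$ is a lower bound of $\{f,g\}$, hence $f-u\le f\wedge_{E}g=0_{E}$, i.e.\ $u\ge f$. Since clearly $f\ge e,0_{E}$, this gives $f=e\vee_{E}0_{E}$. Equivalently, $\{0_{E},e\}=f-\{f,g\}$, so $\bigvee\{0_{E},e\}=f-\bigwedge\{f,g\}=f$; this is exactly the paper's one-line computation $e^{+}=(f-g)^{+}=f-f\wedge g=f$. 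Every element then has a positive part, and $a\vee b=b+(a-b)\vee 0_{E}$ makes $E$ a vector lattice; no Archimedean or Riesz-type considerations are needed. The same observation also closes your first attempt directly: $h_{1}=h\vee_{E}0_{E}\le e,f$, so $h_{1}=0_{E}$ and $h=-h_{2}\le 0_{E}$.
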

\begin{proof}
(i)$\Rightarrow$(iii): To prove that $E$ is weakly pervasive assume that $e,f\in E_{+}$ are such that $e\wedge_{E_{+}}f=0_{E}$. If $g\in E$ is such that $g\le e,f$, then $g^{+}\in E_{+}$ together with $g^{+}\le e,f$ yield $g^{+}=0_{E}$, hence $g\le 0_{E}$. Thus, $e\wedge_{E} f=0_{E}$.

For every $e\in E$ we have that $e^{+},e^{-}\in E_{+}$, $e=e^{+}-e^{-}$ and $e^{+}\wedge_{E_{+}}e^{-}=0_{E}$.

(iii)$\Rightarrow$(ii) follows from the definition of weak pervasiveness.

(ii)$\Rightarrow$(i): Let $e\in E$ and let $f,g\in E_{+}$ be such that $e=f-g$ and $f\wedge g=0_{E}$. Then, $e^{+}= \left(f-g\right)^{+}=f-f\wedge g=f$. Hence, every element of $E$ has a positive part, which is sufficient for $E$ to be a lattice.
\end{proof}

\begin{proposition}\label{pervcomplete}
For a directed ordered vector space $E$ the following conditions are equivalent:
\item[(i)] $E$ is a Dedekind complete vector lattice;
\item[(ii)] $E$ is pervasive, Archimedean and complete with respect to order convergence;
\item[(iii)] $E$ is monotone complete and pervasive;
\item[(iv)] $E$ is directed, Archimedean, weakly pervasive, and such that every order bounded totally ordered set in $E$ has a supremum.
\end{proposition}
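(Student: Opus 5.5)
The plan is to prove (i)$\Rightarrow$(ii), (i)$\Rightarrow$(iii), (i)$\Rightarrow$(iv), and then close the cycle with (ii)$\Rightarrow$(iii)$\Rightarrow$(i) and (iv)$\Rightarrow$(i).

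\emph{The implications out of (i).} A Dedekind complete vector lattice is Archimedean. It is trivially pervasive and weakly pervasive, the latter by Lemma~\ref{wp}. It is complete with respect to order convergence by \cite[Proposition~9.10]{btw}. It is monotone complete, and totally ordered order bounded sets have suprema. So (i) implies each of (ii), (iii) and (iv).

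\emph{(ii)$\Rightarrow$(iii).} This is part~(ii) of Corollary~\ref{monc}.

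\emph{(iii)$\Rightarrow$(i).} By part~(i) of Corollary~\ref{monc}, $E$ is Archimedean, so $E^{\delta}$ exists, and by Theorem~\ref{pervasive} $E$ is order dense in $E^{\delta}$. Take $h\in E^{\delta}_{+}$. Part~(iii) of Theorem~\ref{pervasive} gives a directed $Q\subseteq E_{+}$ with $h=\bigvee_{E^{\delta}}Q$. Since $E$ is majorizing in $E^{\delta}$, $Q$ is bounded above in $E$. Monotone completeness (viewing $Q$ as an increasing net indexed by itself) gives $e:=\bigvee_{E}Q$. The embedding $E\to E^{\delta}$ preserves suprema by Proposition~\ref{ocp}, so $h=e\in E$. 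As $E^{\delta}=E^{\delta}_{+}-E^{\delta}_{+}$, we get $E=E^{\delta}$, which is a Dedekind complete vector lattice.

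\emph{(iv)$\Rightarrow$(i).} This is the main step. First, I would use Zorn's lemma on chains to upgrade suprema of order bounded chains to suprema of upward directed order bounded sets. A chain-complete poset is directed complete; this is a standard result of Iwamura type, and it may require some care in a non-lattice setting. The simpler route I would try first is to use Zorn only to produce maximal elements. Given $e\in E$, I aim to show that $e$ has a decomposition as in part~(iii) of Lemma~\ref{wp}.
\begin{itemize}
\item[(a)] Fix $u\ge e,0_{E}$ (directedness) and consider $S:=\left\{f\in E_{+}: f\le u,\ f-e\ge 0_{E}\right\}$ with the reverse order.
\item[(b)] Chains in $S$ are bounded below by $0_{E}$ and $e$, and they have infima by (iv) applied through $f\mapsto -f$. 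The infimum $f$ of a chain still satisfies $f\ge0_{E}$ and $f\ge e$, so it lies in $S$.
\item[(c)] Zorn's lemma then gives a minimal $f\in S$. Put $g:=f-e\in E_{+}$.
\item[(d)] To see $f\wedge_{E_{+}}g=0_{E}$, suppose $0_{E}\le k\le f,g$. Then $f-k\ge0_{E}$ and $f-k-e=g-k\ge0_{E}$, so $f-k\in S$ and $f-k\le f$. Minimality forces $k=0_{E}$.
\end{itemize}
With $e=f-g$, weak pervasiveness and Lemma~\ref{wp}(iii) show that $E$ is a vector lattice. It remains to get Dedekind completeness. An order bounded nonempty set $A$ in a lattice gives the upward directed order bounded set $A^{\vee}$. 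Its supremum is obtained from chain suprema by the directed-complete-from-chain-complete argument, or by Zorn again. For the latter, take a maximal chain in $A^{\vee}$, let $s$ be its supremum, and show $s\ge a$ for every $a\in A^{\vee}$: the chain's elements joined with $a$ yield a chain in $A^{\vee}$ extending it, and the Archimedean property is used as needed. This last reduction is where I expect the most friction. If it resists, I would cite the classical fact that a lattice in which every bounded chain has a supremum is conditionally complete.
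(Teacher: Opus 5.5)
Your implication scheme is sound, and every step except one sub-argument is correct. You organize it differently from the paper. The paper runs (i)$\Rightarrow$(ii)$\Rightarrow$(iii)$\Rightarrow$(iv)$\Rightarrow$(i): it gets (iii)$\Rightarrow$(iv) from part~(i) of Corollary~\ref{monc} plus routine checks, and it proves (iv)$\Rightarrow$(i) purely by citation (Gl\"uck's Theorem~4.12 for the lattice property, Markowsky's Corollary~1 for Dedekind completeness).

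Your direct (iii)$\Rightarrow$(i) is correct. It uses part~(iii) of Theorem~\ref{pervasive}, majorization of $E$ in $E^{\delta}$, and supremum preservation from Proposition~\ref{ocp}. Your steps (a)--(d) are also correct, and they are a genuinely different, self-contained replacement for the Gl\"uck citation. The minimal element $f$ of $\{e,0_{E}\}^{\uparrow}\cap(u]$ satisfies $f\wedge_{E_{+}}(f-e)=0_{E}$, and weak pervasiveness plus Lemma~\ref{wp} finish. This part never uses the Archimedean property.

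The piece that would fail is your ``Zorn again'' route to Dedekind completeness. A maximal chain in an upward directed set need not be cofinal. Moreover, $\{m\vee a: m\in M\}$ is a different chain, not an extension of $M$, so maximality of $M$ says nothing about it. Here is a concrete counterexample. In $\R^{2}$ let $A:=\{(1-\frac1n,0),(0,1-\frac1n):n\in\N\}$, so that $A^{\vee}=\{(1-\frac1n,1-\frac1m):n,m\in\N\}$.
\begin{itemize}
\item The chain $M:=\{(1-\frac1n,0):n\in\N\}$ is maximal in $A^{\vee}$. An element $(1-\frac1{n'},1-\frac1{m'})$ with $m'\ge 2$ is incomparable with $(1-\frac1n,0)$ whenever $n>n'$.
\item Yet $\bigvee M=(1,0)\not\ge(0,\frac12)\in A^{\vee}$.
\end{itemize}
Since $\R^{2}$ is Archimedean, invoking that property cannot repair this.

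Your fallback, the classical chain-complete$\Rightarrow$directed-complete theorem, is exactly the paper's route. That theorem concerns posets in which every chain has a supremum. So you would apply it inside an order interval $[l,u]\supseteq A$, and then use $v\wedge u$ to see that the supremum there is the supremum in $E$.

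The simplest repair, however, reuses your own (a)--(c) pattern. Apply Zorn to $U:=A^{\uparrow}$ with the reverse order. Given a nonempty chain $C\subseteq U$ and $c_{0}\in C$, the chain $C\cap(c_{0}]$ lies in $[a,c_{0}]$ for any $a\in A$. By (iv), applied through $f\mapsto -f$, it has an infimum, which lies in $U$ and is a lower bound of $C$. Zorn then gives a minimal $s\in U$. Since $E$ is already a lattice, $s\wedge v\in U$ for every $v\in U$, so $s\wedge v=s$ and $s=\bigvee A$.

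With this repair your (iv)$\Rightarrow$(i) is elementary and citation-free. It also shows that the Archimedean hypothesis in (iv) is redundant. The cost is length compared with the paper's two citations.
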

\begin{proof}
(i)$\Rightarrow$(ii) is established in \cite[Proposition~9.10]{btw}. (ii)$\Rightarrow$(iii) follows from part~(ii) of Proposition~\ref{monc}. In (iii)$\Rightarrow$(iv), Archimedeanness follows from part~(i) of Proposition~\ref{monc}, while the rest is straightforward. In (iv)$\Rightarrow$(i), the fact that $E$ is a lattice is proved in \cite[Theorem~4.12]{gluck}, while Dedekind completeness follows from \cite[Corollary~1]{markowsky}.%(i)$\Rightarrow$(iii): We may assume that $E$ is an order dense subspace of $E^{\delta}$. Let $f\in E^{\delta}_{+}$. It follows from Theorem~\ref{pervasive} that there is a directed $Q\subseteq E$ such that $\bigvee Q=f$. Since $E$ is majorizing in $E^{\delta}$, it follows that $Q$ is an increasing order bounded net in $E$ (indexed by itself). By assumption, there is $e:=\bigvee_{E}Q$, but as the inclusion of $E$ into $E^{\delta}$ is supremum-preserving, it follows that $f=e\in E$. Due to arbitrariness of $f$ we conclude that $E^{\delta}=E$.%According to Proposition~\ref{oca}, $Q$ is order Cauchy in $E$, and so it order converges to some $e\in E$. Since order limit of an increasing net must be its supremum, we conclude that $e=\bigvee_{E}Q$. A
\end{proof}

It is easy to see that a linear subspace $H\subseteq E$ is full if and only if $0_{E}\le e\le h\in H$ implies $e\in H$. If $E$ is a vector lattice, an \emph{ideal} in $E$ is a full linear sublattice of $E$. If $A\subseteq E$, we denote the ideal generated by $A$ in $E$ by $I_{E}\left(A\right)$.

\begin{proposition}\label{pervid}
Let $F$ be a vector lattice, let $E\subseteq F$ be an order dense subspace, and let $H\subseteq E$ be a directed full subspace. Then $H$ is an order dense majorizing subspace of $I_{F}\left(H\right)$.
\end{proposition}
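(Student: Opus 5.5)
We have to prove two things: $H$ is majorizing in $I_{F}\left(H\right)$, and $H$ is order dense in $I_{F}\left(H\right)$. The plan is to handle majorization by a direct description of the generated ideal, and then obtain order density from order density of $E$ in $F$ together with fullness of $H$ in $E$.

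\textbf{Majorizing.} Since $H$ is a linear subspace, the ideal it generates is
\[I_{F}\left(H\right)=\left\{f\in F: \left|f\right|\le \left|h_{1}\right|+\dots+\left|h_{n}\right|\text{ for some } h_{1},\dots,h_{n}\in H\right\}.\]
Because $H$ is directed, every $h\in H$ can be written as $h=u-v$ with $u,v\in H_{+}$, so $\left|h\right|\le u+v\in H_{+}$. Hence every $f\in I_{F}\left(H\right)$ satisfies $\left|f\right|\le w$ for some $w\in H_{+}$, in particular $f\le w$. Thus $H$ is majorizing in $I_{F}\left(H\right)$. The same bound also shows that each positive element of $I_{F}\left(H\right)$ lies below an element of $H_{+}$.

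\textbf{Order dense.} Take $0_{F}<f\in I_{F}\left(H\right)$ and choose $w\in H_{+}$ with $f\le w$. Since $E$ is order dense in $F$, there is $e\in E$ with $0_{F}<e\le f$. Then $0_{E}\le e\le w\in H$, and $H$ is full in $E$, so by the remark preceding the proposition $e\in H$. Therefore $\left(0_{F},f\right]\cap H\ne\varnothing$, which is order density. Note that $e\le f\le w$ gives $e\in\left[0_{E},w\right]$, an order interval in $E$ with endpoints in $H$, so fullness applies exactly.

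\textbf{Main obstacle.} The only subtle point is the description of $I_{F}\left(H\right)$ for a subspace that is not a sublattice. The set displayed above is the ideal generated by $H$: it is a linear subspace, it is solid, it is closed under lattice operations because it is solid, and it clearly contains $H$ and lies in every ideal containing $H$. Directedness of $H$ is used precisely to replace the sum of moduli by a single element of $H_{+}$. Without directedness this step fails: $\left|h\right|$ need not be dominated by an element of $H$, and majorization can break down.
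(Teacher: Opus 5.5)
Your proof is correct and follows essentially the same route as the paper. First, directedness of $H$ shows that every element of $I_{F}\left(H\right)$ is dominated in modulus by some $w\in H_{+}$; the paper simply asserts $I_{F}\left(H\right)=\bigcup_{h\in H_{+}}\left[-h,h\right]$, and your explicit description of the generated ideal fills in that step. Then order density of $E$ in $F$, together with fullness of $H$, places an element of $\left(0_{F},f\right]\cap E$ inside $H$.
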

\begin{proof}
First, since $H$ is directed, it is easy to see that $I_{F}\left(H\right)=\bigcup_{h\in H_{+}}\left[-h,h\right]$. Let $0_{F}<f\in I_{F}\left(H\right)$, so that there is $h\in H_{+}$ with $f\le h$. As $E$ is order dense, there is $e\in\left(0_{F},f\right]\cap E$. In particular, $0_{F}\le e\le h$, and since $H$ is full we conclude that $e\in H$.
\end{proof}

In case several posets are present, we denote order convergence in a poset $P$ by $\mathrm{o}_{P}$. Part~(i) of the next result is close to \cite[Theorem~27]{km2}.

\begin{corollary}\label{pervideal}
Let $E$ be a pervasive Archimedean ordered vector space and let $H\subseteq E$ be a directed full subspace. Then:
\item[(i)] $H$ is pervasive and $I_{E^{\delta}}\left(H\right)=H^{\delta}$;
\item[(ii)] $\mathrm{o}_{E}$ and $\mathrm{o}_{H}$ agree on subsets of $H$ which are order bounded in $H$.
\end{corollary}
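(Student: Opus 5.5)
We regard $E$ as an order dense majorizing subspace of $E^{\delta}$; this is allowed by Theorem~\ref{pervasive}, since $E$ is majorizing in $E^{\delta}$. Then Proposition~\ref{pervid} applies with $F:=E^{\delta}$. It shows that $H$ is an order dense majorizing subspace of the vector lattice $L:=I_{E^{\delta}}\left(H\right)$. In particular, $H$ embeds order densely into a vector lattice and $H$ is directed, so $H$ is pervasive. $H$ is also Archimedean as a subspace of $E$.

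\textbf{Identifying $H^{\delta}$.} The ideal $L$ of the Dedekind complete $E^{\delta}$ is itself Dedekind complete. Indeed, a nonempty subset of $L$ bounded above by some $\ell\in L$ is also bounded below by an element of the subset, so its supremum in $E^{\delta}$ lies between two elements of $L$, hence in $L$ by solidity, and it is then the supremum in $L$. Since $H$ is an order dense majorizing subspace of $L$, Corollary~\ref{dedeperv} (equivalently Corollary~\ref{odm} plus uniqueness of the Dedekind completion) gives $L=H^{\delta}$. This proves (i).

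\textbf{Part (ii).} Fix $u\in H_{+}$ and a net $\left(h_{\alpha}\right)\subseteq\left[-u,u\right]\cap H$, and let $h\in\left[-u,u\right]\cap H$ be a candidate limit. By Proposition~\ref{oca}, $\mathrm{o}_{E}$ is the restriction of $\mathrm{o}_{E^{\delta}}$, and $\mathrm{o}_{H}$ is the restriction of $\mathrm{o}_{H^{\delta}}=\mathrm{o}_{L}$. So it suffices to show that $\mathrm{o}_{L}$ and $\mathrm{o}_{E^{\delta}}$ agree on order bounded nets in $L$. Both spaces are Dedekind complete lattices, and $\left[-u,u\right]_{L}=\left[-u,u\right]_{E^{\delta}}$ because $L$ is solid. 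Suprema and infima of subsets of this interval coincide in $L$ and in $E^{\delta}$, since $L$ is an ideal in a Dedekind complete lattice, as shown above. Corollary~\ref{dcoc} expresses order convergence of an order bounded net purely through $\bigwedge_{\alpha}\bigvee_{\beta\ge\alpha}$ and $\bigvee_{\alpha}\bigwedge_{\beta\ge\alpha}$ computed inside this interval. Hence the two convergences agree.

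\textbf{Expected obstacle.} The main subtlety is that an $\mathrm{o}_{E}$-convergent net need only be eventually order bounded in $E$, and its witnesses live in $E$, not in $H$. Part (ii) restricts to nets that are order bounded in $H$ precisely so that we can pass to a common interval $\left[-u,u\right]$ with $u\in H$. We also need the limit to lie in $H$. If the limit is only assumed to lie in $E$, closedness of $\left[-u,u\right]$ (Proposition~\ref{oc}) places it in $\left[-u,u\right]_{E}$, and fullness of $H$ then gives $h\in H$. With these points settled, the rest is bookkeeping with Corollary~\ref{dcoc}.
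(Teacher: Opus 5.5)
Your proposal is correct and follows the paper's route. Part (i) comes from Proposition~\ref{pervid} (with $F=E^{\delta}$) together with Corollary~\ref{dedeperv}. Part (ii) reduces, via the last part of Proposition~\ref{oca}, to comparing $\mathrm{o}_{E^{\delta}}$ and $\mathrm{o}_{H^{\delta}}$ on the common interval $\left[-u,u\right]_{H^{\delta}}=\left[-u,u\right]_{E^{\delta}}$. The only difference is on that interval: the paper cites \cite[Lemma~2.2]{abt}, whereas you argue directly from Corollary~\ref{dcoc}, using that suprema and infima of subsets of the interval coincide in the ideal and in $E^{\delta}$. This is a valid self-contained substitute.
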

\begin{proof}
(i) follows from Corollary~\ref{dedeperv} and Proposition~\ref{pervid}.

(ii): Let $h\in H_{+}$. It follows from (i) that $R:=\left[-h,h\right]_{H^{\delta}}$ is an order interval in $E^{\delta}$. Hence, the restrictions of both $\mathrm{o}_{E^{\delta}}$ and $\mathrm{o}_{H^{\delta}}$ to $R$ agree with $\mathrm{o}_{R}$ (see \cite[Lemma~2.2]{abt}). By the last part of Proposition~\ref{oca}, we have $\mathrm{o}_{E}=\left.\mathrm{o}_{E^{\delta}}\right|_{E}$ and $\mathrm{o}_{H}=\left.\mathrm{o}_{H^{\delta}}\right|_{H}$. Since $\left[-h,h\right]\subseteq R$, we conclude that $\left.\mathrm{o}_{E}\right|_{\left[-h,h\right]}=\left.\mathrm{o}_{E^{\delta}}\right|_{\left[-h,h\right]}=\left.\mathrm{o}_{H^{\delta}}\right|_{\left[-h,h\right]}=\left.\mathrm{o}_{H}\right|_{\left[-h,h\right]}$.
\end{proof}

The following example shows that the fact that $I_{E^{\delta}}\left(H\right)=H^{\delta}$ for each full directed subspace $H$ of $E$ does not imply pervasiveness of $E$.

%%ST310: say what L_\infty is? I know they all isomorphic, but choose the model L_\infty[0,1].
\begin{example}\label{nonperv}
We will denote $L_{\8}:=L_{\8}\left[0,1\right]$ and let
\[E:=\left\{\left(e,f\right)\in L_{\8}\times L_{\8}: \textstyle\int e = \int f\right\}.\]
First, let us prove that $E^{\delta}=L_{\8}\times L_{\8}$. It is enough to show that $E$ is supremum-dense in $L_{\8}\times L_{\8}$. Let $g,h\in L_{\8}$. If $\int g = \int h$, then $\left(g,h\right)\in E$. Alternatively, without loss of generality we may assume that $\delta:=\int g - \int h>0$. Let $g_{1}:=g-2\delta\1_{\left[0,1/2\right]}$ and $g_{2}:=g-2\delta\1_{\left[1/2,1\right]}$. We have that $\int g_{i}=\int g-\delta=\int h$, for $i=1,2$. Hence, $\left(g_{1},h\right), \left(g_{2},h\right)\in E$ and $\left(g_{1},h\right)\vee \left(g_{2},h\right)=\left(g,h\right)$.

To prove that $E$ is not pervasive, we use the condition (ii) in Theorem~\ref{pervasive}: we show that $E$ is not order dense in $E^{\delta}=L_{\8}\times L_{\8}$. Indeed, if $\left(e,f\right)\in E_{+}$ is such that $\left(e,f\right)\le \left(\1,\0\right)$, then $f=\0$, hence $\int e=\int f=0$, and so $e=\0$.

Let $H\subseteq E$ be full and directed. Let us show that $I_{E^{\delta}}\left(H\right)=H^{\delta}$, for which it is again enough to prove that $H$ is supremum-dense in $I_{E^{\delta}}\left(H\right)$. Let $\left(g,h\right)\in I_{E^{\delta}}\left(H\right)$, so that there is $\left(e,f\right)\in H_{+}$ with $\left|g\right|\le e$ and $\left|h\right|\le f$. If $f=\0$, then $\int e=\int f=0$, hence $e=\0$, and so $g=h=\0$, implying $\left(g,h\right)\in H$. Similarly, $e=\0$ also yields $\left(g,h\right)\in H$. We thus assume that $e,f>\0$. Also, we may again assume that $\delta:=\int g - \int h>0$.

There are disjoint measurable $A,B\subseteq\left[0,1\right]$ and $\varepsilon>0$ such that $\lambda\left(A\right)=\lambda\left(B\right)=\varepsilon$ and $\varepsilon\1_{A}, \varepsilon\1_{B}\le e$. It follows that for
\[g_{1}:=g-\frac{\delta}{\varepsilon}\1_{A}\quad\text{and}\quad g_{2}:=g-\frac{\delta}{\varepsilon}\1_{B}\]
we have $\int g_{1}=\int g_{2}=\int h$, and so $\left(g_{1},h\right),\left(g_{2},h\right)\in E$. Since $f\ge\0$ and $\varepsilon\1_{A}\le e$, we get
\[\left(e,f\right)\ge \left(g_{1},h\right)\ge -\left(e,f\right) - \frac{\delta}{\varepsilon^{2}}\left(e,f\right)=-\frac{\delta+\varepsilon^{2}}{\varepsilon^{2}}\left(e,f\right),\]
implying $\left(g_{1},h\right)\in H$. Similarly, $\left(g_{2},h\right)\in H$, and moreover, disjointness of $A$ and $B$ yields $\left(g_{1},h\right)\vee \left(g_{2},h\right)=\left(g,h\right)$.
\qed\end{example}

Up until this point in this section we only considered Archimedean ordered vector spaces. Let us comment on the situation outside of this assumption. We call $E$ a \emph{pre-Riesz space} if it embeds as a supremum-dense subspace of a vector lattice. Then, there is an essentially unique vector lattice $E^{\rho}$ which contains $E$ as a supremum-dense subspace that generates $E^{\rho}$ as a vector lattice (see \cite[Chapter~2]{kv}). We call $E^{\rho}$ the \emph{Riesz completion} of $E$; it is the sublattice generated by $E$ in the lattice $E^{\partial}$ (no linear operations are involved). In case $E$ is Archimedean, $E^{\rho}$ is also the linear sublattice generated by $E$ in $E^{\delta}$. It is easy to see that a pre-Riesz space must be directed. Conversely, a directed Archimedean space is a pre-Riesz space.

\begin{remark}\label{prepervasive}
Theorem~\ref{pervasive} remains partially valid for pre-Riesz spaces. Namely, the conditions (i), (vi), (vii), (viii) and (ix) are equivalent to the version of the condition (ii) in which $E^{\delta}$ is replaced with $E^{\rho}$. We only need to supply a new proof for (i)$\Rightarrow$(vi). Assume that $F$ is a vector lattice, and $E$ is order dense in $F$. Let $Q\subseteq E_{+}$ be such that $\bigwedge_{E_{+}}Q=0_{E}$, and let $e\in E$ be a lower bound for $Q$. Then, $e^{+}\in F$ is also a lower bound for $Q$. If $e\not\le 0_{E}$, then $e^{+}>0_{F}$, and since $E$ is order dense in $F$, there is $0_{E}<g\in E$ such that $g\le e^{+}\le Q$, which contradicts $\bigwedge_{E_{+}}Q=0_{E}$. Thus, $\bigwedge_{E}Q=0_{E}$.

It follows that a pervasive pre-Riesz space is Archimedean whenever
\[\bigwedge\nolimits_{E_{+}}\left\{\tfrac{1}{n}f: n\in\N\right\}=0_{E},\quad\text{for every }f\in E_{+}.\] In particular, every pervasive almost Archimedean (see \cite[Section~1.2, Exercise~6]{at}) pre-Riesz space is Archimedean.
\qed\end{remark}

We call vectors in $E$ \emph{disjoint} if they are disjoint in $E^{\rho}$, and say that $E$ is \emph{disjointness-free} if $e\perp f$ implies that either $e=0_{E}$, or $f=0_{E}$.

\begin{proposition}\label{l:vector lattice totally ordered disjointness free}
   Let $E$ be a pre-Riesz space. The following conditions are equivalent:
    \item[(i)] $E$ is totally ordered;
    \item[(ii)] $E$ is a disjointness-free lattice;
    \item[(iii)] $E$ is disjointness-free and pervasive.\medskip

    If in this case $E$ is also Archimedean, then $\dim E\le 1$.
\end{proposition}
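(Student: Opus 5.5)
We prove the cycle (i)$\Rightarrow$(ii)$\Rightarrow$(iii)$\Rightarrow$(i) and then handle the Archimedean claim separately. Throughout, we view $E$ as a supremum-dense subspace of $E^{\rho}$, so $E^{\rho}$ is a vector lattice and disjointness is computed there.

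\emph{(i)$\Rightarrow$(ii).} A totally ordered poset is a lattice, since $e\vee f=\max\{e,f\}$. Suppose $e\perp f$, i.e.\ $|e|\wedge|f|=0$ in $E^{\rho}$. Because $E$ is totally ordered, $e\vee(-e)$ is one of $\pm e$, so $|e|\in\{e,-e\}\subseteq E$, and likewise $|f|\in E$. Then $|e|\wedge|f|=\min\{|e|,|f|\}$, which is attained in $E$. Since $E$ is a lattice and is supremum-dense in $E^{\rho}$, Proposition~\ref{ocp} gives that lattice operations in $E$ agree with those in $E^{\rho}$. Hence $\min\{|e|,|f|\}=0$, so $e=0$ or $f=0$.

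\emph{(ii)$\Rightarrow$(iii).} If $E$ is a lattice, then $E^{\rho}=E$, because $E$ is already a vector lattice containing itself as a supremum-dense generating subspace. Thus $E$ is trivially order dense in a vector lattice, i.e.\ pervasive, and it remains disjointness-free.

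\emph{(iii)$\Rightarrow$(i).} This is the main step. By Remark~\ref{prepervasive}, pervasiveness means that $E$ is order dense in $E^{\rho}$. Take $e\in E$ and suppose, for contradiction, that $e^{+}\ne0$ and $e^{-}\ne0$ in $E^{\rho}$. Order density gives $g,h\in E$ with $0<g\le e^{+}$ and $0<h\le e^{-}$. Then $g\wedge h\le e^{+}\wedge e^{-}=0$, so $g\perp h$. Both are nonzero, contradicting disjointness-freeness. Hence $e^{+}=0$ or $e^{-}=0$, i.e.\ $e\le0$ or $e\ge0$. Applying this to $e-f$ shows that any two elements of $E$ are comparable, so $E$ is totally ordered.

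\emph{The Archimedean claim.} Suppose $E$ is totally ordered and Archimedean, and let $u>0$ and $e\in E$. Set $S=\{t\in\R:tu\le e\}$. By total ordering, every real $t$ satisfies either $tu\le e$ or $tu\ge e$, so $S$ is a lower set of $\R$.

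First, $S$ is bounded above and nonempty. If $nu\le e$ for all $n\in\N$, the Archimedean property gives $u\le0$, a contradiction. Applying the same argument to $-e$ shows that $-nu\le e$ for some $n$, so $S\neq\varnothing$.

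Now let $s=\sup S$. For every $n$ we have $(s-\tfrac1n)u\le e\le(s+\tfrac1n)u$, which gives $-\tfrac1n u\le e-su\le\tfrac1n u$. Since $E$ is Archimedean, it follows that $e-su=0$, i.e.\ $e=su$. Hence $E=\R u$ whenever some $u>0$ exists.

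If no such $u$ exists, then total ordering forces every element to be $\le0$ and $\ge0$. Thus $E=\{0\}$, and in either case $\dim E\le1$.
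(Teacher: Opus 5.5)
Your proof is correct and takes essentially the same approach as the paper. Your (iii)$\Rightarrow$(i) inlines the paper's argument: the paper first uses order density in $E^{\rho}$ (Remark~\ref{prepervasive}) to show that $E^{\rho}$ is disjointness-free, and then uses that an element incomparable with $0$ has nonzero disjoint parts $e^{+},e^{-}$. For the final claim, the paper simply cites the known fact that an Archimedean totally ordered vector space is $\{0\}$ or $\R$, whereas you prove it directly via $s=\sup\{t: tu\le e\}$.
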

\begin{proof}
(i)$\Rightarrow$(ii): Clearly, $E$ is a lattice. Moreover, since the infimum of two elements is their minimum, it follows that $E$ is disjointness-free.

(ii)$\Rightarrow$(i): If $E$ is not totally ordered, then after translation there is an element $e$ incomparable with $0_{E}$, and so $e^{+}$ and $e^{-}$ are nonzero disjoint elements.

(ii)$\Rightarrow$(iii) is trivial.

(iii)$\Rightarrow$(i): First, we show that $E^{\rho}$ is disjointness-free. Indeed, if $0_{E^{\rho}}<g,h\in E^{\rho}_{+}$ are disjoint, according to Remark~\ref{prepervasive}, there are $e\in\left(0_{E^{\rho}},g\right]\cap E$ and $f\in\left(0_{E^{\rho}},h\right]\cap E$, which are then also disjoint, contradiction. Now we can apply the equivalence of (i) and (ii) to $E^{\rho}$ and conclude that it is totally ordered. Therefore every two elements in $E^{\rho}$, and hence in $E$, are comparable.

It is well-known (see \cite[Chapter~1, Exercise~14]{ab0}) that an Archimedean totally ordered vector space is $\{0\}$ or $\R$, showing the final statement.
\end{proof}

\subsection{Criteria for atomicity of a pervasive space}\label{sec_pervatom}

We call $e\in E_{+}$ an \emph{atom} if $\left[0_{E},e\right]\subseteq \R e$. Clearly, if $e$ is an atom, and $r>0$, then $re$ is an atom. If $E$ is Archimedean, then $e$ is an atom if and only if $\left[0_{E},e\right]$ is totally ordered (see \cite[Lemma~2.1]{lrw}). It is easy to see that atoms are preserved by linear order isomorphisms.

\begin{proposition}\label{pervatom}
Let $F$ be an Archimedean vector lattice, and let $E\subseteq F$ be an order dense subspace. Then, $f\in F_{+}$ is an atom in $F$ if and only if $f\in E$ and it is an atom in $E$.
\end{proposition}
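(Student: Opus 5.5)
We prove the two implications separately; in both, order density of $E$ in $F$ is the key tool, used through Proposition~\ref{pervdense} and Lemma~\ref{arch}.

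\emph{Necessity.} Let $f\in F_{+}$ be an atom in $F$. We may assume $f>0_{F}$, since $0_{F}\in E$ is trivially an atom of $E$. By order density there is $e\in E$ with $0_{F}<e\le f$. Because $f$ is an atom, $e=rf$ for some $r\in\R$, and $r>0$ since $e>0_{F}$. Hence $f=r^{-1}e\in E$. It remains to see that $f$ is an atom of $E$, and this is immediate: $\left[0_{E},f\right]=\left[0_{F},f\right]\cap E\subseteq\R f$.

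\emph{Sufficiency.} Let $f\in E$ be an atom in $E$, and let $g\in\left[0_{F},f\right]$. We must show $g\in\R f$. By Proposition~\ref{pervdense}, $g=\bigvee\left[0_{F},g\right]_{E}$, and $\left[0_{F},g\right]_{E}\subseteq\left[0_{E},f\right]\subseteq\R f$. So $\left[0_{F},g\right]_{E}=\left\{tf:t\in T\right\}$ for some $T\subseteq\left[0,1\right]$ with $0\in T$. We may assume $f>0_{F}$, since the case $f=0_{F}$ is trivial. Put $s:=\sup T\in\left[0,1\right]$.

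We claim $g=sf$. First, $sf$ is an upper bound of $\left[0_{F},g\right]_{E}$, so $g\le sf$. For the reverse inequality, take $t_{n}\in T$ with $t_{n}\uparrow s$. Since $g\ge t_{n}f$ for every $n$, we get $g-sf\ge-(s-t_{n})f$, and the Archimedean property of $F$ gives $g-sf\ge0_{F}$. Hence $g=sf\in\R f$, so $f$ is an atom in $F$.

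I do not expect a serious obstacle. The one delicate point is the supremum step: we need that the supremum in $F$ of $\left\{tf:t\in T\right\}$ is $(\sup T)f$. This relies on $F$ being Archimedean, and it is exactly why Proposition~\ref{pervdense} is stated for Archimedean $F$.
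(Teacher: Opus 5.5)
Your proof is correct. The necessity direction is the same as the paper's, and you also handle the trivial case $f=0_F$, which the paper glosses over. Your sufficiency argument, however, takes a different route.

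\textbf{The paper's sufficiency argument.} The paper argues by contraposition. If $f$ is not an atom of $F$, there are disjoint $g,h\in(0_F,f]$. Order density then yields $u\in(0_F,g]\cap E$ and $v\in(0_F,h]\cap E$. These are disjoint, hence linearly independent, elements of $[0_F,f]_E$, contradicting atomicity of $f$ in $E$. This is very short, but it silently uses the fact that in an Archimedean vector lattice a positive non-atom dominates two nonzero disjoint elements. That fact comes from the characterization recalled at the start of Subsection~\ref{sec_pervatom}: $[0_F,f]$ is then not totally ordered, and for incomparable $a,b$ in it the elements $(a-b)^{\pm}$ do the job.

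\textbf{Your argument.} You instead compute directly. Proposition~\ref{pervdense} writes $g$ as the supremum in $F$ of a set of multiples $tf$ with $t\in T\subseteq[0,1]$, and the Archimedean property identifies this supremum as $(\sup T)f$. Your route avoids disjointness altogether and also identifies the scalar explicitly. It uses the lattice structure of $F$ only through Proposition~\ref{pervdense}. Consequently, it works verbatim for any Archimedean ordered vector space $F$ in which each $g\in F_+$ is the supremum of $[0_F,g]_E$.

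One small quibble with your closing remark: Proposition~\ref{pervdense} needs the Archimedean hypothesis for its own proof, via part~(i) of Lemma~\ref{arch}. Your final supremum step uses the hypothesis independently, so it is not ``exactly why'' that proposition assumes it.
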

\begin{proof}
Necessity: There is $e\in \left(0_{F},f\right]\cap E\subseteq\R f\cap E$, and so there is $r>0$ such that $rf\in E$. Thus, $f\in E$, and clearly it is an atom there.

Sufficiency: Assume that $f$ is not an atom in $F$, so that there are disjoint $g,h\in\left(0_{F},f\right]$. Find $u\in \left(0_{F},g\right]\cap E$ and $v\in \left(0_{F},h\right]\cap E$. It follows that $u,v\in \left[0_{F},f\right]_{E}$ are linearly independent, contradicting the fact that $f$ is an atom in $E$.
\end{proof}

We will call $E$ \emph{atomic} if for every $e>0_{E}$, $\left(0_{E},e\right]$ contains an atom.

\begin{corollary}\label{pervatomic}
Let $F$ be an Archimedean vector lattice, and let $E\subseteq F$ be an order dense subspace. Then $F$ is atomic if and only if $E$ is atomic. In particular, in this case every element of $E_{+}$ is the supremum of a collection of atoms.
\end{corollary}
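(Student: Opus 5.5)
The plan is to transfer atomicity in both directions through Proposition~\ref{pervatom}, which identifies the atoms of $F$ with the atoms of $E$ that are positive in $F$, and then to use order density of $E$ to move between positive elements of $F$ and of $E$.

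For \emph{$F$ atomic $\Rightarrow$ $E$ atomic}, let $e\in E$ with $e>0_{E}$. Then $e>0_{F}$, and atomicity of $F$ gives an atom $a\in(0_{F},e]$ of $F$. By Proposition~\ref{pervatom}, $a\in E$ and $a$ is an atom of $E$, so $(0_{E},e]$ contains an atom.

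For \emph{$E$ atomic $\Rightarrow$ $F$ atomic}, let $f\in F$ with $f>0_{F}$. Order density gives $e\in(0_{F},f]\cap E$, and atomicity of $E$ gives an atom $a\in(0_{E},e]$ of $E$. By the sufficiency part of Proposition~\ref{pervatom}, $a$ is an atom of $F$, and $a\le e\le f$ with $a>0_{F}$.

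For the final claim, fix $e\in E_{+}$ and let $A$ be the set of atoms $a$ of $F$ (equivalently, of $E$) with $a\le e$. I would first show $e=\bigvee_{F}A$, using the standard fact that in an Archimedean atomic vector lattice every positive element is the supremum of the atoms below it. This is the step needing the most care, so I would prove it directly.
\begin{itemize}
\item Suppose $g\in F$ is an upper bound of $A$ with $g\not\ge e$. Then $e\wedge g$ is also an upper bound of $A$ (each $a\in A$ lies below both $e$ and $g$), and $h:=e-e\wedge g>0_{F}$.
\item Atomicity of $F$ gives an atom $b\le h\le e$, so $b\in A$, and hence $b\le e\wedge g$.
\item Then $2b\le b+e\wedge g\le h+e\wedge g=e$, using $b\le h$. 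Since $b$ is an atom, so is $2b$, and thus $2b\in A$.
\item Iterating, $nb\in A$ for all $n$, so $nb\le e$ for every $n$. The Archimedean property forces $b\le 0$, a contradiction.
\end{itemize}
Hence $e=\bigvee_{F}A$.

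To pass to $E$, note that $A\subseteq E$ and $e\in E$. Any upper bound of $A$ in $E$ is an upper bound in $F$, hence lies above $e$, so $e=\bigvee_{E}A$ as well. This argument only uses that the supremum lies in $E$, so no majorizing assumption is needed.
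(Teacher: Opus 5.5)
Your proof is correct and follows essentially the paper's route. Both directions go through Proposition~\ref{pervatom}, and your sufficiency argument is the paper's verbatim. For necessity, the paper invokes the standard fact that every positive element of an atomic Archimedean vector lattice is the supremum of the atoms below it, which also yields atomicity of $E$; you instead prove atomicity of $E$ directly and then prove that fact yourself by the Archimedean iteration.

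One detail is missing: your inequality $h\le e$ needs $e\wedge g\ge 0_{F}$, i.e.\ $g\ge 0_{F}$. This holds because for $e>0_{F}$ the set $A$ is a nonempty subset of $F_{+}$, so any upper bound $g$ of $A$ is positive.
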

\begin{proof}
Necessity: Every element of $F_{+}$ is a supremum of a collection of atoms in $F$. According to Proposition~\ref{pervatom}, atoms in $F$ are also atoms in $E$, and so every element of $E_{+}$ is a supremum in $F$ (hence in $E$) of a collection of atoms in $E$.

Sufficiency: Let $f>0_{F}$. There is $e\in\left(0_{F},f\right]\cap E$, and since $E$ is atomic, there is an atom $g\in\left(0_{F},e\right]\cap E$. Then, $0_{F}<g\le f$, and according to Proposition~\ref{pervatom}, $g$ is an atom in $F$. As $f$ was arbitrary, we conclude that $F$ is atomic.
\end{proof}

\begin{remark}
An extension of Corollary~\ref{pervatomic} and more information about atoms in pervasive spaces can be found in \cite{mz}.
\qed\end{remark}

We now proceed towards a characterization of atomicity in terms of order convergence extending the results from \cite{abt}. We start with some topological preliminaries. For a compact Hausdorff space $K$ we denote the vector lattice of all continuous functions on $K$ by $\Co\left(K\right)$. The indicator function of a set $A\subseteq K$ is denoted by $\1_{A}$; in particular, $\1:=\1_{K}$ and $\0:=\1_{\varnothing}$.

\begin{lemma}\label{pervurisohn}
Let $K$ be a compact Hausdorff space and let $E\subseteq \Co\left(K\right)$ be an order dense subspace and let $U_{1},\dots,U_{n}\subseteq K$ be nonempty open sets. Then, there is $f\in \left[\0,\1\right]_{E}$ which vanishes outside of $U_{1}\cup\dots\cup U_{n}$ and such that $f^{-1}\left(1\right)\cap U_{k}\ne\varnothing$, for every $k=1,\dots,n$.
\end{lemma}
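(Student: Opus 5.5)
We reduce to the case of a single open set and then add up the pieces. First shrink the sets: choose nonempty open $V_{1},\dots,V_{n}$ with $V_{k}\subseteq U_{k}$ that are either pairwise disjoint or coincide. We may simply discard duplicates, since if $f^{-1}(1)$ meets $V_{k}$, then it meets $U_{k}$. To get disjointness, note that $K$ is Hausdorff. If some $U_{k}$ contains two distinct points, they can be separated inside it. A cleaner route is as follows. If $K$ is finite, or more generally if the $U_{k}$ contain isolated points, we handle those directly. Otherwise we proceed inductively: pick points $x_{k}\in U_{k}$, remove repeated points, and use the Hausdorff property to find pairwise disjoint open neighbourhoods $V_{k}\subseteq U_{k}$ of the distinct points. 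Indices $k$ whose chosen point coincides with an earlier point share the same $V$. Afterwards the result is still stated in terms of the $U_{k}$.

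For a single nonempty open $V$, we use Urysohn's lemma to pick $h\in\Co(K)$ with $\0\le h\le\1$, $h$ vanishing outside $V$, and $h(x)=1$ at some point $x\in V$. Replacing $h$ by $(2h-\1)^{+}$, we may assume that $h^{-1}(1)$ contains a nonempty open set $W$ with $\overline{W}\subseteq V$; apply Urysohn to a closed neighbourhood of $x$ inside $V$. By order density of $E$ there is $g\in E$ with $\0<g\le h$. Then $g$ vanishes outside $V$, but $g$ need not attain the value $1$, and $g$ need not be $\le \1$ near its maximum in a useful way. We normalise: let $m:=\max g>0$ and set $g_{1}:=g/m$. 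Then $\0\le g_{1}\le \1$, since $g\le h\le \1$ gives $\max g\le1$ and $g/m\le1$ by definition. Moreover $g_{1}$ vanishes outside $V$ (indeed outside $\supp h\subseteq V$) and attains $1$ at some point of $V$. So $g_{1}\in[\0,\1]_{E}$ with $g_{1}^{-1}(1)\cap V\ne\varnothing$.

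Now do this for each of the disjoint sets $V_{1},\dots,V_{m}$ obtained above, getting $g_{1},\dots,g_{m}\in[\0,\1]_{E}$ with $g_{j}$ supported in $V_{j}$ and attaining $1$ in $V_{j}$. Put $f:=g_{1}+\dots+g_{m}\in E$. Disjointness of the supports gives $\0\le f\le\1$, and $f=g_{j}$ on $V_{j}$, so $f^{-1}(1)$ meets each $V_{j}$ and hence each $U_{k}$. Also, $f$ vanishes outside $\bigcup_{k} V_{k}\subseteq\bigcup_{k} U_{k}$. The main obstacle is the disjointification step, and the point-selection argument above should suffice. Take distinct points $x_{k}\in U_{k}$ when possible. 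If $U_{k}$ and $U_{l}$ can only share a single point, choosing the same point for both is harmless, since one $V$ serves both. Then separate the finitely many distinct chosen points by disjoint open sets, intersected with the relevant $U_{k}$ (with the intersection of all $U_{k}$ sharing that point).
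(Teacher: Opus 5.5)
Your proof is correct. Its analytic core is the same as the paper's: Urysohn gives $h$ with $\0<h\le\1$ vanishing off $V$, order density gives $g\in E$ with $\0<g\le h$, and dividing by $\max g$ produces an element of $\left[\0,\1\right]_{E}$ that vanishes off $V$ and attains $1$ inside $V$. For pairwise disjoint sets you then add these functions.

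The difference is in how you reduce to disjoint sets. The paper builds the refinement $V_{1},\dots,V_{m}$ by induction on $n$. At each step it splits every $V_{i}$ into $V_{i}\cap U_{0}$ and $V_{i}\setminus\overline{U_{0}}$, and adds $U_{0}\setminus\bigcup_{i}\overline{V_{i}}$. That construction uses no separation axiom, only the fact that an open set missing $U_{0}$ also misses $\overline{U_{0}}$. Your point-selection route is shorter:
\begin{itemize}
\item choose arbitrary $x_{k}\in U_{k}$ and let $y_{1},\dots,y_{m}$ be the distinct points among them;
\item separate these by pairwise disjoint open sets $W_{j}$ using the Hausdorff property;
\item put $V_{j}:=W_{j}\cap\bigcap\left\{U_{k}: x_{k}=y_{j}\right\}$.
\end{itemize}
These sets are nonempty, pairwise disjoint and contained in $\bigcup_{k}U_{k}$, and every $U_{k}$ contains one of them. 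That is exactly what the summation step needs. The Hausdorff assumption you use is available anyway, since Urysohn's lemma already needs normality of $K$. So you trade the paper's separation-free generality, which is not needed here, for a one-line refinement.

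When you write it up, drop the false starts. No case distinction for finite $K$ or isolated points is needed, and the $x_{k}$ need not be chosen distinct ``when possible''; coinciding points are handled by the intersection. Replacing $h$ by $\left(2h-\1\right)^{+}$ is also unnecessary, and it would not by itself create an open set on which the function equals $1$. Finally, $\0\le g\le h$ only shows that $g$ vanishes where $h$ does, i.e.\ off $V$. The closed support of $h$ need not lie in $V$, but you never use more than the former.
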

\begin{proof}
First, assume that $U_{1},\dots,U_{n}$ are disjoint and fix $k=1,\dots,n$. By Urysohn's lemma there is $f_{k}\in \Co\left(K\right)\cap \left(\0,\1_{U_{k}}\right]$. Since $E$ is order dense in $\Co\left(K\right)$, we can find some $e_{k}\in\left(\0,f_{k}\right]\cap E$. In particular, $e_{k}$ vanishes outside of $U_{k}$, and so it attains its norm at some point $x_{k}\in U_{k}$. Then, $f:=\sum_{k=1}^{n}\frac{1}{\|e_{k}\|}e_{k}\in \left[\0,\1\right]_{E}$ vanishes outside of $U_{1}\cup\dots\cup U_{n}$, and moreover $f\left(x_{k}\right)=1$ for every $k=1,\dots,n$.

In the general case, we first find disjoint nonempty open sets $V_{1},\dots,V_{m}$ such that $V_{1}\cup\dots\cup V_{m}\subseteq U_{1}\cup\dots\cup U_{n}$ and for every $k\in\{1,\dots,n\}$ there is $l\in\{1,\dots,m\}$ so that $V_{l}\subseteq U_{k}$. Their existence can be proved by induction over $n$.

The case $n=1$ is trivial. If the claim is proved for $n$, and $U_{0},\dots,U_{n}$ are open and nonempty, by the induction hypothesis we can find $V_{1},\dots,V_{m}$ corresponding to $U_{1},\dots,U_{n}$. Let us show that the collection of all nonempty sets among $V_{i}\cap U_{0}$, $V_{i}\setminus \overline{U_{0}}$, $i\in\{1,\dots,m\}$, and $U_{0}\setminus \bigcup_{i=1}^{m}\overline{V_{i}}$ satisfies the requirements. For $j\in\{1,\dots,n\}$ find $i\in\{1,\dots,m\}$ such that $V_{i}\subseteq U_{j}$. Then, one of $V_{i}\cap U_{0}$, $V_{i}\setminus \overline{U_{0}}$ is nonempty and contained in $V_{i}$, hence in $U_{j}$. Also, either $V_{i}\cap U_{0}\ne\varnothing$, for some $i\in\{1,\dots,m\}$, or $U_{0}\setminus \bigcup_{i=1}^{m}\overline{V_{i}}$ is nonempty and contained in $U_{0}$. This verifies the conclusion of the induction step.

Now, according to the special case, there is $f\in \left[\0,\1\right]_{E}$ which vanishes outside of $V_{1}\cup\dots\cup V_{m}\subseteq U_{1}\cup\dots\cup U_{n}$ and such that $f^{-1}\left(1\right)\cap V_{l}\ne\varnothing$, for every $l\in\{1,\dots,m\}$. For every $k\in\{1,\dots,n\}$ find $l$ as above; it follows that $f^{-1}\left(1\right)\cap U_{k}\supseteq f^{-1}\left(1\right)\cap V_{l}\ne\varnothing$.
\end{proof}

\begin{proposition}\label{pervnoc}
Let $E\subseteq \Co\left(K\right)$ be an order dense subspace, where $K$ is a compact Hausdorff space without isolated points. Then there is a net in $\left[\0,\1\right]_{E}$ which has no order Cauchy (in $\Co\left(K\right)$) subnets.
\end{proposition}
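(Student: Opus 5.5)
The plan is to index the net by finite families of nonempty open sets. Each element of the net will be forced to take both the value $1$ and the value $0$ inside every set of its index. Then for any subnet, every eventual upper bound minus every eventual lower bound will be $\ge\1$, which rules out the order Cauchy property by part~(iii) of Proposition~\ref{ocauchy} applied in $\Co\left(K\right)$.

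\emph{Construction.} Let $A$ be the set of all finite families $\alpha=\left\{U_{1},\dots,U_{n}\right\}$ of nonempty open subsets of $K$, directed by inclusion. Fix $\alpha=\left\{U_{1},\dots,U_{n}\right\}\in A$ and choose points $x_{k}\in U_{k}$. Since $K$ is Hausdorff without isolated points, every nonempty open set is infinite. Hence each $V_{k}:=U_{k}\setminus\left\{x_{1},\dots,x_{n}\right\}$ is nonempty and open. Lemma~\ref{pervurisohn} applied to $V_{1},\dots,V_{n}$ gives $f_{\alpha}\in\left[\0,\1\right]_{E}$ with two properties:
\begin{itemize}
\item[(a)] $f_{\alpha}$ vanishes outside $V_{1}\cup\dots\cup V_{n}$, so in particular $f_{\alpha}\left(x_{k}\right)=0$ for each $k$;
\item[(b)] $f_{\alpha}$ attains the value $1$ somewhere in each $V_{k}\subseteq U_{k}$.
\end{itemize}
Thus for every $U\in\alpha$ there are $y,z\in U$ with $f_{\alpha}\left(y\right)=1$ and $f_{\alpha}\left(z\right)=0$.

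\emph{No order Cauchy subnets.} Let $\left(f_{\alpha_{\beta}}\right)_{\beta}$ be a subnet, let $u$ be an eventual upper bound and $l$ an eventual lower bound of it in $\Co\left(K\right)$. Suppose, for contradiction, that $u\left(x\right)-l\left(x\right)<1$ at some point $x$.
\begin{itemize}
\item[1.] By continuity, pick $\varepsilon>0$ and an open neighbourhood $U$ of $x$ on which $u-l<1-\varepsilon$ and on which both $u$ and $l$ oscillate by less than $\varepsilon/3$.
\item[2.] By the subnet property, $\alpha_{\beta}\ge\left\{U\right\}$ for all sufficiently large $\beta$. So we can choose $\beta$ large enough that, simultaneously, $U\in\alpha_{\beta}$ and $l\le f_{\alpha_{\beta}}\le u$.
\item[3.] Take $y,z\in U$ with $f_{\alpha_{\beta}}\left(y\right)=1$ and $f_{\alpha_{\beta}}\left(z\right)=0$. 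Then $u\left(y\right)\ge1$ and $l\left(z\right)\le0$.
\item[4.] The oscillation bounds give $u\ge1-\varepsilon/3$ and $l\le\varepsilon/3$ on $U$. Hence $u-l\ge1-2\varepsilon/3$ on $U$, contradicting $u-l<1-\varepsilon$ there.
\end{itemize}
Therefore every element of $\left(f_{\alpha_{\beta}}\right)^{\nearrow}-\left(f_{\alpha_{\beta}}\right)^{\searrow}$ dominates $\1$. Its infimum in $\Co\left(K\right)$, if it exists, is then $\ge\1$, so it cannot be $\0$. By Proposition~\ref{ocauchy}, the subnet is not order Cauchy in $\Co\left(K\right)$.

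The main obstacle is making a single function in the net reach both $0$ and $1$ inside every prescribed open set. Lemma~\ref{pervurisohn} alone only delivers the value $1$. Removing finitely many points, which is possible because $K$ has no isolated points, supplies the zeros. The rest is the routine continuity estimate above.
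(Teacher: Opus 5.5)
Your proof is correct. It is a more elementary variant of the paper's argument.

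The paper's route is as follows. It first invokes Hewitt's theorem \cite{hewitt} to obtain a dense $Z\subseteq K$ with dense complement. It then indexes the net by $\mathcal{Z}\times\mathcal{Y}\times\prod_{z\in Z}\mathcal{U}_{z}$, i.e.\ finite subsets of $Z$, finite subsets of $K\setminus Z$, and a neighbourhood of each point of $Z$. Lemma~\ref{pervurisohn} is applied to the sets $U_{z_{k}}\setminus B$, which are nonempty because they contain $z_{k}$. Finally it shows that every tail of every subnet has supremum $\1$ in $\Co(K)$ (via \cite[Lemma~3.1]{bt}) and infimum $\0$ (since the functions eventually vanish at each point of the dense set $K\setminus Z$), and concludes with Proposition~\ref{ocauchy}.

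You use the same local device: delete finitely many points from each prescribed open set and apply Lemma~\ref{pervurisohn}. The difference is that you pick the deleted points inside the open sets themselves. This needs only that nonempty open sets in a Hausdorff space without isolated points are infinite. It dispenses with the resolvability theorem and shrinks the index set to finite families of open sets.

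Your direct continuity estimate $u-l\ge\1$ likewise replaces the cited lemma on suprema in $\Co(K)$. Applied to $u$ and $l$ separately, the same estimate gives $u\ge\1$ and $l\le\0$, which recovers the paper's tail extrema. The only extra feature of the paper's construction is that its net is pointwise null on the fixed dense set $K\setminus Z$, and the proposition does not require this.
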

\begin{proof}
There is a dense $Z\subseteq K$ such that $K\setminus Z$ is also dense (see \cite{hewitt}). Let $\mathcal{Z}$ and $\mathcal{Y}$ be the collections of all finite subsets of $Z$ and $K\setminus Z$, respectively, ordered by inclusion. For $z\in Z$ let $\mathcal{U}_{z}$ be the collection of all neighborhoods of $z$, ordered by reverse inclusion. Let $\Gamma:=\mathcal{Z}\times \mathcal{Y}\times \prod_{z\in Z}\mathcal{U}_{z}$, which is a directed set. Let $\gamma=\left(A,B,\left(U_{z}\right)_{z\in Z}\right)\in\Gamma$, where $A=\left\{z_{1},\dots,z_{n}\right\}\subseteq Z$. For every $k\in\{1,\dots,n\}$ we have that $V_{k}:=U_{z_{k}}\setminus B$ is a neighborhood of $z_{k}$. Applying Lemma~\ref{pervurisohn} to $V_{1},\dots,V_{n}$, there is $f_{\gamma}\in \left[\0,\1\right]_{E}$ which vanishes on $B$ and such that $f^{-1}_{\gamma}\left(1\right)\cap V_{k}\ne\varnothing$, for every $k\in\{1,\dots,n\}$.

Let $\left(f_{\gamma_{\delta}}\right)_{\delta\in\Delta}$ be a subnet of $\left(f_{\gamma}\right)_{\gamma\in\Gamma}$. We claim that for every $\delta_{0}\in\Delta$ we have $\bigvee_{\delta\ge\delta_{0}}f_{\gamma_{\delta}}=\1$ and $\bigwedge_{\delta\ge\delta_{0}}f_{\gamma_{\delta}}=\0$. Once this is established, it will follow immediately from Proposition~\ref{ocauchy} that $\left(f_{\gamma_{\delta}}\right)_{\delta\in\Delta}$ is not order Cauchy.

For the former claim, let $U\ne\varnothing$ be an open set. As $Z$ is dense, we can find $x\in Z\cap U$. There is $\delta\ge\delta_{0}$ such that $\gamma_{\delta}\ge\left(\left\{x\right\},\varnothing, \left(U_{z}\right)_{z\in Z}\right)$, where $U_{x}:=U$ and $U_{z}:=K$ if $z\ne x$. Then, $\gamma_{\delta}=\left(A,B,\left(V_{z}\right)_{z\in Z}\right)$, where $x\in A$ and $V_{x}\subseteq U_{x}=U$. We have that
\[f^{-1}_{\gamma_{\delta}}\left(1\right)\cap U\supseteq f^{-1}_{\gamma_{\delta}}\left(1\right)\cap V_{x}\ne\varnothing.\] Since $U$ was chosen arbitrarily, it follows from \cite[Lemma~3.1]{bt} that $\bigvee_{\delta\ge\delta_{0}}f_{\gamma_{\delta}}=\1$.

We now prove the latter claim. For every $y\in K\setminus Z$, there is $\delta\ge\delta_{0}$ such that $\gamma_{\delta}\ge\left(\varnothing,\left\{y\right\},\left(K\right)_{z\in Z}\right)$. Arguing as in the previous step, we see that $f_{\gamma_{\delta}}\left(y\right)=0$, and since $y$ was arbitrary, it is easy to conclude that $\bigwedge_{\delta\ge\delta_{0}}f_{\gamma_{\delta}}=\0$.
\end{proof}

Let $E$ be an ordered vector space, and let $A\subseteq E$. We call $A$ \emph{order compact} if every net in $A$ has a subnet that order converges to some $e\in A$. We say that $A$ is \emph{order pre-compact} if every net in $A$ has a subnet that is order Cauchy in $E$. Further, $A$ is called order \emph{locally (pre-)compact} if every net in $A$ which order converges in $E$ to an element of $A$ has a tail contained in an order (pre-)compact subset of $E$. It is easy to see that a subset of an order (locally) pre-compact set is order (locally) pre-compact. The following two results extend \cite[Theorems~4.5 and~4.9]{abt} and partially \cite[Theorem~9.13]{bctv}.

\begin{theorem}\label{ordprec}
For a pervasive Archimedean ordered vector space $E$ the following conditions are equivalent:
\item[(i)] $E$ is order locally pre-compact;
\item[(ii)] All order intervals are order pre-compact;
\item[(iii)] Order convergence is topological on the intervals in $E$;
\item[(iv)] Order convergence has the \emph{Choquet property}, i.e.\ if $\left(e_{\alpha}\right)_{\alpha\in A}$ is not order-null, it has a subnet which has no further order-null subnets;
\item[(v)] $E$ is atomic;
\item[(vi)] There is a set $X$ such that $E$ embeds as a subspace of $\R^{X}$ with $c_{00}\left(X\right)\subseteq E$.\medskip

Moreover, on order bounded subsets of $E$ order convergence agrees with pointwise convergence.
\end{theorem}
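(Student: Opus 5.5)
The plan is to reduce everything to the known vector lattice case in \cite{abt}, applied to $F:=E^{\delta}$. By Theorem~\ref{pervasive}, $E$ is order dense and majorizing in $F$. By the last claim of Proposition~\ref{oca}, $\mathrm{o}_{E}$ is the restriction of $\mathrm{o}_{F}$. In particular, a net in $E$ is order Cauchy in $E$ if and only if it is order Cauchy in $F$: the difference net lies in $E$ and order-nullity is the same in both spaces.

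\textbf{Atomicity.} By Corollary~\ref{pervatomic}, $E$ is atomic if and only if $F$ is. For the Dedekind complete lattice $F$, \cite[Theorems~4.5 and~4.9]{abt} shows that atomicity is equivalent to the lattice versions of (i)--(iv), and to $F$ being an ideal of $\R^{X}$ containing $c_{00}(X)$, where $X$ is the set of atoms up to scaling. Order convergence on order bounded sets is pointwise there.

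\textbf{(v)$\Rightarrow$(vi) and the last claim.} Proposition~\ref{pervatom} shows that every atom of $F$ lies in $E$, so $c_{00}(X)\subseteq E\subseteq F\subseteq\R^{X}$. An order interval of $E$ is contained in an order interval of $F$, and $\mathrm{o}_{E}=\left.\mathrm{o}_{F}\right|_{E}$, so on order bounded subsets of $E$ order convergence is pointwise.

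\textbf{(vi)$\Rightarrow$(v).} I expect this to need a direct argument, since $E$ need not be full in $\R^{X}$. First I would show that $E$ is order dense in its Riesz completion with $c_{00}(X)$ inside, or directly that for $0<e\in E$ some $x$ has $e(x)>0$. Then $e(x)\1_{\{x\}}\in(0,e]_{E}$, and this element is an atom of $E$, because anything between $0$ and it is a positive function supported at $x$.

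\textbf{(v)$\Rightarrow$(ii),(iii),(iv).} Take a net in $[u,v]\subseteq E$. Its coordinates are bounded, so by Tychonoff it has a pointwise convergent subnet. Pointwise convergence on an order bounded set in $F$ gives $\mathrm{o}_{F}$-convergence, hence the subnet is order Cauchy in $F$, hence in $E$. This gives (ii). Statement (iii) holds because on intervals $\mathrm{o}_{E}$ equals pointwise convergence, which is topological; one must check that the "intervals" meant are those of $E$, which follows since $[u,v]\subseteq[u,v]_{F}$ and the topology restricts. For (iv), a non-order-null net is either not eventually order bounded, or is bounded with a coordinate $x$ not tending to $0$. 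In the first case, take a subnet avoiding all order intervals; this uses directedness and the fact that boundedness is preserved by subnets. In the second, take a subnet with $|e_{\alpha}(x)|\ge\varepsilon$.

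\textbf{(i)$\Leftrightarrow$(ii).} Each of (i) and (ii) implies the other in the standard way. Every order convergent net is eventually in an interval, which is order pre-compact, giving (ii)$\Rightarrow$(i). For (i)$\Rightarrow$(ii), use translation invariance and the constant-zero-limit nets $t e_{\alpha}$, $t\to0$, as in \cite{abt}.

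\textbf{Failure of atomicity.} The main obstacle is showing that non-atomicity breaks (ii), (iii) and (iv). If $E$ is not atomic, choose $0<e$ with no atoms below it. Let $H$ be the principal ideal $\{f:\ \pm f\le te\}$ of $E$, which is a directed full subspace. By Corollary~\ref{pervideal}, $H$ is pervasive and $H^{\delta}=I_{F}(e)$. By Kakutani, $I_{F}(e)$ is an order dense subspace of some $\Co(K)$ with $e=\1$. The space $K$ has no isolated points, since an isolated point would yield an atom of $F$ below $e$, and hence one of $E$ by Proposition~\ref{pervatom}.

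Proposition~\ref{pervnoc} then gives a net in $[0,e]_{H}$ with no subnets that are order Cauchy in $\Co(K)$. By Corollary~\ref{pervideal}(ii), such a subnet is also not order Cauchy in $E$, which refutes (ii). The same net refutes (iii) and (iv) as in \cite{abt}: the tails of every subnet have supremum $e$ and infimum $0$. Hence the net is not order-null, and no subnet of it is order-null, while a topological order convergence on $[0,e]$ would be compact by (ii)-type arguments. Checking the transfers between $\Co(K)$, $H$ and $E$ carefully is the delicate part.
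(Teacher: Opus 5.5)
Your reduction to $F=E^{\delta}$ is sound. So are your proofs of (i)$\Leftrightarrow$(ii), (v)$\Rightarrow$(vi), (v)$\Rightarrow$(ii),(iii),(iv), the final claim, and the refutation of (ii) for non-atomic $E$ via Proposition~\ref{pervnoc} and Corollary~\ref{pervideal}. Your direct (vi)$\Rightarrow$(v) is a genuine simplification of the paper's route through (ii): if $0<e\in E$, then $e(x)\1_{\{x\}}\in(0,e]_{E}$ is an atom for some $x$.

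The gap is in showing that non-atomicity destroys (iv) and (iii). The net from Proposition~\ref{pervnoc} has no order-null subnets at all, so it does not violate the Choquet property. It is itself a subnet with no further order-null subnets, which is exactly what the Choquet property requires. To refute (iv) you need the opposite kind of net: one in $[0,e]_{E}$ that is not order-null, but every subnet of which has a further order-null subnet. It must also lie in $E$ itself. A Choquet-violating net in $\Co(K)$ or $E^{\delta}$ from \cite{abt} does not transfer, since order convergence restricted to the smaller set $[0,e]_{E}$ could a priori behave better. The paper builds such a net by adapting \cite[Lemma~9.12]{bctv} to order dense subspaces of $\Co(K)$. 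It replaces the requirement $f_{\alpha}(t_{\alpha})=1$ with $f_{\alpha}^{-1}(1)\cap U^{\alpha}_{t_{\alpha}}\ne\varnothing$, which Lemma~\ref{pervurisohn} makes achievable inside $E$. Nothing in your proposal supplies this construction.

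Your claim that a topology inducing order convergence on $[0,e]$ ``would be compact by (ii)-type arguments'' is also unsupported. A topological convergence on an interval has no reason to be compact, and compactness of intervals is exactly what fails in the non-atomic case. The correct route is (iii)$\Rightarrow$(iv). Suppose $\tau$ induces order convergence on $[-e,e]$ and a net in $[-e,e]$ is not order-null. Then there are a $\tau$-neighbourhood $U$ of $0_{E}$ and a cofinal set of indices along which the net stays outside $U$. That subnet has no order-null subnets. The case of a net that is not eventually order bounded is handled as in your (v)$\Rightarrow$(iv). Combined with the Lemma~9.12-type net, this yields (iii)$\Rightarrow$(iv)$\Rightarrow$(v). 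Without these two ingredients, your proposal does not show that (iii) or (iv) implies atomicity.
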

\begin{proof}
(i)$\Rightarrow$(ii): Fix $e\in E_{+}$, and let $\Gamma:=\N\times\left[0_{E},e\right]$, ordered by the first component. Define a net $\left(e_{\gamma}\right)_{\gamma\in\Gamma}$ by $e_{\left(n,f\right)}:=\frac{1}{n}f$. The tails of this net are of the form $\left[0_{E},\frac{1}{n}e\right]$, for some $n\in\N$. It follows that $e_{\gamma}\toru 0_{E}$, therefore $e_{\gamma}\too 0_{E}$, and so there is $n\in\N$ such that $\left[0_{E},\frac{1}{n}e\right]$ is order pre-compact. It is easy to deduce from this that every order interval in $E$ is order pre-compact.

(ii)$\Rightarrow$(i) follows from the fact that every order convergent net is eventually order bounded, hence contained in an interval, which is order pre-compact.

(iii)$\Rightarrow$(iv): Suppose that $\left(e_{\alpha}\right)_{\alpha\in A}$ is not order-null. If it is not eventually order bounded, it has a subnet with no further order bounded subnets (see \cite[Proposition~9.5]{bctv}; take into account the comment after \cite[Proposition~9.1]{bctv}). In particular, this subnet has no further order-null subnets.

If $\left(e_{\alpha}\right)_{\alpha\in A}$ is eventually order bounded, without loss of generality we may assume that $\left(e_{\alpha}\right)_{\alpha\in A}\subseteq \left[-e,e\right]$, for some $e\in E_{+}$. By assumption, there is a topology $\tau$ on $\left[-e,e\right]$ which agrees with order convergence. As the net is not order-null, there are a $\tau$-neighborhood $U$ of $0_{E}$ and a majorizing $A'\subseteq A$ such that $\left(e_{\alpha}\right)_{\alpha\in A'}\subseteq \left[-e,e\right]\setminus U$. It is clear that no further subnet of $\left(e_{\alpha}\right)_{\alpha\in A'}$ is order-null.

(ii),(iv)$\Rightarrow$(v): Assume that $e>0_{E}$ is such that $\left[0_{E},e\right]$ contains no atoms. The subspace $H:=\bigcup_{n\in\N}\left[-ne,ne\right]$ satisfies the conditions of Corollary~\ref{pervideal}. Hence, without loss of generality we may assume that $E=H$, and so $e$ is an order unit in $E$. Furthermore, it remains an order unit in $E^{\delta}$, which we therefore may assume to be isomorphic to $\Co\left(K\right)$, for some compact Hausdorff $K$ (see \cite[Theorem~45.4]{lz}). Thus, $E$ embeds as an order dense subspace of $\Co\left(K\right)$ and $e=\1$. By virtue of Proposition~\ref{oca}, order convergence in $E$ is the restriction of order convergence in $\Co\left(K\right)$. According to Proposition~\ref{pervatom}, $\Co\left(K\right)$ is atomless, and so $K$ has no isolated points (see \cite[Lemma~3.2]{abt}).

By Proposition~\ref{pervnoc} there is a net in $\left[\0,\1\right]_{E}$ which has no order Cauchy subnets. We conclude that $\left[0_{E},e\right]$ is not order pre-compact. This establishes (ii)$\Rightarrow$(v).

To prove (iv)$\Rightarrow$(v), we first adapt \cite[Lemma~9.12]{bctv} to order dense subspaces of $\Co\left(K\right)$. To that end, we construct $f_{\alpha}$ as in the proof of that lemma, except that instead of requiring $f_{\alpha}\left(t_{\alpha}\right)=1$ we only demand that $f_{\alpha}^{-1}\left(1\right)\cap U_{t_{\alpha}}^{\alpha}\ne\varnothing$ (which we can achieve by Lemma~\ref{pervurisohn}).

Now, according to this modified result, there is a net in $\left[\0,\1\right]_{E}$ which is not order-null but such that its every subnet contains a further order-null subnet. This shows that order convergence does not have the Choquet property.

(v)$\Rightarrow$(vi): It follows from Corollary~\ref{pervatomic} that $E^{\delta}$ is atomic, and so isomorphic to an order dense ideal in $\R^{X}$ for some set $X$ (see \cite[Theorems~1.40 and~1.78]{ab0}). As $E$ is order dense in $E^{\delta}$, it is order dense in $\R^{X}$, and so contains all atoms of the latter, in particular, $\1_{\left\{x\right\}}$ for every $x\in X$.

(vi)$\Rightarrow$(ii),(iii): First, let $F$ be the ideal generated by $E$ in $\R^{X}$. Then, by Proposition~\ref{pervid}, $E$ is order dense and majorizing, hence supremum-dense in $F$, and so by Proposition~\ref{ocp}, order convergence in $E$ is the restriction of order convergence in $F$. Now, by \cite[Proposition~2.5]{abt}, order convergence in order bounded sets in $F$ agrees with the pointwise convergence, which is topological, and these sets are order pre-compact.
\end{proof}

\begin{corollary}\label{automaticlat}
For a pervasive Archimedean ordered vector space $E$ the following conditions are equivalent:
\item[(i)] $E$ is order locally compact;
\item[(ii)] Every order interval in $E$ is order compact;
\item[(iii)] $E$ is an atomic Dedekind complete vector lattice;
\item[(iv)] There is a set $X$ such that $E$ embeds as an ideal of $\R^{X}$ with $c_{00}\left(X\right)\subseteq E$.
\end{corollary}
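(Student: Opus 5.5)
I will prove the cycle (iii)$\Rightarrow$(iv)$\Rightarrow$(ii)$\Rightarrow$(i)$\Rightarrow$(iii), with (ii)$\Leftrightarrow$(i) handled exactly as in the proof of Theorem~\ref{ordprec}.

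\textbf{(iii)$\Rightarrow$(iv).} Since $E$ is an atomic Dedekind complete vector lattice, it is isomorphic to an order dense ideal of $\R^{X}$ (see \cite[Theorems~1.40 and~1.78]{ab0}), where $X$ indexes a maximal disjoint family of atoms. Being order dense in $\R^{X}$, $E$ contains every atom $\1_{\{x\}}$ of $\R^{X}$, so $c_{00}(X)\subseteq E$.

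\textbf{(iv)$\Rightarrow$(ii).} An order interval of $E$ is an order interval $[f,g]$ of the ideal $E\subseteq\R^{X}$, and $[f,g]$ is a product of compact real intervals. By the last claim of Theorem~\ref{ordprec} (equivalently \cite[Proposition~2.5]{abt}), order convergence on $[f,g]$ is pointwise convergence. Here $E$ is order dense and majorizing in itself, and is full in $\R^{X}$, so no passage to an ideal is needed. By Tychonoff, every net in $[f,g]$ has a pointwise convergent subnet, and its limit lies in $[f,g]\subseteq E$. Hence $[f,g]$ is order compact.

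\textbf{(ii)$\Leftrightarrow$(i).} For (ii)$\Rightarrow$(i): an order convergent net is eventually contained in an order interval, which is order compact. For (i)$\Rightarrow$(ii), use the net $e_{(n,f)}:=\frac1n f$ on $\N\times[0_E,e]$ from the proof of Theorem~\ref{ordprec}. It is order null, so some $[0_E,\frac1n e]$ is contained in an order compact set $C$. Then $[0_E,\frac1n e]$ is an order closed subset of $C$ by Proposition~\ref{oc}, hence order compact, and by scaling and translation every interval is order compact.

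\textbf{(ii)$\Rightarrow$(iii): the main step.} Order compact intervals are order pre-compact, since an order convergent subnet is order Cauchy. So Theorem~\ref{ordprec} gives that $E$ is atomic. It remains to show that $E$ is a Dedekind complete lattice, and for this I invoke Proposition~\ref{pervcomplete}: it suffices to show $E$ is monotone complete (condition (iii) there, using pervasiveness).

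Let $(e_\alpha)$ be increasing with upper bound $u$, and fix some $e_{\alpha_0}$. The tail lies in $[e_{\alpha_0},u]$, so by order compactness it has a subnet $e_{\alpha_\beta}\too e\in E$. For each $\alpha$, the set $[e_\alpha)$ is order closed and eventually contains the subnet, so $e\ge e_\alpha$. If $v\ge e_\alpha$ for all $\alpha$, then $(v]$ is order closed, so $e\le v$. Hence $e=\bigvee e_\alpha$; infima of decreasing nets are handled dually.

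The obstacle is mainly bookkeeping: checking that Proposition~\ref{pervcomplete} applies, i.e.\ that $E$ is directed (it is pervasive) and that monotone completeness is enough, which is its condition (iii).
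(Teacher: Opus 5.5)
Your proof is correct and follows the paper's route: (i)$\Leftrightarrow$(ii) as in Theorem~\ref{ordprec}, and (ii)$\Rightarrow$(iii) by showing that order compact intervals force monotone completeness and then invoking Proposition~\ref{pervcomplete} and Theorem~\ref{ordprec}. The only difference is that you argue (iii)$\Rightarrow$(iv)$\Rightarrow$(ii) directly (representation as an order dense ideal of $\R^{X}$, Tychonoff, and the pointwise description of order convergence on intervals), where the paper simply cites \cite{abt}; also, what you actually prove is (iii)$\Rightarrow$(iv)$\Rightarrow$(ii)$\Rightarrow$(iii) together with (i)$\Leftrightarrow$(ii), not the cycle you announce, but this suffices.
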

\begin{proof}
(i)$\Leftrightarrow$(ii) is proved analogously to the corresponding equivalence in Theorem~\ref{ordprec}. It is easy to see that order compactness of order intervals implies monotone completeness, and so in this case $E$ is an atomic Dedekind complete vector lattice, by virtue of Proposition~\ref{pervcomplete} and Theorem~\ref{ordprec}. Hence, we get (ii)$\Rightarrow$(iii), while (iii)$\Leftrightarrow$(iv)$\Rightarrow$(ii) were already proven in \cite{abt}.
\end{proof}

Recall that a metric space is compact if and only if every continuous real-valued function on it is bounded, and in this case every continuous real-valued function on it is uniformly continuous. One may wonder whether an analogous property can be used to characterize order compactness. The notion of uniform continuity with respect to a linear convergence structure is discussed in \cite[Section~2]{bctv}.

\begin{corollary}\label{ordco}
If $E$ is an atomic Dedekind complete vector lattice, then every order continuous function on order intervals in $E$ is bounded and order uniformly continuous.
\end{corollary}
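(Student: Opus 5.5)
By Corollary~\ref{automaticlat}, every order interval $\left[e,f\right]$ in $E$ is order compact. Order convergence on $\left[e,f\right]$ is the restriction of order convergence in $E$. By Theorem~\ref{ordprec}, it is topological there, and in fact it agrees with pointwise convergence. So I would realize $E$ as an ideal of $\R^{X}$ containing $c_{00}\left(X\right)$, via part~(iv) of Corollary~\ref{automaticlat}. Then $\left[e,f\right]$ is identified with the product $\prod_{x\in X}\left[e\left(x\right),f\left(x\right)\right]$ with the product topology. This is a compact Hausdorff topological space, and its order convergent nets are exactly the topologically convergent ones. Hence a function $\varphi\colon\left[e,f\right]\to\R$ is order continuous if and only if it is continuous for the product topology. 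Boundedness of $\varphi$ then follows at once from compactness of the product.

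For uniform continuity I would use the standard fact that a continuous function on a compact Hausdorff space is uniformly continuous with respect to its unique compatible uniformity. I then need to check that this uniformity agrees with the one induced by the linear convergence structure from \cite[Section~2]{bctv}. The target formulation is presumably the following: whenever $\left(g_{\alpha}\right),\left(h_{\alpha}\right)\subseteq\left[e,f\right]$ satisfy $g_{\alpha}-h_{\alpha}\too 0_{E}$, we have $\varphi\left(g_{\alpha}\right)-\varphi\left(h_{\alpha}\right)\to 0$.

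I would prove this by contradiction. Suppose instead that $\left|\varphi\left(g_{\alpha}\right)-\varphi\left(h_{\alpha}\right)\right|\ge\varepsilon$ along a cofinal subset. By order compactness, pass to a subnet along which $g_{\alpha}\too g\in\left[e,f\right]$, and to a further subnet with $h_{\alpha}\too h$. Linearity of order convergence (Proposition~\ref{ocl}, valid since a vector lattice is directed and $E$ is Archimedean) gives $g-h=\lim\left(g_{\alpha}-h_{\alpha}\right)=0_{E}$, so $g=h$. Order continuity then yields $\varphi\left(g_{\alpha}\right)\to\varphi\left(g\right)$ and $\varphi\left(h_{\alpha}\right)\to\varphi\left(g\right)$ along the subnet. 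This contradicts the $\varepsilon$-separation. The same subnet argument also reproves boundedness without invoking topology: if $\left|\varphi\left(g_{\alpha}\right)\right|\to\infty$ along a net, take an order convergent subnet and apply continuity.

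The main obstacle is making sure that the notion of order uniform continuity from \cite{bctv} really reduces to the net formulation above. If it is instead phrased via the convergence uniformity (filters of pairs), I would use that order convergence is topological and compact on intervals. The uniformity generated by $\mathrm{o}$ on $\left[e,f\right]$ is then the unique uniformity of a compact Hausdorff space, and the classical theorem applies. A secondary point is checking that subnets of pairs can be taken simultaneously; this works by taking successive subnets of the same index net.
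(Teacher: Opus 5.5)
Your proposal is correct and is essentially the paper's proof. The paper gets order compactness of order intervals from Corollary~\ref{automaticlat}, gets that order convergence on them is induced by a linear topology (the pointwise one) from Theorem~\ref{ordprec}, and then applies the standard facts about compact uniform spaces, exactly as in your first route. Your additional subnet argument is also valid, and it shows that only order compactness and additivity of order convergence are needed, not topologicality; even the second subnet for $(h_{\alpha})$ is superfluous, since $h_{\alpha}=g_{\alpha}-(g_{\alpha}-h_{\alpha})\too g$ directly.
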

\begin{proof}
By Corollary~\ref{automaticlat} and Theorem~\ref{ordprec}, the order intervals in $E$ are order compact, and order convergence in these intervals is induced by a linear topology. Hence, we may apply the properties of compact uniform spaces to order convergence in this case, and derive both claims.
\end{proof}

We now provide a partial converse to Corollary~\ref{ordco}, and an example showing that a full converse is false.

\begin{proposition}
If $E$ is an order continuous Banach lattice, then the following conditions are equivalent:
\item[(i)] $E$ is atomic;
\item[(ii)] For every $e\in E_{+}$, every order continuous real-valued function on $\left[0_{E},e\right]$ is bounded;
\item[(iii)] For every $e\in E_{+}$, every order continuous real-valued function on $\left[0_{E},e\right]$ is order uniformly continuous.
\end{proposition}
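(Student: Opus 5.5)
The implications (i)$\Rightarrow$(ii) and (i)$\Rightarrow$(iii) are already available. An order continuous Banach lattice is Dedekind complete, so when it is atomic, Corollary~\ref{ordco} applies directly. It remains to prove that each of (ii) and (iii) implies (i). I will argue by contraposition: assume $E$ is not atomic, and build an order continuous function on some interval $\left[0_{E},e\right]$ that is unbounded, and another that is not order uniformly continuous.

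Suppose $E$ is not atomic. Then there is $e>0_{E}$ such that $\left[0_{E},e\right]$ contains no atoms. The tool I plan to use is order continuity of the norm. In an order continuous Banach lattice, order convergence of a net implies norm convergence. Hence every norm continuous function on $\left[0_{E},e\right]$ is order continuous there. So it suffices to find norm continuous functions on $\left[0_{E},e\right]$ that are unbounded, respectively not order uniformly continuous.

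For (ii)$\Rightarrow$(i), I want to show that $\left[0_{E},e\right]$ is not norm compact. Then, since it is a metric space, it carries an unbounded norm continuous real function. Non-compactness should follow from atomlessness. I would use a Lyapunov-type splitting (as in \cite[Lemma~3.2]{abt}, or directly in the band generated by $e$) to find, for each $n$, pairwise disjoint nonzero components of $e$ that refine each other. This produces a sequence $\left(u_{n}\right)\subseteq\left[0_{E},e\right]$ of disjoint elements with norms bounded away from $0$. Such a sequence has no norm Cauchy subsequence. A concrete choice is $u_{n}:=$ components obtained by halving $e$ repeatedly via disjoint splittings; each $u_{n}$ can be kept of norm at least $c\|e\|$ by choosing the larger half in norm. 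One such construction is to set $u_{n}:=e_{n}-e_{n+1}$, where $e_{n}$ is a decreasing sequence of components with $\|e_{n}-e_{n+1}\|\ge\delta$. Securing this norm lower bound is where I expect the main difficulty. If it fails, I would instead use the Rademacher-type system: in the atomless band of $e$, which is representable as an ideal of $L_{0}$, take $r_{n}$ with $\left|r_{n}\right|=e$ and $\frac{e+r_{n}}{2}$ independent-like components, so that $\|\left(e+r_{n}\right)/2-\left(e+r_{m}\right)/2\|=\|e/2\|$-ish by symmetry. Either way, $\left[0_{E},e\right]$ contains an infinite $\varepsilon$-separated set $\{x_{n}\}$. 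Then $\varphi(x):=\sum_{n}n\max\left\{0,1-\frac{3}{\varepsilon}\|x-x_{n}\|\right\}$ is norm continuous and unbounded.

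For (iii)$\Rightarrow$(i), I would use the same separated sequence, which in addition tends to $0$ in order. For this I take the components $x_{n}$ disjoint, so $x_{n}\too 0_{E}$ by standard arguments in order continuous lattices, or at least along a subnet. Consider $\psi(x):=\sum_{n}\max\left\{0,1-\frac{3}{\varepsilon}\|x-x_{n}\|\right\}$. This function is norm continuous, hence order continuous, and $\psi(x_{n})=1$. To violate order uniform continuity, I would find $y_{n}$ with $x_{n}-y_{n}\too 0_{E}$ but $\psi(y_{n})=0$. Order uniform continuity would force $\psi(x_{n})-\psi(y_{n})\to0$, so this gives the contradiction. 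A natural choice is $y_{n}:=x_{n}+$ a shifted piece, or simply pairing $x_{2n}$ with $x_{2n+1}$: both are order null, so their difference is order null, while $\psi$ equals $1$ on both. That pairing does not work, so I need a modification of $\psi$, for instance alternating signs $\sum_{n}\left(-1\right)^{n}\max\{\cdots\}$. Then $\psi(x_{2n})-\psi(x_{2n+1})=2$ while $x_{2n}-x_{2n+1}\too0_{E}$. Hence $\psi$ is not order uniformly continuous.
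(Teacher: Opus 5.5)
There are two genuine gaps. The first is repairable: the non-compactness argument in (ii)$\Rightarrow$(i). The second is fatal as written: the strategy for (iii)$\Rightarrow$(i) cannot work. Your (i)$\Rightarrow$(ii),(iii) step and the reduction to norm continuous functions are fine.

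\textbf{(ii)$\Rightarrow$(i).} Your bump function settles this implication once you have an infinite $\varepsilon$-separated subset of $\left[0_{E},e\right]$. Producing that set is the whole content of the implication, and your main construction cannot work, because in an order continuous Banach lattice every order bounded disjoint sequence is norm null. Indeed, if $u_{n}\in\left[0_{E},e\right]$ are pairwise disjoint, then $\sum_{k\le n}u_{k}=\bigvee_{k\le n}u_{k}\le e$ is increasing and order bounded, hence norm convergent, so $\|u_{n}\|\to 0$. No choice of ``larger halves'' can give $\|e_{n}-e_{n+1}\|\ge\delta$ for a decreasing sequence of components. The Rademacher fallback can be repaired, but ``$\|e/2\|$-ish by symmetry'' is exactly the unproved step, since a lattice norm has no such symmetry. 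One repair:
\begin{itemize}
\item pick $x^{*}\in E^{*}_{+}$ with $x^{*}(e)>0$, and replace $e$ by its component in the carrier band of $x^{*}$, so that $x^{*}$ is strictly positive on $\left[0_{E},e\right]$;
\item use the induced strictly positive, countably additive measure on the atomless Boolean algebra of components of $e$ to split off independent components $c_{n}$ with $x^{*}(c_{n})=x^{*}(e)/2$;
\item conclude $\|c_{n}-c_{m}\|\ge x^{*}\left(\left|c_{n}-c_{m}\right|\right)/\|x^{*}\|=x^{*}(e)/\left(2\|x^{*}\|\right)$.
\end{itemize}
The paper avoids all this by quoting \cite[Theorem~1]{lipecky}: the order intervals of an order continuous Banach lattice are norm compact if and only if the lattice is atomic.

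\textbf{(iii)$\Rightarrow$(i).} Order convergence implies norm convergence in $E$, a fact you yourself rely on. So an $\varepsilon$-separated sequence can never satisfy $x_{2n}-x_{2n+1}\too 0_{E}$. Likewise, disjoint $x_{n}$ in $\left[0_{E},e\right]$ are order null but then also norm null, hence not separated. Worse, your $\psi$ is a sum of fixed-radius bumps with separated supports, so it is Lipschitz. It is therefore norm uniformly continuous, hence order uniformly continuous, and no choice of pairs can break it.

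The missing idea is that on $\left[0_{E},e\right]$ order uniform continuity coincides with norm uniform continuity. Whenever $\|x_{n}-y_{n}\|\to 0$, some subsequence satisfies $x_{n_{k}}-y_{n_{k}}\too 0_{E}$ \cite[Lemma~1.1]{tt}. So a counterexample only needs a norm continuous function that is not norm uniformly continuous. For instance, take bumps of radii $r_{n}\to 0$ around separated points $x_{n}$, paired with points $y_{n}\in\left[0_{E},e\right]$ at distance $r_{n}$ from $x_{n}$. The paper phrases this as (iii)$\Rightarrow$(ii): under (iii) every norm continuous function on $\left[0_{E},e\right]$ is norm uniformly continuous, and a uniformly continuous function on a bounded convex set is bounded.
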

\begin{proof}
Recall that order continuous Banach lattices are Dedekind complete (see \cite[Theorem~3.24]{ab0}), and so (i)$\Rightarrow$(ii),(iii) follow from Corollary~\ref{ordco}.

Let us show that order (uniform) continuity is equivalent to norm (uniform) continuity. Clearly, since order convergence is stronger than norm convergence, norm (uniform) continuity implies order (uniform) continuity. To prove the converse, assume that $\varphi\colon\left[0_{E},e\right]\to \R$ is uniformly order continuous, but not uniformly norm continuous. Then, there are $\left(e_{n}\right)_{n\in\N}$ and $\left(f_{n}\right)_{n\in\N}$ in $\left[0_{E},e\right]$ and $\varepsilon>0$ such that $e_{n}-f_{n}\to 0_{E}$ in norm, but $\left|\varphi\left(e_{n}\right)-\varphi\left(f_{n}\right)\right|\ge\varepsilon$, for all $n\in\N$. There is a subsequence $\left(n_{m}\right)_{m\in\N}$ such that $e_{n_{m}}-f_{n_{m}}\too 0_{E}$ (see \cite[Lemma~1.1]{tt}), which contradicts order uniform continuity, since $\left|\varphi\left(e_{n_{m}}\right)-\varphi\left(f_{n_{m}}\right)\right|\ge\varepsilon$, for every $m\in\N$. The ``non-uniform'' case is done analogously.

Now the condition (ii) is equivalent to boundedness of every norm continuous function on $\left[0_{E},e\right]$, for every $e\in E_{+}$. This condition is equivalent to compactness of $\left[0_{E},e\right]$, for every $e\in E_{+}$, which is further equivalent to atomicity of $E$ (see \cite[Theorem~1]{lipecky}). Hence, we get (ii)$\Rightarrow$(i).

Finally, the condition (iii) is equivalent to uniform norm continuity of every norm continuous function on $\left[0_{E},e\right]$, for every $e\in E_{+}$. It is easy to see that every uniformly norm continuous function on a convex norm bounded set is bounded. Hence, we get (iii)$\Rightarrow$(ii).
\end{proof}

\begin{example}\label{ordconst}
Let us show that order continuous functions on order intervals in $\Co\left[0,1\right]$ are constant. Note that this is well known for linear functionals (see e.g.\ \cite[Exercise~1.25]{ab0}). Let $g\in \Co\left[0,1\right]_{+}$ and let $\varphi\colon\left[\0,g\right]\to\R$ be order continuous. Assume that $\0<h\le g$ is such that $\varphi\left(h\right)\ne \varphi\left(\0\right)$. Without loss of generality we may assume that $\varphi\left(\0\right)=0$ and $\varphi\left(h\right)=1$. Then, $U:=\varphi^{-1}\left(-\8,\frac{1}{2}\right)$ is an order open neighborhood of $\0$ in $[\0, g]$. Arguing as in the proof of \cite[Example~10.7]{btw} (but multiplying all the involved functions by $h$), we construct a dense $X\subseteq \left[0,1\right]$ and an increasing sequence $\left(f_{n}\right)_{n\in\N}\subseteq U$ such that $f_{n}\left(x\right)\to h\left(x\right)$, for every $x\in X$. It follows that $f_{n}\too h$, hence by order continuity we get $\frac{1}{2}>\varphi\left(f_{n}\right)\to \varphi\left(h\right)=1$, contradiction.
\qed\end{example}

We conclude the subsection with an example of an atomless supremum-dense subspace of an atomic vector lattice (see Example~\ref{stratomless} for the converse). In this space order intervals are directed order pre-compact.

\begin{example}\label{poly}
Let $K:=\left\{\frac{1}{m}\right\}_{m\in\N}\cup\left\{0\right\}$. Then, $\Co\left(K\right)\simeq c$ is an atomic vector lattice. Let $E$ be the space of the restrictions of polynomials to $K$. Since $E$ is norm dense in $\Co\left(K\right)$, for every $f\in \Co\left(K\right)$ and $\varepsilon>0$ there is $e\in E$ such that $\|f-e\|\le\varepsilon$, hence $f-2\varepsilon\1\le e - \varepsilon \1 \le f$. This clearly implies that $E$ is supremum-dense in $\Co\left(K\right)$.

Next, we prove that $E$ has no atoms. Let $\0\ne e\in E_{+}$ and let $f\colon K\to \R$ be defined by $f\left(t\right):=te\left(t\right)$. Then, $f\in \left[\0,e\right]$. Assume that $f=re$, for some $r\in \R$. Then, for every $t\in K$ with $t\ne r$ we have $te\left(t\right)=re\left(t\right)$, hence $e\left(t\right)=0$. Therefore, $e$ has infinitely many zeros, which is impossible for a nonzero polynomial.

Let us show that bounded pointwise convergence on $\left\{\frac{1}{m}\right\}_{m\in\N}$ implies directed order convergence in $E$. Note that since order intervals in $E$ are pointwise pre-compact (as subsets of order intervals in the atomic vector lattice), this proves that they are also directed order pre-compact. Let $\left(e_{\alpha}\right)_{\alpha\in A}\subseteq E\cap\left[\0,\1\right]$ be pointwise null on $\left\{\frac{1}{m}\right\}_{m\in\N}$. For each $n\in \N$ let $q_{n} = 2\left(1-\frac{t}{2}\right)^{n}$. We see that $\left(q_{n}\right)_{n\in \N}$ is a decreasing sequence which converges to $\0$ pointwise on the dense set $\left\{\frac{1}{n}\right\}_{n\in\N}$, hence $q_{n}\downarrow \0$. Note that $q_{n}\left(0\right)=2$ and $q_{n}\left(\frac{1}{m}\right) > 0$, for all $m,n\in \N$.

Fix $n\in \N$. The set $U := q_{n}^{-1}\left(1,+\8\right)$ is an open neighborhood of $0$, hence there is $k\in \N$ such that $\left\{\frac{1}{m}\right\}_{m\ge k}\subseteq U$.
For every $l\in \{1,\ldots,k-1\}$ we have $q_{n}\left(\frac{1}{l}\right) > 0$, and since $\left(e_{\alpha}\right)_{\alpha\in A}$ is pointwise null on $\left\{\frac{1}{m}\right\}_{m\in\N}$, there is $\alpha_{l}$ such that $e_{\alpha}\left(\frac{1}{l}\right)\le q_{n}\left(\frac{1}{l}\right)$, whenever $\alpha\ge\alpha_{l}$. Find $\alpha_{0}\ge\alpha_{1},\ldots ,\alpha_{k-1}$. Then, if $\alpha\ge\alpha_{0}$ and $l \in \{1,\ldots, k-1\}$, we have $e_{\alpha}\left(\frac{1}{l}\right)\le q_{n}\left(\frac{1}{l}\right)$; if $x\in \left\{\frac{1}{m}\right\}_{m\ge k}\cup\left\{0\right\}$, then $e_{\alpha}\left(x\right)\le 1<q_{n}\left(x\right)$. In summary, $e_{\alpha}\le q_{n}$ when $\alpha\ge\alpha_{0}$, and so $\left\{q_{n}\right\}_{n\in\N}$ witnesses directed order convergence of $\left(e_{\alpha}\right)_{\alpha\in A}$ to $\0$.
\qed\end{example}

\subsection{Relative uniform and $\sigma$-order compactness}\label{ss33}

Throughout this section, $E$ is a pervasive Archimedean ordered vector space. We say that $\left(e_{\alpha}\right)_{\alpha\in A}\subseteq E$ is \emph{(directed) $\sigma$-order convergent} to $e$ if there is a countable (downward directed) $Q\subseteq E$ with $\bigwedge Q=0_{E}$ which dominates the tails of $\left(e_{\alpha}\right)_{\alpha\in A}$. Clearly, directed $\sigma$-order convergence ($\mathrm{d\sigma o}$) is weaker than relative uniform convergence, but stronger than both $\sigma$-order ($\sigma\mathrm{o}$) and directed order convergences, while the latter two are stronger than order convergence. In other words, $\mathrm{o}\le \mathrm{do},\sigma\mathrm{o}\le \mathrm{d\sigma o}\le \mathrm{ru}$. Let us consider a class of spaces where, except for $\mathrm{ru}$, all these convergences coincide.

\begin{proposition}\label{csp}
The following conditions are equivalent:
\item[(i)] $E$ has the countable supremum property (CSP), i.e.\ if $Q\subseteq E$ is such that $\bigwedge Q=0_{E}$, there is a countable $Q'\subseteq Q$ such that $\bigwedge Q'=0_{E}$;
\item[(ii)] $E$ has the directed countable supremum property, i.e.\ if $Q\subseteq E$ is such that $Q\downarrow 0_{E}$, there is a countable $Q'\subseteq Q$ such that $\bigwedge Q'=0_{E}$ (in this case $Q'$ may be taken to be a decreasing sequence);
\item[(iii)] $E^{\delta}$ has the CSP;
\item[(iv)] Each order bounded disjoint set in $E$ is countable;
\item[(v)] Order convergence and $\sigma$-order convergence agree on $E$;
\item[(vi)] Order convergence and directed $\sigma$-order convergence agree on $E$.
\end{proposition}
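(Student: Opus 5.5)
The plan is to move everything to $E^{\delta}$ and use the pervasive transfer results, Proposition~\ref{pervdird} and Theorem~\ref{pervasive}. The cycle I aim for is (i)$\Rightarrow$(ii)$\Rightarrow$(iii)$\Rightarrow$(iv)$\Rightarrow$(iii)$\Rightarrow$(i) together with (i)$\Rightarrow$(vi)$\Rightarrow$(v)$\Rightarrow$(ii). We may view $E$ as an order dense, majorizing and supremum-dense subspace of $E^{\delta}$, so infima of subsets of $E$ computed in $E$ agree with those computed in $E^{\delta}$ (Corollary~\ref{odm}).

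(i)$\Rightarrow$(ii) is trivial. The decreasing-sequence refinement holds because, given a countable $Q'=\{q_{n}\}\subseteq Q$ with $Q$ downward directed, we can inductively choose $p_{n}\in Q$ with $p_{n}\le q_{n}$ and $p_{n}\le p_{n-1}$. For (ii)$\Rightarrow$(iii), recall that in a vector lattice the CSP only needs to be checked for downward directed sets, since one may replace $Q$ by $Q^{\wedge}$ and pick out countably many finite meets. So let $P\subseteq E^{\delta}_{+}$ satisfy $P\downarrow0$; passing to a tail, $P\le e_{0}\in E$. Proposition~\ref{pervdird} gives a downward directed $Q\subseteq E\cap(P+E^{\delta}_{+})$ with $\bigwedge Q=0$. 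By (ii) there is a sequence $q_{n}\in Q$ with $\bigwedge q_{n}=0$. Each $q_{n}$ dominates some $p_{n}\in P$, so $\bigwedge p_{n}=0$. (iii)$\Rightarrow$(iv) is the standard vector-lattice fact: an uncountable order bounded disjoint set $D\le u$ yields the set of $u-\sum_{F}d$ over finite $F\subseteq D$, which decreases to $u-\bigvee D$, and no countable subfamily reaches that infimum. Here disjointness is computed in $E^{\rho}\subseteq E^{\delta}$.

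For (iv)$\Rightarrow$(iii), take an uncountable order bounded disjoint family in $E^{\delta}$ and use order density to shrink each member to a nonzero element of $E$. This produces an uncountable disjoint order bounded set in $E$. Hence $E^{\delta}$ has no uncountable order bounded disjoint sets, and for Dedekind complete lattices this is equivalent to the CSP (a classical result, e.g.\ via the Zorn/disjoint-system argument in \cite{lz}). For (iii)$\Rightarrow$(i), given $Q\subseteq E$ with $\bigwedge Q=0$ in $E$, we have $\bigwedge Q=0$ in $E^{\delta}$ too. The CSP of $E^{\delta}$ gives a countable $Q'$ with $\bigwedge_{E^{\delta}}Q'=0$, and therefore also in $E$.

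(i)$\Rightarrow$(vi): by Proposition~\ref{o=do} an order null net is witnessed by a downward directed $P\subseteq E$. Applying (ii) to $P$ yields a decreasing sequence in $P$ with infimum $0$ that still dominates the tails, and this gives directed $\sigma$-order convergence. (vi)$\Rightarrow$(v) follows from $\mathrm{o}\le\sigma\mathrm{o}\le\mathrm{d\sigma o}$. For (v)$\Rightarrow$(ii), let $Q\downarrow0$ and consider $Q$ as a net indexed by itself. It order converges to $0$, so by (v) some countable $R\subseteq E$ with $\bigwedge R=0$ dominates its tails. For each $r\in R$ pick $q_{r}\in Q$ with $q_{r}\le r$ (the tail at $q_{r}$ is below $r$). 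Then $Q'=\{q_{r}\}$ is countable with $\bigwedge Q'\le\bigwedge R=0$ and $Q'\ge0$.

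The main obstacle is (iv)$\Rightarrow$(iii), namely showing that the countable disjointness condition implies the CSP in $E^{\delta}$. I would cite the known result for Archimedean (Dedekind complete) vector lattices rather than reprove it. If a direct proof were required, the approach would be to take $P\downarrow0$ below $u$, and choose via Zorn a maximal disjoint family of components $(u-p)^{+}$-type pieces, each dominated by some $u-p_{i}$. By (iv) this family is countable, and one then uses countably many $p_{i}$ to show that the infimum is $0$.
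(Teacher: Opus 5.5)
Your proposal is correct and follows essentially the same route as the paper: Corollary~\ref{pervdird}, applied to finite meets, carries (ii)$\Rightarrow$(iii); the classical disjoint-set characterization of the CSP, together with order density, gives (iii)$\Leftrightarrow$(iv); Proposition~\ref{o=do} plus the choice of $q_{r}\le r$ handles the convergence conditions. The differences are cosmetic: you route (vi) through (i) instead of (ii), and you sketch the easy half of the lattice equivalence rather than citing it. In that sketch, replace $u-\bigvee D$ by $0$ via a translation, and use absolute values for non-positive members of $D$.
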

\begin{proof}
(iii)$\Rightarrow$(i) follows from the fact that, for $Q\subseteq E$, we have $\bigwedge_{E} Q=0_{E}$ if and only if $\bigwedge_{E^{\delta}} Q=0_{E^{\delta}}$.

(i)$\Rightarrow$(ii): Existence of some countable $Q'\subseteq Q$ with $\bigwedge Q'=0_{E}$ follows immediately from (i). If $Q'=\left\{q_{n}\right\}_{n\in\N}$, we can construct a sequence $\left(e_{n}\right)_{n\in\N}\subseteq Q$ as follows. Let $e_{1}:=q_{1}$, and if $e_{1},\dots,e_{n}$ are already selected, then since $Q$ is directed downward, it contains some $e_{n+1}\le e_{n},q_{n+1}$. It is easy to see that $\bigwedge_{n\in\N}e_{n}=0_{E}$.

(ii)$\Rightarrow$(iii): It follows from Theorem~\ref{pervasive} that $E$ is an order dense subspace of $E^{\delta}$. Suppose that $P\subseteq E^{\delta}$ is such that $\bigwedge P=0_{E^{\delta}}$. According to Corollary~\ref{pervdird} applied to $P^{\wedge}$, there is $Q\subseteq E\cap \bigcup_{p\in P^{\wedge}}\left(p+E_{+}^{\delta}\right)$ such that $Q\downarrow 0_{E^{\delta}}$. By our assumption, there is a countable $Q'\subseteq Q$ such that $Q'\downarrow 0_{E}$. For every $q\in Q'$ there is a finite $P_{q}\subseteq P$ such that $\bigwedge P_{q}\le q$. It follows now that $P':=\bigcup_{q\in Q'}P_{q}$ is countable, and it is easy to see that $\bigwedge P'=0_{E^{\delta}}$.

(iii)$\Leftrightarrow$(iv): According to \cite[Theorem~29.3]{lz}, $E^{\delta}$ has the CSP if and only if each disjoint order bounded set there is countable. Clearly, this condition implies (iv). For the converse, note that if $D\subseteq E^{\delta}_{+}\setminus\left\{0_{E^{\delta}}\right\}$ is disjoint, uncountable and order bounded, then for every $d\in D$ there is $e_{d}\in E\cap \left(0_{E^{\delta}},d\right]$; then $\left\{e_{d}:d\in D\right\}$ is disjoint, uncountable and order bounded in $E$, which is a contradiction.

(ii)$\Rightarrow$(vi) follows from the definitions and the fact that order and directed order convergences agree on pervasive spaces (Proposition~\ref{o=do}). (vi)$\Rightarrow$(v) follows from the fact that $\sigma$-order convergence is stronger than order convergence, but weaker than directed $\sigma$-order convergence.

(v)$\Rightarrow$(ii): Assume that $Q\subseteq E$ is such that $Q\downarrow 0_{E}$. Then, $Q\too 0_{E}$, hence $Q\toso 0_{E}$, and so there is a countable $P\subseteq E$ such that $\bigwedge P=0_{E}$, and $P$ dominates the tails of $Q$. This means that for every $p\in P$ there is $q_{p}\in Q$ with $q_{p}\le p$. It now follows that $Q':=\left\{q_{p}:p\in P\right\}$ is a countable subset of $Q$ with $\bigwedge Q'=0_{E}$.
\end{proof}

The following example exhibits a pervasive Archimedean space with an order unit such that $\sigma\mathrm{o}<\mathrm{d\sigma o}$.

\begin{example}\label{sosdo}
Recall that if $\mathcal{F}$ is a filter on a set $X$, then $r\in \R$ is the limit of $f\colon X\to\R$ along $\mathcal{F}$ if for every $\varepsilon>0$ we have $f^{-1}\left(r-\varepsilon,r+\varepsilon\right)\in\mathcal{F}$. It is easy to see that the limit defined this way is unique, and if $\mathcal{F}$ is an ultrafilter and $f$ is bounded, then the limit along $\mathcal{F}$ always exists (see \cite[17.5(ii)]{schechter}).

Let $F$ be the space of all bounded Borel functions on $\left(0,1\right]$. Since $\left(\left(0,r\right]\right)_{0<r\le 1}$ is a decreasing net of subsets of $\left(0,1\right]$, there is an ultrafilter $\mathcal{U}$ on $\left(0,1\right]$ which contains each subset. Taking a limit with respect to $\mathcal{U}$ defines a vector lattice homomorphism from $\ell_{\8}\left(0,1\right]$ into $\R$. We denote its restriction to $F$ by $\varphi$. For every $t\in \left(0,1\right]$ we have that $\varphi\left(\1_{\left(0,t\right]}\right)=\varphi\left(\1_{\left(0,t\right)}\right)=1$, hence $\1_{\left\{t\right\}}\in\ker\varphi$.

Let us show that $E:=\left\{f\in F: \varphi\left(f\right)=\int f\right\}$ is order dense in $F$. Indeed, if $f>\0$, there is $t\in \left(0,1\right]$ such that $r:=f\left(t\right)>0$. Then, $f\ge r\1_{\left\{t\right\}}$, and since $\1_{\left\{t\right\}}\in\ker\varphi$ and $\int \1_{\left\{t\right\}}=0$, we conclude that $r\1_{\left\{t\right\}}\in E$. Also, clearly $\1\in E$ is an order unit.

We order $\mathcal{U}$ by reverse inclusion, and for $U\in\mathcal{U}$ let $x_{U}$ be an arbitrary point in $U$. Let us show that for every $f\in F$ we have $\lim_{U\in \mathcal{U}}f\left(x_{U}\right)=\lim_{\mathcal{U}}f=\varphi\left(f\right)$. Indeed, for every $\varepsilon>0$, by the definition of the limit with respect to a filter, we get
\[U_{\varepsilon}:=f^{-1}\left(\varphi\left(f\right)-\varepsilon,\varphi\left(f\right)+\varepsilon\right)\in\mathcal{U};\]
then, for every $U\subseteq U_{\varepsilon}$ we have $x_{U}\in U\subseteq U_{\varepsilon}$, and so $\left|f\left(x_{U}\right)-\varphi\left(f\right)\right|<\varepsilon$. Therefore, for every $f\in E$ we have $\int f=\varphi\left(f\right)=\lim_{U\in \mathcal{U}}f\left(x_{U}\right)$.

Let us show that $\left(\1_{\left\{x_{U}\right\}}\right)_{U\in\mathcal{U}}$ is not $\mathrm{d\sigma o}$-null. Assume that $\left\{h_{n}\right\}_{n\in\N}\subseteq E$ is decreasing with $\bigwedge_{n\in\N}h_{n}=\0$ and dominates the tails of our net. For every $n\in\N$ we have that $\1_{\left\{x_{U}\right\}}\le h_{n}$, hence $h_{n}\left(x_{U}\right)\ge 1$ for small enough $U$. It follows that $\int h_{n}=\lim_{U\in \mathcal{U}}h_{n}\left(x_{U}\right)\ge 1$, for every $n\in\N$. On the other hand, as $E$ is order dense in $F$, it follows from Corollary~\ref{odm} that $\bigwedge_{n\in\N}h_{n}=\0$ in $F$. Since the infimum of a countable collection of Borel functions is its pointwise infimum, we conclude that $h_{n}\left(x\right)\downarrow 0$, for every $x\in\left(0,1\right]$. As a result, we get $1\le \int h_{n}\downarrow 0$, contradiction.

Finally, we prove that $\1_{\left\{x_{U}\right\}}\toso\0$. For $n\in\N$ let $A_{n}:=\left(0,\frac{1}{2^{n}}\right]$, $B_{n}:=\left(\frac{1}{2}, \frac{1}{2}+\frac{1}{2^{n}}\right]$ and $g_{n}:=\1_{A_{n}}+\left(2^{n}-1\right)\1_{B_{n}}$. We have that $\varphi\left(g_{n}\right)=1=\int g_{n}$, hence $g_{n}\in E$, for every $n\in\N$. Moreover, $\bigwedge_{n\in\N}g_{n}=\0$ pointwise, hence in $E$. For $n\in\N$ take $A_{n}\in\mathcal{U}$ as an index. Then, for every $U\subseteq A_{n}$ we have $x_{U}\in U\subseteq A_{n}$, hence $\1_{\left\{x_{U}\right\}}\le \1_{A_{n}}\le g_{n}$. Thus, $\left\{g_{n}\right\}_{n\in\N}$ dominates the tails of our net.
\qed\end{example}

We denote the collection of all functions on a set $X$ whose support is countable by $\R^{X}_{csp}$. We define (local) [pre-]compactness with respect to $\sigma$-order or relative uniform convergences in the same way as with respect to order convergence. Clearly, relative uniform compactness implies $\sigma$-order compactness, and so on.

\begin{proposition}\label{socomp}
The following conditions are equivalent:
\item[(i)] $E$ is $\sigma$-order locally pre-compact;
\item[(ii)] Every order interval in $E$ is $\sigma$-order pre-compact;
\item[(iii)] $E$ is atomic and has the CSP;
\item[(iv)] There is a set $X$ such that $E$ embeds as a subspace of $\R^{X}_{csp}$ with $c_{00}\left(X\right)\subseteq E$.
\end{proposition}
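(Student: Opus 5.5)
The plan is to follow the proof of Theorem~\ref{ordprec}, with Proposition~\ref{csp} used to move between $\sigma$-order and order convergence once the CSP is available.

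\emph{(i)$\Leftrightarrow$(ii).} This is the same argument as in Theorem~\ref{ordprec}. The net $e_{(n,f)}:=\frac{1}{n}f$ over $\N\times\left[0_{E},e\right]$ has tails $\left[0_{E},\frac{1}{n}e\right]$. It is dominated by the countable set $\left\{\frac{1}{n}e\right\}_{n\in\N}$, which has infimum $0_{E}$, so it is $\sigma$-order null. Local pre-compactness then gives some $\left[0_{E},\frac{1}{n}e\right]$ that is $\sigma$-order pre-compact, and scaling and translating gives (ii). Conversely, a $\sigma$-order convergent net is eventually order bounded.

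\emph{(ii)$\Rightarrow$(iii).} Since $\sigma\mathrm{o}\ge\mathrm{o}$, a $\sigma$-order Cauchy subnet is order Cauchy. So (ii) implies that order intervals are order pre-compact, and Theorem~\ref{ordprec} yields atomicity. For the CSP I use condition (iv) of Proposition~\ref{csp}. Suppose $D\subseteq\left[0_{E},u\right]$ is an uncountable disjoint set of nonzero elements. Using Corollary~\ref{pervatomic} (atoms below each $d$) and rescaling, we may take the elements of $D$ to be atoms $a_{d}\le u$. The net will be indexed by finite subsets $A\subseteq D$ ordered by inclusion, or by a suitable ultrafilter-type index, and built from elements $a_{d}$ that run outside every finite set. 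I need a net in $\left[0_{E},u\right]$ none of whose subnets is $\sigma$-order Cauchy. The natural candidate is $(a_{d})_{d\in D}$, indexed by $D$ with the cofinite filter, passed to a universal subnet. Any subnet is order null: in the atomic picture (vi) of Theorem~\ref{ordprec} it is pointwise null. But it cannot be $\sigma$-order Cauchy. Suppose a countable $Q$ dominated the tails of $a_{d_{\gamma}}-a_{d_{\gamma'}}$ with $\bigwedge Q=0_{E}$. Each $q\in Q$ is then $\ge$ the atoms from a tail, hence $\ge$ some multiple of uncountably many disjoint atoms' coordinates. The point to check is that each $q$ dominates $a_{d}$ on all but countably many $d$ of the tail, so a countable intersection of cocountable families is nonempty, giving some $a_{d}\le$ (a constant times) every $q$ and contradicting $\bigwedge Q=0$. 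I expect this to be the main obstacle: getting the index set so that every subnet's tails involve co-countably many atoms. Using an ultrafilter on $D$ that contains all co-countable sets (a uniform ultrafilter) should make this work, since each tail of any subnet then lies in a set belonging to that ultrafilter.

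\emph{(iii)$\Rightarrow$(iv).} By Theorem~\ref{ordprec}, $E$ embeds in $\R^{X}$ with $c_{00}(X)\subseteq E$, with atoms $\1_{\{x\}}$. If some $e\in E_{+}$ had uncountable support, then the elements $\1_{\{x\}}$, $x\in\supp e$, scaled by $e(x)$, would form an uncountable disjoint set order bounded by $e$. This contradicts Proposition~\ref{csp}(iv), so $E\subseteq\R^{X}_{csp}$, using $E=E_{+}-E_{+}$.

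\emph{(iv)$\Rightarrow$(ii).} Order intervals are order pre-compact by Theorem~\ref{ordprec}, so every net in $[0,u]$ has an order Cauchy subnet. Now $E\subseteq\R^{X}_{csp}$ has countable disjoint order bounded sets, since such a set lies in the countable support of its bound. So $E$ has the CSP, and Proposition~\ref{csp}(v) gives $\mathrm{o}=\sigma\mathrm{o}$. Hence the order Cauchy subnet is $\sigma$-order Cauchy, since Cauchyness is convergence of the doubly indexed net.
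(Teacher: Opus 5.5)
Your arguments for (i)$\Leftrightarrow$(ii), for atomicity in (ii)$\Rightarrow$(iii), for (iii)$\Rightarrow$(iv) and for (iv)$\Rightarrow$(ii) are fine. The gap is in deriving the CSP from (ii): a net of disjoint atoms cannot show that an order interval fails to be $\sigma$-order pre-compact. Your ultrafilter fix does not work. Take an ultrafilter $\mathcal{U}$ on $D$ containing the co-countable sets. The tails of any subnet of the net along $\mathcal{U}$ are only members of $\mathcal{U}$, not co-countable sets. A non-principal ultrafilter is closed under countable intersections only if there is a measurable cardinal $\le|D|$, and by Ulam's theorem there is none on $\omega_{1}$. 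So one can have $V_{n}\in\mathcal{U}$ with $\bigcap_{n}V_{n}=\varnothing$, and your step about countable intersections of co-countable families does not apply.

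In fact the strategy fails outright for $E=\ell_{\infty}(\omega_{1})$, which is atomic and lacks the CSP: every net of atoms in $\left[\0,\1\right]$ has a $\sigma$-order Cauchy subnet. Pass to a universal subnet $\bigl(c_{\delta}\1_{\{x_{\delta}\}}\bigr)_{\delta}$. If the ultrafilter of $(x_{\delta})$ is principal at $x_{0}$, the subnet is eventually $c_{\delta}\1_{\{x_{0}\}}$ with $(c_{\delta})$ convergent, hence relatively uniformly and so $\sigma$-order convergent. Otherwise choose $V_{n}$ in that ultrafilter with empty intersection; then $\bigl\{\1_{V_{1}\cap\dots\cap V_{n}}\bigr\}_{n\in\N}$ witnesses that the subnet is $\sigma$-order null. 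The net along the co-countable filter fails for the same reason, since it has such an ultrafilter subnet.

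What is needed is a monotone net, and the form of the CSP to negate is condition (ii) of Proposition~\ref{csp}, as in the paper. Given $Q\downarrow 0_{E}$, shrink $Q$ into some $\left[0_{E},q_{0}\right]$ and regard it as a decreasing net indexed by itself. Let $(e_{\alpha})$ be a $\sigma$-order Cauchy subnet, witnessed by a countable $P$ with $\bigwedge P=0_{E}$. For each $p\in P$ there is $\alpha_{p}$ with $e_{\alpha_{p}}-e_{\alpha}\le p$ for all $\alpha\ge\alpha_{p}$. The tails of a subnet of a net decreasing to $0_{E}$ still have infimum $0_{E}$, so this gives $e_{\alpha_{p}}\le p$. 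Hence $\{e_{\alpha_{p}}:p\in P\}$ is a countable subset of $Q$ with infimum $0_{E}$. With this in place of your atom construction, the rest of your proof goes through.
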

\begin{proof}
(i)$\Leftrightarrow$(ii) is proved analogously to the corresponding equivalence in Theorem~\ref{ordprec}.

(ii)$\Rightarrow$(iii): Let us first show that $E$ satisfies the condition (ii) of Proposition~\ref{csp}. Assume that $Q\downarrow 0_{E}$. Without loss of generality we may assume that this net is contained in an order interval, and so contains a $\sigma$-order Cauchy subnet $\left(e_{\alpha}\right)_{\alpha\in A}$. There is a countable $P\subseteq E$ with $\bigwedge P=0_{E}$ which dominates the tails of $\left(e_{\alpha}\right)_{\alpha\in A}$. For every $p\in P$ there is $\alpha_{p}$ such that $e_{\alpha_{p}}-e_{\alpha}\le p$, for every $\alpha\ge\alpha_{p}$; in fact, since $e_{\alpha}\downarrow 0_{E}$, it follows that $e_{\alpha_{p}}\le p$. Then, $Q':=\left\{e_{\alpha_{p}}:p\in P\right\}$ is a countable subset of $Q$ with $\bigwedge Q'=0_{E}$.

It now follows from Proposition~\ref{csp} that order and $\sigma$-order convergences agree on $E$. In particular, every order interval in $E$ is order pre-compact, and so $E$ is atomic, according to Theorem~\ref{ordprec}. The converse is proved by reversing the last two arguments.

(iii)$\Rightarrow$(iv): According to Theorem~\ref{ordprec}, $E$ is isomorphic to a subspace of $\R^{X}$, for some set $X$, with $\1_{\left\{x\right\}}\in E$ for every $x\in X$. Since $E$ also has the CSP, in light of condition (iv) in Proposition~\ref{csp}, $E\subseteq\R^{X}_{csp}$. The converse is straightforward.
\end{proof}

\begin{corollary}\label{socomp2}
The following conditions are equivalent:
\item[(i)] $E$ is $\sigma$-order locally compact;
\item[(ii)] Every order interval in $E$ is $\sigma$-order compact;
\item[(iii)] $E$ is an atomic Dedekind complete vector lattice with the CSP;
\item[(iv)] There is a set $X$ such that $E$ embeds as an ideal of $\R^{X}_{csp}$ with $c_{00}\left(X\right)\subseteq E$.
\end{corollary}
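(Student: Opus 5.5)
The plan is to mirror the proof of Corollary~\ref{automaticlat}, replacing order convergence by $\sigma$-order convergence and using Proposition~\ref{socomp} in place of Theorem~\ref{ordprec}. Throughout, $E$ is pervasive and Archimedean.

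\emph{Step 1: (i)$\Leftrightarrow$(ii).} This is the argument of (i)$\Leftrightarrow$(ii) in Theorem~\ref{ordprec}. For (i)$\Rightarrow$(ii), the net $e_{(n,f)}:=\frac{1}{n}f$ on $\N\times\left[0_{E},e\right]$ is $\sigma$-order null, witnessed by the countable set $\left\{\frac{1}{n}e\right\}_{n\in\N}$. By local compactness some tail $\left[0_{E},\frac{1}{n}e\right]$ lies in a $\sigma$-order compact set. I then need that a $\sigma$-order closed subset of a $\sigma$-order compact set is $\sigma$-order compact. Order intervals are $\sigma$-order closed, because $\sigma$-order convergence implies order convergence and Proposition~\ref{oc} applies. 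Scaling then gives (ii). For (ii)$\Rightarrow$(i), every $\sigma$-order convergent net is eventually order bounded.

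\emph{Step 2: (ii)$\Rightarrow$(iii).} Compactness gives pre-compactness, since a convergent subnet is Cauchy. Proposition~\ref{socomp} then yields that $E$ is atomic with the CSP. By Proposition~\ref{csp}, order and $\sigma$-order convergence coincide on $E$, so every order interval is order compact, and Corollary~\ref{automaticlat} makes $E$ an atomic Dedekind complete vector lattice.

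\emph{Step 3: (iii)$\Rightarrow$(iv).} Corollary~\ref{automaticlat} embeds $E$ as an ideal of $\R^{X}$ containing $c_{00}(X)$. By condition (iv) of Proposition~\ref{csp}, every order bounded disjoint family is countable, and this forces each element of $E$ to have countable support. So $E$ is an ideal of $\R^{X}_{csp}$, because ideals of $\R^X$ contained in $\R^{X}_{csp}$ are ideals there.

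\emph{Step 4: (iv)$\Rightarrow$(ii).} An ideal $E$ of $\R^{X}_{csp}$ containing $c_{00}(X)$ is also an ideal of $\R^{X}$, since $\R^{X}_{csp}$ is itself an ideal of $\R^{X}$. Corollary~\ref{automaticlat} then makes order intervals order compact. Each element of $E$ has countable support, so $E$ has the CSP by Proposition~\ref{csp}(iv). Proposition~\ref{csp}(v) identifies order and $\sigma$-order convergence, so order intervals are $\sigma$-order compact.

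The main, if modest, obstacle is the closed-subset point in Step 1: a subnet of a net in the interval that converges in the ambient compact set must have its limit in the interval. This follows from Proposition~\ref{oc}, because $\sigma\mathrm{o}\ge\mathrm{o}$ and limits are unique.
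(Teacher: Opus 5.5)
Your proposal is correct and follows essentially the same route as the paper. The paper proves (i)$\Leftrightarrow$(ii) by the argument of Theorem~\ref{ordprec}; your closedness of order intervals under $\mathrm{o}$ is what the compact version needs there. It proves (ii)$\Rightarrow$(iii) exactly as you do, via Proposition~\ref{socomp}, Proposition~\ref{csp} and Corollary~\ref{automaticlat}. The differences are minor: in (iii)$\Rightarrow$(iv) you start from the ideal embedding of Corollary~\ref{automaticlat} and read off countable supports from condition~(iv) of Proposition~\ref{csp}, whereas the paper combines Proposition~\ref{socomp} with a textbook result on Dedekind complete vector lattices; and you prove (iv)$\Rightarrow$(ii) directly rather than going through (iii). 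Both versions work.
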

\begin{proof}
(i)$\Leftrightarrow$(ii) is proved analogously to the corresponding equivalence in Theorem~\ref{ordprec}. (iii)$\Rightarrow$(iv) follows from combining Proposition~\ref{socomp} with \cite[Theorem~1.40]{ab0}, while the converse is straightforward.

(ii)$\Rightarrow$(iii): It follows from Proposition~\ref{socomp} that $E$ has the CSP, and so order and $\sigma$-order convergences agree on $E$. Hence, order intervals in $E$ are order compact, and so, by Corollary~\ref{automaticlat}, $E$ is an atomic Dedekind complete vector lattice. The converse is proved by reversing these arguments.
\end{proof}

\begin{remark}
The analogues of the conditions (i) and (ii) in Proposition~\ref{socomp} and Corollary~\ref{socomp2} but for directed $\sigma$-order convergence can be added to the lists.
\qed\end{remark}

For $f,g\in\R^{X}$ we say that $g$ \emph{majorates} $f$ if $\supp f\subseteq\supp g$, and for every $n\in\N$ the set $\left\{x\in X: g\left(x\right)<n\left|f\left(x\right)\right|\right\}$ is finite. Note that the union of these sets is the support of $f$, and so $f\in \R^{X}_{csp}$.

\begin{proposition}\label{rucomp}
The following conditions are equivalent:
\item[(i)] $E$ is relatively uniformly locally pre-compact;
\item[(ii)] Every order interval in $E$ is relatively uniformly pre-compact;
\item[(iii)] $E$ is atomic and such that order and relative uniform convergences agree on $E$;
\item[(iv)] There is a set $X$ such that $E$ embeds as a subspace of $\R^{X}$ with $c_{00}\left(X\right)\subseteq E$ and such that every $e\in E$ is majorated by an element of $E$.
\end{proposition}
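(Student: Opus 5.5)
The plan is to follow the template of Proposition~\ref{socomp}, with relative uniform convergence in place of $\sigma$-order convergence, and to supply a separate argument for the majoration condition. The equivalence (i)$\Leftrightarrow$(ii) goes exactly as in Theorem~\ref{ordprec}. For (i)$\Rightarrow$(ii) one uses the net $e_{(n,f)}:=\frac1n f$ with $f\in[0_E,e]$: it is $\mathrm{ru}$-null with regulator $e$, so a tail $[0_E,\frac1n e]$ is $\mathrm{ru}$-pre-compact, and hence so is $[0_E,e]$ after scaling and translating. The converse holds because $\mathrm{ru}$-convergent nets are eventually order bounded.

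For (ii)$\Rightarrow$(iii), first note that $\mathrm{ru}$-Cauchy implies order Cauchy, by Proposition~\ref{oca} applied to the doubly indexed net. Hence order intervals are order pre-compact, and Theorem~\ref{ordprec} gives that $E$ is atomic and realized inside $\R^X$ with $c_{00}(X)\subseteq E$. Moreover, on order bounded sets order convergence is pointwise convergence.

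It remains to show $\mathrm{o}\ge\mathrm{ru}$. Let $(e_\alpha)$ be order null. Passing to a tail, we may assume $(e_\alpha)\subseteq[-u,u]$, so the net is pointwise null. Suppose it is not $\mathrm{ru}$-null. Then I would extract a subnet with no $\mathrm{ru}$-null further subnet. Since every subnet of a pointwise-null bounded net is still order null, the intended contradiction is that $\mathrm{ru}$-pre-compactness yields an $\mathrm{ru}$-Cauchy subnet. By Proposition~\ref{rucauchy} and monotone limits taken pointwise in the ideal generated by $E$ in $\R^X$, this subnet should be $\mathrm{ru}$-convergent to its pointwise limit $0$ with some regulator $h\in E$. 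One point needs care here: the limit is $0\in E$, and the monotone bounds $f_n,g_n$ from Proposition~\ref{rucauchy} squeeze to $0$ with $g_n-f_n\le\frac1n h$, so we get $|e_\alpha|\le\frac1n h$ eventually. The remaining gap is the passage from ``not $\mathrm{ru}$-null'' to ``some subnet has no $\mathrm{ru}$-null subnet''. I expect this to be handled by arguing directly via (iv) rather than through subnets, so the actual route will be (ii)$\Rightarrow$(iv)$\Rightarrow$(iii).

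For (ii)$\Rightarrow$(iv), the realization in $\R^X$ comes from Theorem~\ref{ordprec}. Fix $e\in E_+$ and consider the net indexed by finite subsets $F\subseteq X$, given by $e_F:=e\cdot\1_{X\setminus F}$. Each $e_F$ lies in the ideal, but not necessarily in $E$, so instead one uses $e-e\1_F\in E$, since $e\1_F\in c_{00}(X)$. This net lies in $[0,e]$ and decreases pointwise to $0$. A subnet is $\mathrm{ru}$-Cauchy and (by the squeezing above) $\mathrm{ru}$-null with a regulator $g\in E_+$. Thus for each $n$ there is a finite $F_n$ with $e\1_{X\setminus F_n}\le\frac1n g$; the subnet is cofinal, so such $F_n$ exist. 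This says exactly that $\{x: g(x)<n e(x)\}\subseteq F_n$ is finite, and $\supp e\subseteq \supp g$ follows after replacing $g$ by $g+e$. For an arbitrary $e\in E$, write $e=e_1-e_2$ with $e_i\ge0$ (directedness) and majorate $e_1+e_2$.

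For (iv)$\Rightarrow$(ii),(iii), work in the ideal $F$ generated by $E$ in $\R^X$. Let $(e_\alpha)\subseteq[-u,u]$ be pointwise Cauchy; such subnets exist by Theorem~\ref{ordprec}. Take $g\in E$ majorating $u$. Given $n$, the set $S_n=\{x:g(x)<n u(x)\}$ is finite, so eventually $|e_\alpha-e_\beta|\le\frac1n g$ on $S_n$ by pointwise Cauchyness (enlarging $g$ on $S_n$ with $\1_{S_n}\in c_{00}(X)$ if necessary). Off $S_n$ we have $|e_\alpha-e_\beta|\le 2u\le\frac2n g$. Therefore the subnet is $\mathrm{ru}$-Cauchy with regulator $g$, which gives (ii). The same estimate applied to a pointwise-null net yields $\mathrm{o}\ge\mathrm{ru}$, and hence (iii).

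For (iii)$\Rightarrow$(ii), combine Theorem~\ref{ordprec}(ii) with the fact that order Cauchy nets are then $\mathrm{ru}$-Cauchy, since $\mathrm{o}=\mathrm{ru}$ applies to $(e_\alpha-e_\beta)$.

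The main obstacle is the (ii)$\Rightarrow$(iv) extraction: showing that an $\mathrm{ru}$-Cauchy subnet of $(e-e\1_F)_F$ actually has limit $0$ with a regulator in $E$. This relies on Proposition~\ref{rucauchy} and on the pointwise description of order on bounded sets.
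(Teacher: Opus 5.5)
Your proof is correct. Its middle part is organized differently from the paper's.

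\textbf{The paper's route.} The paper proves (ii)$\Rightarrow$(iii) abstractly, in parallel with Proposition~\ref{socomp}. Relative uniform pre-compactness of intervals forces every $Q\downarrow 0_{E}$ to contain an $\mathrm{ru}$-null sequence. This gives $\mathrm{ru}=\mathrm{do}$, and pervasiveness (Proposition~\ref{o=do}) upgrades it to $\mathrm{ru}=\mathrm{o}$. Only then is atomicity read off from Theorem~\ref{ordprec}. Next, (iii)$\Rightarrow$(iv) applies $\mathrm{o}=\mathrm{ru}$ to the pointwise-null net equal to $e(x)/|\alpha|$ on $\alpha$ and to $e$ off $\alpha$. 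This choice builds in $\supp e\subseteq\supp g$, at the price of treating finite $X$ separately. Finally, (iv)$\Rightarrow$(iii) checks that nets decreasing to $0_{E}$ are $\mathrm{ru}$-null.

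\textbf{Your route.} You observe at once that $\mathrm{ru}$-Cauchy implies order Cauchy. So Theorem~\ref{ordprec} gives you atomicity and the $\R^{X}$ picture before anything is known about $\mathrm{o}=\mathrm{ru}$. You then extract the majorant directly from an $\mathrm{ru}$-Cauchy subnet of $e\1_{X\setminus F}$, repairing supports with $g+e$. From (iv) you get both $\mathrm{ru}$-pre-compactness and $\mathrm{o}=\mathrm{ru}$ by the finite/cofinite estimate, applied to arbitrary bounded pointwise-Cauchy or pointwise-null nets.

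\textbf{What each buys.} Your route uses pervasiveness only through Theorem~\ref{ordprec}. It bypasses Proposition~\ref{o=do} and the passage from decreasing nets to general $\mathrm{do}$-null nets, and it is more concrete. The paper's route shows the structural mechanism: a sequence-extraction condition on sets decreasing to zero, exactly as with the CSP. It also proves (ii)$\Leftrightarrow$(iii) without coordinates.

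\textbf{Points to clean up.}
\begin{itemize}
\item The net $e\1_{X\setminus F}$ is itself in $E$, since it equals $e-e\1_{F}$ and $e\1_{F}\in c_{00}(X)$. Your remark suggesting otherwise is confused but harmless.
\item In (iv)$\Rightarrow$(ii), drop the parenthetical about enlarging $g$ on $S_{n}$. An $n$-dependent modification changes the regulator and would destroy $\mathrm{ru}$-Cauchyness. It is also unnecessary: $g\ge 0$ and $\supp u\subseteq\supp g$ already force $g>0$ on $S_{n}$. If a given majorant takes negative values, these lie in the finite, $n$-independent set $S_{1}$, and you can correct them once using $c_{00}(X)\subseteq E$.
\item Cut the abandoned subnet argument in (ii)$\Rightarrow$(iii) and the closing ``main obstacle'' paragraph. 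Your squeeze already settles the extraction. More directly, let $\gamma$ run in $|e_{\beta}-e_{\gamma}|\le\frac{1}{n}h$ and use $e_{\gamma}\to 0$ pointwise.
\end{itemize}
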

\begin{proof}
(i)$\Leftrightarrow$(ii) is proved analogously to the corresponding equivalence in Theorem~\ref{ordprec}.

(ii)$\Rightarrow$(iii): Arguing as in the proof of the corresponding implication in Proposition~\ref{socomp}, one can show that any $Q\downarrow 0_{E}$ contains an $\mathrm{ru}$-null sequence. This fact can then be leveraged to show that $\mathrm{ru}=\mathrm{do}$ on $E$. By Proposition~\ref{o=do}, we conclude that $\mathrm{ru}=\mathrm{o}$ on $E$, and in particular, order intervals in $E$ are order pre-compact. Thus, $E$ is atomic, by virtue of Theorem~\ref{ordprec}. By reversing the latter two arguments we get (iii)$\Rightarrow$(ii).

(iii)$\Rightarrow$(iv): First, since $E$ is atomic, by Theorem~\ref{ordprec} we may assume that it is a subspace of $\R^{X}$ for some set $X$, with $\1_{\left\{x\right\}}\in E$ for every $x\in X$. Moreover, order convergence agrees with pointwise convergence in order bounded subsets of $E$. The last condition is trivially satisfied if $X$ is finite, and so we assume otherwise.

Let $e\in E_{+}$. Let $A$ be the collection of all finite subsets of $X$. For $\alpha\in A$ define $e_{\alpha}\colon X\to\R$ by $e_{\alpha}\left(x\right):=\frac{e\left(x\right)}{\left|\alpha\right|}$ if $x\in\alpha$, and $e_{\alpha}\left(x\right):=e\left(x\right)$, otherwise. Then, $\supp \left(e-e_{\alpha}\right)$ is finite, hence $e-e_{\alpha}\in E$, and so $e_{\alpha}\in E$.

Clearly, $e_{\alpha}\to\0$ pointwise, therefore in order, hence relatively uniformly. Thus, there is $g\in E_{+}$ such that for every $n\in\N$ there is $\alpha$ such that $e_{\alpha}\le \frac{1}{n}g$, and so, for all $x\notin\alpha$, we have $e\left(x\right)=e_{\alpha}\left(x\right)\le \frac{1}{n}g\left(x\right)$. Since $\alpha$ is a finite set, it follows that $g$ majorates $e$.

If $f\in E$, find $e\in E_{+}$ such that $\pm f\le e$, and $g\in E_{+}$ which majorates $e$. It is clear that $g$ also majorates $f$.

(iv)$\Rightarrow$(iii): Suppose that $e_{\alpha}\downarrow \0$. Without loss of generality we may assume that this net is bounded above by some $e\in E_{+}$. Let $g$ be a majorant of $e$. Fix $n\in\N$. There are $x_{1},\dots,x_{m}\in X$ such that $\frac{1}{n}g\left(x\right)\ge e\left(x\right)$, for all $x\notin\left\{x_{1},\dots,x_{m}\right\}$. Since $\left(e_{\alpha}\right)$ converges to $\0$ pointwise, there is $\alpha_{n}$ such that $e_{\alpha}\left(x_{k}\right)\le \frac{1}{n}g\left(x_{k}\right)$, whenever $\alpha\ge\alpha_{n}$ and $k\in\left\{1,\dots,m\right\}$. It now follows that $e_{\alpha}\le \frac{1}{n}g$ as soon as $\alpha\ge\alpha_{n}$. Since $n$ was arbitrary, we conclude that $e_{\alpha}\toru\0$.
\end{proof}

Analogously to Corollary~\ref{socomp2}, we can derive the following consequence from Proposition~\ref{rucomp}.

\begin{corollary}\label{rucomp2}
The following conditions are equivalent:
\item[(i)] $E$ is relatively uniformly locally compact;
\item[(ii)] Every order interval in $E$ is relatively uniformly compact;
\item[(iii)] $E$ is an atomic Dedekind complete vector lattice such that order and relative uniform convergences agree on $E$;
\item[(iv)] There is a set $X$ such that $E$ embeds as an ideal of $\R^{X}$ with $c_{00}\left(X\right)\subseteq E$ and such that every $e\in E$ is majorated by an element of $E$.
\end{corollary}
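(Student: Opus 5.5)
We follow the template of Corollary~\ref{socomp2}, with Proposition~\ref{rucomp} in the role that Proposition~\ref{socomp} played there, and Corollary~\ref{automaticlat} supplying the lattice conclusion.

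\emph{(i)$\Leftrightarrow$(ii).} We argue exactly as in the corresponding step of Theorem~\ref{ordprec}. For (i)$\Rightarrow$(ii), given $e\in E_{+}$, consider the net $e_{(n,f)}:=\frac{1}{n}f$ indexed by $\N\times\left[0_{E},e\right]$. This net is $\mathrm{ru}$-null, so local compactness provides $n$ such that $\left[0_{E},\frac{1}{n}e\right]$ lies in an $\mathrm{ru}$-compact set. Since order intervals are $\mathrm{ru}$-closed (they are order closed and $\mathrm{o}\le\mathrm{ru}$), the interval $\left[0_{E},\frac1n e\right]$ is itself $\mathrm{ru}$-compact. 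Scaling and translation then give compactness of every order interval. For (ii)$\Rightarrow$(i), note that $\mathrm{ru}$-convergent nets are eventually order bounded.

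\emph{(ii)$\Rightarrow$(iii).} Relative uniform compactness implies relative uniform pre-compactness, so Proposition~\ref{rucomp} shows that $E$ is atomic and that $\mathrm{o}=\mathrm{ru}$ on $E$. Consequently every order interval is order compact, and Corollary~\ref{automaticlat} yields that $E$ is an atomic Dedekind complete vector lattice. For (iii)$\Rightarrow$(ii), Corollary~\ref{automaticlat} makes order intervals order compact, and since $\mathrm{o}=\mathrm{ru}$ they are $\mathrm{ru}$-compact.

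\emph{(iii)$\Rightarrow$(iv).} By Corollary~\ref{automaticlat}, $E$ embeds as an ideal of $\R^{X}$ containing $c_{00}(X)$. We then run the argument of (iii)$\Rightarrow$(iv) in Proposition~\ref{rucomp} for this particular embedding, producing for each $e\in E$ a majorating element of $E$. The only care needed is to check that the embedding used there can be taken to be this ideal embedding. This holds because both embeddings come from identifying the atoms, with $X$ the set of atoms up to scaling.

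\emph{(iv)$\Rightarrow$(iii).} An ideal of $\R^{X}$ containing $c_{00}(X)$ is an atomic Dedekind complete vector lattice, as in Corollary~\ref{automaticlat}. The majoration condition together with Proposition~\ref{rucomp} gives $\mathrm{o}=\mathrm{ru}$.

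The main subtlety is the closedness used in (i)$\Rightarrow$(ii), namely passing from ``contained in a compact set'' to ``compact'', which relies on order intervals being $\mathrm{ru}$-closed. Everything else is bookkeeping with earlier results.
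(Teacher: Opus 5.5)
Your proposal is correct and follows essentially the paper's route. The paper derives this corollary from Proposition~\ref{rucomp} and Corollary~\ref{automaticlat} along the template of Corollary~\ref{socomp2}, and your explicit use of the $\mathrm{ru}$-closedness of order intervals in (i)$\Rightarrow$(ii) supplies a detail the paper leaves to ``analogously''.

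The only deviation is in (iii)$\Rightarrow$(iv). The template there takes the embedding from part~(iv) of Proposition~\ref{rucomp} and upgrades it to an ideal embedding via Dedekind completeness (\cite[Theorem~1.40]{ab0}). Your rerun of the majoration argument on the ideal embedding is also valid. That argument only uses $c_{00}(X)\subseteq E$ and the agreement of order and pointwise convergence on order bounded sets, and Theorem~\ref{ordprec} provides both for every such embedding, so the appeal to ``identifying the atoms'' is unnecessary.
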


\begin{example}
Both $\R^{X}_{csp}$ and $c_{0}\left(X\right)$ satisfy the conditions of Corollary~\ref{rucomp2} for arbitrary $X$. Indeed, any $f\in c_{0}\left(X\right)$ is majorated by $\sqrt{\left|f\right|}$; any $f\in \R^{X}_{csp}$ is majorated by $\left|f\right|g$, where $\supp f=\left\{x_{n}\right\}_{n\in\N}$, and $g\in\R^{X}$ is such that $g\left(x_{n}\right)=n$. Another example is the space of all real sequences of polynomial growth. The atomic order continuous Banach lattices form another class of examples, since on such spaces order and relative uniform convergences agree (see \cite[Proposition~3]{bw}). An example of a space which satisfies the conditions of Corollary~\ref{socomp2} but not of Proposition~\ref{rucomp} is $\ell_{\8}$.
\qed\end{example}

\subsection{Pervasive JB-algebras are associative}\label{ss34}

In this section we consider a special class of ordered vector spaces, namely JB-algebras, and show that pervasiveness is a rather restrictive condition in this class. We first need to introduce a few general notions.

Let $E$ be an Archimedean ordered vector space. We say that $e\in E_{+}$ is an \emph{order unit} if for each $f \in E$ there is $r \in \R_{+}$ with $f \in \left[-r e, r e\right]$. In this case, $\|\cdot\|_{e}$, defined by $\|f\|_{e}:=\bigwedge\left\{r>0: f \in \left[-r e, r e\right]\right\}$, is a norm on $E$ called the \emph{order unit norm}. It is easy to see that $E_{+}$ is closed with respect to $\|\cdot\|_{e}$. Convergence in this norm agrees with relative uniform convergence in $E$, and so does not depend on the choice of the order unit. Hence, relative uniform convergence in $E$ is topological. One may show that $f\in E$ is an order unit if and only if it is an element of the interior of $E_{+}$ with respect to this topology.

An \emph{order unit space} is a pair $\left(E,e\right)$, where $E$ is an Archimedean ordered vector space and $e$ is an order unit. We always assume that $E$ is equipped with the norm $\|\cdot\|_{e}$. An operator from $\left(E,e\right)$ into $\left(F,f\right)$ is called \emph{unital} if it maps $e$ to $f$. The ``natural'' notion of a morphism between order unit spaces is that of a positive unital operator. We view $\R$ as an order unit space $\left(\R,1\right)$, and call positive unital functionals from $\left(E,e\right)$ to $\left(\R,1\right)$ \emph{states}.

The set $K_{E}$ of states on $(E, e)$ is a $\mathrm{w}^{*}$-closed convex subset of $\Ba_{E^{*}}$ and hence is a $\mathrm{w}^{*}$-compact convex set; its extreme points are called \emph{pure states}. It is known that $E^{*}=\spa K_{E}$ (see \cite[Definition~1.10 and Theorem~1.19]{as2}).

Let $F$ be a vector space and $A\subseteq F$ be convex. We denote the collection of extreme points of $A$ by $\ext A$. We call $f\colon A\to\R$ \emph{affine} if it preserves convex combinations. If $\tau$ is a topology on $A$, we denote the ordered vector space of all $\tau$-continuous affine functions on $A$ by $\Aff\left(A,\tau\right)$ (the order is pointwise). If $A$ is $\tau$-compact, we view $\Aff\left(A,\tau\right)$ as an order unit space with $\1$ playing the role of a unit. Then, the order unit norm is equal to the supremum norm $\|\cdot\|_{\8}$, and $\Aff\left(A,\tau\right)$ is a Banach space in this norm.

It is clear that if $\left(E,e\right)$ is an order unit space, then $f\mapsto\hat{f}$ is a positive operator from $E$ into $\Aff\left(K_{E},\mathrm{w}^{*}\right)$, where $\hat{f}\left(\varphi\right) := \varphi\left(f\right)$. This can be refined as follows (see \cite[Theorem~1.20]{as2} and \cite[Section~2.5]{kv}; part~(i) is known as Kadison's embedding theorem).

\begin{theorem}\label{T:rep of o.u.s.}\interlinepenalty=10000
Let $\left(E,e\right)$ be an order unit space. Then:
\item[(i)] $f\mapsto \hat{f}$ is an isometric order isomorphism from $E$ onto a norm-dense subspace of $\Aff\left(K_{E},\mathrm{w}^{*}\right)$.
\item[(ii)] $f\mapsto \hat{f}|_{\overline{\ext}^{\mathrm{w}^{*}}K_{E}}$ is an isometric order isomorphism from $E$ onto a supremum-dense subspace of $\Co\left(\overline{\ext}^{\mathrm{w}^{*}}K_{E},\mathrm{w}^{*}\right)$.
\end{theorem}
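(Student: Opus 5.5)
Part~(i) is Kadison's embedding theorem, and I would prove it by the Hahn--Banach/separation route. The map $f\mapsto\hat f$ is clearly linear and positive. Each $\hat f$ is affine and $\mathrm{w}^{*}$-continuous on $K_{E}$, since it is evaluation at $f$.

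For injectivity and the order isomorphism property, I need to show that $\hat f\ge\0$ implies $f\ge 0_{E}$. Suppose $f\notin E_{+}$. The cone $E_{+}$ is norm closed and has $e$ as an interior point, and $E$ is Archimedean. So separation gives a positive functional $\varphi$ with $\varphi(f)<0$. Positivity and $e$ being an order unit force $\varphi(e)>0$, so we may normalize to $\varphi(e)=1$, making $\varphi$ a state with $\hat f(\varphi)<0$.

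For the isometry, $\|\hat f\|_{\8}\le\|f\|_{e}$ is immediate from $-\|f\|_{e}e\le f\le\|f\|_{e}e$. Conversely, if $|\hat f|\le r$ on $K_{E}$, then $re\pm f$ is positive on all states, hence positive by the previous step, so $\|f\|_{e}\le r$. Alternatively, the isometry follows from the order isomorphism, since both norms are order unit norms with $\hat e=\1$.

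Norm density is the main obstacle. The closure $\overline{\hat E}$ is a closed subspace of $\Aff(K_{E},\mathrm{w}^{*})$ containing constants. If it were proper, Hahn--Banach would give a nonzero continuous functional $\mu$ on $\Co(K_{E})$, a signed Radon measure, annihilating $\hat E$ but not some $a\in\Aff(K_{E})$. Write $\mu=\mu_{+}-\mu_{-}$. Since $\mu(\1)=0$, we have $\mu_{+}(K)=\mu_{-}(K)=:c$. If $c=0$ then $\mu=0$, a contradiction, so $c>0$ and the normalized measures $\mu_{\pm}/c$ are probability measures on $K_{E}$. Let $\varphi_{\pm}$ be their barycenters in $K_{E}$, which exist because $K_{E}$ is compact and convex. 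Affine continuous functions integrate to their value at the barycenter, so $\varphi_{+}(f)=\varphi_{-}(f)$ for all $f$. This gives $\varphi_{+}=\varphi_{-}$, and hence $\int a\,d\mu=c\bigl(a(\varphi_{+})-a(\varphi_{-})\bigr)=0$, a contradiction. The barycenter formula for affine continuous $a$ is the delicate point. I would justify it by approximating $\mu_{\pm}/c$ $\mathrm{w}^{*}$ by finitely supported probability measures, noting that $a$ commutes with finite convex combinations, and passing to the limit.

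For part~(ii), set $X:=\overline{\ext}^{\mathrm{w}^{*}}K_{E}$. Restriction to $X$ is positive, and Krein--Milman gives $K_{E}=\overline{\conv}\,X$. Hence $\hat f\ge\0$ on $X$ forces $\hat f\ge\0$ on $K_{E}$, so restriction is an order isomorphism, and it is isometric since a continuous affine function attains its extrema on $X$.

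Supremum density requires, for $g\in\Co(X)$ and $x\in X$, elements $\hat f|_{X}\le g$ with $\hat f(x)$ close to $g(x)$. I would take $k\le g$ continuous and $\mathrm{w}^{*}$-lower semicontinuous-convex-compatible, namely the lower envelope. By Milman's converse (every extreme point of $K_{E}$ lies in $X$) and a Bishop--de Leeuw/Choquet-type argument, for $x\in\ext K_{E}$ the lower envelope of $g$ extended to $K_{E}$ satisfies $\check g(x)=g(x)$. The envelope $\check g$ is a supremum of continuous affine functions below $g$, and these can be approximated by $\hat f$ using part~(i). For points of $X$ that are not extreme, a separate approximation is needed, and the supremum identity is preserved because $X$ is the closure of the extreme points.
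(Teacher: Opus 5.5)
The paper does not prove this theorem; it quotes it from Alfsen--Shultz (Theorem~1.20) and Kalauch--van Gaans (Section~2.5). Your part~(i) is a sound self-contained proof. The separation step produces states, and the density argument works. The barycenter of a probability measure $\nu$ on $K_{E}$ can simply be defined as the state $f\mapsto\int\hat f\,d\nu$. Your approximation by finitely supported measures then does give $\int a\,d\nu=a(r(\nu))$ for $a\in\Aff(K_{E},\mathrm{w}^{*})$, because the $\mathrm{w}^{*}$-topology on $K_{E}$ is the one induced by the functions $\hat f$. The order embedding and the isometry in part~(ii) are also fine.

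The gap is in supremum density, which is the real content of (ii).

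First, your stated requirement is wrong. In general you cannot find $\hat f|_{X}\le g$ with $\hat f(x)$ close to $g(x)$ at a non-extreme $x\in X$. Take $E=\Aff(K)$, where $K\subseteq\R^{3}$ is the convex hull of the unit circle in the plane $z=0$ and the segment $\{(1,0)\}\times[-1,1]$. Then $(1,0,0)\in X\setminus\ext K$ is the midpoint of $(1,0,\pm1)\in X$. So every affine minorant of $g$ on $X$ is at most $\frac{1}{2}\bigl(g(1,0,1)+g(1,0,-1)\bigr)$ at that point, which can be strictly less than $g(1,0,0)$. No ``separate approximation'' is needed either. It suffices to treat $x\in\ext K_{E}$: any upper bound $u\in\Co(X)$ of $\{\hat f|_{X}:\hat f|_{X}\le g\}$ then satisfies $u\ge g$ on the dense set $\ext K_{E}$, hence on all of $X$.

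Second, the step at extreme points is only asserted. The parenthetical ``every extreme point of $K_{E}$ lies in $X$'' holds by the definition of $X$, and it is not what is needed. What is needed is $\check h(x)=h(x)$ for $h\in\Co(K_{E})$ and $x\in\ext K_{E}$. This follows from two standard facts: $\check h(x)=\inf\mu(h)$ over probability measures $\mu$ with barycenter $x$, and $\delta_{x}$ is the only such measure when $x$ is extreme. Bishop--de Leeuw is not what is needed here.

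There is also a direct argument, which avoids Tietze and the density from (i). Fix $x\in\ext K_{E}$ and $s<g(x)$.
\begin{itemize}
\item Let $\Phi:=\overline{\conv}\{(y,g(y)):y\in X\}\subseteq E^{*}\times\R$. It is compact, so $L:=\Phi+\{0\}\times[0,\infty)$ is closed.
\item Since $x$ is extreme in $K_{E}$, which contains the projection of $\Phi$, the fiber $\Phi\cap(\{x\}\times\R)$ is a face of $\Phi$. Its extreme points are therefore extreme in $\Phi$, so by Milman's converse they lie on the graph of $g$. This is where Milman's converse genuinely enters. Hence the fiber is $\{(x,g(x))\}$, and $(x,s)\notin L$.
\item Separate $(x,s)$ from $L$ by a functional $(\varphi,t)\mapsto\varphi(f)+\beta t$ with $f\in E$, so that $x(f)+\beta s<c\le\varphi(f)+\beta t$ on $L$.
\item Then $\beta\ge 0$. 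Moreover $\beta=0$ is impossible, since it would separate $x$ from $\overline{\conv}X=K_{E}$.
\item So $h:=\beta^{-1}(ce-f)$ satisfies $\hat h\le g$ on $X$ and $\hat h(x)>s$, as required.
\end{itemize}
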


We proceed with a brief review of JB-algebras. More details can be found in e.g.\ \cite{as}, \cite{hs}, \cite{lrw}, \cite{rt}, \cite{vkr}.

A \emph{Jordan algebra} $(A, \circ)$ is a commutative, not necessarily associative algebra such that
\[a \circ \left(b \circ a^2\right) = \left(a \circ b\right) \circ a^2 \quad\text{for all }a,b \in A.\]

A \emph{JB-algebra} $A$ is a normed, complete real Jordan algebra satisfying
\[\|a \circ b\| \le \|a\|\|b\|,\qquad \|a^2\| =\|a\|^2,\qquad\|a^2\| \le\|a^2 + b^2\|,\]
for all $a,b \in A$. An important example of a JB-algebra is the set of self-adjoint elements of a $C^*$-algebra equipped with the Jordan product $a \circ b := \frac{1}{2}\left(ab + ba\right)$.

A linear operator between JB-algebras is a \emph{Jordan homomorphism} if it preserves the product and the unit (if it exists).

%By the Gelfand-Naimark theorem, this JB-algebra is a norm closed Jordan subalgebra of the Jordan algebra of self-adjoint bounded operators on a complex Hilbert space. Such a JB-algebra is called a \emph{JC-algebra}.

%The elements $a,b \in A$ are said to \emph{operator commute} if $a \circ \left(b \circ c\right) = b \circ \left(a \circ c\right)$ for all $c \in A$. %In a JC-algebra, two elements operator commute if and only if they commute for the $C^*$-multiplication, see \cite[Proposition~1.49]{as}. An element $a\in A$ is said to be \emph{central} if it operator commutes with all elements of $A$.

Assume that $A$ has a unit $1_{A}$. We say that $a\in A$ is \emph{invertible} if it is an invertible element of $\mathrm{JB}\left(a,1_{A}\right)$, the JB-algebra generated by $a$ and $1_{A}$ (note that the latter is associative by \cite[Corollary~1.4]{as}; a characterization of invertibility can be found in \cite[1.15--1.17]{as}). %We denote the set of invertible elements of $A$ by $\mathrm{Inv}\left(A\right)$.

The \emph{spectrum} of $a\in A$, which is denoted by $\sigma\left(a\right)$, is defined to be the set of $\lambda\in\R$ such that $\lambda 1_{A} - a\notin \mathrm{Inv}\left(A\right)$. There is a continuous functional calculus, that is, $\mathrm{JB}\left(a,1_{A}\right)$ is isometrically and algebraically isomorphic to $\Co\left(\sigma\left(a\right)\right)$, see \cite[Theorem~3.2.4]{hs}.

The cone of elements with non-negative spectrum is denoted by $A_{+}$, and equals the set of squares by the functional calculus, and its interior $\mathring{A}_{+}$ consists of all elements with strictly positive spectrum. The cone $A_{+}$ induces a partial ordering $\le$ on $A$ by writing $a \le b$ if $b - a \in A_{+}$. Then, $1_{A}$ is an order unit of $A$, and so $\left(A,1_{A}\right)$ is an order unit space. A non-unital JB-algebra can be endowed with the partial order inherited from its unitization, see \cite[Section~3.3]{hs}, which is a unital JB-algebra that contains $A$ as a JB-ideal of codimension $1$.

For every $a\in A$ the functional calculus yields $a^{+},a^{-}$ in the JB-subalgebra generated by $a$, with $a=a^{+}-a^{-}$.

\begin{lemma}\label{apm}
We have $a^{+}\wedge_{A_{+}}a^{-}=0_{A}$.
\end{lemma}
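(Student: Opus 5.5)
We need to show: if $c\in A_{+}$ with $c\le a^{+}$ and $c\le a^{-}$, then $c=0_{A}$. The plan is to exploit the fact that $a^{+}$ and $a^{-}$ have orthogonal ranges, which kills anything squeezed under both. We may pass to the unitization if $A$ is non-unital, since the order is inherited from it and positivity is unaffected. So assume $1_{A}$ exists.

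The key tool is the quadratic representation $U_{b}(x):=2b\circ(b\circ x)-b^{2}\circ x$, which is a positive operator on a JB-algebra (see \cite{hs}). Set $p:=a^{+}$, $q:=a^{-}$. By the functional calculus in $\mathrm{JB}(a,1_{A})\simeq\Co(\sigma(a))$, the elements $p$ and $q$ correspond to the functions $t\mapsto t^{+}$ and $t\mapsto t^{-}$. These satisfy $p\circ q=0_{A}$, and $U_{q}(p)=0_{A}$ because the computation takes place inside the associative algebra $\mathrm{JB}(a,1_{A})$, where $U_{q}(p)=q^{2}p=0$.

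The main step is the following. From $0_{A}\le c\le p$ and positivity of $U_{q}$ we get $0_{A}\le U_{q}(c)\le U_{q}(p)=0_{A}$, so $U_{q}(c)=0_{A}$. We now want to conclude $U_{s}(c)=0_{A}$ for $s:=r(q)$, the range projection of $q$, or at least for suitable functions of $q$.

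We expect this to be the main obstacle, since range projections need not exist in a general JB-algebra. The workaround is to use the elements $q_{n}:=f_{n}(q)$ with $f_{n}(t)=\min(nt,1)$ on $\sigma(a)$. For the target identity it suffices to show $U_{f(q)}(c)=0_{A}$ for every continuous $f\ge 0$ vanishing at $0$. To get this, use the standard fact that in a JB-algebra $U_{x}(c)=0_{A}$ with $c\ge 0_{A}$ implies $U_{x}(c)=0_{A}$ for $x$ replaced by any element of $\mathrm{JB}(x)$ vanishing where $x$ does. This can be proved via $\|U_{c^{1/2}}(x^{2})\|=\|U_{x}(c)\|$ together with Shirshov–Cohn, i.e.\ by computing in a JC-subalgebra generated by two elements, where it reduces to the C*-identity $\|xc^{1/2}\|^{2}=\|c^{1/2}x^{2}c^{1/2}\|$. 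In the same way $c\le q$ gives $U_{g(p)}(c)=0_{A}$ for $g(p)$ functions of $p$.

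Now choose continuous $h_{n}$ on $\sigma(a)$ with $0\le h_{n}\le 1$ such that $h_{n}(t)=1$ for $|t|\ge 1/n$ and $h_{n}(0)=0$. Write $h_{n}=h_{n}^{(+)}+h_{n}^{(-)}$, with the two summands supported on $t>0$ and $t<0$; these are functions of $p$ and $q$, respectively. Then $U_{h_{n}(a)}(c)=0_{A}$, since the cross terms vanish by the same two-generator (Shirshov–Cohn) computation, where $U_{x+y}(c)=0$ whenever $U_{x}(c)=U_{y}(c)=0$. Consequently
\[
0_{A}\le U_{1_{A}-h_{n}(a)}(c)\ge c-\varepsilon_{n},
\]
and, more cleanly, $c=U_{1_{A}}(c)$. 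In $\Co(\sigma(a))$-language, $c$ is then concentrated at the spectral value $0$, and the bound $c\le p$ forces $\|c\|\le\|U_{k_{n}(a)}(p)\|$-type estimates that tend to $0$ as $n\to\infty$, where $k_{n}=1-h_{n}$. Hence $c=0_{A}$.

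If this route gets too technical, the alternative is to prove the statement directly in the JC-case, inside a C*-algebra via Shirshov–Cohn for the three elements $a,c,1_{A}$; the lemma is presumably proved this way. There, $c\le a^{+}$ gives $c^{1/2}=xa^{+1/2}$-type factorizations, and then $a^{-}ca^{-}\le a^{-}a^{+}a^{-}=0$ yields $ca^{-}=0$. Symmetrically $ca^{+}=0$, so $c=c\,a^{-}$-type vanishing, and finally $c\le a^{-}$ with $ca^{-}=0$ gives $c^{2}\le c^{1/2}a^{-}c^{1/2}=0$.
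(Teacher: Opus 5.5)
Your final paragraph is a correct proof. It is essentially the paper's argument transported into a C*-algebra.

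\textbf{The paper's proof.} It works directly in $A$ in three steps.
\begin{itemize}
\item From $0_{A}\le b\le a^{-}$ and positivity of $U_{a^{+}}$, it gets $0_{A}\le U_{a^{+}}b\le U_{a^{+}}a^{-}=0_{A}$.
\item It then uses the symmetry lemma $U_{x}y=0_{A}\Leftrightarrow U_{y}x=0_{A}$ for $x,y\in A_{+}$ (Lemma~1.26 of Alfsen--Shultz) to get $U_{b}a^{+}=0_{A}$.
\item Finally, from $b\le a^{+}$ it concludes $0_{A}\le b^{3}=U_{b}b\le U_{b}a^{+}=0_{A}$.
\end{itemize}

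\textbf{Your fallback compared.} Your chain $a^{-}ca^{-}\le a^{-}a^{+}a^{-}=0$, hence $c^{1/2}a^{-}=0$, hence $c^{2}=c^{1/2}cc^{1/2}\le c^{1/2}a^{-}c^{1/2}=0$, is the same three steps with the roles of $a^{\pm}$ swapped. Your first step $U_{q}(c)=0_{A}$ is literally the paper's first step. The price of the operator picture is the Shirshov--Cohn theorem for the JB-algebra $B$ generated by $a$, $c$ and $1_{A}$. You also need to check two things:
\begin{itemize}
\item $B$ carries the order of $A$. This holds because spectra are computed in $\mathrm{JB}(x,1_{A})\subseteq B$.
\item $a^{\pm}$ are preserved by the isomorphism onto a JC-algebra.
\end{itemize}
The paper needs only positivity of $U$-operators and one standard lemma. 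Your remarks about ``$xa^{+1/2}$-type factorizations'' and ``$ca^{+}=0$'' play no role and can be dropped.

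\textbf{Your main route.} It is unnecessary, and as written it does not close. Once you have $U_{q}(c)=0_{A}$, the symmetry lemma gives $U_{c}(q)=0_{A}$. Your own two-generator computations would also prove this, via $c^{1/2}q=0\Rightarrow c^{1/2}q^{1/2}=0\Rightarrow cqc=0$. Then $c\le q$ yields $0_{A}\le c^{3}=U_{c}(c)\le U_{c}(q)=0_{A}$ immediately, with no approximation by $h_{n}(a)$.

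The ending you wrote does not work as stated. The display $0_{A}\le U_{1_{A}-h_{n}(a)}(c)\ge c-\varepsilon_{n}$ is not a usable estimate, and ``$c$ is concentrated at the spectral value $0$'' is not an argument. To complete that route you would have to do the following.
\begin{itemize}
\item Prove $c=U_{k_{n}(a)}(c)$ with $k_{n}=1-h_{n}$. This follows from the identity $U_{1_{A}-h}(c)=c-2h\circ c+U_{h}(c)$ once you show $h_{n}(a)\circ c=0_{A}$, which needs another two-generator computation.
\item Then $c=U_{k_{n}(a)}(c)\le U_{k_{n}(a)}(p)$.
\item Finally, $\|U_{k_{n}(a)}(p)\|\le 1/n$ because $k_{n}(t)^{2}t^{+}\le 1/n$ on $\sigma(a)$, so $c=0_{A}$.
\end{itemize}
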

\begin{proof}
First, it is clear that $U_{a^{+}}a^{-}=0_{A}=U_{a^{-}}a^{+}$. Assume that $0_{A}\le b\le a^{+},a^{-}$. Then, $0_{A}\le U_{a^{+}}b\le U_{a^{+}}a^{-}=0_{A}$. Hence, $U_{a^{+}}b=0_{A}$, implying $U_{b}a^{+}=0_{A}$ (see \cite[Lemma~1.26]{as}). Therefore, $0_{A}\le b^{3}=U_{b}b\le U_{b}a^{+}=0_{A}$, thus $b^{3}=0_{A}$, and so $b=0_{A}$.
\end{proof}

We call a linear functional on $A$ \emph{normal} if for $\left(a_{\gamma}\right)_{\gamma\in\Gamma}\subseteq A$ with $a_{\gamma}\downarrow 0_{A}$ it follows that $\varphi\left(a_{\gamma}\right)\to 0$. Clearly, every directed order continuous functional is normal, and the converse is true for positive functionals, according to Proposition~\ref{docont}.

A \emph{JBW-algebra} is the Jordan analogue of a von Neumann algebra: it is a unital JB-algebra which is monotone complete (recall that this means that every order bounded increasing net has a supremum\footnote{In \cite{hs} the term ``monotone complete'' means that every norm-bounded increasing net has a supremum. Since we are assuming existence of an order unit, boundedness in order and in norm agree.}) and has a separating set of normal states, or equivalently, a JB-algebra that is a dual space \cite[Theorem~4.4.16]{hs}. In this case we have the following result.

\begin{proposition}\label{normal}
A functional on a JBW-algebra is normal if and only if it is directed order continuous.
\end{proposition}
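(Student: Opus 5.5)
The plan is to prove both implications separately. The easy direction is: directed order continuous implies normal. If $a_{\gamma}\downarrow 0_{A}$, then $a_{\gamma}\todo 0_{A}$, since the decreasing net itself witnesses the convergence. A directed order continuous functional $\varphi$ therefore maps it to a net that directed order converges to $0$ in $\R$. In $\R$, directed order convergence coincides with the usual convergence, because $\R$ is a lattice and order convergence of nets in $\R$ is the usual convergence. Hence $\varphi\left(a_{\gamma}\right)\to 0$.

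For the converse, let $\varphi$ be normal. The key structural input is that on a JBW-algebra every normal functional is a difference of two normal positive functionals, i.e.\ normal states up to scaling. This is the Jordan analogue of the Jordan decomposition of normal functionals on von Neumann algebras, and it is available in \cite{hs} (the predual of a JBW-algebra is spanned by its normal states, \cite[Section~4.5]{hs}). So write $\varphi=\varphi_{1}-\varphi_{2}$ with $\varphi_{1},\varphi_{2}$ positive and normal. By Proposition~\ref{docont} (the remark right after the definition of normality), a positive normal functional is directed order continuous: for $P\downarrow 0_{A}$ we get $\varphi_{i}\left(P\right)\downarrow 0$ in $\R$. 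Since the difference of directed order continuous linear maps into $\R$ is again directed order continuous, by part~(iii) of Proposition~\ref{todovs} applied in $\R$, we conclude that $\varphi$ is directed order continuous.

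The main obstacle is justifying the decomposition into normal positive parts. If citing it directly is not acceptable, I would argue as follows. A normal functional is norm bounded, since the order unit gives equivalence of boundedness notions. It lies in the norm-closed span of normal states, which is the predual $A_{*}$, by the identification of normal functionals with $\mathrm{w}^{*}$-continuous ones via \cite[Theorem~4.4.16]{hs}. One then invokes the Jordan decomposition in the predual of a JBW-algebra, which writes $\varphi=\varphi^{+}-\varphi^{-}$ with both parts $\mathrm{w}^{*}$-continuous, hence normal.

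As an alternative that avoids the decomposition, one could try to verify directed order continuity directly. Let $e_{\alpha}\todo 0_{A}$ be witnessed by $P\downarrow 0_{A}$, so that $-p\le e_{\alpha}\le p$ eventually. Then $|\varphi\left(e_{\alpha}\right)|$ has to be bounded via $\varphi$ evaluated on positive elements dominated by $p$. This requires controlling $\sup\left\{|\varphi\left(b\right)|: -p\le b\le p\right\}\to 0$ as $p\downarrow 0_{A}$, which again essentially needs positivity of the parts of $\varphi$. So I expect the decomposition route to be the cleanest.
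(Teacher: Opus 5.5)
Your proof is correct and follows the paper's argument. The trivial direction goes through decreasing nets. For the converse, you decompose a normal functional into positive normal parts (the paper cites \cite[Proposition~2.52]{as} rather than \cite{hs}), each part is directed order continuous by Proposition~\ref{docont}, and linearity handles the difference. In your fallback paragraph, though, automatic boundedness of a normal functional does not follow just from the equivalence of order and norm boundedness of sets. It is nonetheless true: if $\|a_{n}\|\le 2^{-n}$ and $\left|\varphi\left(a_{n}\right)\right|\ge n$, then $b_{n}:=\sum_{k\ge n}\left(a_{k}+\|a_{k}\|1_{A}\right)\downarrow 0_{A}$, while $\varphi\left(b_{n}\right)$ does not converge.
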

\begin{proof}
Sufficiency was already addressed above. For necessity, note that by \cite[Proposition~2.52]{as} any normal functional is a difference of positive normal functionals. The latter two are directed order continuous by Proposition~\ref{docont}, and so is their difference.
\end{proof}

The set of normal states on $A$ is called the \emph{normal state space} of $A$. The topology on $A$ defined by the duality of $A$ and the normal state space of $A$ is called the \emph{$\sigma$-weak topology}. That is, we say a net $\left(a_{\gamma}\right)_{\gamma\in\Gamma}\subseteq A$ converges $\sigma$-weakly to $a$ if $\varphi\left(a_{\gamma}\right)\to\varphi\left(a\right)$ for all normal states $\varphi$ on $A$.

Since the linear span $N$ of the normal state space is the unique predual of $A$ by \cite[Theorem~4.4.16]{hs}, the $\sigma$-weak topology equals the $\mathrm{w}^{*}$-topology. By \cite[Corollary~2.17]{as}, the normal state space of a JBW-algebra determines the ordering on $A$, that is, for each $a \in A$, $\psi\left(a\right) \ge 0$ for all normal states $\psi$ if and only if $a \in A_{+}$. Therefore, $A$ is isomorphic to the dual of $N$ not only as a Banach space, but also as an ordered Banach space.

In the following result (ii)$\Leftrightarrow$(iii) is \cite[Theorem~15]{ilr} (\cite[Theorem~1]{sherman} for the case of C*-algebras), but we provide a simpler proof. A version of (ii)$\Leftrightarrow$(iv) for C*-algebras was established in \cite{fmt} (see also \cite{glueck} and \cite{gs}).

\begin{theorem}\label{jblat}
For a JB-algebra $A$ the following conditions are equivalent:
\item[(i)] $A$ is Jordan isomorphic to $\Co_{0}\left(X\right)$ for some locally compact Hausdorff space $X$;
\item[(ii)] $A$ is associative;
\item[(iii)] $A$ is a vector lattice;
\item[(iv)] $A$ satisfies the Riesz decomposition property, i.e.\ $\left[0_{A},a\right]+\left[0_{A},b\right]=\left[0_{A},a+b\right]$, for every $a,b\in A_{+}$;
\item[(v)] $A^{**}$ is a vector lattice;
\item[(vi)] $A$ is pervasive;
\item[(vii)] $A$ is weakly pervasive.
\end{theorem}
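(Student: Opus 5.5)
We would organize the proof as a cycle through the lattice-type conditions, with the heavy lifting concentrated in showing that the weakest condition (vii) forces associativity. The easy implications come first. (i)$\Rightarrow$(ii) is clear, since $\Co_{0}\left(X\right)$ with pointwise product is associative. (ii)$\Rightarrow$(i) is \cite[Theorem~3.2.2]{hs}. (i)$\Rightarrow$(iii) holds because the order on $\Co_{0}\left(X\right)$ given by squares is the pointwise order, which is a lattice order. (iii)$\Rightarrow$(vi) is trivial, since $A$ is order dense in itself. (vi)$\Rightarrow$(vii) follows from condition (vi) of Theorem~\ref{pervasive}; $A$ is Archimedean and directed, being an ordered Banach space with closed generating cone (or sitting inside its unitization). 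Lattices have the Riesz decomposition property, which gives (iii)$\Rightarrow$(iv). For (ii)$\Rightarrow$(v), $A^{**}$ of an associative JB-algebra is an associative JBW-algebra, hence a lattice; conversely (v)$\Rightarrow$(iii) we postpone and instead route (v) through (vii) or (ii).

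The core implication is (vii)$\Rightarrow$(ii), and the plan is to use Lemma~\ref{apm} together with Lemma~\ref{wp}. For each $a\in A$ we have $a=a^{+}-a^{-}$ with $a^{\pm}\in A_{+}$ and $a^{+}\wedge_{A_{+}}a^{-}=0_{A}$ by Lemma~\ref{apm}. Weak pervasiveness upgrades this to $a^{+}\wedge_{A}a^{-}=0_{A}$, so by Lemma~\ref{wp}(iii)$\Rightarrow$(i) $A$ is a vector lattice. (If $A$ is non-unital, all order computations are done in the unitization, but $a^{\pm}\in A$ and the infimum statements only involve elements of $A$, so Lemma~\ref{wp} applies in $A$ directly.) Thus (vii)$\Rightarrow$(iii), and it remains to show (iii)$\Rightarrow$(ii).

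For (iii)$\Rightarrow$(ii), the main obstacle, we would pass to the bidual. Our first attempt is to show that a lattice-ordered JB-algebra has a lattice-ordered bidual, i.e.\ (iii)$\Rightarrow$(v). The dual of a vector lattice is a vector lattice, and then so is its dual. The order on $A^{**}$ as a JBW-algebra coincides with the bidual order, since normal states of $A^{**}$ are the states of $A$. So $A^{**}$ is a JBW-algebra which is a vector lattice. In a JBW-algebra, projections $p$ and $1-p$ satisfy $p\wedge\left(1-p\right)=0$ in the lattice; the lattice structure should force every pair of projections to operator commute. Concretely, for projections $p,q$, the element $p\wedge q$ computed in the lattice should equal the projection onto the intersection, and distributivity of a vector lattice gives $p=p\wedge q+p\wedge\left(1-q\right)$. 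Hence $U_{q}p+U_{1-q}p=p$, which is the standard criterion for $p$ and $q$ to operator commute. Since a JBW-algebra is the norm-closed span of its projections, all elements operator commute, so $A^{**}$ is associative, and therefore so is its subalgebra $A$. The delicate point is justifying $p\wedge q\le U_{q}p$-type identities, i.e.\ identifying lattice infima of projections with projection infima. If this proves difficult, we would instead use (iv): Riesz decomposition applied to $p\le q+\left(1-q\right)$ writes $p=b+c$ with $0\le b\le q$ and $0\le c\le 1-q$. Then $b$ lives in $U_{q}A$ and $c$ lives in $U_{1-q}A$, yielding the same commutation identity. This also handles (iv)$\Rightarrow$(ii), after noting that Riesz decomposition passes to the bidual by duality.
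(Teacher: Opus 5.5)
Your proposal is correct. For the (vi)/(vii) part it is the paper's argument: (vii)$\Rightarrow$(iii) is exactly Lemma~\ref{apm} combined with Lemma~\ref{wp}. The genuine difference is how you get from the bidual back to associativity.

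\textbf{The paper's route.} It proves (iv)$\Rightarrow$(v) by ordered Banach space duality: Riesz decomposition together with a closed, normal cone makes $A^{*}$ a Banach lattice up to an equivalent norm, so $A^{**}$ is a lattice. It then proves (v)$\Rightarrow$(ii) in two steps. First, the lattice-ordered order unit space $A^{**}$ is represented on its pure states as $\Co\left(K\right)$ (\cite[Theorem~2.5.6]{kv}, a Kakutani-type representation). Second, it invokes the theorem that a unital isometry between JB-algebras is a Jordan isomorphism \cite{wy}.

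\textbf{Your route.} You argue intrinsically in the JBW-algebra $A^{**}$, in four steps.
\begin{enumerate}
\item Write a projection $p\le q+\left(1-q\right)$ as $p=b+c$ with $0\le b\le q$ and $0\le c\le 1-q$.
\item Use that $0\le b\le q$ forces $U_{1-q}b=0$, hence $b=U_{q}b$ (Peirce theory, \cite{as}), and similarly for $c$. This gives $U_{q}p=b$ and $U_{1-q}p=c$, so $p=U_{q}p+U_{1-q}p$, which is the criterion for $p$ and $q$ to operator commute.
\item Since a JBW-algebra is the norm-closed span of its projections, all elements operator commute.
\item Full operator commutativity implies associativity, and $A$ inherits it as a subalgebra.
\end{enumerate}
This is valid. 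It avoids both the Kakutani-type representation and the isometry theorem of \cite{wy}, at the cost of Peirce decomposition and the spectral theory of JBW-algebras. It also makes visible that only the decomposition of $1$ along complementary projections is needed.

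\textbf{Two remarks.} First, the ``delicate point'' you worry about is not needed. In the lattice version you only need $q\wedge\left(1-q\right)=0$, which follows from Lemma~\ref{apm} applied to $2q-1$ and then Lemma~\ref{wp}. That gives $p=p\wedge q+p\wedge\left(1-q\right)$, and the same face property as in your Riesz-decomposition variant finishes the argument. There is no need to identify $p\wedge q$ with a projection infimum.

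Second, two of your phrases compress exactly the content of the paper's (iv)$\Rightarrow$(v): ``Riesz decomposition passes to the bidual by duality'', and ``the dual of a vector lattice is a vector lattice'' when applied to the norm dual. Both require the cone of $A$ to be closed, generating and normal, so that the norm dual coincides with the order dual and carries an equivalent lattice norm. In the non-unital case of (iii)$\Rightarrow$(v) you also need that the JB-norm is a lattice norm; this follows because the lattice and Jordan positive parts agree, by Lemmas~\ref{apm} and~\ref{wp}. With those citations supplied, your cycle of implications closes.
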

\begin{proof}
(i)$\Rightarrow$(ii), as well as (i)$\Rightarrow$(iii)$\Rightarrow$(iv) and (iii)$\Rightarrow$(vi) are trivial. (vi)$\Rightarrow$(vii) follows from Theorem~\ref{pervasive} or results in \cite{km}. (ii)$\Rightarrow$(i) is \cite[Theorem~3.2.2]{hs}. (vii)$\Rightarrow$(iii) follows from combining Lemmas~\ref{wp} and~\ref{apm}.

%For (i)$\Rightarrow$(vi) note that the unitization of $\Co_{0}\left(X\right)$ is $\Co\left(X_{\8}\right)$, where $X_{\8}$ is the one-point compactification of $A$.\medskip
%It is clear that if $A$ is a vector lattice, then it is pervasive. Conversely, if $A$ is a JBW-algebra then it is monotone complete, hence if additionally it is pervasive, by Proposition~\ref{pervcomplete} it is a Dedekind complete vector lattice.\medskip

(iv)$\Rightarrow$(v): First, note that the order on $A^{**}$ as a JB-algebra is determined by normal states, and in particular, is consistent with its order as a dual of an ordered vector space $A^{*}$. Since for every $a\in A$ we have that $A_{+}\ni \sqrt{a^{2}}\ge a$ (here we use \cite[Theorem~3.2.2]{hs} for $\mathrm{JB}\left(a\right)$), it follows that $A$ is a directed ordered vector space. Moreover, since $A$ satisfies the Riesz decomposition property, $A_{+}$ is norm-closed in $A$, and the norm is monotone on $A_{+}$, by \cite[Theorem~2.47 and Exercise~2.17]{at}, it follows that $A^{*}$ is a vector lattice endowed with a norm equivalent to a Banach lattice norm. We conclude that $A^{**}$ is also a lattice (see \cite[Theorem~4.1]{ab}).

(v)$\Rightarrow$(ii): Since $A$ is a subalgebra of $A^{**}$, it is enough to show that $A^{**}$ is associative. Moreover, recall that the latter is always unital. Let $K_{A^{**}}$ be the space of states on $A^{**}$, and let $K$ be the collection of the pure states, i.e.\ the extreme points of $K_{A^{**}}$. According to \cite[Theorem~2.5.6]{kv}, $K$ is $\mathrm{w}^{*}$-closed in $K_{A^{**}}$ and the representation $a\mapsto \hat{a}$ as in Theorem~\ref{T:rep of o.u.s.} is an isometric vector lattice homomorphism onto $\Co\left(K\right)$. Moreover, $1_{A^{**}}$ is mapped into $\1$. Since any unital isometry between JB-algebras is a Jordan isomorphism (see \cite[Theorem~4]{wy}), $A^{**}$ is isomorphic to $\Co\left(K\right)$, and in particular it is associative.
\end{proof}

\section{Ice cream spaces}\label{s4}

In this section, we will examine the concepts of (directed) order convergence, pervasiveness and atomicity in an explicit class of ordered vector spaces, the so-called \emph{ice cream spaces}. In Subsection~\ref{icebsc}, for a Banach space $F$, we construct the ice cream space $\icecream\left(F\right)$, identify it with the space $\Aff\left(\Ba_{F^{*}}, \mathrm{w}^{*}\right)$ of $\mathrm{w}^{*}$-continuous affine functions on $\Ba_{F^{*}}$, and show that directed order and norm convergence agree on $\icecream\left(F\right)$. In Subsection~\ref{iceoc}, we show that under mild hypotheses on $F$, order convergence and the order topology on $\Aff\left(\Ba_{F^{*}}, \mathrm{w}^{*}\right)$ coincide with continuous convergence and the compact-open topology, respectively. In Subsection~\ref{icelat}, we study to which extent $\icecream\left(F\right)$ has lattice structure. In Subsection~\ref{iceatm}, we give a necessary and sufficient condition on $F$ for $\icecream\left(F\right)$ to be (strongly) atomic.

\subsection{Preliminaries}\label{icepre}

It follows from part~(i) of Theorem~\ref{T:rep of o.u.s.} that any complete order unit space is, up to an isomorphism, the space of affine continuous functions on some compact convex subset of a locally convex space. In this section we consider ice cream spaces, which are precisely the order unit spaces whose state space is isomorphic to the dual unit ball of a Banach space equipped with the $\mathrm{w}^{*}$-topology (see Proposition~\ref{iceaff}). First, let us cover some general information about ordered vector spaces.

\begin{lemma}\label{unital}
Let $\left(E,e\right)$ and $\left(F,f\right)$ be order unit spaces, and let $T\colon E\to F$ be a linear operator. Then:
\item[(i)] If $T$ is unital, then it is positive if and only if $\|T\|=1$.
\item[(ii)] If $T$ is a bijection, then any two of being unital, being an order isomorphism and being an isometry imply the third one.
\item[(iii)] If $T$ is unital, then it is an isometry if and only if it is an order embedding.
\item[(iv)] If $F=\R$, then any two of being unital, positivity and having norm $1$ imply the third one.
\end{lemma}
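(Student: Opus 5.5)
The whole argument rests on one observation. For an order unit space $\left(E,e\right)$ with its order unit norm, and for $g\in E$ with $\|g\|_{e}\le 1$, we have $g\in\left[-e,e\right]$. This holds because the infimum in the definition of $\|g\|_{e}$ is attained, which is a consequence of the Archimedean property: $g\le re$ for all $r>1$ forces $g\le e$. We also use that $f\in E_{+}$ if and only if $\|e-f/\|f\|_{e}\|_{e}\le 1$, or more simply that $f\ge0$ if and only if $\|\,\|f\|_{e}e-f\|_{e}\le \|f\|_{e}$. Indeed, $0\le f\le \|f\|e$ gives $\|f\|e-f\in\left[0,\|f\|e\right]$. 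Conversely, if $\|f\|e-f\le\|f\|e$, then $f\ge0$.

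\textbf{Part (i).} Assume $T$ is unital and positive. If $\|g\|_{e}\le1$, then $-e\le g\le e$, so positivity and $Te=f$ give $-f\le Tg\le f$. Hence $\|Tg\|_{f}\le1$, so $\|T\|\le1$, and $\|Te\|=\|f\|=1$ gives equality. Conversely, assume $\|T\|=1$ and let $g\ge0$ with $\|g\|_{e}=1$, so that $0\le g\le e$ and therefore $e-g\in\left[0,e\right]\subseteq\left[-e,e\right]$. Then $\|f-Tg\|_{f}=\|T(e-g)\|\le1$, so $f-Tg\le f$, that is, $Tg\ge0$. Scaling handles arbitrary $g\ge0$.

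\textbf{Part (iii).} This is a refinement of the same idea. If $T$ is a unital order embedding, meaning $Tg\ge0\iff g\ge0$, then $g\in\left[-re,re\right]\iff Tg\in\left[-rf,rf\right]$, so the norms coincide. Conversely, if $T$ is a unital isometry, then it is positive by (i). For the reverse implication, suppose $Tg\ge0$ and put $r:=\|g\|=\|Tg\|$. Then $0\le Tg\le rf$ gives $\|rf-Tg\|\le r$, that is, $\|T(re-g)\|\le r$. Since $T$ is an isometry, $\|re-g\|_{e}\le r$, hence $re-g\le re$, that is, $g\ge0$.

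\textbf{Part (ii).} Here $T$ is bijective. Unital together with order isomorphism gives isometry by (iii). Unital together with isometry gives order embedding by (iii), and a bijective order embedding is an order isomorphism. The remaining case, an order isomorphism that is an isometry, is the main obstacle: we must show $Te=f$. Order isomorphisms map order units to order units, and the image of $\left[-e,e\right]$ is $\left[-Te,Te\right]$. Since $T$ is an isometry, this image is the unit ball $\left[-f,f\right]$ of $F$. So $\left[-Te,Te\right]=\left[-f,f\right]$. Then $Te\le f$ and $f\le Te$, whence $Te=f$.

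\textbf{Part (iv).} Take $F=\R$. Unital and positive gives norm $1$ by (i), and unital with norm $1$ gives positive by (i). For positive with norm $1$ implying unital, let $\varphi$ be such a functional. For $g\in\left[-e,e\right]$ we get $|\varphi(g)|\le\varphi(e)$, so $\|\varphi\|=\varphi(e)$. Since $\|\varphi\|=1$, this yields $\varphi(e)=1$.
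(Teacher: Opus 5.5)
Your proof is correct and follows essentially the paper's approach. Parts (i) and (iv) rest on the same identification of the unit ball with $[-e,e]$, and your key step in (ii) is exactly the paper's argument: an isometric order isomorphism is unital because $[-Te,Te]=[-f,f]$. The only difference is organizational. You prove (iii) directly via the criterion $g\ge 0\iff\|\,\|g\|_{e}e-g\|_{e}\le\|g\|_{e}$ and then deduce the remaining cases of (ii). The paper instead gets those cases of (ii) by applying (i) to $T$ and $T^{-1}$, and obtains (iii) by applying (ii) to $T\colon E\to(TE,f)$.
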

\begin{proof}
(i): Necessity follows from $T\Ba_{E}=T\left[-e,e\right]\subseteq\left[-f,f\right]=\Ba_{F}$. For sufficiency, let $g\in E_{+}$. We may assume that $\|g\|\le 2$, so that $g\in \left[0_{E},2e\right]=e+\left[-e,e\right]$. Then, $Tg\in Te+T\left[-e,e\right]\subseteq f+\left[-f,f\right]\subseteq F_{+}$.

(ii): The facts that every unital order isomorphism is an isometry, and every unital isometry is an order isomorphism follow from applying (i) to $T$ and $T^{-1}$. If $T$ is an isometric order isomorphism, we have that $T\left[-e,e\right]= \left[-f,f\right]$, and since $e=\max \left[-e,e\right]$, it follows that $Te=\max \left[-f,f\right]=f$.

(iii) follows from (ii) applied to $E$ and $\left(TE,f\right)$.

(iv): In view of (i), it is enough to show that a positive functional of norm $1$ is unital, which is done using a similar argument as in (ii).
\end{proof}

Let $A$ be a convex subset of a vector space $E$. Recall that a \emph{face} of $A$ is a subset $B$ of $A$ with the property that whenever $e,f\in A$ and $t\in\left(0,1\right)$ are such that $te+\left(1-t\right)f\in B$, then $e,f\in B$. Extreme points correspond to faces of cardinality $1$. For some basic information on faces, see \cite[Section~1.6]{at}. Being a face is a transitive relation. The collection of faces of $A$ is closed with respect to arbitrary intersections, and so it is a complete lattice. If $A$ is a cone, so is every nonempty face $B$ of $A$.

\begin{proposition}\label{face}
Let $E$ be an ordered vector space. For a cone $C\subseteq E_{+}$ the following conditions are equivalent:
\item[(i)] $C$ is a face of $E_{+}$;
\item[(ii)] If $e,f\in E_{+}$ are such that $e+f\in C$, then $e,f\in C$;
\item[(iii)] $C$ is a lower set in $E_{+}$, i.e.\ $0_{E}\le e\le f\in C$ implies $e\in C$;
\item[(iv)] $C$ is a full set;
\item[(v)] There is a directed full subspace $H\subseteq E$ such that $C=H_{+}$.
\end{proposition}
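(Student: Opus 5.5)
I will prove the cycle (i)$\Rightarrow$(ii)$\Rightarrow$(iii)$\Rightarrow$(iv)$\Rightarrow$(v)$\Rightarrow$(i). Throughout, $C$ is a cone contained in $E_{+}$, so it is closed under addition and positive scaling.

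\textbf{(i)$\Rightarrow$(ii).} Let $e,f\in E_{+}$ with $e+f\in C$. Then $\frac12(e+f)\in C$, and it is the midpoint of $2e,2f\in E_{+}$. The face property gives $2e,2f\in C$, hence $e,f\in C$ because $C$ is a cone.

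\textbf{(ii)$\Rightarrow$(iii).} If $0_{E}\le e\le f\in C$, write $f=e+(f-e)$ with both summands in $E_{+}$. Then (ii) yields $e\in C$.

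\textbf{(iii)$\Rightarrow$(iv).} Let $p,q\in C$ with $p\le q$, and let $p\le e\le q$. Then $e\ge p\ge0_{E}$, so $0_{E}\le e\le q\in C$, and (iii) gives $e\in C$. Thus $[p,q]\subseteq C$.

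\textbf{(iv)$\Rightarrow$(v).} Put $H:=C-C$, which is a linear subspace because $C$ is a convex cone. It is directed by construction, since every element is a difference of elements of $H_{+}\supseteq C$.

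First I show $H_{+}=C$; this is the step needing the most care. Let $h=p-q\ge0_{E}$ with $p,q\in C$. Then $0_{E}\le h\le h+q=p$. Since $0_{E}\in C$ (as $C$ is a cone), fullness gives $[0_{E},p]\subseteq C$, so $h\in C$.

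For fullness of $H$, I use the criterion recalled before Proposition~\ref{pervid}. Let $0_{E}\le e\le h\in H$. Then $h\in H_{+}=C$, so $e\in[0_{E},h]\subseteq C\subseteq H$.

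\textbf{(v)$\Rightarrow$(i).} Assume $C=H_{+}$ for a directed full subspace $H$. Let $e,f\in E_{+}$ and $t\in(0,1)$ with $g:=te+(1-t)f\in C$. Then $0_{E}\le te\le g$, so fullness of $H$ gives $te\in H$, hence $e\in H$ and so $e\in H_{+}=C$. The same argument applied to $(1-t)f$ gives $f\in C$.

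No step is a real obstacle. The only points needing care are using $0_{E}\in C$ in (iv)$\Rightarrow$(v), and checking $H_{+}=C$ before applying the fullness criterion.
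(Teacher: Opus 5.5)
Your proof is correct and is essentially the paper's argument arranged as one cycle instead of the paper's two loops (i)$\Rightarrow$(ii)$\Rightarrow$(iii)$\Rightarrow$(i) and (iii)$\Rightarrow$(v)$\Rightarrow$(iv)$\Rightarrow$(iii); the key steps are the same, namely $H:=C-C$ and the observation $0_{E}\le p-q\le p$ giving $H_{+}=C$. One harmless slip in (i)$\Rightarrow$(ii): $\frac{1}{2}(e+f)$ is the midpoint of $e$ and $f$, not of $2e$ and $2f$, and that already gives $e,f\in C$ directly.
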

\begin{proof}
(i)$\Rightarrow$(ii): Since $C$ is a cone, we have that $\frac{1}{2}e+\frac{1}{2}f\in C$, hence $e,f\in C$.

(ii)$\Rightarrow$(iii): We have that $e,f-e\in E_{+}$, and $e+\left(f-e\right)=f\in C$ implying $e\in C$.

(iii)$\Rightarrow$(i): Assume that $e,f\in E_{+}$ and $t\in\left(0,1\right)$ are such that $te+\left(1-t\right)f\in C$. Then, $0_{E}\le e\le \frac{1}{t}\left( te+\left(1-t\right)f\right)\in C$ implies $e\in C$. Similarly, $f\in C$, and so $C$ is a face of $E_{+}$.

(iii)$\Rightarrow$(v): It is clear that $H:=C-C$ is a directed subspace of $E$. Let us show that $H_{+}=C$. If $f,e\in C$ with $f-e\ge 0_{E}$, then $0_{E}\le f-e\le f\in C$ implies $f-e\in C$, so that $H_{+}\subseteq C$. The converse inclusion is obvious.

If $e,f\in H$ with $e\le f$, then $f-e\in H_{+}=C$, therefore $\left[0_{E},f-e\right]\subseteq C$, and so $\left[e,f\right]=e+\left[0_{E},f-e\right]\subseteq C-C$. We conclude that $H=C-C$ is a full subspace.

% It is left to recall that any directed full subspace of $E$ is an ideal (see \cite[Theorem 4.3.22]{kv}).

(v)$\Rightarrow$(iv)$\Rightarrow$(iii) is straightforward.
\end{proof}

\begin{corollary}
The collection of nonempty faces $C$ of $E_{+}$ is order isomorphic to the collection of directed full linear subspaces $H$ of $E$ via $C\mapsto C-C$ and $H\mapsto H_{+}$.
\end{corollary}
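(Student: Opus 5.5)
The plan is to read the corollary off Proposition~\ref{face} and then check the three things it leaves open: the two maps land where claimed, they are mutually inverse, and both preserve inclusion.

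\emph{The maps are well defined.} Let $C$ be a nonempty face of $E_{+}$. As noted before Proposition~\ref{face}, $C$ is then a cone contained in $E_{+}$, so condition (i) of Proposition~\ref{face} holds for $C$. The proof of (iii)$\Rightarrow$(v) there shows that $H:=C-C$ is a directed full subspace of $E$ with $H_{+}=C$. Conversely, let $H$ be a directed full subspace. Then $H_{+}=H\cap E_{+}$ is a cone contained in $E_{+}$. It is nonempty because it contains $0_{E}$. It is full in $E$: if $p,q\in H_{+}$ and $p\le r\le q$, then $r\in H$ by fullness of $H$, and $r\ge p\ge 0_{E}$ gives $r\in H_{+}$. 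So $H_{+}$ satisfies (iv) of Proposition~\ref{face}, hence (i), and is therefore a nonempty face of $E_{+}$.

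\emph{The maps are mutually inverse.} For a face $C$, the identity $(C-C)_{+}=C$ is exactly what the proof of (iii)$\Rightarrow$(v) established. For a directed full subspace $H$, directedness gives $H=H_{+}-H_{+}$, which is \cite[Proposition~1.1.7]{kv} applied to $H$ as an ordered vector space in its own right. Hence $H\mapsto H_{+}\mapsto H_{+}-H_{+}=H$.

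\emph{Both maps are order preserving for inclusion.} If $C\subseteq C'$, then $C-C\subseteq C'-C'$. If $H\subseteq H'$, then $H_{+}\subseteq H'_{+}$. A bijection that preserves order in both directions is an order isomorphism, which gives the corollary.

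The only step that is more than bookkeeping is the identity $H=H_{+}-H_{+}$, and it reduces to the cited characterization of directedness. I expect no real obstacle.
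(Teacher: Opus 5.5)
Your proposal is correct and follows the paper's route: the corollary is stated without a separate proof, as a direct consequence of Proposition~\ref{face}, whose proof of (iii)$\Rightarrow$(v) gives $C-C$ directed and full with $(C-C)_{+}=C$, while (v)$\Rightarrow$(i) and $H=H_{+}-H_{+}$ for directed $H$ give the inverse map. Your checks of well-definedness, mutual inverseness and monotonicity of both maps supply exactly the bookkeeping the paper leaves implicit.
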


A ray in $E_{+}$ which constitutes a face of $E_+$ is called an \emph{extreme ray}.

\begin{corollary}\label{atmo}
Let $E$ be an ordered vector space and let $e>0_{E}$. Then:
\item[(i)] $e$ is an atom if and only if $\left[0,+\8\right)e$ is an extreme ray.
\item[(ii)] $E$ is atomic if and only if every nonempty and nonzero face of $E_{+}$ contains an extreme ray.
\end{corollary}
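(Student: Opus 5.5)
Both parts reduce to Proposition~\ref{face}, which identifies the faces of $E_{+}$ that are cones with the lower subsets of $E_{+}$ (condition~(iii)), together with the definition of an atom. For (i), I would set $C:=\left[0,+\8\right)e$; it is a cone contained in $E_{+}$. If $e$ is an atom, let $0_{E}\le f\le re$ with $r>0$. Then $\frac{1}{r}f\in\left[0_{E},e\right]\subseteq\R e$, so $f=se$ for some $s\in\R$. Since $f\ge0_{E}$ and $e>0_{E}$, we must have $s\ge0$; otherwise $-e\ge0_{E}$, which together with $e\ge0_{E}$ forces $e=0_{E}$. The case $r=0$ gives $f=0_{E}$ by antisymmetry. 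Hence $C$ is a lower set in $E_{+}$, and by Proposition~\ref{face} it is a face, i.e.\ an extreme ray. Conversely, if $C$ is a face, then it is a lower set, so $\left[0_{E},e\right]\subseteq C\subseteq\R e$ and $e$ is an atom.

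For (ii), I would first record that every face of $E_{+}$ that contains a nonzero point is automatically a cone. Indeed, a nonempty face $B$ contains $0_{E}$: if $f\in B$ with $f\ne0_{E}$, write $f=\frac{1}{2}\left(2f\right)+\frac{1}{2}\cdot 0_{E}$, and the face property gives $0_{E},2f\in B$. Similarly $tf\in B$ for all $t\ge0$, since $f$ is a convex combination of $tf$ and $0_{E}$ or of $0_{E}$ and $2tf$. Closure under addition follows from $\frac{1}{2}(f+g)\in B$ (by convexity) together with the scaling just shown. So "nonzero face" means a face different from $\varnothing$ and $\left\{0_{E}\right\}$, and it is a cone. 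The paper already notes that nonempty faces of a cone are cones, so I can simply cite that.

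For the forward direction of (ii), assume $E$ is atomic and let $B$ be a nonzero face. Pick $e\in B$ with $e>0_{E}$ and an atom $a\in\left(0_{E},e\right]$. Since $B$ is a lower set in $E_{+}$ by Proposition~\ref{face}, we get $a\in B$, hence $\left[0,+\8\right)a\subseteq B$, and this is an extreme ray by (i). For the converse, let $e>0_{E}$. The main step is to produce a suitable face: I would take the smallest face containing $e$, namely $B:=\bigcup_{n}\left[0_{E},ne\right]$. This set is a cone, and it is a lower subset of $E_{+}$, so by Proposition~\ref{face} it is a face; it is nonzero because it contains $e$. By hypothesis $B$ contains an extreme ray $\left[0,+\8\right)a$ with $a>0_{E}$, so $a\le ne$ for some $n$. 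Then $\frac{a}{n}\in\left(0_{E},e\right]$, and $\frac{a}{n}$ is an atom by (i), since positive multiples of atoms are atoms. This shows $E$ is atomic. The only point needing care is verifying that $B$ is a cone: it is closed under addition because $\left[0_{E},ne\right]+\left[0_{E},me\right]\subseteq\left[0_{E},(n+m)e\right]$, and under positive scaling because $t\left[0_{E},ne\right]\subseteq\left[0_{E},\lceil tn\rceil e\right]$ for $t\ge0$.
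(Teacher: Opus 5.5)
Your proof is correct and follows the paper's argument essentially step for step: part~(i) uses the lower-set characterization of faces from Proposition~\ref{face}, and part~(ii) uses the face $\bigcup_{n\in\N}\left[0_{E},ne\right]$ and then rescales the atom found there by $\frac{1}{n}$. The only blemish is your justification that $tf\in B$ for $0<t<1$: the convex-combination phrasing is garbled, and it suffices to use $0_{E}\in B$ together with convexity of $B$; since you also fall back on the paper's remark that nonempty faces of a cone are cones, nothing is lost.
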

\begin{proof}
(i): Necessity: If $e$ is an atom, then $\left[0,1\right]e$ is a lower set in $E_{+}$, hence so is $\left[0,+\8\right)e$, and so the latter is a face of $E_{+}$ by Proposition~\ref{face}.

Sufficiency: If $\left[0,+\8\right)e$ is an extreme ray, it is a face of $E_{+}$. Hence, $\left[0_{E},e\right]\subseteq \left[0,+\8\right)e$, and so $\left[0_{E},e\right]\subseteq \left[0,1\right]e$.

(ii): Necessity: If $C\subseteq E_{+}$ is a face which contains $f>0_{E}$, then there is an atom $e\in\left(0_{E},f\right]\subseteq C$. It follows from (i) that $\left[0,+\8\right)e$ is an extreme ray of $E_{+}$ contained in $C$.

Sufficiency: If $f>0_{E}$, then $\bigcup_{n\in\N}\left[0_{E},nf\right]$ satisfies the conditions of Proposition~\ref{face}, and so is a face. By our assumption and (i), this face contains an atom, say $e$. Let $n\in\N$ be such that $e\in\left[0_{E},nf\right]$. Then, $\frac{1}{n}e$ is an atom contained in $\left[0_{E},f\right]$.
\end{proof}

\begin{proposition}\label{minext}
Assume that $E$ is Archimedean. Then, $C\subseteq E_{+}$ is an extreme ray if and only if it is a minimal nonzero face.
\end{proposition}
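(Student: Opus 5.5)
The plan is to prove the two directions separately, using Proposition~\ref{face} to pass between faces of $E_{+}$ and lower sets of $E_{+}$, and Corollary~\ref{atmo} to pass between extreme rays and atoms. Throughout, a ray means a set $\left[0,+\8\right)e$ with $e>0_{E}$, and a nonzero face means a face of $E_{+}$ that contains a nonzero element. Since $E$ is Archimedean, the cone is proper: if $e,-e\in E_{+}$, then $ne\le 0\cdot$ and indeed $n(-e)\le e$ for all $n$, which forces $e\le 0_{E}$, hence $e=0_{E}$.

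\emph{Extreme ray $\Rightarrow$ minimal nonzero face.} Let $C=\left[0,+\8\right)e$ be an extreme ray. It is a nonzero face by definition. Let $D\subseteq C$ be a nonzero face, and pick $0_{E}\ne d\in D$. Then $d=te$ for some $t>0$. Since $D$ is a cone, $D\supseteq\left[0,+\8\right)d=C$. Hence $D=C$, so $C$ is minimal. This direction does not use the Archimedean property.

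\emph{Minimal nonzero face $\Rightarrow$ extreme ray.} Let $C$ be a minimal nonzero face and fix $e\in C$ with $e>0_{E}$. It suffices to show that $e$ is an atom. Corollary~\ref{atmo}(i) then makes $\left[0,+\8\right)e$ an extreme ray, that is, a face contained in $C$, and minimality gives $C=\left[0,+\8\right)e$.

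\begin{itemize}
\item[1.] For any $f$ with $0_{E}<f\le e$, consider the face $C_{f}:=\bigcup_{n\in\N}\left[0_{E},nf\right]$, which is a face by Proposition~\ref{face}, as in the proof of Corollary~\ref{atmo}(ii). Since $f\in C$ and $C$ is a lower set, $C_{f}\subseteq C$. Minimality then gives $C_{f}=C$, so $e\le nf$ for some $n$. Thus every nonzero element of $\left[0_{E},e\right]$ dominates a positive multiple of $e$.
\item[2.] Now let $0_{E}\le f\le e$ and put $s:=\sup\{r\ge0: re\le f\}$. The constraint $re\le f\le e$ gives $(1-r)e\ge 0_{E}$; if $r>1$ this would make $-e\ge 0_{E}$, contradicting properness. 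So $s\in[0,1]$.
\item[3.] By the Archimedean property, $se\le f$: we have $(s-\tfrac1n)e\le f$ for all $n$, so $\tfrac1n e\ge se-f$ for all $n$, i.e.\ $n(se-f)\le e$, which gives $se-f\le 0_{E}$.
\item[4.] Set $g:=f-se$, so $0_{E}\le g\le (1-s)e\le e$. If $g\ne 0_{E}$, Step~1 gives $g\ge \tfrac1n e$ for some $n$, hence $(s+\tfrac1n)e\le f$, contradicting the definition of $s$. Therefore $g=0_{E}$, so $f=se\in\R e$, and $e$ is an atom.
\end{itemize}

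The main obstacle is Steps~3--4. Minimality of $C$ alone only shows that nonzero elements of $\left[0_{E},e\right]$ generate the same face as $e$, which is the conclusion of Step~1. To upgrade this to $\left[0_{E},e\right]\subseteq\R e$, one needs the Archimedean supremum argument: it shows that the maximal multiple of $e$ below $f$ exists and is attained, which leaves a remainder $g$ to which Step~1 applies.
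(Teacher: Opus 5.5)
Your proof is correct, but it takes a different route from the paper. The paper argues by contradiction. If $C$ is not a ray, it picks linearly independent $u,v\in C$ and restricts to the plane $F=\spa\{u,v\}$. There $F_{+}=E_{+}\cap F$ is closed (a finite-dimensional Archimedean cone) and is shown to lie inside $C$. The paper then takes a nonzero boundary point $f$ of $F_{+}$, and minimality forces $\bigcup_{n}[0_{E},nf]=C\supseteq F_{+}$. So $f$ is an order unit of $F$, contradicting the cited fact that order units lie in the interior of the cone.

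You instead show directly that every $e>0_{E}$ in $C$ is an atom, and then apply Corollary~\ref{atmo}(i). The geometric core is the same. Your would-be remainder $g=f-se$ is exactly where the segment from $f$ in the direction $-e$ leaves the planar cone $\spa\{e,f\}\cap E_{+}$, i.e.\ a boundary point. Your Step~3 replaces the closedness of finite-dimensional Archimedean cones with a single explicit use of the Archimedean axiom, and your Steps~1 and~4 replace the ``order units are interior points'' citation.

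What your version buys:
\begin{itemize}
\item It is self-contained: no topology and no external finite-dimensional convex geometry.
\item It pinpoints the one place the Archimedean property enters, namely that $s$ is attained. This is exactly what fails in the non-Archimedean example of the Remark after the proposition: with $e=(0,1)$ and $f=(x,y)$, $x\ne0$, $0<y<1$, one gets $s=y$ but $se\not\le f$.
\item It yields a slightly more general local lemma: in an Archimedean space, if $e>0_{E}$ and every $0_{E}<f\le e$ satisfies $e\le nf$ for some $n$, then $e$ is an atom.
\end{itemize}
The paper's argument is shorter, given the cited facts, and more visibly geometric.

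One small blemish: your opening justification that $E_{+}$ is proper is garbled. Applied to $n(-e)\le e$, the Archimedean property yields $e\ge 0_{E}$, not $e\le 0_{E}$. The step is also unnecessary: the order on $E$ is a partial order, so $e\ge0_{E}$ and $e\le 0_{E}$ give $e=0_{E}$ by antisymmetry.
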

\begin{proof}
Since an extreme ray is one-dimensional, and any face of a cone is a cone, necessity is clear. For sufficiency, assume that $C$ is a minimal nonzero face of $E_{+}$.

Suppose that $C$ is not a ray, so that there are two linearly independent vectors $u,v\in C$. Let $F:=\spa\left\{u,v\right\}$, which is an Archimedean ordered vector space. Then, $F_{+}:=E_{+}\cap F$ is closed with respect to the unique Hausdorff linear topology on the two-dimensional space $F$, and it is not a ray since it contains $u,v$. We clearly have $C\cap F\subseteq E_{+}\cap F=F_{+}$. For the converse inclusion, assume that $t,s\in\R$ are such that $tu+sv=:e\in E_{+}$. Then, either $t,s\ge 0$, and then clearly $e\in C$, or say $t>0>s$, so that $C\ni u=\frac{1}{t}e+\frac{-s}{t}v$ implies $e\in C$.

Since $F_{+}$ is a proper cone but not a ray, there exists $0_E < f \in \partial{F_{+}}\subseteq C$. It follows from minimality that $\bigcup_{n\in\N}\left[0_{E},nf\right]=C\supseteq F_{+}$. Hence, $f$ is an order unit of $F$ implying that $f$ belongs to the relative interior of $F_+$ in $F$ (see \cite[Proposition~1.5.11(i)]{kv}). This contradicts $f \in \partial F_+$.
\end{proof}

\begin{remark}
The assumption that $E$ is Archimedean in Proposition~\ref{minext} is not superfluous: consider $E=\R^{2}$ ordered by the cone $C:=\R\times\left(0,+\8\right)\cup\left\{\left(0,0\right)\right\}$, which has only two nonempty faces: $C$ and $\left\{\left(0,0\right)\right\}$.
\qed\end{remark}

\subsection{Basic properties of the ice cream spaces}\label{icebsc}

Throughout the subsection, $F$ is a normed space. Endow $\icecream\left(F\right):=F\oplus_{1} \R$ with the \emph{ice cream order}, determined by the convex cone $\left\{\left(f,r\right): \|f\|\le r\right\}$. Then, $\left(f,r\right)$ is an order unit if and only if $\|f\|< r$. In particular, we will always assume that $1_{\smallicecream\left(F\right)}:=\left(0_{F},1\right)$ is the designated order unit of $\icecream\left(F\right)$. We will show in Proposition~\ref{vdd} that the norm on $\icecream\left(F\right)$ agrees with the order unit norm. It follows that the interior of $\icecream\left(F\right)_{+}$ consists of the elements $\left(f,r\right)$ with $\|f\|< r$, and the boundary $\partial \icecream\left(F\right)_{+}$ of $\icecream\left(F\right)_{+}$ consists of the elements of the form $\left(f,\|f\|\right)$ with $f\in F$.

\begin{remark}
A slightly different (but essentially equivalent) notion of an ice cream cone exists in the literature, see \cite[Section~2.6]{at}. If $F$ is a Hilbert space, then $\icecream\left(F\right)$ is called a \emph{spin factor}. A term sometimes used for $\icecream\left(F\right)$ in the general case is \emph{generalized spin factor}.
\qed\end{remark}

\begin{remark}
Note that $F\times \left[0,+\8\right)\subseteq \icecream\left(F\right)$ can be identified with the collection of all closed balls in $F$, endowed with the structure of a metric ordered cone: the addition and positive multiplication are defined in terms of the algebraic operations with sets in a vector space, the ordering is by inclusion and the distance is the Hausdorff metric. It was observed in \cite{schmidt} that the correspondence $\left(f,r\right)\mapsto\Ba\left(f,r\right)$ is an order isomorphism, an isometry, and an algebraic isomorphism.
\qed\end{remark}

\begin{proposition}[cf.\ \cite{vddb}]\label{vdd}
$\icecream\left(F\right)$ is the free order unit space over $F$, i.e.\ for every order unit space $\left(E,e\right)$ and linear map $T\colon F\to E$ such that $T\Ba_{F}\subseteq \Ba_{E}=\left[-e,e\right]$, there is a unique positive linear map $T_{\smallicecream}\colon\icecream\left(F\right)\to E$ such that $T_{\smallicecream}\left(0_{F},1\right)=e$ and $T_{\smallicecream}\iota=T$, where $\iota$ is the natural inclusion from $F$ into $\icecream\left(F\right)=F\oplus\R$.
\end{proposition}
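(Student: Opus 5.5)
The plan is to write down the only possible candidate, $T_{\smallicecream}(f,r):=Tf+re$, and to verify that it is positive. Uniqueness is immediate. Any linear map $S$ with $S(0_{F},1)=e$ and $S\iota=T$ satisfies $S(f,r)=S(f,0)+rS(0_{F},1)=Tf+re$, since $\icecream(F)=\iota F\oplus\R(0_{F},1)$ as vector spaces. This candidate is clearly linear and unital, and it extends $T$. Note that positivity is not needed for uniqueness; it is needed only for existence.

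For positivity, let $(f,r)\in\icecream(F)_{+}$, so that $\|f\|\le r$. If $r=0$, then $f=0_{F}$ and there is nothing to prove. If $r>0$, then $f/r\in\Ba_{F}$, so by hypothesis $T(f/r)\in\left[-e,e\right]$. Hence $-re\le Tf\le re$, and in particular $Tf+re\ge 0_{E}$. So $T_{\smallicecream}$ is positive.

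Since the surrounding text claims that this proposition shows the norm of $\icecream(F)$ agrees with the order unit norm, I will also verify that fact. The norm on $\icecream(F)=F\oplus_{1}\R$ is $\|f\|+|r|$; the order unit norm is $\|(f,r)\|_{1_{\smallicecream(F)}}=\inf\{s>0:\ (f,r)\in[-s(0_{F},1),s(0_{F},1)]\}$. The membership condition $(0_{F},s)\pm(f,r)\in\icecream(F)_{+}$ means $\|f\|\le s-r$ and $\|f\|\le s+r$, i.e.\ $s\ge\|f\|+|r|$. Thus the order unit norm equals $\|f\|+|r|$, which is exactly the $\oplus_{1}$ norm.

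Finally, I will check that $(\icecream(F),(0_{F},1))$ is indeed an order unit space. It is Archimedean because the cone is closed in the norm topology. Also, $(0_{F},1)$ is an order unit, since $(0_{F},\|f\|+|r|)\pm(f,r)\ge 0$ for every $(f,r)$.

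The only mildly delicate point is the $r=0$ case of positivity, together with correctly identifying which norm is meant. Neither should cause real difficulty.
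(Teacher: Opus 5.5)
Your proposal is correct and takes the same approach as the paper. The paper also first checks that the unit ball of $F\oplus_{1}\R$ is the interval between $(0_{F},-1)$ and $(0_{F},1)$, then defines $T_{\smallicecream}(f,r):=Tf+re$ as the unique candidate, and derives positivity from $\|f\|\le r\Rightarrow Tf\in[-re,re]$. Your additional checks (the Archimedean property via norm-closedness of the cone, and that $(0_{F},1)$ is an order unit) are valid and only make explicit what the paper takes for granted in the surrounding text.
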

\begin{proof}
First, let us verify that the norm on $\icecream\left(F\right)$ is the order unit norm. Indeed, the unit ball consists of all $\left(f,r\right)$ such that $\|f\|+\left|r\right|\le 1$, which is equivalent to
\[\left(0_{F},-1\right)\le \left(f,r\right)\le\left(0_{F},1\right).\]

Let $\left(E,e\right)$ be an order unit space, and let $T\colon F\to E$ be linear with $T\Ba_{F}\subseteq \left[-e,e\right]$. Clearly, $T_{\smallicecream}\colon\icecream\left(F\right)\to E$, defined by $T_{\smallicecream}\left(f,r\right):=Tf+re$, is the unique linear map which satisfies $T_{\smallicecream}\left(0_{F},1\right)=e$ and $T_{\smallicecream}\iota=T$. It is left to show that $T_{\smallicecream}$ is positive. Indeed, if $\left(f,r\right)\in \icecream\left(F\right)_{+}$, then $\|f\|\le r$, hence $Tf\in\left[-re,re\right]$, thus $T_{\smallicecream}\left(f,r\right)=Tf+re\ge 0_{E}$.
\end{proof}

\begin{proposition}
Isometric order isomorphisms between ice cream spaces are precisely the maps induced by isometric isomorphisms between the underlying normed spaces.
\end{proposition}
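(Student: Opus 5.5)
The claim has two directions. The easy one: if $S\colon F\to G$ is an isometric isomorphism of normed spaces, then $\hat S(f,r):=(Sf,r)$ is a linear bijection from $\icecream\left(F\right)$ onto $\icecream\left(G\right)$. Since $\|Sf\|=\|f\|$, it maps $\icecream\left(F\right)_{+}$ onto $\icecream\left(G\right)_{+}$. It preserves $\|f\|+|r|$, so it is an isometry. Alternatively, it is unital and an order isomorphism, hence an isometry by part~(ii) of Lemma~\ref{unital}.

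For the converse, let $T\colon\icecream\left(F\right)\to\icecream\left(G\right)$ be an isometric order isomorphism. By part~(ii) of Lemma~\ref{unital}, $T$ is unital: $T(0_{F},1)=(0_{G},1)$. The key step is to show that $T$ maps $F\oplus\{0\}$ into $G\oplus\{0\}$. I plan to characterize $F\oplus\{0\}$ order-theoretically relative to the unit $u=(0,1)$. For $x=(f,r)$ one has $u+x\ge 0$ and $u-x\ge0$ exactly when $\|f\|\le 1-|r|$. Moreover, $u\pm x$ both lie on the boundary $\partial\icecream_{+}$ (i.e.\ are non-order-units) iff $\|f\|=1-r$ and $\|f\|=1+r$, i.e.\ $r=0$ and $\|f\|=1$.

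Order units are preserved by order isomorphisms, since they are the interior points of the cone, or alternatively by the definition $f\in[-re,re]$. So the set of $x$ with $u\pm x\in E_{+}\setminus\mathrm{int}E_{+}$ is mapped by $T$ onto the corresponding set in $\icecream\left(G\right)$. That set is $\So_{F}\times\{0\}$. Its linear span is $F\oplus\{0\}$, and $T$ is linear, so $T(F\oplus\{0\})=G\oplus\{0\}$.

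Consequently $T(f,0)=(Sf,0)$ for a linear bijection $S\colon F\to G$. Then $T(f,r)=(Sf,r)$ by unitality. Isometry of $T$ on $F\oplus\{0\}$ gives $\|Sf\|=\|f\|$, so $S$ is an isometric isomorphism inducing $T$.

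The main obstacle is the boundary/interior characterization. Using that order units are exactly the pairs with $\|f\|<r$, which was noted at the start of the subsection, and that this property is invariant under order isomorphisms, is what I expect to make it routine.
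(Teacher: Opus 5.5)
Your proof is correct and follows essentially the same route as the paper. Both arguments obtain unitality of $T$ from part~(ii) of Lemma~\ref{unital} and then use that $T$ maps $\partial\icecream(F)_{+}$, the positive elements that are not order units, onto $\partial\icecream(G)_{+}$. The paper's version is slightly different in its key step: it writes $T(f,\|f\|)=(g,\|g\|)$, uses the isometry to get $\|g\|=\|f\|$, subtracts $T(0_{F},\|f\|)$, and gets surjectivity onto $G$ by applying the argument to $T^{-1}$. You instead identify $\So_{F}\times\{0\}$ as $\{x: u\pm x\in\partial\icecream(F)_{+}\}$ with $u=(0_{F},1)$ and take linear spans, which yields $T(F\oplus\{0\})=G\oplus\{0\}$ in one step.
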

\begin{proof}
Let $G$ be a Banach space. It is clear that any isometric isomorphism from $F$ onto $G$ induces an isometric order isomorphism from $\icecream\left(F\right)$ onto $\icecream\left(G\right)$. For the converse, suppose that $T\colon\icecream\left(F\right)\to \icecream\left(G\right)$ is an isometric order isomorphism. Then, by part~(ii) of Lemma~\ref{unital}, $T$ is unital, i.e.\ $T\left(0_{F},1\right)=\left(0_{G},1\right)$.

Fix $f\in F$. We have that $\left(f,\|f\|\right)\in\partial \icecream\left(F\right)_{+}$, hence $T\left(f,\|f\|\right)\in\partial \icecream\left(G\right)_{+}$. Therefore, $T\left(f,\|f\|\right)=\left(g,\|g\|\right)$, for some $g\in G$ with $\|g\|=\|f\|$. Then,
\[T\left(f,0\right)=T\left(f,\|f\|\right)-T\left(0_{F},\|f\|\right)=\left(g,\|g\|\right)-\left(0_{G},\|f\|\right)=\left(g,0\right).\]
Since $f\in F$ was arbitrary, we conclude that $T$ isometrically maps $F$ into $G$. Applying the same argument to $T^{-1}$ shows that $S:=\left.T\right|_{F}$ is an isometric isomorphism onto $G$, and $T=S\oplus \Id_{\R}$.
\end{proof}

We now study states on $\icecream\left(F\right)$. Recall that $F$ is \emph{strictly convex} (or \emph{rotund}) if every point in the unit sphere $\So_{F}$ is extreme. We will say that two ordered normed spaces are \emph{order isometric} if there is an isometric order isomorphism between them.

\begin{proposition}\label{icecream} Let $F$ be a normed space.
\item[(i)] $\icecream\left(F\right)^{*}=F^{*}\oplus_{\8}\R$ order isometrically with respect to the ice cream order on the latter.
\item[(ii)] The states on $\icecream\left(F\right)$ are the functionals of the form $\left(\nu,1\right)$, where $\nu\in \Ba_{F^{*}}$. The pure states are the functionals of the form $\left(\nu,1\right)$, where $\nu\in \ext\Ba_{F^{*}}$.
\item[(iii)] $\icecream\left(F\right)$ is order isometric to a supremum-dense subspace of $\Co\left(\overline{\ext}^{\mathrm{w}^{*}}\Ba_{F^{*}},\mathrm{w}^{*}\right)$.
\item[(iv)] If $F$ is norm complete, $\dim F>0$ and $F^{*}$ is strictly convex, then $F$ is non-reflexive if and only if there exists a strictly positive pure state on $\icecream\left(F\right)$.
\end{proposition}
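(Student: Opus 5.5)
For (i), I would use the pairing $\langle(\nu,s),(f,r)\rangle:=\nu(f)+sr$. Since $\icecream(F)=F\oplus_{1}\R$, its dual is $F^{*}\oplus_{\8}\R$ isometrically. For the order, a functional $(\nu,s)$ is positive iff $\nu(f)+sr\ge0$ whenever $\|f\|\le r$. Taking $r=\|f\|$ and choosing $f$ that nearly norms $-\nu$, this holds iff $s\ge\|\nu\|$. So the dual cone is exactly the ice cream cone of $F^{*}$.

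For (ii), the states are the positive functionals with $(\nu,s)(0_{F},1)=s=1$, i.e.\ $(\nu,1)$ with $\|\nu\|\le1$. The map $\nu\mapsto(\nu,1)$ is an affine $\mathrm{w}^{*}$-homeomorphism from $\Ba_{F^{*}}$ onto $K_{\smallicecream(F)}$. Affine bijections preserve extreme points, so the pure states are $(\nu,1)$ with $\nu\in\ext\Ba_{F^{*}}$.

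For (iii), I would combine (ii) with part~(ii) of Theorem~\ref{T:rep of o.u.s.}. The closure of the pure states in $K_{\smallicecream(F)}$ is the image of $\overline{\ext}^{\mathrm{w}^{*}}\Ba_{F^{*}}$ under the homeomorphism above. The representation then sends $(f,r)\mapsto[\nu\mapsto\nu(f)+r]$ on this closure, which is an isometric order isomorphism onto a supremum-dense subspace.

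For (iv), a pure state $(\nu,1)$ is strictly positive iff $\nu(f)+r>0$ for every nonzero $(f,r)$ with $\|f\|\le r$. Only boundary points matter, since interior points $\|f\|<r$ give positivity automatically. So the condition reduces to: $\nu(f)>-\|f\|$ for every $f\ne0$, i.e.\ $\nu$ (equivalently $-\nu$, which is also extreme) attains its norm at no point of $\So_{F}$. Here $\|\nu\|=1$ because $\dim F>0$, so $\ext\Ba_{F^{*}}\subseteq\So_{F^{*}}$. If $F$ is non-reflexive and complete, James' theorem gives a $\mu\in\So_{F^{*}}$ that does not attain its norm, and strict convexity of $F^{*}$ makes $\mu$ extreme. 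Then $(-\mu,1)$ is a strictly positive pure state. Conversely, if $F$ is reflexive, every functional attains its norm, so there is no such pure state.

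The main obstacle is the appeal to James' theorem; everything else is routine duality bookkeeping.
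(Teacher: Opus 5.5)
Your proposal is correct and follows essentially the same route as the paper. Both use the pairing argument for the dual cone in (i), read off the states and pure states in (ii), and invoke Theorem~\ref{T:rep of o.u.s.}(ii) for (iii). In (iv), both reduce strict positivity of $(\nu,1)$ to non-attainment of the norm and then apply James' theorem. Your version is somewhat more explicit: you spell out the affine $\mathrm{w}^{*}$-homeomorphism $\nu\mapsto(\nu,1)$, and you invoke strict convexity only in the non-reflexive direction.
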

\begin{proof}
(i): The only non-straightforward part of the statement is showing that the two spaces are order isomorphic. We have that $\left<\left(f^{*},s\right),\left(f,r\right)\right>=\left<f^{*},f\right>+sr$. If $\|f^{*}\|\le s$ and $\|f\|\le r$, then $sr\ge -\left<f^{*},f\right>$, and so $\left<\left(f^{*},s\right),\left(f,r\right)\right>\ge 0$. If $\|f^{*}\|> s$, there is $f\in F$ such that $\|f\|=1$ and $\left<f^{*},f\right><-s$, and then $\left(f,1\right)\ge 0$ but $\left<\left(f^{*},s\right),\left(f,1\right)\right>=\left<f^{*},f\right>+s<0$.

(ii): Let $\left(\nu,r\right)\in \icecream\left(F\right)^{*}$. This functional is a state if and only if $\left(\nu,r\right)\in \icecream\left(F\right)^{*}_{+}$ and $\left(\nu,r\right)\left(0_{F},1\right)=1$, i.e.\ $r=1$; by (i), this is equivalent to $\|\nu\|\le r=1$. The second claim follows from the first one.

(iii) follows from (ii) and part~(ii) of Theorem~\ref{T:rep of o.u.s.}.

(iv): First, since $F^{*}$ is strictly convex, we have $\ext\Ba_{F^{*}}=\So_{F^{*}}$, and so the pure states on $\icecream\left(F\right)$ are precisely the functionals $\left(\nu,1\right)$, where $\nu\in \So_{F^{*}}$.

Let us show that $\left(\nu,1\right)$, where $\nu\in \So_{F^{*}}$, is strictly positive if and only if $\nu$ does not attain its norm. Strict positivity means that $\left(\nu,1\right)\left(f,r\right)=\nu\left(f\right)+r>0$, whenever $f\in F$ and $r>0$ are such that $\|f\|\le r$. Attainment of the norm for $\nu$ means existence of $f\in\So_{F}$ such that $\nu\left(f\right)=-1$, equivalently $\left(\nu,1\right)\left(f,1\right)=0$.

It follows that a strictly positive pure state on $\icecream\left(F\right)$ exists if and only if some $\nu\in \So_{F^{*}}$ does not attain its norm. By James' Theorem (see \cite[Corollary~3.131]{fhhmz}), this last condition is equivalent to $F$ not being reflexive.
\end{proof}

The class of spaces in part~(iv) of Proposition~\ref{icecream} is quite broad.

\begin{example}\label{lur}
If $F$ is a Banach space with a separable dual, by \cite[Theorem~8.7]{fhhmz}, it has an equivalent norm such that $F^{*}$ is \emph{locally uniformly rotund (LUR)}, i.e.\ whenever $\left(f_{n}\right)_{n\in\N}\subseteq F$ and $f\in F$ are such that $2\|f_{n}\|^{2}+2\|f\|^{2}-\|f_{n}+f\|^{2}\to 0$, it follows that $f_{n}\to f$. Note that if $F^{*}$ is LUR, then it is strictly convex (this can be deduced from \cite[Fact~7.7]{fhhmz}).
\qed\end{example}

\begin{remark}
In part~(iv) of Proposition~\ref{icecream} it was established that pure states on ice cream spaces can be strictly positive. Note that this situation cannot occur even for limits of pure states on pervasive order unit spaces. Indeed, assume that $\left(E,e\right)$ is such a space, with $\dim E>1$, and $\nu\in \overline{\ext}^{\mathrm{w}^{*}}K_{E}$. By part~(ii) of Theorem~\ref{T:rep of o.u.s.}, we have that $E$ embeds as a supremum-dense subspace of $\Co\bigl(\overline{\ext}^{\mathrm{w}^{*}}K_{E}\bigr)$. According to Proposition~\ref{arch1}, $E$ is in fact order dense in $\Co\bigl(\overline{\ext}^{\mathrm{w}^{*}}K_{E}\bigr)$. Take $\0<f\in \Co\bigl(\overline{\ext}^{\mathrm{w}^{*}}K_{E}\bigr)$ which vanishes at $\nu$. Then, there is $0_{E}<e\in E$ such that $e\le f$, and so $\nu\left(e\right)=0$. Thus, $\nu$ is not strictly positive.
\qed\end{remark}

The following representation of ice cream spaces as spaces of affine functions will be very useful for studying order convergence in them.

\begin{proposition}\label{iceaff}
Let $F$ be a normed space.
\item[(i)] $\icecream\left(F\right)$ is order isometric to a norm-dense subspace of $\Aff\left(\Ba_{F^{*}},\mathrm{w}^{*}\right)$ endowed with the pointwise order and supremum norm.
\item[(ii)] $\icecream\left(F^{*}\right)$ is order isometric to $\Aff\left(\Ba_{F},\mathrm{w}\right)=\Aff\left(\Ba_{F},\|\cdot\|\right)$, which is also equal to the space of all bounded affine functions on $\Ba_{F}$.
\item[(iii)] The states on $\Aff\left(\Ba_{F^{*}},\mathrm{w}^{*}\right)$ and $\Aff\left(\Ba_{F},\|\cdot\|\right)$ are the point evaluations at points of $\Ba_{F^{*}}$ and $\Ba_{F^{**}}$, respectively.
\end{proposition}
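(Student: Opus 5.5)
The plan is to use the explicit map $\Phi\colon\icecream\left(F\right)\to\Aff\left(\Ba_{F^{*}},\mathrm{w}^{*}\right)$ given by $\Phi\left(f,r\right)\left(\nu\right):=\nu\left(f\right)+r$. This function is affine and $\mathrm{w}^{*}$-continuous.

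For part (i), $\Phi$ is an order embedding: $\Phi\left(f,r\right)\ge\0$ means $\inf_{\nu\in\Ba_{F^{*}}}\nu\left(f\right)+r=-\|f\|+r\ge0$, by Hahn--Banach. $\Phi$ is also unital, since it sends $\left(0_{F},1\right)$ to $\1$. By part (iii) of Lemma~\ref{unital} it is therefore an isometry, where the norm on $\icecream\left(F\right)$ is the order unit norm by Proposition~\ref{vdd}.

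Norm density comes from Theorem~\ref{T:rep of o.u.s.}(i). The states on $\icecream\left(F\right)$ form the set $\left\{\left(\nu,1\right):\nu\in\Ba_{F^{*}}\right\}$ by Proposition~\ref{icecream}(ii). The map $\nu\mapsto\left(\nu,1\right)$ is an affine homeomorphism from $\left(\Ba_{F^{*}},\mathrm{w}^{*}\right)$ onto $\left(K_{\smallicecream\left(F\right)},\mathrm{w}^{*}\right)$. This holds because $\mathrm{w}^{*}$-convergence against $F\oplus\R$ is just $\mathrm{w}^{*}$-convergence of the $F^{*}$-component. Under this identification the Kadison map $\hat{\cdot}$ becomes $\Phi$, so density follows.

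For part (ii), consider $\Psi\left(\mu,r\right)\left(f\right):=\mu\left(f\right)+r$ on $\Ba_{F}$. I will check three inclusions. First, $\Psi$ lands in the $\mathrm{w}$-continuous affine functions. Second, those lie among the norm-continuous ones. Third, norm-continuous affine functions on $\Ba_{F}$ are bounded. For the third, I take $\varphi$ affine and continuous at $0$; there is $\delta$ with $\left|\varphi\right|$ bounded on $\delta\Ba_{F}$, and for $f\in\Ba_{F}$ the identity $\delta f=\delta f+\left(1-\delta\right)0$ together with affinity gives $\varphi\left(f\right)=\left(\varphi\left(\delta f\right)-\left(1-\delta\right)\varphi\left(0\right)\right)/\delta$, which is bounded.

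The main step is showing that every bounded affine $\varphi$ on $\Ba_{F}$ equals $\Psi\left(\mu,r\right)$. I would set $r:=\varphi\left(0\right)$ and $\psi:=\varphi-r$, so that $\psi$ is affine with $\psi\left(0\right)=0$. Affinity with $0$ gives $\psi\left(tf\right)=t\psi\left(f\right)$ for $t\in\left[0,1\right]$. Next, $0=\psi\left(\tfrac12 f+\tfrac12\left(-f\right)\right)$ gives $\psi\left(-f\right)=-\psi\left(f\right)$. Then $\psi\left(\tfrac12\left(f+g\right)\right)=\tfrac12\left(\psi\left(f\right)+\psi\left(g\right)\right)$ gives additivity whenever $f,g,f+g$ are in $\Ba_{F}$ (or after scaling). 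So $\psi$ extends uniquely to a linear functional $\mu$ on $F$ via $\mu\left(f\right):=\|f\|\psi\left(f/\|f\|\right)$, and $\mu$ is bounded because $\psi$ is bounded on the ball. Once $\mu\in F^{*}$, $\Psi\left(\mu,r\right)=\varphi$, and $\Psi$ is injective.

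To finish (ii), I show $\Psi$ is an order embedding and isometric. We have $\inf_{\Ba_{F}}\Psi\left(\mu,r\right)=r-\|\mu\|$ (supremum over the ball, no attainment needed), so the positive cones correspond. $\Psi$ is unital, so it is isometric by Lemma~\ref{unital}(iii), and it is surjective by the previous step. The orders are identified via Proposition~\ref{icecream}(i) on the dual side, which is consistent here.

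For part (iii), states on a unital space of affine functions containing $\1$ are positive unital functionals. For $\Aff\left(\Ba_{F^{*}},\mathrm{w}^{*}\right)$, the image of $\icecream\left(F\right)$ is norm dense by (i), and states are norm continuous. Hence each state is determined by its restriction, which by Proposition~\ref{icecream}(ii) is $\left(\nu,1\right)$, i.e.\ evaluation at $\nu\in\Ba_{F^{*}}$. For $\Aff\left(\Ba_{F},\|\cdot\|\right)\cong\icecream\left(F^{*}\right)$, Proposition~\ref{icecream}(ii) applied to $F^{*}$ gives states of the form $\left(\xi,1\right)$ with $\xi\in\Ba_{F^{**}}$, acting by $\mu+r\mapsto\xi\left(\mu\right)+r$. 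This is the (extended) evaluation at $\xi$, which is interpreted through $\mathrm{w}^{*}$-density of $\Ba_{F}$ in $\Ba_{F^{**}}$ by Goldstine.

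The obstacle I expect is that last interpretation, namely making precise "evaluation at points of $\Ba_{F^{**}}$" for functions defined only on $\Ba_{F}$. I would extend each $\Psi\left(\mu,r\right)$ to $\Ba_{F^{**}}$ by $\xi\mapsto\xi\left(\mu\right)+r$.
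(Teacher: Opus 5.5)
Your proposal is correct and follows the paper's proof. Parts (i) and (iii) rest on Proposition~\ref{icecream}(ii) and Kadison's theorem (Theorem~\ref{T:rep of o.u.s.}(i)), and part (ii) uses the same key step: a bounded affine function on $\Ba_{F}$, minus its value at $0_{F}$, extends by positive homogeneity to an element of $F^{*}$.

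The only real difference is in (ii). You check the order isometry directly, via $\inf_{\Ba_{F}}\left(\mu+r\1\right)=r-\|\mu\|$ and Lemma~\ref{unital}(iii). The paper instead inherits it from (i) applied to $F^{*}$ and identifies $\Aff\left(\Ba_{F^{**}},\mathrm{w}^{*}\right)$ with functions on $\Ba_{F}$ via Goldstine's theorem. That Goldstine identification is also what justifies your extension $\xi\mapsto\xi\left(\mu\right)+r$ when you interpret evaluations at points of $\Ba_{F^{**}}$ in (iii).
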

\begin{proof}
(i) follows from part~(ii) of Proposition~\ref{icecream}, part~(i) of Theorem~\ref{T:rep of o.u.s.}, and the fact that the $\mathrm{w}^{*}$-topology on $\Ba_{F^{*}}$ as a subset of $F^{*}$ agrees with the $\mathrm{w}^{*}$-topology on $\Ba_{F^{*}}$ as the state space of $\icecream\left(F\right)$.

(ii): By (i) we have that $\icecream\left(F^{*}\right)\simeq \Aff\left(\Ba_{F^{**}},\mathrm{w}^{*}\right)$. Since by Goldstine's theorem (see \cite[Theorem~3.96]{fhhmz}) we have that $\Ba_{F}$ is dense in $\left(\Ba_{F^{**}},\mathrm{w}^{*}\right)$, we can identify $\Aff\left(\Ba_{F^{**}},\mathrm{w}^{*}\right)$ with the set of all $e\in \Aff\left(\Ba_{F},\mathrm{w}\right)$ that can be continuously extended to $\Ba_{F^{**}}$. It is also clear that $\Aff\left(\Ba_{F},\mathrm{w}\right)\subseteq \Aff\left(\Ba_{F},\|\cdot\|\right)$, which in turn is contained in the space of all bounded affine functions on $\Ba_{F}$. Assume that $u$ is a bounded affine function on $\Ba_{F}$ and let $v:=u-u\left(0_{F}\right)\1$, which is also a bounded affine function on $\Ba_{F}$. This function satisfies $v\left(0_{F}\right)=0$, and for every $f\in \Ba_{F}$ and $t\in\left[0,1\right]$ we have that
\[v\left(tf\right)=tv\left(f\right)+\left(1-t\right)v\left(0_{F}\right)=tv\left(f\right).\] Moreover, for $f,g\in \Ba_{F}$ we have $v\left(\frac{1}{2}f+\frac{1}{2}g\right)=\frac{1}{2}v\left(f\right)+\frac{1}{2}v\left(g\right)$. It is easy to see that extending $v$ by positive homogeneity yields a linear functional on $F$. This functional is bounded on $\Ba_{F}$, so is an element of $F^{*}$. Hence, we may view $v$ as a $\mathrm{w}^{*}$-continuous linear functional on $F^{**}$, and so $u=v+u\left(0_{F}\right)\1\in\Aff\left(\Ba_{F^{**}},\mathrm{w}^{*}\right)$.

\begin{comment}
Then, arguing as before, $v:=u-u\left(0_{F}\right)\1$ can be extended to a norm-continuous linear functional on $F$. Such a functional is weakly continuous, thus justifying $u=v+u\left(0_{F}\right)\1\in \Aff\left(\Ba_{F},\mathrm{w}\right)$.

The second claim is proven similarly; let us only comment on the equality $\Aff\left(\Ba_{F},\mathrm{w}\right)=\Aff\left(\Ba_{F},\|\cdot\|\right)$. The direct inclusion is obvious. For the converse, assume that $u\in\Aff\left(\Ba_{F},\|\cdot\|\right)$. Then, arguing as before, $v:=u-u\left(0_{F}\right)\1$ can be extended to a norm-continuous linear functional on $F$. Such a functional is weakly continuous, thus justifying $u=v+u\left(0_{F}\right)\1\in \Aff\left(\Ba_{F},\mathrm{w}\right)$.\medskip
\end{comment}

(iii) follows from (i), (ii) and part~(ii) of Proposition~\ref{icecream}.
\end{proof}

Note that the specific isomorphism from $\icecream\left(F^{*}\right)$ onto $\Aff\left(\Ba_{F},\|\cdot\|\right)$ is $\left(\nu,r\right)\mapsto \nu+r\1$ (as functions on $\Ba_{F}$).

We now turn to studying convergences on ice cream spaces, starting with the behavior of monotone nets.

\begin{proposition}\label{docauch}
An increasing net in $\icecream\left(F\right)$ which is bounded from above is norm Cauchy.
\end{proposition}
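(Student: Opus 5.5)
Let $\left(e_{\alpha}\right)_{\alpha\in A}=\left(\left(f_{\alpha},r_{\alpha}\right)\right)_{\alpha\in A}$ be increasing in $\icecream\left(F\right)$ and bounded above by $\left(g,s\right)$. The plan is to turn the order inequalities into norm estimates directly from the shape of the ice cream cone. We may replace the net by $e_{\alpha}-e_{\alpha_{0}}$ on the tail $\alpha\ge\alpha_{0}$; this is positive, increasing and bounded above, and norm Cauchyness is translation invariant. So we assume $0\le e_{\alpha}\le u:=\left(g,s\right)$ for all $\alpha$.

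\emph{Step 1: the scalar components converge.} The second coordinate $\left(f,r\right)\mapsto r$ is the state $\left(0,1\right)$ of part~(ii) of Proposition~\ref{icecream}, so it is positive. Hence $\left(r_{\alpha}\right)$ is an increasing real net bounded by $s$, and it converges to some $r\in\R$. In particular it is Cauchy.

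\emph{Step 2: control the vector component.} For $\alpha\le\beta$ we have $e_{\beta}-e_{\alpha}\ge 0$. By the definition of the cone this means
\[\|f_{\beta}-f_{\alpha}\|\le r_{\beta}-r_{\alpha}.\]
Thus for any $\beta,\gamma\ge\alpha$ we get
\[\|f_{\beta}-f_{\gamma}\|\le\|f_{\beta}-f_{\alpha}\|+\|f_{\gamma}-f_{\alpha}\|\le\left(r_{\beta}-r_{\alpha}\right)+\left(r_{\gamma}-r_{\alpha}\right)\le 2\left(r-r_{\alpha}\right).\]
The right-hand side tends to $0$ by Step~1.

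\emph{Step 3: conclude.} The norm on $\icecream\left(F\right)=F\oplus_{1}\R$ gives
\[\|e_{\beta}-e_{\gamma}\|=\|f_{\beta}-f_{\gamma}\|+\left|r_{\beta}-r_{\gamma}\right|\le 4\left(r-r_{\alpha}\right)\]
for all $\beta,\gamma\ge\alpha$, so the net is norm Cauchy.

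I do not expect a real obstacle here. The only point to check is that the positive cone really yields $\|f_{\beta}-f_{\alpha}\|\le r_{\beta}-r_{\alpha}$, and that the upper bound is used only to bound the scalar parts. In fact the translation step is unnecessary, since Step~2 uses only monotonicity. Note also that the argument does not need completeness of $F$.
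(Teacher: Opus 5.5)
Your proposal is correct and follows the paper's proof: the key inequality $\|f_{\beta}-f_{\alpha}\|\le r_{\beta}-r_{\alpha}$ for $\alpha\le\beta$ makes the scalar parts an increasing bounded real net, and this in turn forces the vector parts to be Cauchy. The only difference is that the paper first shifts and normalizes the net into the interval between $0$ and the unit. As you observe, that step is not needed.
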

\begin{proof}
Let $\left(\left(f_{\alpha}, r_{\alpha}\right)\right)_{\alpha\in A}$ be an increasing bounded from above net in $\icecream\left(F\right)$. By shifting, normalizing and passing to a tail, we may assume that $\left(\left(f_{\alpha}, r_{\alpha}\right)\right)_{\alpha\in A}\subseteq \left[0_{\smallicecream\left(F\right)},1_{\smallicecream\left(F\right)}\right]$. In particular, $\left(0_{F},0\right)\le \left(f_{\alpha}, r_{\alpha}\right)\le\left(0_{F},1\right)$ implies $\|f_{\alpha}\|\le r_{\alpha}$, as well as $\|f_{\alpha}\|\le 1-r_{\alpha}$, hence $r_{\alpha}\in\left[0,1\right]$, for every $\alpha$. If $\alpha\le\beta$, we have that $\|f_{\beta}-f_{\alpha}\|\le r_{\beta}-r_{\alpha}$, and so $r_{\alpha}\le r_{\beta}$. It follows that $\left(r_{\alpha}\right)_{\alpha\in A}\subseteq\left[0,1\right]$ is an increasing net, therefore it converges in $\left[0,1\right]$. The fact that $\|f_{\beta}-f_{\alpha}\|\le r_{\beta}-r_{\alpha}$, for $\alpha\le\beta$, implies that $\left(f_{\alpha}\right)_{\alpha\in A}$ is a Cauchy net.
\end{proof}

In the following result part~(i) is a generalization of \cite[Example~6.2]{hauser}.

\begin{proposition}\label{icedo}
Let $F$ be a normed space.
\item[(i)] The directed order, relative uniform and norm convergence structures on $\icecream\left(F\right)$ coincide.
\item[(ii)] Monotone completeness, directed order completeness, relative uniform completeness and norm completeness of $\icecream\left(F\right)$ are each equivalent to norm completeness of $F$.
\end{proposition}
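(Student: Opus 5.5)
For part~(i), the plan is to compare all three convergences with the norm, which by Proposition~\ref{vdd} is the order unit norm for $1_{\smallicecream\left(F\right)}=\left(0_{F},1\right)$. In an order unit space a net converges relatively uniformly exactly when it converges in the order unit norm, because every regulator is dominated by a multiple of the unit. So $\mathrm{ru}=\|\cdot\|$ on $\icecream\left(F\right)$.

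By Proposition~\ref{oca} we have $\mathrm{ru}\ge\mathrm{do}$; the space is Archimedean, being an order unit space (and this is direct from the cone anyway). It remains to show $\mathrm{do}\ge\|\cdot\|$. Suppose $e_{\alpha}\todo 0$, witnessed by a downward directed $P$ with $\bigwedge P=0$. It suffices to show that $P$, viewed as a net indexed by itself, is norm null. Then for every $\varepsilon>0$ there is $p\in P$ with $\|p\|\le\varepsilon$, which gives $-\varepsilon 1\le -p\le e_{\alpha}\le p\le\varepsilon 1$ eventually.

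The main obstacle is to show that a decreasing net $P\downarrow 0$ converges in norm. The plan is as follows.
\begin{itemize}
\item Apply Proposition~\ref{docauch} to the increasing, bounded-above net $-P$; this shows that $P$ is norm Cauchy.
\item The completion $\hat F$ of $F$ gives a space $\icecream(\hat F)$ containing $\icecream(F)$ isometrically and order-embedded, with a closed cone. So $P$ converges in norm to some $p_{0}\in\icecream(\hat F)$.
\item Since the cone is closed, $p_{0}\le p$ for all $p\in P$, and $p_{0}\ge 0$.
\item To conclude $p_{0}=0$ without knowing $p_{0}\in\icecream(F)$, use states. For each $\nu\in\Ba_{F^{*}}$, the state $(\nu,1)$ of Proposition~\ref{icecream} is monotone and continuous, so $(\nu,1)(p)\downarrow(\nu,1)(p_{0})\ge 0$.
\item Write $p_{0}=(g,s)$ with $\|g\|\le s$. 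If $s>0$, pick $h\in F$ close to $g$ and form the element $q=(h,\cdot)$ of $\icecream(F)$ with a slightly smaller real part, say $q=(h,s-2\delta)$ with $\|h-g\|\le\delta$ chosen so that $q\le p_{0}$. Checking $q\le p_{0}$ means $(g-h,2\delta)\ge0$, which holds.
\item If $\|h\|\le s-2\delta$, then $0\neq q\le P$. Otherwise $\|g\|\approx s$, and instead take $q=\left(\tfrac{s}{2\|g\|}h,\ \cdot\right)$-type midpoints, i.e.\ $q=\tfrac12 p_0$ approximated from below.
\item More simply: if $s>0$, choose $q=(h,t)\in\icecream(F)$ with $0<q\le p_{0}$, for instance by approximating $\tfrac12 p_{0}=(g/2,s/2)$ from below using $h\approx g/2$ and $t=s/2-\|g/2-h\|$. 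This $q$ is a nonzero positive lower bound of $P$ in $\icecream(F)$, contradicting $\bigwedge P=0$.
\end{itemize}
Hence $s=0$, so $p_{0}=0$, and $P\to 0$ in norm.

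For part~(ii), relative uniform completeness equals norm completeness of $\icecream(F)=F\oplus_{1}\R$ by part~(i), and this is equivalent to completeness of $F$. Directed order completeness is equivalent to the same thing, since by part~(i) $\mathrm{do}$-Cauchy nets and $\mathrm{do}$-limits coincide with norm ones. Monotone completeness implies $\mathrm{ru}$-completeness by Corollary~\ref{monc}(i). Conversely, if $F$ is complete, an increasing bounded net is norm Cauchy by Proposition~\ref{docauch}, so it converges in norm to some $e$. Closedness of the cone gives that $e$ is an upper bound and is below every upper bound, hence $e$ is the supremum.
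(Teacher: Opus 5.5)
You have a genuine gap in the last step of (i). The reductions are fine: $\mathrm{ru}=\|\cdot\|$, $\mathrm{ru}\ge\mathrm{do}$, and it suffices to show that $P\downarrow 0$ is norm null. So are the passage to $\icecream(\hat F)$ with $0\le p_{0}\le P$, and all of part~(ii).

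The problem is that you derive a contradiction from $s>0$ by producing a \emph{nonzero positive} $q\in\icecream(F)$ below $p_{0}$. Your candidate $q=(h,\,s/2-\|g/2-h\|)$ does satisfy $q\le p_{0}$. But $q\ge 0$ means $\|h\|+\|g/2-h\|\le s/2$. If $p_{0}$ lies on the boundary of the cone, i.e.\ $\|g\|=s$, the triangle inequality then forces $\|h\|+\|g/2-h\|=\|g/2\|$. Replacing $p_{0}$ by $\frac{1}{2}p_{0}$ does not help, since $\frac{1}{2}p_{0}$ is still on the boundary.

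No cleverer choice of $h$ repairs this. Suppose $\hat F$ is strictly convex (e.g.\ $F$ is a dense proper subspace of $\ell_{2}$) and $g\in\hat F\setminus F$. Then $0\le(h,t)\le(g,\|g\|)$ gives $\|h\|+\|g-h\|=\|g\|$ and $t=\|h\|$. Hence $h\in\R g\cap F=\{0_{F}\}$ and $t=0$, so $[0,p_{0}]\cap\icecream(F)=\{0\}$. At that point in your argument nothing excludes such a $p_{0}$.

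The fix is that positivity of $q$ is not needed. $\bigwedge P=0$ means every lower bound of $P$ is $\le 0$, and an element $(h,t)$ with $t>0$ is never $\le 0$. So your own first candidate already works: take $q=(h,s-2\delta)$ with $h\in F$, $\|h-g\|\le\delta$ and $0<2\delta<s$. Then $p_{0}-q=(g-h,2\delta)\ge 0$, so $q$ is a lower bound of $P$ that is not $\le 0$, which is the contradiction.

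The paper's argument is simpler still and needs no completion. By Proposition~\ref{docauch}, choose $p\in P$ with $\|p-p'\|\le\varepsilon$ for all $P\ni p'\le p$. Since the norm is the order unit norm, $p-\varepsilon 1_{\smallicecream(F)}$ is a lower bound of $\{p'\in P: p'\le p\}$. That set still has infimum $0$ because $P$ is downward directed. Hence $0\le p\le\varepsilon 1_{\smallicecream(F)}$, i.e.\ $\|p\|\le\varepsilon$. Your bullet about states is never used.
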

\begin{proof}
(i): Since $\icecream\left(F\right)$ is an order unit space, norm convergence agrees with relative uniform convergence in $\icecream\left(F\right)$. The latter convergence implies directed order convergence, and so it is left to show that directed order convergence implies norm convergence.

Assume that $e_{\alpha}\todo 0_{\smallicecream\left(F\right)}$. Let $Q\downarrow 0_{\smallicecream\left(F\right)}$ be as in part~(i) of Proposition~\ref{todovs}. According to Proposition~\ref{docauch}, the set $Q$, viewed as a net indexed by $-Q$, is norm-Cauchy. Fix $\varepsilon>0$ and let $q_{0}\in Q$ be such that $\|q_{0}-q\|\le\varepsilon$, for every $Q\ni q\le q_{0}$. It follows that $q\ge q_{0}-\varepsilon 1_{\smallicecream\left(F\right)}$, and since $0_{\smallicecream\left(F\right)}=\bigwedge\left\{q\in Q:q\le q_{0}\right\}$, we obtain $0_{\smallicecream\left(F\right)}\ge q_{0}-\varepsilon 1_{\smallicecream\left(F\right)}$, hence $q_{0}\le \varepsilon 1_{\smallicecream\left(F\right)}$. There is $\alpha_{0}\in A$ such that $-\varepsilon 1_{\smallicecream\left(F\right)}\le -q_{0}\le e_{\alpha}\le q_{0}\le \varepsilon 1_{\smallicecream\left(F\right)}$, for every $\alpha\ge\alpha_{0}$. We conclude that $\|e_{\alpha}\|\to 0$.

(ii): It follows immediately from (i) that completenesses with respect to directed order, relative uniform and norm convergences are equivalent. Monotone completeness implies relative uniform completeness by part~(i) of Corollary~\ref{monc}. Since $\icecream\left(F\right)\simeq F\oplus_{1}\R$ as normed spaces, it follows that $\icecream\left(F\right)$ is norm complete if and only if $F$ is. If $\icecream\left(F\right)$ is norm complete, it is monotone complete. Indeed, by Proposition~\ref{docauch}, every increasing net which is bounded from above is norm Cauchy, hence norm convergent, thus directed order convergent, and so the limit is the supremum of the net.%Let us show that if $\icecream\left(F\right)$ is monotone complete, then $F$ is norm complete. Let $\left(f_{n}\right)_{n\in\N}\subseteq F$ be such that $\sum\limits_{n\in\N}\|f_{n}\|=1$. For $n\in\N$, let $e_{n}:=\left(f_{1}+\dots+f_{n},\|f_{1}\|+\dots+\|f_{n}\|\right)$. It follows that $e_{n}\in \left[0_{\smallicecream\left(F\right)}, 2\cdot 1_{\smallicecream\left(F\right)}\right]$, and also $e_{n}\le e_{n+1}$ for every $n\in\N$. Since $\icecream\left(F\right)$ is monotone complete, $\left(e_{n}\right)_{n\in\N}$ increases to some element $\left(f,r\right)$, which by (i) is also the norm limit. But then $\sum\limits_{n\in\N} f_n=f$, and so every absolutely convergent series in $F$ is convergent, justifying its completeness.
\end{proof}

\subsection{Order convergence in the ice cream spaces}\label{iceoc}

In this section, we seek to describe order convergence and the order topology on an ice cream space $\icecream\left(F\right)$. For a reflexive separable $F$, we will show that order convergence in $\icecream\left(F\right)$ coincides with so-called continuous convergence in $\icecream\left(F\right) \subseteq \Co(\Ba_{F^*}, \mathrm{w}^{*})$.

Let $X$ be a topological space. We say that a net $\left(f_{\alpha}\right)_{\alpha\in A}\subseteq\Co\left(X\right)$ \emph{continuously converges} to $f\in\Co\left(X\right)$ if for every $x\in X$ and net $\left(x_{\beta}\right)_{\beta\in B}\subseteq X$ such that $x_{\beta}\to x$, we have $f_{\alpha}\left(x_{\beta}\right)\xrightarrow[\alpha\in A,~\beta\in B]{}f\left(x\right)$.

Throughout this section, $F$ is a Banach space. By \cite[Proposition~8.4]{btw}, the restriction to $F^{*}$ of continuous convergence in $\Co\left(F\right)$ agrees with bounded $\mathrm{w}^{*}$-convergence, defined as follows. A net in $F^{*}$ \emph{bounded $\mathrm{w}^{*}$}-converges to $f^{*}\in F^{*}$ if it converges to $f^{*}$ in the $\mathrm{w}^{*}$-topology and is eventually norm bounded. The \emph{bounded $\mathrm{w}^{*}$}-topology is the strongest topology on $F^{*}$ which agrees with the $\mathrm{w}^{*}$-topology on norm-bounded subsets of $F^{*}$. This topology, denoted by $\mathrm{bw}^{*}$, is the topological modification of the bounded $\mathrm{w}^{*}$-convergence, i.e.\ the closed sets of the $\mathrm{bw}^{*}$-topology are precisely the closed sets in bounded $\mathrm{w}^{*}$-convergence (see \cite[Example~8.17]{bctv}). We will need the following properties of this topology.

\begin{proposition}\label{bw}
\item[(i)] Viewing $F^{*}\subseteq \Co\left(F\right)$ or $F^{*}\subseteq \Co\left(\Ba_{F}\right)$, the restriction of the compact-open topology agrees with the $\mathrm{bw}^{*}$-topology.
\item[(ii)] $\left(F^{*},\mathrm{bw}^{*}\right)^{*}=F$.
\end{proposition}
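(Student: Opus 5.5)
For part~(i) I would first compare the two embeddings. Since every element of $\Co\left(F\right)$ coming from $F^{*}$ is linear, its values on a compact $C\subseteq F$ are controlled by its values on $C$ scaled into $\Ba_{F}$. Conversely, $\Ba_{F}$ is not compact, but compact subsets of $\Ba_{F}$ are compact in $F$. So the compact-open topology on $F^{*}$ is the same whether we view $F^{*}$ in $\Co\left(F\right)$ or in $\Co\left(\Ba_{F}\right)$. The reason is that every compact $C\subseteq F$ is bounded, and $\sup_{C}|\nu-\mu|=r\sup_{C/r}|\nu-\mu|$ with $C/r\subseteq\Ba_{F}$.

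The compact-open topology on $F^{*}$ is the topology of uniform convergence on norm-compact subsets of $F$. The classical Banach--Dieudonn\'e theorem says exactly that this topology coincides with the $\mathrm{bw}^{*}$-topology, i.e.\ with the finest topology agreeing with $\mathrm{w}^{*}$ on bounded sets. I would cite it (e.g.\ \cite{fhhmz}). A self-contained route is to check the two inclusions directly:
\begin{itemize}
\item[(a)] On $\Ba_{F^{*}}$ the two topologies agree, since equicontinuity of $\Ba_{F^{*}}$ makes pointwise and compact convergence coincide. This gives $\mathrm{cpt}\subseteq\mathrm{bw}^{*}$.
\item[(b)] Every $\mathrm{bw}^{*}$-neighborhood of $0$ contains a set of the form $\{\nu:\sup_{n}|\nu(f_{n})|<1\}$ with $f_{n}\to0_{F}$.
\end{itemize}
Step~(b) is the main obstacle. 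I would follow the standard inductive construction: if $U$ is $\mathrm{bw}^{*}$-open with $0\in U$, one builds finite sets $A_{n}\subseteq\frac1n\Ba_{F}$ such that $\{\nu:|\nu|\le 1/\text{on }A_{1}\cup\dots\cup A_{n}\}\cap n\Ba_{F^{*}}\subseteq U$. Each step uses $\mathrm{w}^{*}$-compactness of $(n+1)\Ba_{F^{*}}$ minus an open set. The union $\bigcup_{n}A_{n}$ together with $0_{F}$ is then a compact set.

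For part~(ii), a linear functional on $F^{*}$ that is $\mathrm{bw}^{*}$-continuous is, by (i), continuous for uniform convergence on compact sets. Hence it is bounded by $\sup_{C}|\cdot|$ for some norm-compact $C\subseteq F$. Replacing $C$ by the closed absolutely convex hull, which is again compact by Mazur's theorem, the functional is $\mathrm{w}^{*}$-continuous on the polar-type bounded sets. Equivalently, its restriction to $\Ba_{F^{*}}$ is $\mathrm{w}^{*}$-continuous, because $\mathrm{bw}^{*}$ agrees with $\mathrm{w}^{*}$ there. By the Banach--Dieudonn\'e/Krein--\v{S}mulian theorem, a functional on $F^{*}$ whose kernel meets each ball in a $\mathrm{w}^{*}$-closed set is $\mathrm{w}^{*}$-continuous, hence is evaluation at some $f\in F$. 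Conversely, evaluations at points of $F$ are clearly $\mathrm{bw}^{*}$-continuous, which gives $\left(F^{*},\mathrm{bw}^{*}\right)^{*}=F$.
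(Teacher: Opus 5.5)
Your proposal is correct and follows the paper's route. The paper also passes from $\Co\left(F\right)$ to $\Co\left(\Ba_{F}\right)$ by the scaling $rK\subseteq\Ba_{F}$, and cites standard results for the $\Co\left(F\right)$ case of (i) and for (ii). Your Banach--Dieudonn\'e citation and Krein--\v{S}mulian argument unpack those citations correctly; the detour through a compact $C$ and Mazur is unnecessary there, since $\ker\varphi$ being $\mathrm{bw}^{*}$-closed already gives $\mathrm{w}^{*}$-closed intersections with balls.

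The only slip is in your optional self-contained sketch of (b). With $A_{n}\subseteq\frac{1}{n}\Ba_{F}$, the base requirement $\left\{\nu:\left|\nu\right|\le 1\text{ on }A_{1}\right\}\cap\Ba_{F^{*}}\subseteq U$ is unattainable in general (e.g.\ $F=\R$, $U=\left(-\frac12,\frac12\right)$). Shift the index: let $A_{1}$ be an arbitrary finite set and take $A_{n}\subseteq\frac{1}{n-1}\Ba_{F}$ for $n\ge 2$.
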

\begin{proof}
For (ii), as well as the case of $\Co\left(F\right)$ in (i), see \cite[Lemmas~8.45 and~8.46]{at}. Also, it is easy to see that the restriction map $R\colon\Co\left(F\right)\to\Co\left(\Ba_{F}\right)$ defined by $\varphi\mapsto \varphi|_{\Ba_{F}}$ is a continuous map with respect to compact-open topologies. Clearly, $R|_{F^{*}}$ is injective. To show that $R|_{F^{*}}$ is a homeomorphism onto its image, let $\left(f^{*}_{\alpha}\right)_{\alpha\in A}$ be a net in $F^{*}$ such that $\bigl(f^{*}_{\alpha}|_{\Ba_{F}}\bigr)_{\alpha\in A}$ is $\mathrm{bw}^{*}$-null and let $K\subseteq F$ be compact. There is $r>0$ such that $rK\subseteq \Ba_{F}$, and so $f^{*}_{\alpha}|_{\Ba_{F}}\to\0$ uniformly on $rK$. Due to linearity, we have that $f^{*}_{\alpha}\to\0$ uniformly on $K$, and since $K$ was arbitrary, we conclude that $f^{*}_{\alpha}\to\0$.
\end{proof}

To study order convergence in ice cream spaces, we will use the representations from Proposition~\ref{iceaff}. In particular, by e.g.\ continuous convergence in $\Aff\left(\Ba_{F},\|\cdot\|\right)$ we mean the restriction of continuous convergence in $\Co\left(\Ba_{F}\right)$ to $\Aff\left(\Ba_{F},\|\cdot\|\right)$. On the other hand, we will also use the fact that $\Aff\left(\Ba_{F},\|\cdot\|\right)$ is (non-isometrically) isomorphic to $\icecream\left(F\right)^{*}$, and so we will be referring to the $\mathrm{w}^{*}$- and bounded $\mathrm{w}^{*}$-topologies on $\Aff\left(\Ba_{F},\|\cdot\|\right)$ with respect to $\icecream\left(F\right)$.

We can define bounded weak convergence, bounded pointwise convergences, etc. in a way similar to how bounded $\mathrm{w}^{*}$-convergence was defined before Proposition~\ref{bw}.

\begin{proposition}\label{iceo}
\item[(i)] The weak and pointwise topologies coincide on $\Aff\left(\Ba_{F^{*}},\mathrm{w}^{*}\right)$. The $\mathrm{w}^{*}$- and pointwise topologies coincide on $\Aff\left(\Ba_{F},\|\cdot\|\right)$.
\item[(ii)] Bounded pointwise convergence, bounded $\mathrm{w}^{*}$-convergence, bounded compact-open convergence and continuous convergence agree on $\Aff\left(\Ba_{F},\|\cdot\|\right)$.
\item[(iii)] The bounded $\mathrm{w}^{*}$-topology agrees with the compact-open topology on $\Aff\left(\Ba_{F},\|\cdot\|\right)$.
\end{proposition}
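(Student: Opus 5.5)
The plan is to use the concrete identifications from Proposition~\ref{iceaff}: every element of $\Aff\left(\Ba_{F^{*}},\mathrm{w}^{*}\right)$ is $\nu\mapsto \nu(f)+r$ with $(f,r)\in F^{**}\oplus\R$ (by Proposition~\ref{iceaff}(i) applied to the dense image of $\icecream(F)$, its completion is $\icecream(F)$ itself since $F$ is Banach). Similarly, every element of $\Aff\left(\Ba_{F},\|\cdot\|\right)$ is $u=\nu+r\1$ with $\nu\in F^{*}$, which is $\icecream\left(F^{*}\right)$.

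\emph{Part (i).} The dual of $\icecream(F)=F\oplus_{1}\R$ is $F^{*}\oplus_{\8}\R$, acting by $(f,r)\mapsto\nu(f)+sr$. Point evaluations at $\nu\in\Ba_{F^{*}}$ are among these functionals, so the weak topology dominates the pointwise one. Conversely, every functional $(\nu,s)$ with $\nu\neq 0$ is a linear combination of evaluations at $\nu/\|\nu\|$ and at $0$. Concretely,
\[
\nu(f)+sr=\|\nu\|\,\delta_{\nu/\|\nu\|}(f,r)+\left(s-\|\nu\|\right)\delta_{0}(f,r).
\]
So every weakly continuous functional is a finite combination of point evaluations, and the two topologies coincide. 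The same computation handles $\Aff\left(\Ba_{F},\|\cdot\|\right)\simeq\icecream(F^{*})$, whose predual functionals are $(f,s)\in F\oplus\R$. These are again combinations of evaluations at $f/\|f\|\in\Ba_{F}$ and at $0$, so the $\mathrm{w}^{*}$-topology equals the pointwise topology.

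\emph{Part (ii).} On norm-bounded sets, $\mathrm{w}^{*}$ and pointwise convergence agree by (i). Continuous convergence implies bounded compact-open convergence: a continuously convergent net of functions on $\Ba_{F}$ is eventually uniformly convergent on compacta by the standard argument. It is also eventually bounded, which I would argue directly using affineness, since pointwise convergence at $0$ together with the bound on $\nu_{\alpha}$ follows. Indeed, if $\|\nu_{\alpha}\|\to\8$ along a subnet, pick $x_{\alpha}$ with $\nu_{\alpha}(x_{\alpha})\ge \|\nu_{\alpha}\|/2$ and $\|x_{\alpha}\|\le\|\nu_{\alpha}\|^{-1/2}\to0$, contradicting continuous convergence at $0$. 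Compact-open convergence trivially gives pointwise convergence. The remaining step, which is the main one, is that bounded pointwise convergence implies continuous convergence. Write $u_{\alpha}=\nu_{\alpha}+r_{\alpha}\1\to u=\nu+r\1$ with $\|\nu_{\alpha}\|\le C$ and $x_{\beta}\to x$ in norm. Then
\[
\left|u_{\alpha}(x_{\beta})-u(x)\right|\le C\|x_{\beta}-x\|+\left|u_{\alpha}(x)-u(x)\right|,
\]
where the bound on $r_{\alpha}$ and $\nu_{\alpha}$ comes from boundedness. This is equicontinuity of bounded sets of affine functions, and it is where linearity is essential.

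\emph{Part (iii).} I would transport Proposition~\ref{bw}(i) to $\icecream(F^{*})=F^{*}\oplus\R$. The compact-open topology on $\Co(\Ba_{F})$ restricted to $\{\nu+r\1\}$ is the product of the compact-open topology on $F^{*}$ (viewed in $\Co(\Ba_{F})$) and the usual topology on $\R$. Indeed, $r$ is recovered continuously as the value at $0$, and $\nu=u-u(0)\1$. Likewise, the bounded $\mathrm{w}^{*}$-topology of $F^{*}\oplus_{\8}\R$ with respect to the predual $F\oplus_{1}\R$ is the product of $\mathrm{bw}^{*}$ on $F^{*}$ with $\R$. Checking this product decomposition of $\mathrm{bw}^{*}$ is the only delicate point. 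Closed sets are tested on bounded sets $\Ba_{F^{*}}\times[-n,n]$, where the topology is the product, and a finite-dimensional factor does not spoil the strongest-topology description. Then Proposition~\ref{bw}(i) finishes the proof.
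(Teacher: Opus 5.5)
Parts (i) and (ii) are sound and follow the paper. In (i), your formula $\nu(f)+sr=\|\nu\|\,\delta_{\nu/\|\nu\|}(f,r)+(s-\|\nu\|)\,\delta_{0}(f,r)$ is a concrete form of two remarks in the paper: the dual is spanned by the states, which are point evaluations, and $\icecream(F)=\spa\{(f,1):f\in\Ba_{F}\}$. In (ii), you prove eventual boundedness and the estimate $|u_\alpha(x_\beta)-u(x)|\le C\|x_\beta-x\|+|u_\alpha(x)-u(x)|$ by hand, where the paper cites the agreement of continuous and bounded $\mathrm{w}^{*}$-convergence on $F^{*}$.

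There is a slip in your boundedness step. The condition $\|x_\alpha\|\le\|\nu_\alpha\|^{-1/2}$ forces $\nu_\alpha(x_\alpha)\le\|\nu_\alpha\|^{1/2}$, which is incompatible with $\nu_\alpha(x_\alpha)\ge\|\nu_\alpha\|/2$ once $\|\nu_\alpha\|>4$. Instead, take $y_\alpha\in\So_{F}$ with $\nu_\alpha(y_\alpha)\ge\|\nu_\alpha\|/2$ and set $x_\alpha:=\|\nu_\alpha\|^{-1/2}y_\alpha$. Then $x_\alpha\to 0_{F}$, but $u_\alpha(x_\alpha)\ge\frac{1}{2}\|\nu_\alpha\|^{1/2}+u_\alpha(0_{F})\to+\8$ along the subnet. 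This contradicts continuous convergence evaluated on the diagonal.

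The genuine gap is in (iii), at the point you flag yourself. You assert, but do not prove, that $\mathrm{bw}^{*}$ on $F^{*}\oplus_{\8}\R$ is the product of $\mathrm{bw}^{*}$ on $F^{*}$ with $\R$. Testing on the bounded pieces $n\Ba_{F^{*}}\times[-n,n]$ only shows one inclusion: sets open in the product topology are $\mathrm{bw}^{*}$-open. The reverse inclusion cannot be obtained this way, nor with nets, since a net converging in the product topology need not be eventually bounded in the $F^{*}$-coordinate. It is a genuine ``coherent topology times locally compact factor'' statement. It is true: by the Banach--Dieudonn\'e theorem, $\mathrm{bw}^{*}$ is the topology of uniform convergence on norm-null sequences of the predual, and a null sequence in $F\oplus_{1}\R$ is just a pair of null sequences.

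The paper sidesteps this issue entirely. It renorms $\icecream(F)$ as $F\oplus_{\8}\R$, which changes neither the $\mathrm{w}^{*}$-topology nor the bounded sets. It then applies Proposition~\ref{bw}(i) to this Banach space, whose unit ball is $\Ba_{F}\times[-1,1]$. So $\mathrm{bw}^{*}$ on $E$ becomes the compact-open topology for the functions $(x,s)\mapsto\nu(x)+rs$, and the splitting is done on the compact-open side, where it is elementary. The constant $r$ is the value at $0_{F}$ (resp.\ at $(0_{F},1)$), and evaluation is compact-open continuous. Every compact subset of $\Ba_{F}\times[-1,1]$ lies in some $L\times[-1,1]$ with $L\subseteq\Ba_{F}$ compact. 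Hence both compact-open convergences reduce to compact-open convergence of the linear parts on $\Ba_{F}$ together with convergence of the constants.
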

\begin{proof}
The first claim in (i) follows from part~(ii) of Proposition~\ref{icecream}, and so in this proof we will be only working with $E:=\Aff\left(\Ba_{F},\|\cdot\|\right)$.

(i): Recall that $\left(\nu,r\right)\left(f,s\right)=\nu\left(f\right)+rs$, where $\left(\nu,r\right)\in \icecream\left(F^{*}\right)$ and $\left(f,s\right)\in \icecream\left(F\right)$. Hence, if we identify $\left(\nu,r\right)$ with $\nu+r\1$, then $\left(f,1\right)$ acts on it as the evaluation at $f$, for $f\in\Ba_{F}$. This shows that the $\mathrm{w}^{*}$-topology is stronger than the pointwise topology on $E$. On the other hand, since $\icecream\left(F\right)=\spa\left\{\left(f,1\right): f\in\Ba_{F}\right\}$, we get the converse relation.

%Clearly, $\delta_{f^{*}}\in E^{*}$, for every $f^{*}\in F^{*}$, and so the weak topology on $E$ is stronger than the pointwise topology. To show the converse, assume that $\left(e_{\alpha}\right)_{\alpha\in A}\subseteq E$ is pointwise null. Note that $e_{\alpha}=f_{\alpha}+r_{\alpha}\1$, where $f\in F$ and $r_{\alpha}\in\R$, for every $\alpha$. We have that $r_{\alpha}=e_{\alpha}\left(0_{F}\right)\to 0$. Let $\nu\in E^{*}$. Since $E\simeq \icecream\left(F\right)$, it follows that $E^{*}\simeq \icecream\left(F^{*}\right)$, and so $\nu=\left(f^{*},s\right)$, where $f^{*}\in F^{*}$ and $s\in\R$. We have $\nu\left(e_{\alpha}\right)=\left<f^{*},f_{\alpha}\right>+sr_{\alpha}=e_{\alpha}\left(f^{*}\right)-r_{\alpha}+sr_{\alpha}\to 0$. As $\nu$ was arbitrary, we conclude that $e_{\alpha}\tow\0$.\medskip

(ii): It follows from (i) that bounded $\mathrm{w}^{*}$- and bounded pointwise convergences coincide on $E$. Continuous convergence is always stronger than compact-open convergence (see \cite[Proposition~8.2]{btw}), which in turn is stronger than pointwise convergence.

Let $\left(e_{\alpha}\right)_{\alpha\in A}\subseteq E$ be a net, and for every $\alpha\in A$ write $e_{\alpha}=f^{*}_{\alpha}+e_{\alpha}\left(0_{F}\right)\1$, where $f^{*}_{\alpha}\in F^{*}$. Assume that $\left(e_{\alpha}\right)_{\alpha\in A}$ converges to $\0$ with respect to continuous convergence. Then, $e_{\alpha}\left(0_{F}\right)\to 0$, and so $\left(f^{*}_{\alpha}\right)_{\alpha\in A}$ also converges to $\0$ with respect to continuous convergence. It follows from the discussion at the start of Subsection~\ref{iceoc} that $\left(f^{*}_{\alpha}\right)_{\alpha\in A}$ is eventually norm-bounded. We conclude that $\left(e_{\alpha}\right)_{\alpha\in A}$ is eventually norm-bounded, hence bounded compact-open-null. Similarly, if $\left(e_{\alpha}\right)_{\alpha\in A}$ is bounded pointwise-null, then $\left(f^{*}_{\alpha}\right)_{\alpha\in A}$ is bounded $\mathrm{w}^{*}$-null, hence null with respect to continuous convergence, which also implies that $\left(e_{\alpha}\right)_{\alpha\in A}$ converges to $\0$ with respect to continuous convergence.

(iii): Recall that the $\mathrm{w}^{*}$-topology on $E$ is with respect to $\icecream\left(F\right)$, and so, by renorming the latter, we may represent $E\simeq \left(F\oplus_{\8} \R\right)^{*}\subseteq\Co\left(\Ba_{F}\times \left[-1,1\right]\right)$. Recall that by part~(i) of Proposition~\ref{bw}, the bounded $\mathrm{w}^{*}$-topology on $E$ coincides with the compact-open topology viewing $E\subseteq\Co\left(\Ba_{F}\times \left[-1,1\right]\right)$. Hence, we need to show that a net $\left(e_{\alpha}\right)_{\alpha\in A}\subseteq E$ is compact-open-null on $\Ba_{F}$ if and only if it is compact-open-null on $\Ba_{F}\times \left[-1,1\right]$. In fact, arguing as in (ii) we have that $e_{\alpha}=f^{*}_{\alpha}+e_{\alpha}\left(0_{F}\right)\1$, where $e_{\alpha}\left(0_{F}\right)\to 0$, and $\left(f^{*}_{\alpha}\right)_{\alpha\in A}$ is compact-open-null (in either sense). Note that the $f_{\alpha}^{*}$'s when represented as functions on $\Ba_{F}\times \left[-1,1\right]$ only depend on the first variable. Hence, if $\left(f^{*}_{\alpha}\right)_{\alpha\in A}$ is compact-open-null on $\Ba_{F}\times \left[-1,1\right]$, then it is clearly compact-open-null on $\Ba_{F}$. Conversely, suppose $\left(f^{*}_{\alpha}\right)_{\alpha\in A}$ is compact-open-null on $\Ba_{F}$. If $K\subseteq \Ba_{F}\times \left[-1,1\right]$ is compact, there is a compact $L\subseteq F$ such that $K\subseteq L\times\left[-1,1\right]$. Using this, one can show that $\left(f^{*}_{\alpha}\right)_{\alpha\in A}$ is compact-open-null on $\Ba_{F}\times \left[-1,1\right]$.
\end{proof}

We now move on to the actual description of order convergence in $\icecream\left(F\right)$. In fact, some information is available for general order unit spaces. We need to recall that if $K$ is a compact Hausdorff space, then every order-null net in $\Co\left(K\right)$ converges pointwise to $\0$ on a co-meager set in $K$, while the converse holds for order bounded sequences (see \cite[Theorem~3.7 and the discussion after Corollary~3.3]{bt}). Note that in order unit spaces, the notions of order boundedness and norm boundedness coincide. To simplify the notation, until the end of this section all closures will be taken in the $\mathrm{w}^{*}$-topology, unless indicated otherwise.

\begin{proposition}\label{ouoc}
Let $\left(E,e\right)$ be an order unit space, embedded into $\Co\left(\mathop{\overline{\ext}}K_{E},\mathrm{w}^{*}\right)$ via $f\mapsto \hat{f}$ as described prior to Theorem~\ref{T:rep of o.u.s.}. Then:
\item[(i)] Order convergence in $E$ is the restriction of order convergence in $\Co\left(\mathop{\overline{\ext}}K_{E},\mathrm{w}^{*}\right)$.
\item[(ii)] Weak convergence implies order convergence for sequences in $E$.
\item[(iii)] If additionally $E^{*}$ is separable, then bounded weak convergence is stronger than order convergence in $E$.
\end{proposition}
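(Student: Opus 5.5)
For (i), I will use part~(ii) of Theorem~\ref{T:rep of o.u.s.}: $f\mapsto\hat f|_{\overline{\ext}K_{E}}$ embeds $E$ as a supremum-dense subspace of $\Co\left(\overline{\ext}K_{E}\right)$. A supremum-dense linear subspace is also infimum-dense, so Proposition~\ref{ocp} shows that order convergence in $E$ is the restriction of order convergence in $\Co\left(\overline{\ext}K_{E}\right)$. This part is immediate.

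For (ii), let $e_{n}\tow 0_{E}$ be a sequence. By the uniform boundedness principle it is norm bounded, hence order bounded, since norm and order boundedness coincide in order unit spaces. For every $\varphi\in K_{E}$ we have $\hat e_{n}(\varphi)=\varphi(e_{n})\to0$. So $(\hat e_{n})$ is an order bounded sequence in $\Co(K)$, with $K:=\overline{\ext}K_{E}$, which converges to $\0$ pointwise on all of $K$, and in particular on a co-meager set. By the cited result from \cite{bt} (the converse direction, valid for order bounded sequences), $\hat e_{n}\too\0$ in $\Co(K)$. Then (i) gives $e_{n}\too 0_{E}$.

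For (iii), the plan is to reduce nets to sequences using metrizability. If $E^{*}$ is separable, bounded subsets of $E^{*}$, and in particular $K_{E}$, are $\mathrm{w}^{*}$-metrizable compacta. The bounded subsets of $E$ are also weakly metrizable, since a countable norm-dense subset of $E^{*}$ separates and determines weak convergence on bounded sets. Let $(e_{\alpha})$ be eventually bounded with $e_{\alpha}\tow 0_{E}$; after passing to a tail we may assume $(e_{\alpha})\subseteq\left[-u,u\right]$. I would argue by contradiction, and the natural route is to use the structure of order convergence in $\Co(K)$ with $K$ metrizable compact. By (i) and the characterization $\bigvee(\hat e_{\alpha})^{\searrow}=\0=\bigwedge(\hat e_{\alpha})^{\nearrow}$ from Proposition~\ref{oc}, it suffices to show the following: for every nonempty open $U\subseteq K$ and every $\varepsilon>0$, there are an eventual upper bound $g\in\Co(K)$ of $(\hat e_{\alpha})$ and a nonempty open $V\subseteq U$ with $g<\varepsilon$ on $V$. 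This uses the standard fact that in $\Co(K)$ a set of continuous functions bounded below by $\0$ has infimum $\0$ iff it is small on open subsets of arbitrary open sets, cf.\ \cite[Lemma~3.1]{bt}. The lower bounds are handled symmetrically.

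The main obstacle is constructing such $g$ for a net rather than a sequence. My first attempt would exploit weak metrizability: the bounded weak topology is given by a metric $d$. For each $n$, choose $\alpha_{n}$ such that $d(e_{\alpha},0)<1/n$ for $\alpha\ge\alpha_{n}$. Then I would show that $h_{n}:=\sup\{|\hat e_{\alpha}|:\alpha\ge\alpha_{n}\}$, computed pointwise, is dominated by a continuous function which is small on some open subset of $U$. For this I would use a Baire category argument on $K$ applied to the upper semicontinuous regularizations of the $h_{n}$, whose pointwise limit vanishes on $K$ because $|\hat e_{\alpha}(\varphi)|\to0$ for every $\varphi$ and the convergence is uniform on $\mathrm{w}^{*}$-compact sets of functionals approximated from a countable dense set. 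If this uniformity fails, the fallback is to use the separability of $E^{*}$ to extract a countable family of sequences, apply (ii) to each, and combine the resulting eventual upper bounds.
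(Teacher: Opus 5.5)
Your parts (i) and (ii) are exactly the paper's argument. In (iii), however, the decisive step fails as you state it. The pointwise limit of the upper semicontinuous regularizations $h_n^{*}$ need not vanish on all of $K$. Nor is the convergence $\hat e_\alpha\to 0$ uniform on $\mathrm{w}^{*}$-compact sets of functionals; uniformity on $K$ itself would already be norm convergence.

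For instance, take $E=c$, so $E^{*}=\ell_1$ is separable and $K=\overline{\ext}K_E\cong\N\cup\{\infty\}$, and let $e_n$ be the unit vectors. Then $h_n(k)=1$ for all $k\ge\alpha_n$ and $h_n(\infty)=0$, yet $h_n^{*}(\infty)=1$ for every $n$.

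Your fallback does not close the gap either. Eventual upper bounds for countably many sequences extracted from the net, each supplied by (ii), give no eventual upper bound for the net itself; combining controls coming from different sequences is precisely the difficulty. The paper handles this with convergence-space machinery. For metrizable $K$, order convergence on $\Co(K)$ is strongly first countable, and $(\Ba_E,\mathrm{w})$ is metrizable, so the sequential continuity given by (ii) upgrades to continuity by \cite[Theorem~5.2]{bbv}.

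Your route can be repaired, and it then becomes a more elementary alternative that does not even need $K$ to be metrizable. The missing observation is that $h_n$ is a pointwise supremum of continuous functions, hence lower semicontinuous. So $h_n^{*}=h_n$ off the meager set of discontinuity points of $h_n$.

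Moreover, $\inf_n h_n(\varphi)=0$ for every $\varphi\in K$. This holds simply because evaluation at $\varphi$ is $d$-continuous at $0_E$ on the bounded set containing the net. Hence $\inf_n h_n^{*}=0$ on a comeager, hence dense, subset of $K$; this is where Baire category is genuinely needed.

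Now finish as follows:
\begin{enumerate}
\item Given $U$ and $\varepsilon$, pick such a $\varphi\in U$ and an $n$ with $h_n^{*}(\varphi)<\varepsilon/2$.
\item By upper semicontinuity and regularity there is an open $V\ni\varphi$ with $\overline{V}\subseteq U\cap\{h_n^{*}<\varepsilon/2\}$.
\item Let $\phi\in\Co(K)$ be an Urysohn function equal to $1$ on $\{h_n^{*}\ge\varepsilon/2\}$ and to $0$ on $\overline{V}$, and set $g:=\frac{\varepsilon}{2}\1+M\phi$ with $M\ge\sup h_n^{*}$.
\item Then $g$ dominates $\pm\hat e_\alpha$ for all $\alpha\ge\alpha_n$ and satisfies $g<\varepsilon$ on $V$.
\end{enumerate}
Your criterion, Proposition~\ref{oc} and (i) then complete the proof. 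In effect this re-runs the proof of the sequential result from \cite{bt}, with the countable sequence of tails supplied by the weak metric rather than by the index set.
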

\begin{proof}
Denote $K:=\left(\mathop{\overline{\ext}}K_{E},\mathrm{w}^{*}\right)$.

(i) follows from the fact that $E$ is supremum-dense in $\Co\left(K\right)$ and Proposition~\ref{ocp}.

(ii): Assume that $e_{n}\tow 0_{E}$. First, it follows from the Banach--Steinhaus theorem (see \cite[Theorem~3.85]{fhhmz}) that $\left\{e_{n}\right\}_{n\in\N}$ is norm bounded, and therefore also order bounded. Next, by definition we see that $\left(e_{n}\right)_{n\in\N}$ converges to $\0$ pointwise in $\Co\left(K\right)$. According to the comment before the proposition, this implies $e_{n}\too \0$ in $\Co\left(K\right)$, and so by (i) we conclude that $e_{n}\too 0_{E}$ in $E$.

(iii): First, $E$ is separable, and so $\left(\Ba_{E^{*}},\mathrm{w}^{*}\right)$ is metrizable; it also follows that $\left(\Ba_{E},\mathrm{w}\right)$ is metrizable (for these results see \cite[Propositions~2.8, 3.103 and~3.105]{fhhmz}). In particular, $K$ is metrizable and compact, and so order convergence in $\Co\left(K\right)$ is strongly first countable (see \cite[discussion after Proposition~4.21]{bbv}). Furthermore, $\left(\Ba_{E},\mathrm{w}\right)$ is metrizable, thus also strongly first countable. By (ii), the embedding of $\left(\Ba_{E},\mathrm{w}\right)$ into $\left(\Co\left(K\right),\mathrm{o}\right)$ is sequentially continuous, and so it is in fact continuous by \cite[Theorem~5.2]{bbv}. Thus, using (i) again we conclude that bounded weak convergence is stronger than order convergence in $E$.
\end{proof}

Before specializing to ice cream spaces, let us present an auxiliary result whose proof is inspired by \cite[Section~2]{gmm}.

\begin{lemma}\label{densenet}
Assume that the weak and $\mathrm{w}^{*}$-topologies agree on $\So_{F^{*}}$. Furthermore, let $E:= \Aff\left(\Ba_{F^{*}},\mathrm{w}^{*}\right)$.
If a net $\left(e_{\alpha}\right)_{\alpha\in A}\subseteq \Ba_{E}$ converges to $\0$ pointwise on a $\mathrm{w}^{*}$-dense subset of $\mathop{\overline{\ext}}\Ba_{F^{*}}$, then it converges weakly in $E$.
\end{lemma}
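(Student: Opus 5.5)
The plan is to reduce the statement to a fact about pointwise limits, and then to identify such a limit as an element of the bidual that must vanish.

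\textbf{Reduction.} By part~(ii) of Proposition~\ref{icecream} and part~(i) of Proposition~\ref{iceo}, the weak and pointwise topologies coincide on $E=\Aff\left(\Ba_{F^{*}},\mathrm{w}^{*}\right)$. So it suffices to show that $e_{\alpha}\left(\nu\right)\to 0$ for every $\nu\in\Ba_{F^{*}}$. I will argue by contradiction. Suppose this fails at some $\nu_{0}$. Then there is a subnet along which $\left|e_{\alpha}\left(\nu_{0}\right)\right|\ge\varepsilon>0$. Since the net lies in $\Ba_{E}\subseteq\Ba_{E^{**}}$, Alaoglu lets me pass to a further subnet that $\mathrm{w}^{*}$-converges to some $\varphi\in\Ba_{E^{**}}$. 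Here $E\simeq\icecream\left(F\right)$, so $E^{*}\simeq F^{*}\oplus\R$ and $E^{**}\simeq F^{**}\oplus\R$. Under this identification $\varphi$ acts on states $\left(\nu,1\right)$ by $\nu\mapsto g^{**}\left(\nu\right)+c$ for some $g^{**}\in F^{**}$ and $c\in\R$. This is a norm- and weakly continuous affine function on $\Ba_{F^{*}}$. It vanishes on the dense set $D$, and $\left|\varphi\left(\nu_{0}\right)\right|\ge\varepsilon$. The goal is therefore to show that $\varphi\equiv 0$ on $\Ba_{F^{*}}$.

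\textbf{Vanishing on extreme points.} Take $x\in\ext\Ba_{F^{*}}\subseteq\So_{F^{*}}$ and a net $\left(d_{\beta}\right)\subseteq D$ with $d_{\beta}\to x$ in the $\mathrm{w}^{*}$-topology. By $\mathrm{w}^{*}$-lower semicontinuity of the norm, $1=\|x\|\le\liminf\|d_{\beta}\|\le 1$, so $\|d_{\beta}\|\to 1$. Then $d_{\beta}/\|d_{\beta}\|\in\So_{F^{*}}$ still $\mathrm{w}^{*}$-converges to $x$. By hypothesis it therefore converges weakly, and $d_{\beta}-d_{\beta}/\|d_{\beta}\|\to 0$ in norm. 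Hence $d_{\beta}\to x$ weakly, and weak continuity of $\varphi$ gives $\varphi\left(x\right)=\lim\varphi\left(d_{\beta}\right)=0$. Since $\varphi$ is affine, it vanishes on $\conv\ext\Ba_{F^{*}}$.

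\textbf{Passing to the whole ball (main obstacle).} Krein--Milman only gives $\mathrm{w}^{*}$-density of $\conv\ext\Ba_{F^{*}}$, while $\varphi$ is only weakly continuous. My first attempt is to show that $g^{**}$ is $\mathrm{w}^{*}$-continuous on $\Ba_{F^{*}}$, so that $g^{**}\in F$. Then $\varphi$ is $\mathrm{w}^{*}$-continuous, and vanishing on a $\mathrm{w}^{*}$-dense convex set forces $\varphi=0$.

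The hypothesis already gives this on the sphere: $g^{**}$ is $\mathrm{w}^{*}$-continuous on $\So_{F^{*}}$, and by homogeneity on each sphere $t\So_{F^{*}}$. To reach all of $\Ba_{F^{*}}$ I would follow \cite[Section~2]{gmm}, whose argument shows that agreement of the weak and $\mathrm{w}^{*}$-topologies on the sphere forces $\mathrm{w}^{*}$-continuity on the ball. The idea is to write points of the ball as midpoints of sphere points, which is possible when $\dim F=\8$, with control on the $\mathrm{w}^{*}$-convergence; this is combined with the Krein--Smulian theorem, applied to $\ker g^{**}\cap\Ba_{F^{*}}$.

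If that route fails, the fallback is a Choquet-theoretic one. Represent $\nu_{0}$ as the barycenter of a boundary measure on $\overline{\ext}\Ba_{F^{*}}$. The barycentric formula then applies to $\varphi$, because $\varphi$ is continuous at every point of $\overline{\ext}\Ba_{F^{*}}\cap\So_{F^{*}}$ by the argument above. This gives $\varphi\left(\nu_{0}\right)=0$, contradicting $\left|\varphi\left(\nu_{0}\right)\right|\ge\varepsilon$.

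In the finite-dimensional case all topologies coincide, and the statement is immediate from $\varphi$ vanishing on $\conv\ext\Ba_{F^{*}}=\Ba_{F^{*}}$.
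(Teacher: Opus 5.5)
The gap is the last step, passing from $\conv\ext\Ba_{F^{*}}$ to all of $\Ba_{F^{*}}$, and neither of your routes closes it.

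Your first route rests on a false principle. Every $g^{**}\in F^{**}$ is weakly continuous on $F^{*}$, so under the hypothesis every $g^{**}$ is $\mathrm{w}^{*}$-continuous on $\So_{F^{*}}$. If this forced $\mathrm{w}^{*}$-continuity on $\Ba_{F^{*}}$, Krein--\v{S}mulian would give $g^{**}\in F$ for all $g^{**}$, i.e.\ every such $F$ would be reflexive. But $F=c_{0}$ satisfies the hypothesis: the $\mathrm{w}^{*}$-topology is metrizable on $\Ba_{\ell_{1}}$, and a $\mathrm{w}^{*}$-convergent sequence of norm-one vectors in $\ell_{1}$ with norm-one limit converges in norm. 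So do the non-reflexive spaces of Corollary~\ref{iceord1}, which is exactly where the lemma is needed. Concretely, take $g^{**}=\left(1,1,\dots\right)\in\ell_{\8}$. The points $\frac{1}{2}\left(e^{*}_{1}+e^{*}_{n}\right)\in\So_{\ell_{1}}$ $\mathrm{w}^{*}$-converge to $\frac{1}{2}e^{*}_{1}$, while $g^{**}$ equals $1$ on them and $\frac{1}{2}$ at the limit. Sphere points $\mathrm{w}^{*}$-accumulate at interior points, and that is precisely where continuity is lost.

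Your Choquet fallback is not justified either. The representing boundary measure $m$ lives on $\overline{\ext}\Ba_{F^{*}}$, which need not lie in $\So_{F^{*}}$; it is the whole ball when $F^{*}$ is strictly convex and $\dim F=\8$. You have no control of $\varphi$ off the sphere. Moreover, even continuity of $\varphi$ at $m$-almost every point does not license $\varphi\left(\nu_{0}\right)=\int\varphi\,dm$. That formula is guaranteed for $\mathrm{w}^{*}$-continuous affine functions, not for an arbitrary bounded affine function such as $g^{**}+c$. As written, this step assumes what has to be proved.

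What you are missing is an observation you essentially already made. Your normalization argument gives $\mathrm{w}^{*}$-continuity of $\varphi$, relative to $\Ba_{F^{*}}$, at every point of $\So_{F^{*}}$, not just at extreme points. Indeed, if $\Ba_{F^{*}}\ni\nu_{\beta}\to\mu\in\So_{F^{*}}$ in the $\mathrm{w}^{*}$-topology, then $\|\nu_{\beta}\|\to1$. The normalized net converges weakly by hypothesis, hence $\nu_{\beta}\to\mu$ weakly and $\varphi\left(\nu_{\beta}\right)\to\varphi\left(\mu\right)$.

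Now $\varphi$ vanishes on $\conv\ext\Ba_{F^{*}}$, which is $\mathrm{w}^{*}$-dense in $\Ba_{F^{*}}$ by Krein--Milman. Approximating $\mu$ from this set yields $\varphi\left(\mu\right)=0$ for all $\mu\in\So_{F^{*}}$. Since $\varphi$ is affine and $\Ba_{F^{*}}=\conv\So_{F^{*}}$, it follows that $\varphi\equiv0$, which is the contradiction you want.

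This is exactly the paper's proof, except that the paper does not pass to $E^{**}$. Instead of a bidual cluster point it uses the convex set $C=\left\{\nu\in\Ba_{F^{*}}:e_{\alpha}\left(\nu\right)\to0\right\}$. It shows $C$ is norm closed (writing $e_{\alpha}=f_{\alpha}+r_{\alpha}\1$ with $f_{\alpha}\in\Ba_{F}$) and hence weakly closed. It gets the $\mathrm{w}^{*}$-density of $C$ from Krein--Milman, and captures $\So_{F^{*}}$ by the same normalization trick. Your weakly continuous affine $\varphi$ plays the role of the weakly closed convex set $C$. With this repair your Alaoglu framing is a valid, if slightly heavier, variant.
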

\begin{proof}
Let $C:=\left\{\nu\in \Ba_{F^{*}}: e_{\alpha}\left(\nu\right)\to 0\right\}$. By part~(i) of Proposition~\ref{iceo}, it is enough to show that $C=\Ba_{F^{*}}$. Since every $e_{\alpha}$ is affine, $C$ is convex. Let us show that $C$ is norm-closed. Assume that $\mu$ is in the norm closure of $C$, and let $\varepsilon>0$. There is $\nu\in C$ such that $\|\mu-\nu\|\le\varepsilon$. Find $\alpha_{0}$ such that $\left|e_{\alpha}\left(\nu\right)\right|\le\varepsilon$, for all $\alpha\ge \alpha_{0}$. For $\alpha\ge \alpha_{0}$ we have that $e_{\alpha}=f_{\alpha}+r_{\alpha}\1$, where $r_{\alpha}\in\R$ and $f_{\alpha}\in \Ba_{F}$, hence
\[\left|e_{\alpha}\left(\mu\right)\right|=\left|f_{\alpha}\left(\mu\right)+r_{\alpha}\right|\le \left|f_{\alpha}\left(\nu\right)+r_{\alpha}\right|+\left|f_{\alpha}\left(\mu-\nu\right)\right|=\left|e_{\alpha}\left(\nu\right)\right|+\left|f_{\alpha}\left(\mu-\nu\right)\right|
\le2\varepsilon.\]
Thus, $\mu\in C$. Being a norm-closed convex set, $C$ is weakly closed. By assumption, $C\cap \mathop{\overline{\ext}}\Ba_{F^{*}}$ is $\mathrm{w}^{*}$-dense in $\mathop{\overline{\ext}}\Ba_{F^{*}}$. Let us show that $C$ is $\mathrm{w}^{*}$-dense in $\Ba_{F^{*}}$. Since $C$ is convex, so is $D:=\overline{C}$. We have that $D\supseteq \overline{C\cap \mathop{\overline{\ext}}\Ba_{F^{*}}}\supseteq \ext\Ba_{F^{*}}$, hence by the Krein--Milman theorem, we conclude that $D\supseteq \overline{\conv}\left( \ext\Ba_{F^{*}}\right)=\Ba_{F^{*}}$.

Let $\mu\in\So_{F^{*}}$. As $C$ is $\mathrm{w}^{*}$-dense, there is $\left(\nu_{\beta}\right)_{\beta\in B}\subseteq C$ such that $\nu_{\beta}\toww\mu$. Then, $1=\|\mu\|\le\liminf_{\beta\in B}\|\nu_{\beta}\|\le\limsup_{\beta\in B}\|\nu_{\beta}\|\le 1$ implies $\|\nu_{\beta}\|\to 1$, and so without loss of generality we assume $\nu_{\beta}\ne 0_{F^{*}}$, for every $\beta$. It follows that $\So_{F^{*}}\ni\mu_{\beta}:=\|\nu_{\beta}\|^{-1}\nu_{\beta}\to\mu$ in the $\mathrm{w}^{*}$-topology, and so by our assumption we get $\mu_{\beta}\tow\mu$. Therefore, $\nu_{\beta}=\|\nu_{\beta}\|\mu_{\beta}\tow\mu$, and since $\nu_{\beta}\in C$ and $C$ is weakly closed, we conclude that $\mu\in C$. Thus, $\So_{F^{*}}\subseteq C$, hence $\Ba_{F^{*}}=\conv\So_{F^{*}}\subseteq C$, as required.
\end{proof}

\begin{proposition}\label{iceor}
\item[(i)] Weak convergence implies order convergence for sequences in $\icecream\left(F\right)$.
\item[(ii)] If, additionally, $F^{*}$ is separable, then bounded weak convergence is stronger than order convergence in $\icecream\left(F\right)$.
\item[(iii)] If, additionally, weak and $\mathrm{w}^{*}$-convergences agree on $\So_{F^{*}}$, then order convergence is stronger than bounded weak convergence in $\icecream\left(F\right)$.
\end{proposition}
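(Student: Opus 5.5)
Parts (i) and (ii) follow by specializing Proposition~\ref{ouoc} to the order unit space $E:=\icecream\left(F\right)$. For (i), part~(ii) of Proposition~\ref{ouoc} applies directly. For (ii), I will check that $E^{*}$ is separable. By part~(i) of Proposition~\ref{icecream}, $\icecream\left(F\right)^{*}=F^{*}\oplus_{\8}\R$, which is separable whenever $F^{*}$ is. Part~(iii) of Proposition~\ref{ouoc} then gives the claim.

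For (iii), I will use the representation of Proposition~\ref{iceaff}. Since $F$ is a Banach space, I expect $\icecream\left(F\right)$ to be identified with all of $\Aff\left(\Ba_{F^{*}},\mathrm{w}^{*}\right)$: every $\mathrm{w}^{*}$-continuous affine function is $f+r\1$ with $f$ a $\mathrm{w}^{*}$-continuous linear functional, i.e.\ $f\in F$. So Lemma~\ref{densenet} applies with $E=\icecream\left(F\right)$. Let $e_{\alpha}\too 0$ in $\icecream\left(F\right)$. The net is eventually order bounded, hence eventually norm bounded. After passing to a tail and rescaling, I may assume $\left(e_{\alpha}\right)\subseteq \Ba_{E}$. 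It then suffices to show that $e_{\alpha}\to 0$ pointwise on a $\mathrm{w}^{*}$-dense subset of $K:=\mathop{\overline{\ext}}\Ba_{F^{*}}$, because Lemma~\ref{densenet} then yields weak convergence. Together with boundedness, this is bounded weak convergence.

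To get the pointwise convergence, I will use part~(i) of Proposition~\ref{ouoc}. Via part~(ii) of Proposition~\ref{icecream}, the state space of $\icecream\left(F\right)$ is $\Ba_{F^{*}}$, with pure states $\ext\Ba_{F^{*}}$. So $e_{\alpha}\too\0$ in $\Co\left(K\right)$. By the fact recalled before Proposition~\ref{ouoc} (\cite[Theorem~3.7]{bt}), an order-null net in $\Co\left(K\right)$ converges pointwise to $\0$ on a co-meager subset of $K$. Since $K$ is compact Hausdorff, the Baire category theorem makes a co-meager set dense. This gives the required dense set, and Lemma~\ref{densenet} finishes the argument.

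The main obstacle is the identification in (iii): matching $E$ in Lemma~\ref{densenet} with $\icecream\left(F\right)$, and matching $\Ba_{E}$ with the unit ball of the ice cream norm. Both rely on part~(i) of Proposition~\ref{iceaff} being onto for Banach $F$, which I would verify via the $\mathrm{w}^{*}$-continuous linear functionals argument above. I also need the reduction to a bounded tail, which comes from order convergence implying eventual order boundedness, and from order boundedness coinciding with norm boundedness in order unit spaces.
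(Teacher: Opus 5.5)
Your proposal is correct and follows the paper's proof essentially step for step. Parts (i) and (ii) come from parts~(ii) and~(iii) of Proposition~\ref{ouoc}, using $\icecream\left(F\right)^{*}\simeq F^{*}\oplus_{\8}\R$ for separability. Part (iii) comes from part~(i) of Proposition~\ref{ouoc}, pointwise convergence on a co-meager (hence $\mathrm{w}^{*}$-dense) subset of $\mathop{\overline{\ext}}\Ba_{F^{*}}$, and Lemma~\ref{densenet} applied to a rescaled bounded tail.

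The paper uses the identification $\icecream\left(F\right)=\Aff\left(\Ba_{F^{*}},\mathrm{w}^{*}\right)$ that you flag without comment. The quickest justification is that $\icecream\left(F\right)\simeq F\oplus_{1}\R$ is complete, so its isometric norm-dense image from part~(i) of Proposition~\ref{iceaff} is the whole space. Your direct argument would additionally need the Krein--\v{S}mulian (Banach--Dieudonn\'e) theorem to pass from $\mathrm{w}^{*}$-continuity on $\Ba_{F^{*}}$ to $\mathrm{w}^{*}$-continuity on $F^{*}$.
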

\begin{proof}
(i) follows immediately from part~(ii) of Proposition~\ref{ouoc}. (ii) follows from part~(iii) of Proposition~\ref{ouoc} and the fact that $\icecream\left(F\right)^{*}\simeq F^{*}\oplus_{\8} \R$.

(iii): Assume that $\left(e_{\alpha}\right)_{\alpha\in A}\subseteq E:=\icecream\left(F\right)$ is order-null. By part~(i) of Proposition~\ref{ouoc} and the discussion preceding it, there are $\alpha_{0}$ and $r>0$ such that the net $\left(re_{\alpha}\right)_{\alpha\ge\alpha_{0}}$ is contained in $\Ba_{E}$ and converges pointwise on a co-meager set in $\left(\mathop{\overline{\ext}}\Ba_{F^{*}},\mathrm{w}^{*}\right)$. The latter is a compact Hausdorff space, and so any co-meager subset of this space is $\mathrm{w}^{*}$-dense. By Lemma~\ref{densenet}, the net $\left(re_{\alpha}\right)_{\alpha\ge\alpha_{0}}$ is weakly null. Thus, $\left(e_{\alpha}\right)_{\alpha\in A}$ is bounded weakly null.
\end{proof}

We finally get to the culmination of this section.

\begin{theorem}\label{iceord}
Assume that $F^{*}$ is separable and that the weak and $\mathrm{w}^{*}$-topologies agree on $\So_{F^{*}}$. Let $E:=\Aff\left(\Ba_{F^{*}},\mathrm{w}^{*}\right)\simeq \icecream\left(F\right)$. Then:
\item[(i)] Order, bounded weak and bounded pointwise convergences agree on $E$.
\item[(ii)] Order intervals in $E$ are order metrizable and order pre-compact.
\item[(iii)] $\dim F=\8$ if and only if directed order convergence is strictly stronger than order convergence in $E$.
\item[(iv)] Order intervals in $E$ are order compact if and only if $E$ is order complete if and only if $F$ is reflexive.
\end{theorem}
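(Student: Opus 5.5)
Part~(i) follows by combining parts~(ii) and~(iii) of Proposition~\ref{iceor}: under the hypotheses, bounded weak convergence is stronger than order convergence, and order convergence is stronger than bounded weak convergence. Bounded weak and bounded pointwise convergence coincide on $E$ by part~(i) of Proposition~\ref{iceo}, since weak and pointwise topologies agree on $\Aff\left(\Ba_{F^{*}},\mathrm{w}^{*}\right)$.

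For (ii), note that order intervals are norm bounded, so by (i) order convergence on an interval is just the weak topology restricted to it. Since $E^{*}\simeq F^{*}\oplus_{\8}\R$ is separable, weak topologies on bounded subsets of $E$ are metrizable; this gives order metrizability. For pre-compactness, let $(e_{\alpha})$ lie in an interval. Embed it in $E^{**}$, whose bounded sets are $\mathrm{w}^{*}$-compact, and take a subnet that is $\mathrm{w}^{*}$-convergent in $E^{**}$. Then the doubly indexed net $e_{\alpha}-e_{\beta}$ is bounded and weakly null in $E$, since it is $\mathrm{w}^{*}$-null in $E^{**}$ and the functionals come from $E^{*}$. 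By (i) it is order null, so the subnet is order Cauchy.

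For (iii), if $\dim F<\8$ the two convergences coincide by the remark after Example~\ref{jochen}. If $\dim F=\8$, Proposition~\ref{icedo} identifies directed order convergence with norm convergence, while order convergence equals bounded weak convergence by (i). Josefson--Nissenzweig, or simply Riesz's lemma together with metrizability, gives a bounded weakly null sequence $(f_{n})$ of unit vectors in $F$. Then $(f_{n},0)$ is order null but not norm null, so the inclusion $\mathrm{do}\ge \mathrm{o}$ is strict.

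For (iv), order compactness of intervals means bounded weak compactness, i.e.\ weak compactness of $\Ba_{E}$. This is reflexivity of $E$, equivalently of $F$ since $E\simeq F\oplus\R$. For the equivalence with order completeness:
\begin{itemize}
\item If $F$ is reflexive, any order Cauchy net is eventually bounded and weakly Cauchy, hence weakly convergent by reflexivity, hence order convergent by (i).
\item Conversely, if $F$ is not reflexive, take a bounded net in $\Ba_{E}$ that is $\mathrm{w}^{*}$-convergent in $E^{**}$ to a point outside $E$. As in (ii) it is order Cauchy, but it has no weak limit in $E$, hence no order limit by (i).
\end{itemize}
The same example shows that intervals are not order compact when $F$ is not reflexive.

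The main subtlety is (ii)/(iv): checking that ``order Cauchy'' matches ``bounded weakly Cauchy''. This needs (i) applied to the doubly indexed net, which is eventually bounded, so (i) applies.
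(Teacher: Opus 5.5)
Your proposal is correct and is essentially the paper's proof: (i) comes from parts~(ii) and~(iii) of Proposition~\ref{iceor} together with part~(i) of Proposition~\ref{iceo}; (ii) and (iv) transfer weak metrizability and the bidual/Goldstine weak pre-compactness of bounded sets through (i); and (iii) compares $\mathrm{do}=\|\cdot\|$ (Proposition~\ref{icedo}) with bounded weak convergence. One correction in (iii): Josefson--Nissenzweig gives $\mathrm{w}^{*}$-null normalized sequences in dual spaces, not weakly null ones in $F$ (by Schur's theorem $\ell_{1}$ has none), so drop that citation; your second justification can be completed because $F^{*}$ is separable, and in fact no sequence is needed, since for $\dim F=\8$ the point $0_{F}$ lies in the weak closure of $\So_{F}$, which already gives a bounded weakly null net that is not norm null.
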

\begin{proof}
(i) follows from combining parts~(ii) and (iii) of Proposition~\ref{iceor} and part~(i) of Proposition~\ref{iceo}. (ii) follows from the facts that balls in $E$ are metrizable and pre-compact in the weak topology, that order and weak convergences agree on balls, and that every order interval in $E$ is norm bounded. (iii) follows from (i) and part~(i) of Proposition~\ref{icedo}.

(iv): If $F$ is reflexive, then so is $E$, and so order intervals in $E$ are weakly compact, which by (i) is equivalent to order compact.

If order intervals are order compact, then they are order complete. Since every order Cauchy net has a tail contained in an order interval, it follows that $E$ is order complete.

We now assume that $F$ is not reflexive and show that $E$ is not order complete. As $E$ is also not reflexive, we have that $\Ba_{E^{**}}\setminus\Ba_{E}\ne\varnothing$. By Goldstine's theorem, there is a net in $\Ba_{E}$ which $\mathrm{w}^{*}$-converges to $g\in \Ba_{E^{**}}\setminus\Ba_{E}$, and so it is a weakly Cauchy net, which does not have a weak limit. By (i), this net is also order Cauchy but not order convergent, thus showing that $E$ is not order complete.
\end{proof}

The \emph{order topology} is the topological modification of order convergence, i.e.\ the closed sets are precisely the sets which are closed with respect to order convergence.

\begin{corollary}\label{iceord2}
Let $F$ be a separable reflexive Banach space. Then, on $\Aff\left(\Ba_{F^{*}},\|\cdot\|\right)$ order convergence coincides with continuous convergence, while the order topology coincides with the compact-open topology.
\end{corollary}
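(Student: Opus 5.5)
The plan is to reduce the statement to Theorem~\ref{iceord} and Proposition~\ref{iceo}, applied with $G:=F^{*}$ in place of $F$. Since $F$ is reflexive, on $\Ba_{F^{*}}$ the weak and $\mathrm{w}^{*}$-topologies coincide. Hence
\[\Aff\left(\Ba_{F^{*}},\mathrm{w}^{*}\right)=\Aff\left(\Ba_{F^{*}},\mathrm{w}\right)=\Aff\left(\Ba_{F^{*}},\|\cdot\|\right)=:E,\]
where the last equality is part~(ii) of Proposition~\ref{iceaff}, applied to the Banach space $G=F^{*}$. That part also gives $E\simeq\icecream\left(G^{*}\right)=\icecream\left(F^{**}\right)=\icecream\left(F\right)$. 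As sets of functions these are all the same space, so we may view $E$ both as the space in Theorem~\ref{iceord} and as the space $\Aff\left(\Ba_{G},\|\cdot\|\right)$ in Proposition~\ref{iceo}.

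Next, check the hypotheses of Theorem~\ref{iceord}. Since $F$ is separable and reflexive, $F^{*}$ is separable. The weak and $\mathrm{w}^{*}$-topologies agree on all of $F^{*}$, in particular on $\So_{F^{*}}$. So part~(i) of Theorem~\ref{iceord} applies: on $E$, order convergence coincides with bounded weak and with bounded pointwise convergence. Bounded pointwise convergence here means pointwise convergence on $\Ba_{F^{*}}=\Ba_{G}$, which is the same whether we regard $\Ba_{G}$ with the $\mathrm{w}^{*}$- or the norm topology. By part~(ii) of Proposition~\ref{iceo} for $G$, bounded pointwise convergence on $\Aff\left(\Ba_{G},\|\cdot\|\right)$ agrees with continuous convergence, where $\Ba_{G}$ carries the norm topology. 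This proves the first claim.

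For the topologies, the order topology is the topological modification of order convergence. By the previous paragraph, it is therefore the topological modification of bounded $\mathrm{w}^{*}$-convergence on $E$. Here $\mathrm{w}^{*}$ is taken with respect to the predual $\icecream\left(G\right)=\icecream\left(F^{*}\right)$, as in Subsection~\ref{iceoc}, and it equals pointwise convergence by part~(i) of Proposition~\ref{iceo}. As recalled before Proposition~\ref{bw}, this topological modification is the $\mathrm{bw}^{*}$-topology. The $\mathrm{bw}^{*}$-topology coincides with the compact-open topology on $\Aff\left(\Ba_{G},\|\cdot\|\right)$ by part~(iii) of Proposition~\ref{iceo}.

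The main point requiring care is the identification of the various dualities. Specifically, one must check that the weak convergence of Theorem~\ref{iceord} coincides with the $\mathrm{w}^{*}$-convergence with respect to $\icecream\left(F^{*}\right)$ used in Proposition~\ref{iceo}. This holds because $\icecream\left(F\right)$ is reflexive, being $F\oplus\R$ with $F$ reflexive, and its predual is $\icecream\left(F^{*}\right)$ up to an isomorphism. Alternatively, one can avoid this by passing through bounded pointwise convergence, which is intrinsic to the functions on $\Ba_{F^{*}}$.
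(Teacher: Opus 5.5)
Your proposal is correct and follows essentially the same route as the paper. You identify the three $\Aff$ spaces via reflexivity, apply part~(i) of Theorem~\ref{iceord}, then use part~(ii) of Proposition~\ref{iceo} for continuous convergence and part~(iii) for the compact-open topology. The one minor difference is how you equate order convergence with bounded $\mathrm{w}^{*}$-convergence: the paper uses reflexivity of $E\simeq F\oplus_{1}\R$ to identify bounded weak with bounded $\mathrm{w}^{*}$-convergence, whereas your main line goes through bounded pointwise convergence and part~(i) of Proposition~\ref{iceo} applied explicitly to $G=F^{*}$, which works equally well.
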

\begin{proof}
First, note that the weak and $\mathrm{w}^{*}$-topologies agree on $F^{*}$, and so
\[E:=\Aff\left(\Ba_{F^{*}},\|\cdot\|\right)=\Aff\left(\Ba_{F^{*}},\mathrm{w}\right)=\Aff\left(\Ba_{F^{*}},\mathrm{w}^{*}\right).\] Since $F$ is separable and reflexive, it follows that $F^{*}$ is separable. %, while smoothness of $F$ implies that $F^{*}$ is strictly convex (see \cite[Corollary 7.23(ii)]{fhhmz}).
Thus, $F$ satisfies the conditions of Theorem~\ref{iceord}, and so order convergence, bounded weak convergence and bounded pointwise convergence all agree on $E$. By part~(ii) of Proposition~\ref{iceo}, bounded pointwise convergence also agrees with continuous convergence in $E$. As $E\simeq F\oplus_{1}\R$ is reflexive, bounded $\mathrm{w}^{*}$-convergence coincides with bounded weak convergence, and hence with order convergence. The compact-open topology is equal to the bounded $\mathrm{w}^{*}$-topology by part~(iii) of Proposition~\ref{iceo}, and so it is equal to the order topology.
\end{proof}

The following result gives more information on the spaces considered in Example~\ref{lur}.

\begin{corollary}\label{iceord1}
Let $F$ be a non-reflexive Banach space such that $F^{*}$ is separable and locally uniformly rotund. Then, $\icecream\left(F\right)$ is directed order complete, but not order complete.
\end{corollary}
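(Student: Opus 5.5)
The plan is to reduce the statement to Theorem~\ref{iceord} and Proposition~\ref{icedo}. First I check that $F$ satisfies the hypotheses of Theorem~\ref{iceord}. Separability of $F^{*}$ is assumed. I would then verify that the weak and $\mathrm{w}^{*}$-topologies agree on $\So_{F^{*}}$, using local uniform rotundity. Since the weak topology is finer, only one direction matters: take $\mu\in\So_{F^{*}}$ and a net $(\mu_{\beta})\subseteq\So_{F^{*}}$ with $\mu_{\beta}\toww\mu$, and show $\mu_{\beta}\to\mu$ in norm.

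\begin{itemize}
\item By $\mathrm{w}^{*}$-lower semicontinuity of the norm, $\liminf_{\beta}\|\mu_{\beta}+\mu\|\ge 2\|\mu\|=2$. Hence $\|\mu_{\beta}+\mu\|\to 2$.
\item It follows that $2\|\mu_{\beta}\|^{2}+2\|\mu\|^{2}-\|\mu_{\beta}+\mu\|^{2}\to 0$.
\item The LUR condition of Example~\ref{lur} is stated for sequences, but it is equivalent to its net version. Otherwise, some subnet stays at norm distance $\ge\varepsilon$ from $\mu$ while the LUR defect tends to $0$, and extracting a sequence along it contradicts the sequential definition. So $\mu_{\beta}\to\mu$ in norm, hence weakly.
\end{itemize}

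Thus Theorem~\ref{iceord} applies to $E\simeq\icecream(F)$. Since $F$ is not reflexive, part~(iv) of Theorem~\ref{iceord} gives that $\icecream(F)$ is not order complete.

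For directed order completeness, part~(i) of Proposition~\ref{icedo} shows that directed order convergence on $\icecream(F)$ coincides with norm convergence. Hence directed order Cauchy nets are exactly the norm Cauchy nets. Since $F$ is a Banach space, part~(ii) of Proposition~\ref{icedo} makes $\icecream(F)$ norm complete, so every directed order Cauchy net converges in norm, that is, in directed order.

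The main, though minor, obstacle is the topology check on $\So_{F^{*}}$. One has to be careful to pass from the sequential definition of LUR to nets, and to use the lower semicontinuity of the dual norm under $\mathrm{w}^{*}$-convergence. Everything else is a direct citation.
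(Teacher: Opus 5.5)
Your proposal is correct and follows the paper's proof. Directed order completeness comes from Proposition~\ref{icedo}, and the failure of order completeness comes from part~(iv) of Theorem~\ref{iceord}, once LUR is used to show that the weak and $\mathrm{w}^{*}$-topologies agree on $\So_{F^{*}}$. The only difference is minor: you prove $\mathrm{w}^{*}$-to-norm convergence directly for nets (via $\mathrm{w}^{*}$-lower semicontinuity and extracting a sequence). The paper instead cites the $\mathrm{w}^{*}$-Kadec--Klee property for sequences and then passes to topologies using metrizability of the $\mathrm{w}^{*}$-topology on $\So_{F^{*}}$, a step your version makes unnecessary.
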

\begin{proof}
First, $\icecream\left(F\right)$ is directed order complete by part~(ii) of Proposition~\ref{icedo}. As $F^{*}$ is LUR, it is strictly convex and has the $\mathrm{w}^{*}$-Kadec--Klee property, i.e.\ every $\mathrm{w}^{*}$-convergent sequence in $\So_{F^{*}}$ is norm-convergent (see \cite[Exercise~8.45]{fhhmz}). Since $F$ is separable, the $\mathrm{w}^{*}$-topology on $\So_{F^{*}}$ is metrizable. We have two metrizable topologies on $\So_{F^{*}}$, which agree on sequences, hence these topologies coincide. As the weak topology is stronger than the $\mathrm{w}^{*}$-topology but weaker than the norm topology, it follows that all three topologies coincide on $\So_{F^{*}}$. Hence, $F$ satisfies the conditions of Theorem~\ref{iceord}, and since it is not reflexive, by part~(iv) of that theorem, $\icecream\left(F\right)$ is not order complete.
\end{proof}

\subsection{Lattice operations in the ice cream spaces}\label{icelat}

In this subsection we explore the near lack of lattice operations in the ice cream spaces. Throughout this subsection, $F$ is again a normed space.

\begin{lemma}\label{iceable}
Let $0_{F}\ne f\in F$. There is an ordered vector space structure on $F$ such that $f$ is an order unit with respect to this order and the order unit norm on $F$ agrees with the original norm if and only if $\nu\left(f\right)=\pm 1$, for every $\nu\in \ext\Ba_{F^{*}}$. In this case, $\|f\|=1$.
\end{lemma}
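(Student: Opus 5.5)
Both directions reduce to the extreme points of the dual ball.

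For necessity, suppose $F$ carries a cone $C$ making $(F,f)$ an order unit space whose order unit norm is the given norm. Then $\Ba_{F}=[-f,f]$ and $F$ is Archimedean, so $C$ is norm closed. Lemma~\ref{unital}(iv) says a functional with $\|\nu\|=1$ is positive exactly when it is unital, so the state space $K$ consists of the $\nu\in\Ba_{F^{*}}$ with $\nu(f)=1$. Since $C$ is closed, $C=\{g:\nu(g)\ge0\ \text{for all }\nu\in K\}$. The unit ball $[-f,f]$ is then cut out by the functionals in $K\cup(-K)$, and a bipolar argument gives $\Ba_{F^{*}}=\overline{\conv}^{\mathrm{w}^{*}}(K\cup -K)$. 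By Milman's converse to Krein--Milman, every extreme point of $\Ba_{F^{*}}$ lies in the $\mathrm{w}^{*}$-closed set $K\cup -K$, so $\nu(f)=\pm1$ on $\ext\Ba_{F^{*}}$. Also $\|f\|_{f}=1$, which gives $\|f\|=1$.

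For sufficiency, assume $\nu(f)=\pm1$ for every $\nu\in\ext\Ba_{F^{*}}$. Put $K:=\{\nu\in\Ba_{F^{*}}:\nu(f)=1\}$, a $\mathrm{w}^{*}$-compact face of the ball, and define
\[C:=\{g\in F:\nu(g)\ge0\ \text{for all }\nu\in K\}.\]
The key identity is
\[\|g\|=\max_{\nu\in K}|\nu(g)|\quad\text{for all }g\in F.\]
This holds because $\|g\|$ is attained at an extreme point of $\Ba_{F^{*}}$ by Krein--Milman, and each extreme point lies in $K\cup -K$.

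With this identity the remaining properties follow in order.
\begin{itemize}
\item $C$ is a proper cone: if $\pm g\in C$, then $\nu(g)=0$ on $K$, so $g=0$.
\item Applying the identity to $g=f$ gives $\|f\|=1$.
\item For $g\in F$ and $r\ge\|g\|$, every $\nu\in K$ satisfies $\nu(rf\pm g)=r\pm\nu(g)\ge0$, so $-rf\le g\le rf$; hence $f$ is an order unit.
\item Conversely, $-rf\le g\le rf$ means $|\nu(g)|\le r$ on $K$, which means $\|g\|\le r$. So the order unit norm equals the original norm.
\item Archimedeanity is immediate, since $C$ is an intersection of closed half-spaces defined by functionals.
\end{itemize}

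The main obstacle is the necessity direction. It needs two inputs handled carefully: the bipolar identification of $\Ba_{F^{*}}$ with $\overline{\conv}(K\cup -K)$, which uses that $\Ba_{F}=[-f,f]$ equals $\{g:|\nu(g)|\le1\text{ on }K\}$ together with closedness of $C$, and Milman's theorem. An alternative route is to note that the order unit space $(F,f)$ embeds isometrically via $g\mapsto\hat g$ into $\Co(\overline{\ext}K)$ by Theorem~\ref{T:rep of o.u.s.}. Then each extreme point of $\Ba_{F^{*}}$ is $\pm$ an evaluation at a point of $\overline{\ext}K$, because extreme points of the dual ball of a subspace of $\Co(X)$ lift to $\pm\delta_x$. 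Either way, such points take the value $\pm1$ at $f$.
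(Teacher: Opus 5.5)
Your proof is correct and follows essentially the paper's route: the sufficiency direction is the paper's argument with the details written out (your cone $C$ is exactly the pointwise order coming from the isometric embedding of $F$ into $\Co(K)$ that sends $f$ to $\1$), and the necessity direction likewise reduces to showing $\ext\Ba_{F^{*}}\subseteq K\cup -K$. The only difference is that you derive $\Ba_{F^{*}}=\overline{\conv}^{\mathrm{w}^{*}}(K\cup -K)$ from the bipolar theorem and then apply Milman's converse to Krein--Milman, whereas the paper cites $\Ba_{F^{*}}=\conv(K\cup -K)$ from Alfsen--Shultz and uses the elementary fact that an extreme point of the convex hull of two convex sets must lie in one of them.
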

\begin{proof}
Necessity: According to \cite[Definition~1.10 and Theorem~1.19]{as2}, we have that $\Ba_{F^{*}}=\conv\left(K_{F}\cup-K_{F}\right)$, where $K_{F}$ is the set of states on $\left(F,f\right)$. Then, any extreme point of $\Ba_{F^{*}}$ has to be contained in either $K_{F}$ or $-K_{F}$ and be extreme there. Thus, for every $\nu\in\ext\Ba_{F^{*}}$ either $\nu$ or $-\nu$ is a pure state on $\left(F,f\right)$, and so in particular, $\nu\left(f\right)=\pm 1$.

Sufficiency: Let $K:=\left\{\nu\in \Ba_{F^{*}}: \nu\left(f\right)=1\right\}$, which is $\mathrm{w}^{*}$-compact. By assumption, $\ext\Ba_{F^{*}}\subseteq K\cup -K$, and so by the Krein--Milman theorem, we have that
\[\|h\|=\bigvee_{\nu\in \ext\Ba_{F^{*}}}\left|\nu\left(h\right)\right|=\bigvee_{\nu\in K}\left|\nu\left(h\right)\right|,\quad\text{for every }h\in F.\]
Let $J\colon F\to\Co\left(K\right)$ be defined by $\left[Jh\right]\left(\nu\right):=\nu\left(h\right)$. Then, $Jf=\1$, and by the preceding step, $J$ is an isometry, and so we may view $F$ as a subspace of $\Co\left(K\right)$ with $\1=f\in F$. Hence, the norm on $F$ is the order unit norm of $\left(F,f\right)$.%We introduce a partial order on $F$ by declaring $F_{+}:=\left\{g\in F,~ \forall \nu\in K,~\nu\left(g\right)\ge 0\right\}$. It is easy to see that $F_{+}$ is a closed (and hence Archimedean) convex wedge. For every $g\in \Ba_{F}$ and $\nu\in K$ we have that $\nu\left(g-f\right)=\nu\left(g\right)-1\le 0$, hence $g\le f$, and similarly $g\ge -f$. This shows that $\Ba_{F}\subseteq\left[-f,f\right]$. Conversely, if $g\in \left[-f,f\right]$, it follows that $\left|\nu\left(g\right)\right|\le 1$, for every $\nu\in \ext\Ba_{F^{*}}$. It can be deduced from Krein-Milman theorem that $\left|\nu\left(g\right)\right|\le 1$, for every $\nu\in \Ba_{F^{*}}$, and so $\|g\|\le 1$. Thus, $\left[-f,f\right]\subseteq\Ba_{F}$, and so the order unit norm agrees with $\|\cdot\|$. Note that it also follows that $F_{+}$ is a proper cone.
\end{proof}

Recall from Proposition~\ref{ocp} that a set in a directed Archimedean ordered vector space $E$ has an infimum if and only if it has the same infimum in $H$, where $H$ is a vector lattice which contains $E$ as a supremum-dense subspace. According to part~(iii) of Proposition~\ref{icecream}, in the case $E=\icecream\left(F\right)$ we can take $H:=\Co\left(\mathop{\overline{\ext}}\Ba_{F^{*}},\mathrm{w}^{*}\right)$.

\begin{proposition}\label{antilattice}
For $f,g\in F$ and $r,s\in\R$ we have that $\left(f,r\right)\wedge \left(g,s\right)=0_{\smallicecream\left(F\right)}$ if and only if one of the following conditions is satisfied:
\begin{itemize}
\item $f=0_{F}$, $r=0$ and $\|g\|\le s$;
\item $g=0_{F}$, $s=0$ and $\|f\|\le r$;
\item $r,s>0$, $sf+rg=0_{F}$, and $\frac{1}{r}f=-\frac{1}{s}g$ satisfies the conditions of Lemma~\ref{iceable} (in particular, $\|f\|=r$ and $\|g\|=s$).
\end{itemize}
\end{proposition}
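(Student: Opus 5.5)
The plan is to work in the representation from part~(iii) of Proposition~\ref{icecream}. There $\icecream\left(F\right)$ sits supremum-densely in $H:=\Co\left(K\right)$, where $K:=\mathop{\overline{\ext}}\Ba_{F^{*}}$, and $(f,r)$ corresponds to the function $\nu\mapsto\nu(f)+r$. By the remark preceding the proposition, $(f,r)\wedge(g,s)=0$ in $\icecream\left(F\right)$ if and only if the pointwise minimum of $\hat{u}:=\nu(f)+r$ and $\hat{v}:=\nu(g)+s$ is $\0$ in $\Co\left(K\right)$. This means $\hat{u},\hat{v}\ge\0$, i.e.\ $\|f\|\le r$ and $\|g\|\le s$, and $\min(\hat u,\hat v)=0$ at every point of $K$. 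By continuity it suffices to check this on $\ext\Ba_{F^{*}}$, since $\{\min=0\}$ is closed. The whole proof thus reduces to analysing the condition
\[\text{for every }\nu\in\ext\Ba_{F^{*}}:\quad \nu(f)=-r\ \text{ or }\ \nu(g)=-s,\]
together with $\|f\|\le r$ and $\|g\|\le s$.

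First I will settle the degenerate cases. If $r=0$, then $f=0_F$ and $\hat u\equiv\0$, so the condition holds with $\|g\|\le s$ arbitrary; symmetrically for $s=0$. This gives the first two bullets.

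Now assume $r,s>0$. Set $a:=\frac1rf$ and $b:=\frac1sg$, both in $\Ba_F$. The condition says that every extreme $\nu$ satisfies $\nu(a)=-1$ or $\nu(b)=-1$. Since $\ext\Ba_{F^{*}}$ is symmetric, I apply the condition to both $\nu$ and $-\nu$. From $-\nu$ we get $\nu(a)=1$ or $\nu(b)=1$. If $\nu(a)=-1$, then $\nu(a)=1$ is impossible, so $\nu(b)=1$. If instead $\nu(b)=-1$, then $\nu(a)=1$. Either way $\nu(a)=-\nu(b)=\pm1$ for all extreme $\nu$.

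By Krein--Milman, $a+b$ vanishes on $\ext\Ba_{F^{*}}$, so its norm is zero and $a=-b$, i.e.\ $sf+rg=0_F$. Moreover $\nu(a)=\pm1$ on $\ext\Ba_{F^{*}}$, which is exactly the criterion of Lemma~\ref{iceable}. That lemma then gives $\|a\|=1$, hence $\|f\|=r$ and $\|g\|=s$. We also need $a\ne0_F$, which holds because $\ext\Ba_{F^{*}}\neq\varnothing$ when $F\ne\{0\}$; if $F=\{0\}$ the third case is vacuous and the situation is covered separately.

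For the converse, with $a=-b$ satisfying $\nu(a)=\pm1$ on extreme points: if $\nu(a)=-1$ then $\hat u(\nu)=0$, and otherwise $\nu(b)=-1$ and $\hat v(\nu)=0$. Positivity follows from $\|a\|=1$.

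The main subtle step is the symmetry trick using $-\nu$. A second point needing care is passing from extreme points to their closure, and ensuring that the infimum in $\Co(K)$ of two functions is the pointwise minimum, which it is because $\Co(K)$ is a lattice.
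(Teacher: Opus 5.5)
Your proposal is correct and follows essentially the same route as the paper. Both arguments reduce, via the supremum-dense embedding into $\Co\left(\mathop{\overline{\ext}}\Ba_{F^{*}}\right)$, to the pointwise condition ``$\nu(f)=-r$ or $\nu(g)=-s$ for every $\nu\in\ext\Ba_{F^{*}}$'', and then use the symmetry $\nu\mapsto-\nu$ together with Krein--Milman to obtain $sf+rg=0_F$ and $\nu(f/r)=\pm1$. Your explicit check that $f/r\ne 0_F$, including the trivial case $F=\{0\}$, is a small point that the paper leaves implicit.
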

\begin{proof}
Necessity: First, it follows that $\left(f,r\right), \left(g,s\right)\in \icecream\left(F\right)_{+}$, and so $\|f\|\le r$ and $\|g\|\le s$. In particular, if $r=0$, then $f=0_{F}$, and if $s=0$, then $g=0_{F}$. Hence, we may assume from now on that $r,s>0$.

It follows from the discussion before the proposition that $\left(f,r\right)\wedge \left(g,s\right)=0_{\smallicecream\left(F\right)}$ if and only if $\left<\left(f,r\right),\left(\nu,1\right)\right>\left<\left(g,s\right),\left(\nu,1\right)\right>=0$, for every $\nu\in\ext\Ba_{F^{*}}$. This is equivalent to either $\nu\left(f\right)=-r$, or $\nu\left(g\right)=-s$, for every $\nu\in \ext\Ba_{F^{*}}$. Note that if for some $\nu\in \ext\Ba_{F^{*}}$ we have $\nu\left(f\right)=-r$, then $-\nu\in \ext\Ba_{F^{*}}$, and $-\nu\left(f\right)=r\ne -r$, and so $-\nu\left(g\right)=-s$, or $\nu\left(g\right)=s$. Conversely, if $\nu\left(g\right)=-s$, then $\nu\left(f\right)=r$, and so $\nu\left(sf+rg\right)=0$ for every $\nu\in \ext\Ba_{F^{*}}$. By the Krein--Milman theorem, it follows that $sf+rg=0_{F}$. It is also clear that for $h:=\frac{1}{r}f$ we have $\nu\left(h\right)=\pm 1$, for every $\nu\in \ext\Ba_{F^{*}}$.

Sufficiency of the first two conditions is clear. The third case can be obtained by reversing the arguments in the previous paragraph.
\end{proof}

We can now derive a result which was originally established in \cite{gs}.

\begin{corollary}
If $\icecream\left(F\right)$ is weakly pervasive, then $\dim F\le 1$, in which case it is a vector lattice.
\end{corollary}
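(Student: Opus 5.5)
Assume $\icecream\left(F\right)$ is weakly pervasive and $\dim F\ge 1$; we aim to show $\dim F=1$ by producing pairs of positive elements whose infimum in $\icecream\left(F\right)_{+}$ is $0$. The natural candidates are boundary points. Fix $f\in\So_{F}$ and set $u:=\left(f,1\right)$, $v:=\left(-f,1\right)$. Both are positive, and $u+v=\left(0_{F},2\right)$.

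First I would compute $u\wedge_{\smallicecream\left(F\right)_{+}}v$. Let $\left(h,t\right)\in\icecream\left(F\right)_{+}$ lie below both $u$ and $v$. Then $\|h\|\le t$, $\|f-h\|\le 1-t$ and $\|f+h\|\le 1-t$. The triangle inequality gives
\[2=\|2f\|\le\|f-h\|+\|f+h\|\le 2-2t,\]
so $t\le 0$, hence $t=0$ and $h=0_{F}$. Thus $u\wedge_{\smallicecream\left(F\right)_{+}}v=0$.

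By weak pervasiveness we then get $u\wedge_{\smallicecream\left(F\right)}v=0$. Proposition~\ref{antilattice} now applies with $r=s=1$: we have $f+(-f)=0_{F}$, and $f$ must satisfy the conditions of Lemma~\ref{iceable}, namely $\nu\left(f\right)=\pm1$ for every $\nu\in\ext\Ba_{F^{*}}$. Since $f\in\So_{F}$ was arbitrary, every unit vector takes only the values $\pm1$ on every extreme point of $\Ba_{F^{*}}$.

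The main step is to show this forces $\dim F\le 1$. Suppose $f,g\in\So_{F}$ are linearly independent. By Krein--Milman there is $\nu\in\ext\Ba_{F^{*}}$, and $\nu$ is nonzero. The vectors $f_{t}:=\frac{f+tg}{\|f+tg\|}$ are unit vectors depending continuously on $t\in\R$, because $f+tg\ne 0_{F}$. So $t\mapsto\nu\left(f_{t}\right)$ is continuous with values in $\left\{\pm1\right\}$, hence constant. Then $\nu\left(f+tg\right)=c\|f+tg\|$ for all $t$, with $c\in\{\pm1\}$. Applying the same argument to $g$ and $-f$, and to the path joining $f$ to $-f$ through $g$ (which lies away from $0_{F}$ by independence), gives $\nu\left(f\right)=\nu\left(-f\right)$. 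This contradicts $\nu\left(f\right)=\pm1\ne0$.

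Hence $\dim F\le 1$. In that case $\icecream\left(F\right)$ is two-dimensional with cone generated by $\left(f,1\right)$ and $\left(-f,1\right)$, i.e.\ order isomorphic to $\R^{2}$, which is a vector lattice.
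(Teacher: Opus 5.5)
Your proof is correct and follows essentially the same route as the paper: the same pair $(f,1),(-f,1)$, the same triangle-inequality computation showing their infimum in the positive cone is zero, and then Proposition~\ref{antilattice} together with Lemma~\ref{iceable}. The only difference is the last step, where your connectedness argument is valid but unnecessary: once $\dim F\ge 2$, the hyperplane $\ker\nu$ for any $\nu\in\ext\Ba_{F^{*}}$ contains a unit vector $f$, and $\nu(f)=0\ne\pm1$ already violates the condition, which is what the paper does (arguing contrapositively with that specific $f$).
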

\begin{proof}
It is clear that $\icecream\left(F\right)$ is a vector lattice if $\dim F=0$. If $\dim F= 1$, one can show that the clockwise rotation by $\frac{\pi}{4}$ is an order isomorphism between $\icecream\left(F\right)$ and $\R^{2}$, which is a vector lattice.

Assume that $\dim F> 1$, so that there are linearly independent $\nu,\mu\in \ext \Ba_{F^{*}}$ (this follows from the Krein--Milman theorem). Find $f\in\So_{F}$ such that $\mu\left(f\right)=0\ne \nu\left(f\right)$. Then, $f$ does not satisfy the conditions of Lemma~\ref{iceable}, and so according to Proposition~\ref{antilattice}, the infimum of $\left(f,1\right)$ and $\left(-f,1\right)$ does not exist. On the other hand, let us show that $\left(f,1\right)\wedge_{\smallicecream\left(F\right)_{+}}\left(-f,1\right)=\left(0_{F},0\right)$, which will prove that $\icecream\left(F\right)$ is not weakly pervasive. Indeed, if $0_{\smallicecream\left(F\right)}\le \left(g,r\right)\le \left(\pm f,1\right)$, then $\|f-g\|,\|f+g\|\le 1-r$, hence
\[2=\|2f\|\le \|f-g\|+\|f+g\|\le 2-2r,\]
which yields $r=0$ and $g=0_{F}$.
\end{proof}

We now characterize disjointness in ice cream spaces. Recall that vectors in a directed Archimedean ordered vector space $E$ are disjoint if they are disjoint in the Riesz completion $E^{\rho}$ of $E$. By part~(iii) of Proposition~\ref{icecream}, for $E=\icecream\left(F\right)$ we can replace $E^{\rho}$ with $\Co\left(\mathop{\overline{\ext}}\Ba_{F^{*}},\mathrm{w}^{*}\right)$.

\begin{proposition}
For $f,g\in F$ and $r,s\in\R$ we have that $\left(f,r\right)\perp \left(g,s\right)$ if and only if one of the following conditions is satisfied:
\begin{itemize}
\item $f=0_{F}$, $r=0$;
\item $g=0_{F}$, $s=0$;
\item $r,s\ne 0$, $sf+rg=0_{F}$, and $\frac{1}{r}f=-\frac{1}{s}g$ satisfies the conditions of Lemma~\ref{iceable} (in particular, $\|f\|=\left|r\right|$ and $\|g\|=\left|s\right|$);
\item $r=s=0$ and $\ext\Ba_{F^{*}}\subseteq f^{\top}\cup g^{\top}$.\footnote{In order to distinguish from the disjoint complement, we denote the annihilator of $A\subseteq F$ by $A^{\top}$.}
\end{itemize}
\end{proposition}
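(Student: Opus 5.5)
The plan is to work in the representation of part~(iii) of Proposition~\ref{icecream}: $\icecream\left(F\right)$ sits supremum-densely in $\Co\left(K\right)$, where $K:=\mathop{\overline{\ext}}\Ba_{F^{*}}$, via $\left(f,r\right)\mapsto \widehat{\left(f,r\right)}$ with $\widehat{\left(f,r\right)}\left(\nu\right)=\nu\left(f\right)+r$. Disjointness means $\left|\hat e\right|\wedge\left|\hat e'\right|=\0$ in $\Co\left(K\right)$, i.e.\ $\hat e\cdot\hat e'=0$ pointwise on $K$. Since both functions are continuous, this is equivalent to vanishing of the product on the dense set $\ext\Ba_{F^{*}}$. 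So the condition to analyse is
\[\left(\nu\left(f\right)+r\right)\left(\nu\left(g\right)+s\right)=0\quad\text{for every }\nu\in\ext\Ba_{F^{*}}.\]
The key tool is the symmetry $\nu\in\ext\Ba_{F^{*}}\Leftrightarrow -\nu\in\ext\Ba_{F^{*}}$, together with the Krein--Milman fact that a vector annihilated by all of $\ext\Ba_{F^{*}}$ is zero.

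I would split into cases according to $r,s$, checking sufficiency in each case by direct substitution.

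\emph{Case $r=s=0$.} The condition reads $\nu\left(f\right)\nu\left(g\right)=0$ for all extreme $\nu$, which is exactly $\ext\Ba_{F^{*}}\subseteq f^{\top}\cup g^{\top}$. This covers the fourth item. The first two items with $r=0$ or $s=0$ and the other vector zero are then special instances.

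\emph{Case $r=0\ne s$.} For each extreme $\nu$ either $\nu\left(f\right)=0$ or $\nu\left(g\right)=-s$. Suppose some extreme $\nu$ had $\nu\left(f\right)\ne0$. Then $\nu\left(g\right)=-s$, and also $-\nu\left(f\right)\ne0$, so $-\nu\left(g\right)=-s$. This gives $s=0$, a contradiction. Hence $f$ is annihilated by $\ext\Ba_{F^{*}}$, so $f=0_F$, which is the first item. The case $s=0\ne r$ is symmetric.

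\emph{Case $r,s\ne0$.} Here I would mimic the proof of Proposition~\ref{antilattice}. Suppose $\nu\left(f\right)=-r$ for an extreme $\nu$. Then $-\nu\left(f\right)=r\ne-r$, so $-\nu\left(g\right)=-s$, i.e.\ $\nu\left(g\right)=s$. Conversely, $\nu\left(g\right)=-s$ forces $\nu\left(f\right)=r$. In every case, each extreme $\nu$ satisfies $\nu\left(sf+rg\right)=0$, and $\nu\left(\frac1r f\right)=\pm1$. By Krein--Milman, $sf+rg=0_F$, and $\frac1r f$ satisfies Lemma~\ref{iceable}, which gives $\|f\|=\left|r\right|$ and $\|g\|=\left|s\right|$. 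For sufficiency, put $h:=\frac1rf=-\frac1sg$ with $\nu\left(h\right)=\pm1$. Then the product equals $rs\left(\nu\left(h\right)+1\right)\left(1-\nu\left(h\right)\right)=0$.

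The main obstacle is the first reduction: justifying that disjointness in $E^{\rho}$ can be tested in $\Co\left(K\right)$, and that there it amounts to the pointwise product vanishing on $\ext\Ba_{F^{*}}$. The paper's preceding remark allows replacing $E^{\rho}$ by $\Co\left(K\right)$. I would make sure the lattice operations of $E^{\rho}$ agree with those of $\Co\left(K\right)$, using that $E^{\rho}$ is the vector sublattice generated by $E$ inside $E^{\delta}$ and that $\Co\left(K\right)$ embeds into $E^{\delta}$ as a sublattice. The remaining steps are then routine.
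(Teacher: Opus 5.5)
Your proposal is correct and follows the paper's route. Like the paper, you reduce disjointness to the vanishing of $\left(\nu\left(f\right)+r\right)\left(\nu\left(g\right)+s\right)$ on $\ext\Ba_{F^{*}}$ via the representation in $\Co\left(\mathop{\overline{\ext}}\Ba_{F^{*}}\right)$, then split into cases on $r,s$ using the symmetry $\nu\mapsto-\nu$ and Krein--Milman. Your argument that $E^{\rho}$ and $\Co\left(\mathop{\overline{\ext}}\Ba_{F^{*}}\right)$ are both vector sublattices of $E^{\delta}$, so disjointness can be tested in the latter, spells out a step the paper only asserts.
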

\begin{proof}
Arguing as in the proof of Proposition~\ref{antilattice}, one can show that $\left(f,r\right)\perp \left(g,s\right)$ if and only if either $\nu\left(f\right)=-r$, or $\nu\left(g\right)=-s$, for every $\nu\in \ext\Ba_{F^{*}}$.

Moreover, if $r,s\ne 0$, we are in a similar situation as in that proposition.

Assume that $r\ne 0$ and $s=0$. If $\nu\in \ext\Ba_{F^{*}}$ satisfies $\nu\left(f\right)=-r$, then $-\nu\in \ext\Ba_{F^{*}}$ and $-\nu\left(f\right)=r\ne -r$, and so $-\nu\left(g\right)=0$, or $\nu\left(g\right)=0$. If $\nu\in \ext\Ba_{F^{*}}$ is such that $\nu\left(f\right)\ne-r$, then also $\nu\left(g\right)=0$. Hence, $g=0_{F}$.

Similarly, if $r= 0$ and $s\ne 0$, we obtain that $f=0_{F}$.

If $r=s=0$, it follows that for every $\nu\in \ext\Ba_{F^{*}}$ either $\nu\left(f\right)=0$ or $\nu\left(g\right)=0$, so that $\nu\in f^{\top}\cup g^{\top}$.

Sufficiency of the third condition is proved using the arguments from the proof of Proposition~\ref{antilattice}; sufficiency of the other conditions is clear.
\end{proof}

\begin{corollary}\label{antilattice2}
If $\dim F>1$ and $F^{*}$ is strictly convex, then $\icecream\left(F\right)$ is disjointness-free. Moreover, whenever $P\subseteq \icecream\left(F\right)$ is a finite set such that $\bigvee P$ exists, then this supremum is in fact the maximum, i.e.\ there is $p\in P$ such that $\bigvee P=p$.
\end{corollary}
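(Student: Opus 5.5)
The plan is to reduce everything to the condition of Lemma~\ref{iceable} and to show that, under the hypotheses, no nonzero vector of $F$ satisfies it. Suppose $0_{F}\ne h\in F$ satisfies $\nu\left(h\right)=\pm1$ for every $\nu\in\ext\Ba_{F^{*}}$. Since $F^{*}$ is strictly convex, $\ext\Ba_{F^{*}}=\So_{F^{*}}$, as in the proof of part~(iv) of Proposition~\ref{icecream}. Because $\dim F>1$, the sphere $\So_{F^{*}}$ is path-connected: any two points of it can be joined within the unit sphere of a two-dimensional subspace of $F^{*}$ containing them, which is homeomorphic to a circle. Along such a path the norm-continuous function $\nu\mapsto\nu\left(h\right)$ can take only the values $\pm1$, so it is constant. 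But it is odd in $\nu$, so it takes both values, which is a contradiction. Hence no vector satisfies the conditions of Lemma~\ref{iceable}.

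For disjointness-freeness I then apply the preceding proposition characterizing $\left(f,r\right)\perp\left(g,s\right)$. The third case is now impossible. In the fourth case, $r=s=0$ and $\So_{F^{*}}\subseteq f^{\top}\cup g^{\top}$. If both $f$ and $g$ were nonzero, the annihilators would be proper closed subspaces, and a Baire category argument, or simply the fact that a vector space is not a union of two proper subspaces, shows that the union of two proper subspaces cannot contain the whole sphere. Here I scale to pass from $\So_{F^{*}}$ to all of $F^{*}$. Thus $f=0_{F}$ or $g=0_{F}$, so one of the two vectors is $0$, and the remaining cases say directly that one vector is zero.

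For the second claim, let $P$ be finite with $p_{0}:=\bigvee P$. The supremum in $\icecream\left(F\right)$ coincides with the supremum in the Riesz completion $\Co\left(\mathop{\overline{\ext}}\Ba_{F^{*}}\right)$ by part~(iii) of Proposition~\ref{icecream} and Proposition~\ref{ocp}, so $p_{0}=\max_{p\in P}p$ pointwise there. I proceed by induction on $\left|P\right|$, reducing first to two elements: if $\bigvee P$ exists, I want to show it already equals one of the $p$. For two elements $p,q$ with $p\vee q$ existing in $\icecream\left(F\right)$, the vectors $\left(p\vee q\right)-p$ and $\left(p\vee q\right)-q$ are disjoint in the Riesz completion, since in $\Co\left(K\right)$ we have $\left(a\vee b-a\right)\wedge\left(a\vee b-b\right)=0$. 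By disjointness-freeness one of them is $0$, so $p\vee q\in\left\{p,q\right\}$.

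For a general finite $P$, I would use the same identity in $\Co\left(K\right)$: the functions $p_{0}-p$, $p\in P$, have pointwise minimum $0$. The main obstacle is that disjointness-freeness concerns only pairs. To handle this, I would argue directly via the fact that these functions are affine on $\Ba_{F^{*}}$ with connected $\So_{F^{*}}$. The set $Z_{p}:=\left\{\nu\in\So_{F^{*}}:\left(p_{0}-p\right)\left(\nu\right)=0\right\}$ is the intersection of the sphere with an affine hyperplane, unless $p=p_{0}$, and the finitely many $Z_{p}$ cover $\So_{F^{*}}$. Each $Z_{p}$ is closed and nowhere dense in $\So_{F^{*}}$ when $\dim F>1$, because a hyperplane section of a sphere has empty relative interior. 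Baire's theorem applied to the norm-complete, hence Baire, sphere $\So_{F^{*}}$ (a closed subset of $F^{*}$) then forces some $p_{0}-p$ to vanish identically, which gives $p=p_{0}$.
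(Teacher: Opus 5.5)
\textbf{Assessment.} Your argument for disjointness-freeness is correct and follows the paper's route: no vector satisfies Lemma~\ref{iceable}, and $\So_{F^{*}}$ cannot lie in two annihilators. The path-connectedness detour is not needed. Any $\mu\in\So_{F^{*}}$ with $\mu(h)=0$ exists because $\dim F^{*}\ge 2$, and it already violates $\nu(h)=\pm 1$. Your two-element case of the second claim is also fine. The announced induction is never used, and it would not go through directly, since existence of $\bigvee P$ does not imply existence of suprema of subsets of $P$.

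\textbf{The gap.} It is in the general case. You justify nowhere density of $Z_{p}$ by the assertion that a hyperplane section of a sphere has empty relative interior, and this is false in general normed spaces. For $F^{*}=\ell_{\infty}^{2}$, the section $\{\nu:\nu_{1}=1\}\cap\So_{F^{*}}$ is an edge of the square, which has nonempty relative interior.

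This is exactly the configuration you face. Write $p=(f,r)$ and $p_{0}=(f_{0},r_{0})$. Positivity of $p_{0}-p$ gives $r_{0}-r\ge\|f_{0}-f\|$. So if $p\ne p_{0}$ and $Z_{p}\ne\varnothing$, then $r_{0}-r=\|f_{0}-f\|>0$. In that case $Z_{p}=\Ba_{F^{*}}\cap H$ for the hyperplane $H=\{\nu:\nu(f_{0}-f)=-(r_{0}-r)\}$, which supports $\Ba_{F^{*}}$ and misses the origin. Hence strict convexity of $F^{*}$ has to be used at this step, not only to identify $\ext\Ba_{F^{*}}$ with $\So_{F^{*}}$.

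\textbf{The repair.} It is short and makes Baire's theorem unnecessary. $Z_{p}$ is a convex subset of $\So_{F^{*}}$, being a face of the ball cut out by a supporting hyperplane. Because $F^{*}$ is strictly convex, $Z_{p}$ is therefore at most one point. If $f=f_{0}$ and $r\ne r_{0}$, then $Z_{p}=\varnothing$. Finitely many points cannot cover the infinite set $\So_{F^{*}}$, so $p=p_{0}$ for some $p\in P$. Even in your nowhere-density formulation, Baire is more than you need: a finite union of closed nowhere dense sets is nowhere dense in any space.

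\textbf{Comparison with the paper.} Once repaired, your argument is the paper's. The paper normalizes to $\bigwedge_{k}(f_{k},r_{k})=0$. It then uses that strict convexity of $F^{*}$ gives, for $f\ne 0_{F}$, a unique $\nu_{f}\in\So_{F^{*}}$ with $\nu_{f}(f)=-\|f\|$. Evaluating at any $\nu\in\So_{F^{*}}$ distinct from all the $\nu_{f_{k}}$ makes every $\nu(f_{k})+r_{k}$ strictly positive, which is a contradiction.
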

\begin{proof}
For the first claim, note that if $\dim F>1$ and $F^{*}$ is strictly convex, then no elements of $F$ satisfy the conditions of Lemma~\ref{iceable}, and the extreme points of $\Ba_{F^{*}}$ cannot be covered by two hyperplanes.

In order to prove the second claim, assume that $\left(f_{1},r_{1}\right),\ldots,\left(f_{n},r_{n}\right)\in \icecream\left(F\right)$ are such that $\bigwedge_{k=1}^{n}\left(f_{k},r_{k}\right)=0_{\smallicecream\left(F\right)}$. By the comment before Proposition~\ref{antilattice}, it follows that for every $\nu\in \mathop{\overline{\ext}}\Ba_{F^{*}}$ we have
\[\bigwedge_{k=1}^{n}\left<\left(f_{k},r_{k}\right),\left(\nu,1\right)\right>=\bigwedge_{k=1}^{n}\left(\nu\left(f_{k}\right)+r_{k}\right)=0.\] Since $F^{*}$ is strictly convex, for every $f\in F$ there is a unique $\nu_{f}\in \So_{F^{*}}$ with $\nu_{f}\left(f\right)=-\|f\|$ (combine \cite[Corollaries~7.23(i) and~7.22(ii)]{fhhmz}); in particular, if $\nu_{f}\ne \nu\in\So_{F^{*}}$, then $\|f\|>-\nu\left(f\right)$. Since $\dim F>1$ and $F^{*}$ is strictly convex, we have that $\mathop{\overline{\ext}}\Ba_{F^{*}}\supseteq \So_{F^{*}}$ is infinite. Hence, we can select $\nu\in \So_{F^{*}}$ such that $\nu\ne\nu_{f_{k}}$, for $k=1,\ldots,n$. Then, for every $k=1,\ldots,n$ we have that $\left(f_{k},r_{k}\right)\ge 0_{\smallicecream\left(F\right)}$, thus $r_{k}\ge \|f_{k}\|>-\nu\left(f_{k}\right)$. This implies $\bigwedge_{k=1}^{n}\left(\nu\left(f_{k}\right)+r_{k}\right)>0$, contradiction.
\end{proof}

\begin{remark}
In the case when $\dim F=\8$, the proof of the last claim in Corollary~\ref{antilattice2} is simplified: for every $\nu\in \mathop{\overline{\ext}}\Ba_{F^{*}}=\Ba_{F^{*}}$ we have $\bigwedge_{\left(f,r\right)\in P}\left<\left(f,r\right),\left(\nu,1\right)\right>=0$, and so taking $\nu=0_{F^{*}}$ yields $\bigwedge_{\left(f,r\right)\in P}r=0$. Hence, there is $\left(f,0\right)\in P$. Since $P\subseteq \icecream\left(F\right)_{+}$, it follows that $f=0_{F}$.
\qed\end{remark}

\subsection{Atomic vs strongly atomic spaces}\label{iceatm}

In this subsection, we discuss the possibility of extending Theorem~\ref{ordprec} beyond pervasive ordered vector spaces. Namely, we study the relation between order pre-compactness of order intervals and atomicity. Recall from Theorem~\ref{ordprec} that atomic pervasive spaces have order intervals which are pre-compact topological spaces with respect to order convergence. The next example shows that this is, in general, false in non-pervasive atomic spaces.

\begin{example}\label{watom}
Let $F:=\Co\left[0,1\right]\oplus c$, and let $E:=\left\{f\oplus g\in F: \int f=\lim_{\8} g\right\}$. Then, $E$ is Archimedean, and it is also directed, since it has an order unit. Let us show that $E$ is atomic. Let $\left\{e_{n}\right\}_{n\in\N}$ be the standard basis of $c_{00}\subseteq c$. Clearly, $\0\oplus e_{n}$ is an atom in $E$, for every $n\in\N$. Suppose $f\oplus g>\0\oplus\0$. Then, either $g>\0$, or $f>\0$, but in the latter case, $\lim_{\8} g=\int f>0$, and so $g>\0$. Hence, there is $n\in\N$ such that $g\left(n\right)>0$, thus $g\ge g\left(n\right)\left(\0\oplus e_{n}\right)$.

Let us show that $\1\oplus\1$ is not a supremum of a collection of atoms. To this end, note that the atoms in $\left[\0\oplus\0,\1\oplus\1\right]$ are of the form $\0\oplus re_{n}$, $r\in\left(0,1\right]$, $n\in\N$. Let $P$ be the set of these atoms. Then, $P^{\uparrow}=\left\{f\oplus g\in E_{+}: g\ge\1\right\}$. It is easy to see that $P^{\uparrow}$ does not have a least element, and so $P$ does not have a supremum.

Let us show that the interval $\left[\0\oplus\0,\1\oplus\1\right]$ is not order pre-compact. We construct a net $\left(f_{\gamma}\right)_{\gamma\in\Gamma}$ in $\Co\left[0,1\right]$ as in the proof of Proposition~\ref{pervnoc}, taking $Z:=\Q$. For every $\gamma\in\Gamma$ let $g_{\gamma}:=\left(\int f_{\gamma}\right)\1$. Using the same arguments one can show that if $f\oplus g$ is an eventual upper (resp. lower) bound of any subnet of the net $\left(f_{\gamma}\oplus g_{\gamma}\right)_{\gamma\in\Gamma}$, then $f\ge\1$ (resp. $f\le\0$). Consequently, no subnet of this net is order Cauchy. Starting with the net constructed in the proof of (iv)$\Rightarrow$(v) in Theorem~\ref{ordprec}, and selecting a subnet such that $\int f_{\gamma}\to r$, for some $r\in\left[0,1\right]$, shows that order convergence in $\left[\0\oplus\0,\1\oplus\1\right]$ does not have the Choquet property, hence is not topological.
\qed\end{example}

In an attempt to rescue the situation, we say that $e\in E_{+}$ is \emph{strongly atomic} if it is a supremum of a collection of atoms in $E_{+}$ (we also agree to count $0_{E}$ as strongly atomic), and call $E$ \emph{strongly atomic} if every element of $E_{+}$ is strongly atomic. Clearly, strong atomicity implies atomicity, and the converse is true if $E$ is pervasive and Archimedean, by Corollary~\ref{pervatomic} and Theorem~\ref{pervasive}. If $E$ is strongly atomic, and $F\subseteq E$ contains all atoms in $E$, then $F$ is strongly atomic. The space constructed in Example~\ref{watom} is atomic but not strongly atomic. It is easy to see that the supremum of a collection of strongly atomic elements of $E$ is strongly atomic. Strongly atomic elements are preserved by linear order isomorphisms, in particular positive scalar multiplication. We will see in Example~\ref{findimnot} that atomicity together with order compactness of order intervals does not imply strong atomicity. Example~\ref{smoothre} and the discussion before it refute that order compactness of order intervals implies atomicity. By Example~\ref{poly}, directed order pre-compactness of order intervals does not even imply existence of an atom, while in Example~\ref{megaice} we construct an order unit space with directed order compact intervals, which fails to be atomic. Example~\ref{noocomp} exhibits a strongly atomic space whose order intervals fail to be even order pre-compact.

This leaves us with the following problem.

\begin{question}
Does directed order compactness of order intervals imply existence of an atom?
\qed\end{question}

We now discuss atomicity of the ice cream spaces. Until the end of the section, we work with $\icecream\left(F\right)$, where $F$ is a normed space. We start with some observations.

\begin{lemma}\label{corresp}
There is an order isomorphism between the collection of faces of $\Ba_{F}$ and the collection of nonempty faces of $\icecream\left(F\right)_{+}$. It is given by
\[A\mapsto C_{A}:=\left\{r\left(f,1\right): f\in A,~r>0\right\}\cup\left\{0_{\smallicecream\left(F\right)}\right\},\]
and its inverse is given by $C\mapsto A_{C}:=\left\{f\in\Ba_{F}:\left(f,1\right)\in C\right\}$.
\end{lemma}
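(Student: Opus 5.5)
The plan is to verify directly that both maps are well defined, that they are mutually inverse, and that they preserve inclusion. The correspondence should be understood between faces of $\Ba_{F}$, including $\varnothing$, and nonempty faces of $\icecream\left(F\right)_{+}$; the empty face goes to $\left\{0_{\smallicecream\left(F\right)}\right\}$. Monotonicity of $A\mapsto C_{A}$ and $C\mapsto A_{C}$ is obvious from the formulas, so once bijectivity is established we have an order isomorphism.

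First I show that $C_{A}$ is a face of the cone $\icecream\left(F\right)_{+}$ for a face $A$ of $\Ba_{F}$. It is clearly a cone contained in the positive cone, since $\|f\|\le1$ for $f\in A$. By Proposition~\ref{face} it suffices to check condition (ii) there. Suppose $\left(g,s\right),\left(h,t\right)\ge 0$ and $\left(g+h,s+t\right)\in C_{A}$. If the sum is $0$, then $s=t=0$, which forces $g=h=0_{F}$ and both summands lie in $C_{A}$. Otherwise $r:=s+t>0$ and $\frac1r\left(g+h\right)\in A$. If, say, $t=0$, then $h=0_{F}$, and we are done. If $s,t>0$, then
\[\tfrac1r\left(g+h\right)=\tfrac sr\cdot\tfrac gs+\tfrac tr\cdot\tfrac ht,\qquad \tfrac gs,\tfrac ht\in\Ba_{F},\]
is a proper convex combination lying in $A$. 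The face property of $A$ gives $\frac gs,\frac ht\in A$, so both summands are in $C_{A}$.

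Conversely, let $C$ be a nonempty face of $\icecream\left(F\right)_{+}$; it is a cone and contains $0$. To see that $A_{C}$ is a face of $\Ba_{F}$, take $f,g\in\Ba_{F}$ and $\lambda\in\left(0,1\right)$ with $\lambda f+\left(1-\lambda\right)g\in A_{C}$. Then
\[\left(\lambda f+\left(1-\lambda\right)g,1\right)=\lambda\left(f,1\right)+\left(1-\lambda\right)\left(g,1\right)\in C\]
with both terms positive. Since $C$ is a face of the positive cone, $\left(f,1\right),\left(g,1\right)\in C$. This also covers $A_{C}$ being convex.

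Finally I check that the maps are mutually inverse. The identity $A_{C_{A}}=A$ is immediate, because $\left(f,1\right)=r\left(g,1\right)$ forces $r=1$ and $f=g$. For $C_{A_{C}}=C$, the inclusion $\subseteq$ holds since $C$ is a cone containing $0$. For $\supseteq$, take $0\ne\left(g,s\right)\in C$; positivity gives $\|g\|\le s$, and $s>0$ since $s=0$ would force $g=0_{F}$. Then $\left(g/s,1\right)\in C$ because $C$ is a cone, so $g/s\in A_{C}$ and $\left(g,s\right)=s\left(g/s,1\right)\in C_{A_{C}}$.

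No step seems a real obstacle. The only delicate points are the degenerate cases where $s$ or $t$ vanishes, which are handled by the observation that $\left(h,0\right)\ge0$ forces $h=0_{F}$, and the convention for the empty face.
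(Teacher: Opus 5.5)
Your proposal is correct and follows essentially the same route as the paper: you verify condition~(ii) of Proposition~\ref{face} for $C_{A}$ via the decomposition $\frac{1}{s+t}\left(g+h\right)=\frac{s}{s+t}\frac{g}{s}+\frac{t}{s+t}\frac{h}{t}$, check the face property of $A_{C}$ directly, and then confirm that the two maps are mutually inverse and monotone. You also work out the degenerate cases $s=0$ or $t=0$ and the convention for the empty face, which the paper leaves to the reader.
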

\begin{proof}
It is easy to see that if $C$ is a nonempty face of $\icecream\left(F\right)_{+}$, then $A_{C}$ is a face of $\Ba_{F}$.

Suppose now that $A\subseteq\Ba_{F}$ is a face. We will deduce from Proposition~\ref{face} that $C_{A}$ is a nonempty face of $\icecream\left(F\right)_{+}$. Indeed, if $g,h\in F$, $s\ge\|g\|$, and $t\ge\|h\|$ are such that $\left(g,s\right)+\left(h,t\right)\in C_{A}$, then $A\ni\frac{1}{s+t}\left(g+h\right)=\frac{s}{s+t}\frac{g}{s}+\frac{t}{s+t}\frac{h}{t}$ implies that $\frac{g}{s}\in A$, and thus $\left(g,s\right)=s\left(\frac{g}{s},1\right)\in C_{A}$ (we leave the case when $s$ or $t$ is $0$ to the reader).

It is also straightforward to check that the introduced correspondences are mutually inverse and order preserving.
\end{proof}

\begin{lemma}\label{ineq}
Let $f,g\in F$ and $t\in\left[0,1\right]$. Then, $\left(f,1\right)\ge t\left(g,1\right)$ if and only if there is $h\in\Ba_{F}$ such that $f=tg+\left(1-t\right)h$.
\end{lemma}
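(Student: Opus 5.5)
The argument will be a direct computation with the definition of the ice cream cone. By definition, $\left(f,1\right)\ge t\left(g,1\right)$ means $\left(f-tg,1-t\right)\in\icecream\left(F\right)_{+}$, that is,
\[\|f-tg\|\le 1-t.\]
So the statement reduces to showing that this inequality holds if and only if $f-tg=\left(1-t\right)h$ for some $h\in\Ba_{F}$.

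For sufficiency, assume $f=tg+\left(1-t\right)h$ with $\|h\|\le 1$. Then $\|f-tg\|=\left(1-t\right)\|h\|\le 1-t$, which is exactly $\left(f,1\right)-t\left(g,1\right)\in\icecream\left(F\right)_{+}$.

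For necessity, assume $\|f-tg\|\le 1-t$ and split into two cases.
\begin{itemize}
\item If $t<1$, put $h:=\frac{1}{1-t}\left(f-tg\right)$. Then $\|h\|\le 1$ and $f=tg+\left(1-t\right)h$.
\item If $t=1$, the inequality gives $\|f-g\|\le 0$, so $f=g$. Then any $h\in\Ba_{F}$ works, for instance $h=0_{F}$, since $\left(1-t\right)h=0_{F}$.
\end{itemize}

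The only step needing care is the degenerate case $t=1$, where we cannot divide by $1-t$ and instead take $h$ arbitrary. Note also that the lemma places no norm restriction on $f$ or $g$ themselves: membership of $h$ in $\Ba_{F}$ carries the entire positivity condition.
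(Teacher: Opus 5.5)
Your proposal is correct and follows essentially the same route as the paper: both reduce $\left(f,1\right)\ge t\left(g,1\right)$ to $\|f-tg\|\le 1-t$ and take $h:=\frac{f-tg}{1-t}$ when $t<1$. The only difference is that you work out the degenerate case $t=1$ explicitly, whereas the paper just calls it trivial.
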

\begin{proof}
The case $t=1$ is trivial, and so we assume that $t<1$. If $f=tg+\left(1-t\right)h$, for some $h\in\Ba_{F}$, then $\|f-tg\|=\left(1-t\right)\|h\|\le 1-t$, and so $\left(f-tg,1-t\right)\ge 0_{\smallicecream\left(F\right)}$. Conversely, if $\left(f,1\right)\ge t\left(g,1\right)$, then $\|f-tg\|\le 1-t$, or $h:=\frac{f-tg}{1-t}\in\Ba_{F}$; we then have $f=tg+\left(1-t\right)h$.
\end{proof}

\begin{corollary}\label{atom}
Atoms in $\icecream\left(F\right)$ are of the form $r\left(g,1\right)$, where $g\in\ext\Ba_{F}$, and $r>0$.
\end{corollary}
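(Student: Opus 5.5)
The plan is to combine Corollary~\ref{atmo}(i) with Lemma~\ref{corresp}. By Corollary~\ref{atmo}(i), a vector $e>0_{\smallicecream\left(F\right)}$ is an atom exactly when $\left[0,+\8\right)e$ is an extreme ray, that is, a face of $\icecream\left(F\right)_{+}$. Since $e=(g,r)>0$ forces $r\ge\|g\|$, and $r=0$ would give $g=0_{F}$, we have $r>0$. So we may write $e=r\left(g',1\right)$ with $g':=g/r\in\Ba_{F}$. Positive scalar multiples of atoms are atoms, so it suffices to show that $(g',1)$ is an atom if and only if $g'\in\ext\Ba_{F}$.

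Next we observe that the ray $\left[0,+\8\right)(g',1)$ is exactly $C_{A}$ for $A=\{g'\}$, in the notation of Lemma~\ref{corresp}. Extreme points are precisely the faces of cardinality one. Hence, if $g'\in\ext\Ba_{F}$, then $\{g'\}$ is a face of $\Ba_{F}$, and Lemma~\ref{corresp} makes $C_{\{g'\}}$ a face of $\icecream\left(F\right)_{+}$. This ray is therefore extreme, and $(g',1)$ is an atom.

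Conversely, suppose the ray $C:=\left[0,+\8\right)(g',1)$ is a face of $\icecream\left(F\right)_{+}$. Then $A_{C}=\{f\in\Ba_{F}:(f,1)\in C\}=\{g'\}$, since $(f,1)=s(g',1)$ forces $s=1$ and $f=g'$. By Lemma~\ref{corresp}, $A_{C}$ is a face of $\Ba_{F}$, so $g'$ is extreme.

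One could alternatively argue directly with Lemma~\ref{ineq}: $[0,(g',1)]\subseteq\R(g',1)$ translates into the statement that whenever $g'=\frac12 h_{1}+\frac12 h_{2}$ with $h_{i}\in\Ba_{F}$, each $\frac12(h_{i},1)$ lies below $(g',1)$ and hence is proportional to it. That forces $h_{i}=g'$, and the converse direction is handled the same way. The only mildly delicate point is the bookkeeping that the ray corresponds to the singleton face, including the zero vector in $C_{A}$. No real obstacle is expected.
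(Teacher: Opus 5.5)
Your proof is correct and follows essentially the route the paper intends: the corollary is stated without a separate proof as an immediate consequence of Lemma~\ref{corresp}, which matches extreme points of $\Ba_{F}$ with extreme rays of $\icecream\left(F\right)_{+}$ (this is spelled out in the proof of Proposition~\ref{atm}), combined with part~(i) of Corollary~\ref{atmo}. Your reduction to $r>0$ and your identification of the ray with $C_{\left\{g'\right\}}$ supply exactly the details the paper leaves implicit.
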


\begin{proposition}\label{atm}The following conditions are equivalent:
\item[(i)] $\icecream\left(F\right)$ is atomic;
\item[(ii)] Every nonempty face of $\Ba_{F}$ contains an extreme point;
\item[(iii)] Every $f\in \Ba_{F}$ can be written as $f=tg+\left(1-t\right)h$, where $t\in\left(0,1\right]$, $g\in\ext\Ba_{F}$ and $h\in \Ba_{F}$;
\item[(iv)] Every $f\in \So_{F}$ can be written as $f=tg+\left(1-t\right)h$, where $t\in\left(0,1\right]$, $g\in\ext\Ba_{F}$ and $h\in \Ba_{F}$.\medskip

In particular, if $\dim F<\8$, then $\icecream\left(F\right)$ is atomic.
\end{proposition}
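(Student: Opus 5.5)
The plan is to prove the cycle (i)$\Leftrightarrow$(ii), (ii)$\Rightarrow$(iii)$\Rightarrow$(iv)$\Rightarrow$(ii), using the face correspondence of Lemma~\ref{corresp} and the order inequality of Lemma~\ref{ineq}.

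\emph{The equivalence (i)$\Leftrightarrow$(ii).} By part~(ii) of Corollary~\ref{atmo}, $\icecream\left(F\right)$ is atomic if and only if every nonempty nonzero face of $\icecream\left(F\right)_{+}$ contains an extreme ray. Lemma~\ref{corresp} is an order isomorphism, so it sends the minimal nonzero faces to each other. Under it, the faces $C_{A}$ with $A\ne\varnothing$ are exactly the nonzero faces, since $C_{\varnothing}=\left\{0\right\}$. Extreme rays $[0,\8)(g,1)$ correspond to singleton faces $\left\{g\right\}$, i.e.\ to extreme points $g$; this is consistent with Corollary~\ref{atom}. Since the correspondence preserves inclusion, $C_{A}$ contains an extreme ray if and only if $A$ contains an extreme point, which gives (i)$\Leftrightarrow$(ii).

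\emph{The implication (ii)$\Rightarrow$(iii).} Given $f\in\Ba_{F}$, consider the face $C$ of $\icecream\left(F\right)_{+}$ generated by $(f,1)$, namely $\bigcup_{n}[0,n(f,1)]$. It is a face by Proposition~\ref{face}. It is nonzero, so by (ii) (equivalently (i)) it contains an atom $r(g,1)$ with $g\in\ext\Ba_{F}$, and we get $r(g,1)\le n(f,1)$ for some $n$. Put $t:=r/n$. If $t\le1$, Lemma~\ref{ineq} yields $f=tg+(1-t)h$ with $h\in\Ba_{F}$. The inequality itself forces $t\le1$: comparing second coordinates, $n-r\ge\|nf-rg\|\ge0$.

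\emph{The implications (iii)$\Rightarrow$(iv)$\Rightarrow$(ii).} (iii)$\Rightarrow$(iv) is trivial. For (iv)$\Rightarrow$(ii), take a nonempty face $A$ of $\Ba_{F}$. If $A=\Ba_{F}$, choose any point of $\So_{F}$ in $A$; we may assume $\dim F>0$, otherwise $0$ is extreme. If $A\ne\Ba_{F}$, then $A\subseteq\So_{F}$, because any proper face avoids the algebraic interior and so every point of $A$ has norm $1$. Now take $f\in A\cap\So_{F}$ and write $f=tg+(1-t)h$ with $t>0$. If $t=1$, then $f=g$ is extreme. Otherwise the face property puts $g$ (and $h$) in $A$. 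Either way $A$ contains an extreme point.

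\emph{The finite-dimensional case.} If $\dim F<\8$, every nonempty face of $\Ba_{F}$ is a compact convex set, because faces of a finite-dimensional compact convex set are closed. By Krein--Milman it has an extreme point, and extreme points of a face are extreme in $\Ba_{F}$. Closedness of faces in finite dimensions can be argued by induction on dimension via minimal faces, or by noting that the minimal face containing a point is its relative-interior face.

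The step I expect to be the main obstacle is the bookkeeping in (i)$\Leftrightarrow$(ii) between extreme rays and extreme points, together with checking that a proper face of $\Ba_{F}$ lies in $\So_{F}$: a point $f$ with $\|f\|<1$ is an interior point of a segment in every direction, so any face containing it is all of $\Ba_{F}$.
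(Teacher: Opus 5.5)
Your proof is correct, and it differs from the paper mainly in how you return from the sphere to the whole ball.

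\emph{What matches the paper.} The equivalence (i)$\Leftrightarrow$(ii) goes through Lemma~\ref{corresp} and Corollary~\ref{atmo}, as in the paper. The finite-dimensional claim uses closedness of faces in $\R^{n}$ plus Krein--Milman, also as in the paper. There the paper simply cites \cite[Corollary~18.1.1]{rockafellar}, while your sketch of closedness is only gestured at. The standard argument is short: if $A$ is a face of a closed convex $C$, $x\in C\cap\overline{A}$ and $z$ lies in the relative interior of $A$, then $z$ is an interior point of a segment joining $x$ to a point of $A$, so $x\in A$.

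Your (ii)$\Rightarrow$(iii) is essentially the paper's (i)$\Rightarrow$(iii), just routed through the face generated by $(f,1)$. The paper uses atomicity directly to find an atom below $(f,1)$, then applies Corollary~\ref{atom} and Lemma~\ref{ineq}. Read backwards, the same argument gives (iii)$\Rightarrow$(i).

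\emph{The genuine difference.} The paper proves (iv)$\Rightarrow$(iii) by rescaling. It writes $f/\|f\|=tg+(1-t)h$ and then $f=\|f\|t\,g+(1-\|f\|t)e$ for an explicit $e\in\Ba_{F}$. You instead prove (iv)$\Rightarrow$(ii) from two observations:
\begin{itemize}
\item every proper face of $\Ba_{F}$ lies in $\So_{F}$;
\item a decomposition $f=tg+(1-t)h$ with $t\in(0,1)$ forces the extreme point $g$ into any face containing $f$.
\end{itemize}

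\emph{What each route buys.} Yours is more conceptual: it explains why only points of the sphere matter, and it closes the cycle without a separate (iii)$\Rightarrow$(i). The paper's is a two-line computation that stays entirely in the decomposition formulation and needs no knowledge of the facial structure of the ball.
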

\begin{proof}
(i)$\Leftrightarrow$(ii): The order isomorphism between the collection of faces of $\Ba_{F}$ and the collection of nonempty faces of $\icecream\left(F\right)_{+}$ considered in Lemma~\ref{corresp} sets extreme points of $\Ba_{F}$ in correspondence with extreme rays of $\icecream\left(F\right)_{+}$. Hence, the equivalence follows from part~(ii) of Corollary~\ref{atmo}.

(i)$\Leftrightarrow$(iii): According to Corollary~\ref{atom}, $\icecream\left(F\right)$ is atomic if and only if for every $f\in\Ba_{F}$ there are $g\in\ext\Ba_{F}$ and $t>0$ such that $\left(f,1\right)\ge t\left(g,1\right)$, which by Lemma~\ref{ineq} is equivalent to $f=tg+\left(1-t\right)h$, for some $h\in\Ba_{F}$.

(iii)$\Rightarrow$(iv) is trivial. To get the converse, we may assume that $r:=\|f\|\in\left(0,1\right)$, and find $t\in\left(0,1\right]$, $h\in \Ba_{F}$ and $g\in\ext\Ba_{F}$ such that $\frac{1}{r}f=tg+\left(1-t\right)h$. Then, $f=rtg+r\left(1-t\right)h=rtg+\left(1-rt\right)e$, where $e=\frac{r\left(1-t\right)}{1-rt}h$. As $r\left(1-t\right)\le 1-rt$, we conclude that $e\in\Ba_{F}$.

The last claim follows from (ii), the Krein--Milman theorem and the fact that every face of a convex set in $\R^{n}$ is closed (see \cite[Corollary~18.1.1]{rockafellar}).
\end{proof}

It follows from part~(i) of Proposition~\ref{icedo} that directed order pre-compactness of order intervals in $\icecream\left(F\right)$ implies that $\dim F<\8$, which yields atomicity of $\icecream\left(F\right)$ by Proposition~\ref{atm}.

\begin{remark}
It was proved in \cite[Theorem~1]{als} that if $F$ is a Banach space, then the conditions of Proposition~\ref{atm} are also equivalent to the fact that for every $f\in \Ba_{F}$ there are $\left(t_{n}\right)_{n\in\N}\subseteq\left[0,1\right]$ and $\left(g_{n}\right)_{n\in\N}\subseteq\ext\Ba_{F}$ such that $\sum_{n\in\N}t_{n}=1$ and $\sum_{n\in\N}t_{n}g_{n}=f$. Note that this property is strictly weaker than the Krein--Milman property (see \cite[Remark~3.4]{als}).
\qed\end{remark}

According to part~(iv) of Theorem~\ref{iceord} and Proposition~\ref{atm}, if $F$ is a reflexive separable Banach space whose unit ball has a face without extreme points, then $\icecream\left(F\right)$ is an example of an order unit space which is not atomic, but whose order intervals are compact metrizable spaces with respect to order convergence. We now present a construction of such an $F$. We even make it so that $F^{*}$ is LUR.

\begin{example}\label{smoothre}
For $n\in\N$ and $p\ge 1$ let $\Ba_{p}^{n}$ be the unit ball in $\R^{n}$ with respect to the $\ell_{p}$-norm. Let $u_{n}:=\left(\frac{1}{n},\dots,\frac{1}{n}\right)\in\Ba_{1}^{n}\cap \frac{1}{\sqrt{n}}\Ba_{2}^{n}$.

Fix $n\in\N$. Let $F_{n}$ be $\R^{n}$ endowed with the norm whose unit ball is $\Ba_{F_{n}}:=\Ba_{1}^{n}+\frac{1}{\sqrt{n}}\Ba_{2}^{n}$, which is a compact convex body. If $f\in\partial\Ba_{F_{n}}$, then it must be on the $\ell_{2}$-sphere centered at some $e\in \partial\Ba_{1}^{n}$ of radius $\frac{1}{\sqrt{n}}$. Any supporting hyperplane to $\Ba_{F_{n}}$ through $f$ cannot intersect this sphere in any other point, which means that it is the tangent hyperplane to the sphere, ensuring its uniqueness. Hence, the support hyperplane to $\Ba_{F_{n}}$ is unique at each boundary point, and so $F_{n}$ is a smooth normed space. Then, $F_{n}^{*}$ is strictly convex, hence LUR (combine reflexivity of $F_{n}$, \cite[Corollary~7.23(ii), Exercise~9.1 and Fact~9.5]{fhhmz}).

Let $\nu_{n}\colon\R^{n}\to\R$ be the linear functional defined by $\nu_{n}\left(r_{1},\dots,r_{n}\right):=r_{1}+\dots+r_{n}$. By the Cauchy--Schwarz inequality, the maximal value of $\nu_{n}$ on $\Ba_{F_{n}}$ is $2$, and so
\[A_{n}:=\nu_{n}^{-1}\left(2\right)\cap \Ba_{F_{n}}=u_{n}+\left\{\left(r_{1},\dots,r_{n}\right): r_{1},\dots,r_{n}\ge 0,~ r_{1}+\dots+r_{n}=1\right\}\]
is a face of $\Ba_{F_{n}}$. The extreme points of $A_{n}$ are of the form $u_{n}+e_{k}^{n}$, $k=1,\dots,n$, where $e_{1}^{n},\dots,e_{n}^{n}$ is the standard basis for $\R^{n}$. It follows that if $f\in\ext A_{n}$, then $\|f\|_{\ell_{\8}}=1+\frac{1}{n}$. Also, note that $\|u_{n}\|_{\ell_{\8}}=\frac{1}{n}$ and $2u_{n}\in A_{n}$.

Let $F$ be the $\ell_{2}$-sum of the $F_{n}$'s, over $n\in\N$. It is easy to see that $F$ is reflexive and separable, and moreover $F^{*}$ is LUR (combine \cite[Exercises~2.32 and~8.4]{fhhmz}). Fix any $\left(a_{n}\right)_{n\in\N} \in \So_{\ell_{2}}$ with $a_{n} > 0$ for all $n$. Define
$A:= \left\{\left(a_{n}f_{n}\right)_{n\in\N} \in F: \forall n, f_{n}\in A_{n}\right\}$ and $A':=\left\{\left(a_{n}f_{n}\right)_{n\in\N} \in A: \|f_{n}\|_{\ell_{\8}}\to 0\right\}$.

It is clear that $A$ is a convex subset of $\So_{F}$. Let us show that it is a face of $\Ba_{F}$. Assume that $\left(e_{n}\right)_{n\in\N}, \left(h_{n}\right)_{n\in\N}\in\So_{F}$ and $t\in\left(0,1\right)$ are such that $\left(te_{n}+\left(1-t\right)h_{n}\right)_{n\in\N}\in A$. Then, for every $n\in\N$ there is $f_{n}\in A_{n}$ such that $te_{n}+\left(1-t\right)h_{n}=a_{n}f_{n}$. We have that
\begin{align*}
1&=\sum\limits_{n\in\N}\left\|te_{n}+\left(1-t\right)h_{n}\right\|^{2}_{F_{n}}\le \sum\limits_{n\in\N}\left(t\|e_{n}\|_{F_{n}}+\left(1-t\right)\|h_{n}\|_{F_{n}}\right)^{2} \\
&\le t\sum\limits_{n\in\N}\|e_{n}\|^{2}_{F_{n}}+\left(1-t\right)\sum\limits_{n\in\N}\|h_{n}\|^{2}_{F_{n}}=t+1-t=1.
\end{align*}
Hence, both inequalities in the above formula are in fact equalities. Due to strict convexity of the $\ell_{2}$-norm, the second of these equalities yields $\left(\|e_{n}\|\right)_{n\in\N}=\left(\|h_{n}\|\right)_{n\in\N}$. Therefore, the first equality now becomes $a_{n}=\|a_{n}f_{n}\|=\left\|te_{n}+\left(1-t\right)h_{n}\right\|=\|e_{n}\|=\|h_{n}\|$, for every $n\in\N$. It follows that $\frac{1}{a_{n}}e_{n},\frac{1}{a_{n}}h_{n}\in\So_{F_{n}}$ satisfy $\frac{t}{a_{n}}e_{n}+\frac{1-t}{a_{n}}h_{n}\in A_{n}$, which implies $\frac{1}{a_{n}}e_{n},\frac{1}{a_{n}}h_{n}\in A_{n}$, for every $n\in\N$. We conclude that $\left(e_{n}\right)_{n\in\N}, \left(h_{n}\right)_{n\in\N}\in A$.

It is also clear that $A'$ is convex, and it is nonempty as $\left(2a_{n}u_{n}\right)_{n\in\N}\in A'$. Let us show that it is a face of $A$, and hence a face of $\Ba_{F}$. Suppose that $\left(e_{n}\right)_{n\in\N}\in A\setminus A'$, $\left(h_{n}\right)_{n\in\N}\in A$ and $t\in\left(0,1\right)$. There are $\varepsilon>0$ and an increasing sequence $\left(n_{k}\right)_{k\in\N}$ with $\|e_{n_{k}}\|_{\ell_{\8}}\ge\varepsilon$, for every $k\in\N$. Then, $te_{n_{k}}+\left(1-t\right)h_{n_{k}}\ge te_{n_{k}}$, hence
\[\left\|te_{n_{k}}+\left(1-t\right)h_{n_{k}}\right\|_{\ell_{\8}}\ge \|te_{n_{k}}\|_{\ell_{\8}}\ge t\varepsilon,\quad\text{for every }k\in\N.\] Thus, $\lim_{k \to \infty}\left\|te_{n_{k}}+\left(1-t\right)h_{n_{k}}\right\|_{\ell_{\8}}\ne 0$, and therefore $\left(te_{n}+\left(1-t\right)h_{n}\right)_{n\in\N}\notin A'$.

To show that $A'$ has no extreme points, it is enough to prove that all extreme points of $A$ lie outside of $A'$. We have $\ext A= \left\{\left(a_{n}f_{n}\right)_{n\in\N} \in F: \forall n, f_{n}\in \ext A_{n}\right\}$. As $f_{n}\in \ext A_{n}$ implies $\|f_{n}\|=1+\frac{1}{n}\not\to 0$, we conclude that $\ext A\cap A'=\varnothing$.
\qed\end{example}

Before we characterize strong atomicity of an ice cream space in a similar way to Proposition~\ref{atm}, we present another order-theoretic property of such spaces.

\begin{proposition}\label{iceprop}
If $\dim F>0$, then $\partial \icecream\left(F\right)_{+}$ is supremum-dense in $\icecream\left(F\right)_{+}$. That is, every element in $\icecream\left(F\right)_{+}$ is the supremum in $\icecream\left(F\right)$ of a subset of $\partial \icecream\left(F\right)_{+}$.
\end{proposition}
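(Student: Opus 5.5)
The plan is to work in the representation from part~(iii) of Proposition~\ref{icecream}. There $\icecream\left(F\right)$ sits as a supremum-dense subspace of $H:=\Co\left(K\right)$ with $K:=\overline{\ext}\Ba_{F^{*}}$. By Proposition~\ref{ocp}, a subset of $\icecream\left(F\right)$ has supremum $e$ in $\icecream\left(F\right)$ as soon as it has supremum $e$ in $H$. Since $H$ is a vector lattice of continuous functions, it suffices to exhibit a family $P\subseteq\partial\icecream\left(F\right)_{+}$ with $P\le e$ whose pointwise supremum equals $e$ on a dense subset of $K$. In fact it is simplest to get equality at every point of $K$, which makes $e$ the least upper bound in $H$.

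Fix $e=\left(f,r\right)$ with $\|f\|\le r$. The case $e\in\partial\icecream\left(F\right)_{+}$ is trivial, so assume $\|f\|<r$. For $g\in F$ the element $\left(g,\|g\|\right)$ lies in $\partial\icecream\left(F\right)_{+}$. I will take
\[P:=\left\{\left(g,\|g\|\right):\left(g,\|g\|\right)\le\left(f,r\right)\right\},\]
where the constraint means $\|f-g\|\le r-\|g\|$. For $\nu\in K$ the value of $\left(g,s\right)$ at $\nu$ is $\nu\left(g\right)+s$. Given $\nu\in K$, so that $\|\nu\|=1$ because extreme points of $\Ba_{F^{*}}$ lie on the sphere and the sphere is w$^{*}$-closed under... (rather, I only need $\|\nu\|\le1$ together with a near-norming vector), I want some $g$ with $\left(g,\|g\|\right)\le e$ and $\nu\left(g\right)+\|g\|$ close to $\nu\left(f\right)+r$.

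The natural choice is $g:=f+tx$, with $x\in\So_{F}$ satisfying $\nu\left(x\right)\approx\|\nu\|$ and $t\ge0$ chosen so that $\|g\|+\|f-g\|=r$. That is, $\|f+tx\|+t=r$; such a $t$ exists by continuity, since the left-hand side is $\|f\|<r$ at $t=0$ and tends to $\infty$. Then the value at $\nu$ is
\[\nu\left(f\right)+t\nu\left(x\right)+r-t,\]
which is within $t\left(1-\nu\left(x\right)\right)\le 2r\left(1-\nu\left(x\right)\right)$ of $\nu\left(f\right)+r$ when $\|\nu\|=1$.

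The main obstacle is the set of points $\nu\in K$ with $\|\nu\|<1$, which can occur in the w$^{*}$-closure, for instance $0_{F^{*}}$ when $\dim F=\8$. There the estimate fails. The remedy is density: $\ext\Ba_{F^{*}}\subseteq\So_{F^{*}}$ is dense in $K$, and $\dim F>0$ guarantees that it is nonempty. Hence $\sup P=e$ on a dense subset of $K$. Now take any $h\in H$ with $h\ge P$. Then $h\ge e$ on that dense set, so $h\ge e$ everywhere by continuity. Therefore $e=\bigvee_{H}P$, and by Proposition~\ref{ocp} we get $e=\bigvee P$ in $\icecream\left(F\right)$.
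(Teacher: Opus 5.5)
Your argument is correct, but it takes a different route from the paper's.

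\textbf{The paper's route.} The paper stays inside $\icecream\left(F\right)$. After normalizing to $\left(f,1\right)$, it takes an arbitrary upper bound $\left(u,s\right)$ of the set of boundary elements below $\left(f,1\right)$. If $u\ne f$, it slides from $f$ away from $u$ to the point $g=f+s_{0}\left(f-u\right)$ with $\|g\|+\|f-g\|=1$. Because $u$, $f$, $g$ are collinear, the single inequality $\left(g,\|g\|\right)\le\left(u,s\right)$ reduces to $\|u-f\|\le s-1$. The degenerate cases $u=f\ne 0_{F}$ and $u=f=0_{F}$ are treated separately, and the last one is where $\dim F>0$ is used.

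\textbf{Your route.} You use the same sliding device, but you choose the direction from a state $\nu$ (via a near-norming $x\in\So_{F}$) rather than from the upper bound. You then argue through the representation in part~(iii) of Proposition~\ref{icecream}, together with the $\mathrm{w}^{*}$-density of $\ext\Ba_{F^{*}}$ in $\overline{\ext}\Ba_{F^{*}}$.

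\textbf{Comparison.} The paper's proof is completely elementary, with no representation theorem and no density argument, at the price of a small case analysis. Yours has no case split and shows directly that $\left(f,r\right)$ is also the supremum of $P$ in $\Co\left(\overline{\ext}\Ba_{F^{*}}\right)$. It also explains why this supremum is not pointwise: at $0_{F^{*}}$, when it lies in $\overline{\ext}\Ba_{F^{*}}$, the pointwise supremum of $P$ is only $\frac{1}{2}\left(r+\|f\|\right)<r$.

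\textbf{Two small corrections.} First, $\dim F>0$ is not what makes $\ext\Ba_{F^{*}}$ nonempty; for $F=\left\{0_{F}\right\}$ it equals $\left\{0_{F^{*}}\right\}$. What $\dim F>0$ guarantees is $\So_{F}\ne\varnothing$ and $\ext\Ba_{F^{*}}\subseteq\So_{F^{*}}$, and your estimate needs both. Second, delete the aborted remark about the sphere being $\mathrm{w}^{*}$-closed. It is false in infinite dimensions, and your final argument rightly does not rely on it.
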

\begin{proof}
Let $f\in \Ba_{F}$ and $P:=\left[0_{\smallicecream\left(F\right)}, (f,1)\right]\cap \partial \icecream\left(F\right)_{+}$. It is enough to show that every upper bound $\left(e,r\right)$ of $P$ satisfies $\left(e,r\right) \ge \left(f,1\right)$.

First, assume that $e\ne f$. For the continuous function $v\colon\R\to\R$ defined by
\[v\left(s\right):=s\|f-e\|+\left\|f+s\left(f-e\right)\right\|,\]
we have $v\left(0\right)=\|f\|\le 1$, and $\lim_{s\to+\8}v\left(s\right)=+\8$, hence there is $s_{0}\ge 0$ such that $v\left(s_{0}\right)=1$.

Let $g:=f+s_{0}\left(f-e\right)$; we have $\|g\|=1-s_{0}\|f-e\|$ and $f-g=s_{0}\left(e-f\right)$, which yields $\left(g,\|g\|\right)\le\left(f,1\right)$, so that $\left(g,\|g\|\right)\in P$. It follows that $\left(g,\|g\|\right)\le \left(e,r\right)$, hence
$\left(1+s_{0}\right)\left\|e-f\right\|=\|e-g\|\le r-\|g\|=r-1+s_{0}\|f-e\|$, and so $\left\|e-f\right\|\le r-1$, i.e.\ $\left(e,r\right)\ge\left(f,1\right)$.

If $e=f\ne 0_{F}$, for every $\varepsilon>0$ we have that $\left(1+\varepsilon\right)\left(e,r\right)\ge\left(e,r\right)\ge P$, and so by the preceding case we have $\left(1+\varepsilon\right)\left(e,r\right)\ge\left(f,1\right)$. Since $\varepsilon$ was arbitrary, we conclude that $\left(e,r\right)\ge\left(f,1\right)$. Finally, if $e=f=0_{F}$, take $h\in\So_{F}$. Then, $\bigl(\frac{1}{2}h,\frac{1}{2}\bigr)\in P$ is dominated by $\left(0_{F},r\right)$, which implies $r-\frac{1}{2}\ge \left\|\frac{1}{2}h\right\|=\frac{1}{2}$, and so $r\ge 1$. Thus, $\left(e,r\right)\ge\left(0_{F},1\right)=\left(f,1\right)$.
\end{proof}

\begin{theorem}\label{stratm}
\item[(i)] If $f\in \So_{F}$, then $\left(f,1\right)$ is strongly atomic in $\icecream\left(F\right)$ if and only if for every $t\in\left[0,1\right)$ there are $g,h\in \Ba_{F}$ such that $g\in\ext\Ba_{F}$ and $f=tg+\left(1-t\right)h$.
\item[(ii)] $\icecream\left(F\right)$ is strongly atomic if and only if for every $f\in \So_{F}$ and $t\in\left[0,1\right)$ there are $g,h\in \Ba_{F}$ such that $g\in\ext\Ba_{F}$ and $f=tg+\left(1-t\right)h$.
\end{theorem}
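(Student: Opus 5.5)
The plan is to prove (i) and then obtain (ii) from it by scaling and from the fact that suprema of strongly atomic elements are strongly atomic.

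For (i), by Corollary~\ref{atom} and Lemma~\ref{ineq}, the atoms below $\left(f,1\right)$ are exactly the elements $t\left(g,1\right)$ with $g\in\ext\Ba_{F}$, $t\in\left(0,1\right]$ and $f=tg+\left(1-t\right)h$ for some $h\in\Ba_{F}$. Let $P$ be the set of these atoms.

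\emph{Sufficiency.} Assume the decomposition exists for every $t\in\left[0,1\right)$. We show that every upper bound $\left(e,r\right)$ of $P$ dominates $\left(f,1\right)$. Fix $t\in\left[0,1\right)$ and pick $g_{t}\in\ext\Ba_{F}$ with $t\left(g_{t},1\right)\in P$. Then $\|e-tg_{t}\|\le r-t$. From $f=tg_{t}+\left(1-t\right)h_{t}$ we get
\[\|e-f\|\le\|e-tg_{t}\|+\left(1-t\right)\|h_{t}\|\le r-t+1-t.\]
Letting $t\to1$ gives $\|e-f\|\le r-1$, that is, $\left(e,r\right)\ge\left(f,1\right)$. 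Also $\left(f,1\right)$ itself is an upper bound of $P$. Hence $\left(f,1\right)=\bigvee P$.

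\emph{Necessity.} Suppose $\left(f,1\right)=\bigvee Q$ for some set $Q$ of atoms; then $Q\subseteq P$, so $\left(f,1\right)=\bigvee P$ as well. Argue by contradiction: assume there is $t_{0}<1$ with no decomposition at $t_{0}$. The set of admissible $t$ is a down-set. Indeed, if $f=tg+\left(1-t\right)h$ and $s<t$, then
\[f=sg+\left(1-s\right)h',\qquad h'=\tfrac{\left(t-s\right)g+\left(1-t\right)h}{1-s}\in\Ba_{F}.\]
So every $\left(tg,t\right)\in P$ has $t<t_{0}$.

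We then look for an upper bound of $P$ not dominating $\left(f,1\right)$. Natural candidates are $\left(f,r\right)$ with $r<1$, or elements of the form $\left(sf,r\right)$. Let us test $\left(f,1-\delta\right)$. We need $\|f-tg\|\le 1-\delta-t$ for all atoms in $P$. We know only $\|f-tg\|\le1-t$, which is insufficient in general. Instead we use the $\mathrm{w}^{*}$-representation: by the comment before Proposition~\ref{antilattice}, the supremum can be tested in $\Co\left(\overline{\ext}\Ba_{F^{*}}\right)$. Since $f\in\So_{F}$, there are extreme functionals $\nu$ with $\nu\left(f\right)$ close to $-1$. The supremum condition then forces atoms $t\left(g,1\right)$ whose values $t\left(\nu\left(g\right)+1\right)$ approach $\nu\left(f\right)+1$, which is close to $0$. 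That alone does not force $t\to1$, so a better witness is required.

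A more promising route is a convex-geometric separation. Let $D:=\left\{tg+\left(1-t\right)\Ba_{F}:\ t\ge t_{0},\ g\in\ext\Ba_{F}\right\}$; by assumption $f\notin\overline{D}$ in a suitable sense. We would construct an upper bound $\left(e,r\right)$ with $r$ slightly less than $1$ and $e$ slightly perturbed from $f$, using the bound $t<t_{0}$ as slack. Concretely, we try
\[\left(e,r\right)=\left(\lambda f,\ \lambda+\left(1-\lambda\right)t_{0}\right),\qquad\lambda\in\left(0,1\right).\]
For $t\left(g,1\right)\in P$ with $t<t_{0}$ we have $\|\lambda f-tg\|\le\lambda\left(1-t\right)+\left(1-\lambda\right)t$. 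This is at most $r-t$ provided $\left(1-\lambda\right)t\le\left(1-\lambda\right)\left(t_{0}-t\right)$, i.e.\ $t\le t_{0}/2$; this works when $t$ is small. Additional care, or another choice of $e$, is needed for $t\in\left(t_{0}/2,t_{0}\right)$, possibly a convex combination $e=\lambda f+\left(1-\lambda\right)\cdot$ something. Finally, $\left(e,r\right)\not\ge\left(f,1\right)$ because $\left(1-\lambda\right)\|f\|=1-\lambda>\left(1-\lambda\right)\left(1-t_{0}\right)$. This upper-bound construction is the main obstacle.

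For (ii), necessity follows from (i). For sufficiency, every positive element $\left(f,r\right)$ with $r>0$ is $r\left(f/r,1\right)$. The case $\|f/r\|=1$ is (i). The case of interior points is handled either via Proposition~\ref{iceprop}, writing the element as a supremum of boundary elements (each strongly atomic by (i)) and using that suprema of strongly atomic elements are strongly atomic, or via the reduction argument from (iii)$\Leftrightarrow$(iv) of Proposition~\ref{atm}.
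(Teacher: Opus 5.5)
The necessity half of (i) is not proved: you never exhibit an upper bound of $P$ that fails to dominate $(f,1)$, and your candidate $(\lambda f,\lambda+(1-\lambda)t_0)$ only handles atoms with $t\le t_0/2$. The missing observation is that every $g\in\ext\Ba_F$ has $\|g\|=1$. So comparing an atom with a multiple of the order unit involves only the scalar: $t(g,1)\le(0_F,c)$ if and only if $t\le c-t$, i.e.\ $t\le c/2$. Under your hypothesis that every atom in $P$ has $t<t_0$, the element $(0_F,2t_0)$ is therefore an upper bound of $P$. At the same time $(0_F,2t_0)\not\ge(f,1)$, because $\|f\|=1>2t_0-1$, and this contradicts $(f,1)=\bigvee P$.

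Your own computation fails only by a factor of $2$. With second coordinate $\lambda+2(1-\lambda)t_0$, your estimate reduces exactly to $t\le t_0$, and $\lambda$ becomes irrelevant (take $\lambda=0$). The paper runs this argument directly rather than by contradiction. For each $t\in[0,1)$, the vector $(0_F,2t)$ is not an upper bound of $P$, so some atom $r(g,1)\le(f,1)$ has $r>t$. Then $(f,1)\ge r(g,1)\ge t(g,1)$, and Lemma~\ref{ineq} gives $f=tg+(1-t)h$. This makes your down-set lemma and the detour through $\Co(\overline{\ext}\Ba_{F^{*}})$ unnecessary.

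The sufficiency half of (i) is correct and coincides with the paper's argument. Your proof of (ii), via Proposition~\ref{iceprop} and the fact that suprema of strongly atomic elements are strongly atomic, is also the paper's. Drop the alternative ``reduction argument from Proposition~\ref{atm}'', though. If $\|f\|<1$, any atom $t(g,1)\le(f,1)$ satisfies $t-\|f\|\le\|f-tg\|\le1-t$, so $t\le\frac{1+\|f\|}{2}<1$. The limit $t\to1$ on which your sufficiency argument relies is therefore unavailable for interior points.
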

\begin{proof}
(i): Let $P$ be the collection of all atoms beneath $\left(f,1\right)$.

Necessity: Let $t\in\left[0,1\right)$. We have $\left(0_{F},2t\right)\not\ge\left(f,1\right)$, since $\|0_{F}-f\|=1>2t-1$. As $\left(f,1\right)=\bigvee P$, it follows that $\left(0_{F},2t\right)$ is not an upper bound for $P$. Hence, there is an atom $e\in P$ such that $\left(0_{F},2t\right)\not\ge e$. By Corollary~\ref{atom}, there are $g\in\ext\Ba_{F}$ and $r>0$ such that $e=r\left(g,1\right)$. The condition $\left(0_{F},2t\right)\not\ge r\left(g,1\right)$ implies $r=\|0_{F}-rg\|>2t-r$, therefore $r>t$. It follows that $\left(f,1\right)\ge r\left(g,1\right)\ge t \left(g,1\right)$, which by Lemma~\ref{ineq} yields $f=tg+\left(1-t\right)h$, for some $h\in\Ba_{F}$.

Sufficiency: Let $\left(e,r\right)$ be an upper bound for $P$. Fix $\varepsilon\in\left(0,1\right)$. There are $g\in\ext\Ba_{F}$ and $h\in \Ba_{F}$ such that $f=\left(1-\varepsilon\right)g+\varepsilon h$. For $u:=\left(1-\varepsilon\right)g$, we have $\|f-u\|=\varepsilon\|h\|\le\varepsilon$, and so $\left(f,1\right)\ge \left(u,1-\varepsilon\right)$. By Corollary~\ref{atom}, the latter vector is an atom, therefore contained in $P$, and so by our assumption, $\left(e,r\right)\ge\left(u,1-\varepsilon\right)$, i.e.\ $\|e-u\|\le r-1+\varepsilon$. Hence,
\[\|e-f\|\le \|f-u\|+\|e-u\|\le \varepsilon+r-1+\varepsilon=r-1+2\varepsilon.\] As $\varepsilon$ was arbitrary, we conclude that $\|e-f\|\le r-1$, thus $\left(e,r\right)\ge\left(f,1\right)$. This justifies that $\left(f,1\right)=\bigvee P$.

(ii): Necessity follows immediately from (i). For sufficiency, first note that (i) guarantees that every vector in $\partial \icecream\left(F\right)_{+}$ is strongly atomic, and then use Proposition~\ref{iceprop} and the fact that the collection of strongly atomic vectors is closed under taking suprema.
\begin{comment}

it is enough to show that if $f\in \Ba_{F}$ and $\left(e,r\right)$ is an upper bound for the collection $P$ of all atoms below $\left(f,1\right)$, then $\left(e,r\right)\ge\left(f,1\right)$.\medskip

First, assume that $e\ne f$. For the continuous function $\icecream\left(s\right):=s\|f-e\|+\left\|f+s\left(f-e\right)\right\|$ we have $\icecream\left(s\right)=\|f\|\le 1$, and $\lim\limits_{s\to+\8}\icecream\left(s\right)=+\8$, hence there is $s\ge 0$ such that $\icecream\left(s\right)=1$.

Let $g:=f+s\left(f-e\right)$; we have $\|g\|=1-s\|f-e\|$ and $f-g=s\left(e-f\right)$, which yields $\left(g,\|g\|\right)\le\left(f,1\right)$. Let $Q$ be the collection of atoms below $\left(g,\|g\|\right)$. We have $Q\subseteq P\le \left(e,r\right)$. It follows from (i) that $\left(g,\|g\|\right)$ is strongly atomic, therefore $\left(g,\|g\|\right)=\bigvee Q\le \left(e,r\right)$. The latter yields
$\left(1+s\right)\left\|e-f\right\|=\|e-g\|\le r-\|g\|=r-1+s\|f-e\|$, and so $\left\|e-f\right\|\le r-1$, i.e.\ $\left(e,r\right)\ge\left(f,1\right)$.

If $e=f$, for every $\varepsilon>0$ we have that $\left(1+\varepsilon\right)\left(e,r\right)\ge\left(e,r\right)\ge P$, and so by the preceding case we have $\left(1+\varepsilon\right)\left(e,r\right)\ge\left(f,1\right)$. Since $\varepsilon$ was arbitrary, we conclude $\left(e,r\right)\ge\left(f,1\right)$.
\end{comment}
\end{proof}

\begin{comment}
\begin{question}
What is a characterization of strong atomicity of $\left(f,1\right)$, when $\|f\|<1$?
\end{question}
\end{comment}

\begin{proposition}\label{strictstrong}
\item[(i)] If $F$ is strictly convex, then $\icecream\left(F\right)$ is strongly atomic. The converse holds if $\dim F<\8$.
\item[(ii)] If $\dim F<\8$ and $f\in \So_{F}$, then $\left(f,1\right)$ is strongly atomic if and only if it is an atom if and only if $f\in \ext\Ba_{F}$.
\end{proposition}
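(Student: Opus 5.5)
For (i), the direct implication should follow from part~(ii) of Theorem~\ref{stratm}. If $F$ is strictly convex, then $\So_{F}=\ext\Ba_{F}$. So for $f\in\So_{F}$ and $t\in\left[0,1\right)$ we may take $g:=f$ and $h:=f$, which gives $f=tf+\left(1-t\right)f$ with $g\in\ext\Ba_{F}$ and $h\in\Ba_{F}$. Theorem~\ref{stratm} then yields strong atomicity of $\icecream\left(F\right)$. If $\dim F=0$ the statement is trivial, since $\icecream\left(F\right)=\R$.

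For the converse in (i) and for (ii), I will assume $\dim F<\8$ and prove (ii) first. The chain ``$f\in\ext\Ba_{F}$ $\Rightarrow$ $\left(f,1\right)$ is an atom $\Rightarrow$ $\left(f,1\right)$ is strongly atomic'' is immediate. The first step is Lemma~\ref{corresp}, by which extreme points correspond to extreme rays, together with part~(i) of Corollary~\ref{atmo}. The second step holds because an atom is the supremum of the singleton containing it.

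The real content is showing that strong atomicity of $\left(f,1\right)$ forces $f\in\ext\Ba_{F}$. I will use part~(i) of Theorem~\ref{stratm}: for each $t_{n}:=1-\frac{1}{n}$ there are $g_{n}\in\ext\Ba_{F}$ and $h_{n}\in\Ba_{F}$ with $f=t_{n}g_{n}+\left(1-t_{n}\right)h_{n}$. Then $\|f-g_{n}\|\le 2\left(1-t_{n}\right)\to 0$, so $g_{n}\to f$ and $f\in\overline{\ext\Ba_{F}}$. In finite dimensions $\ext\Ba_{F}$ need not be closed, so this alone is insufficient; I need to use the face structure. Let $A$ be the minimal face of $\Ba_{F}$ containing $f$. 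It is a proper face, since $\|f\|=1$. Since $f$ lies in the relative interior of $A$, and $g_{n}$ is a proper ``direction'' from $f$ (namely $f$ lies on the open segment between $g_{n}$ and $h_{n}$ whenever $t_{n}\in\left(0,1\right)$), the face property gives $g_{n}\in A$. Hence each $g_{n}$ is an extreme point of $A$.

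Now $A$ is a compact convex set in $\R^{m}$; it is closed by \cite[Corollary~18.1.1]{rockafellar}. The key observation is that $f$ lies in the relative interior of $A$, while the $g_{n}$, being extreme points of $A$, lie on the relative boundary of $A$ unless $A=\left\{f\right\}$. The relative boundary of a compact convex set in finite dimensions is closed, so $g_{n}\to f$ would force $f$ onto the relative boundary of $A$, contradicting $f\in\operatorname{ri}A$. Therefore $A=\left\{f\right\}$, that is, $f\in\ext\Ba_{F}$. I expect this step to be the main obstacle. The points to justify carefully are that $f\in\operatorname{ri}A$ for the minimal face $A$ (a standard fact: the minimal face equals the union of open segments through $f$, and $f$ is in its relative interior) and that $g_{n}\in A$.

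Finally, the converse in (i) follows from (ii). If $\icecream\left(F\right)$ is strongly atomic, then each $\left(f,1\right)$ with $f\in\So_{F}$ is strongly atomic, so every $f\in\So_{F}$ is extreme. This is exactly strict convexity of $F$.
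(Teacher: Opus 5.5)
Your proof is correct. For (i) and for the easy implications in (ii) it matches the paper's proof. The paper also obtains the first claim of (i) directly from Theorem~\ref{stratm}, and it also derives the converse of (i) from (ii). Your separate treatment of $\dim F=0$ is a sensible precaution: the proof of Theorem~\ref{stratm}(ii) uses Proposition~\ref{iceprop}, which assumes $\dim F>0$.

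The two proofs differ only in the key step, that strong atomicity of $\left(f,1\right)$ forces $f\in\ext\Ba_{F}$. The paper splits this into two cases:
\begin{itemize}
\item if $f\notin\overline{\ext}\Ba_{F}$, the criterion of Theorem~\ref{stratm}(i) fails for the trivial reason you also use, namely $\|f-g_{n}\|\le 2\left(1-t_{n}\right)$;
\item if $f\in\overline{\ext}\Ba_{F}\setminus\ext\Ba_{F}$, the paper invokes an external result, \cite[Theorem~2.10]{al}, on the Aron--Lohman decomposition of points of the ball.
\end{itemize}
You handle both cases at once with elementary finite-dimensional convexity. The face property forces every $g_{n}$ (for $t_{n}\in(0,1)$) into the minimal face $A$ of $\Ba_{F}$ containing $f$. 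There each $g_{n}$ is extreme, so it lies in the relative boundary whenever $A\ne\left\{f\right\}$. On the other hand $g_{n}\to f\in\operatorname{ri}A$, which is impossible because the relative boundary is closed and disjoint from $\operatorname{ri}A$.

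Your route makes the proposition self-contained, relying only on standard facts about relative interiors of faces (Rockafellar's Theorems~18.1--18.2). You do not even need the closedness of $A$, since $\overline{A}\setminus\operatorname{ri}A$ is closed regardless. The paper's version is shorter, but it depends on the cited theorem for the case that carries the real content.
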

\begin{proof}
The first claim in (i) follows immediately from Theorem~\ref{stratm}, while the second claim follows from (ii).

(ii): The second equivalence was established in Corollary~\ref{atom}. It is also clear that every atom is strongly atomic. Suppose that $\left(f,1\right)$ is strongly atomic. According to \cite[Theorem~2.10]{al}, if $f\in\mathop{\overline{\ext}}\Ba_{F}\setminus \ext\Ba_{F}$, then the criterion in part~(i) of Theorem~\ref{stratm} is not satisfied, contradiction. It is also clear that if $f\notin\mathop{\overline{\ext}}\Ba_{F}$, the criterion in part~(i) of Theorem~\ref{stratm} is not satisfied either. We conclude that $f$ is extreme.
\end{proof}

The converse to the first claim in (i) is false in general. Namely, a non-strictly convex Banach space $F$ which satisfies the conditions of Theorem~\ref{stratm} was constructed in \cite[Section~2]{als}.

\begin{example}\label{findimnot}
If $F$ is finite-dimensional but not strictly convex, then $\icecream\left(F\right)$ is atomic by Proposition~\ref{atm}, order intervals in $\icecream\left(F\right)$ are order compact, but $\icecream\left(F\right)$ is not strongly atomic by part~(i) of Proposition~\ref{strictstrong}.
\qed\end{example}

\begin{example}\label{stratomless}
If $F$ and $F^{*}$ are strictly convex with $\dim F=\8$ (e.g., if $F=\ell_{2}$), then $\icecream\left(F\right)$ is strongly atomic by part~(i) of Proposition~\ref{strictstrong}, and it is supremum-dense in an atomless vector lattice $\Co\left(\Ba_{F^{*}},\mathrm{w}^{*}\right)$ by part~(iii) of Proposition~\ref{icecream}.
\qed\end{example}

It follows from Theorem~\ref{stratm} that if $\icecream\left(F\right)$ is strongly atomic, then $\ext\Ba_{F}$ is dense in $\So_{F}$. Let us show that the converse is false.

\begin{example}\label{ellipse}
  Let $f\colon\R^{2}\to\R$ be defined by
\[f\left(x,y\right):=\sqrt{(x - 1)^2 + y^2} + \sqrt{(x + 1)^2 +y^2}.\]
Clearly, $f$ is a symmetric convex function, therefore $B:=\left\{\left(x,y,z\right): \left|z\right|+f\left(x,y\right) \le 4\right\}$ is a symmetric convex body; see the figure below.
%\begin{comment}%
\begin{figure}[H]
    \centering
    \includegraphics[scale=0.4 ]{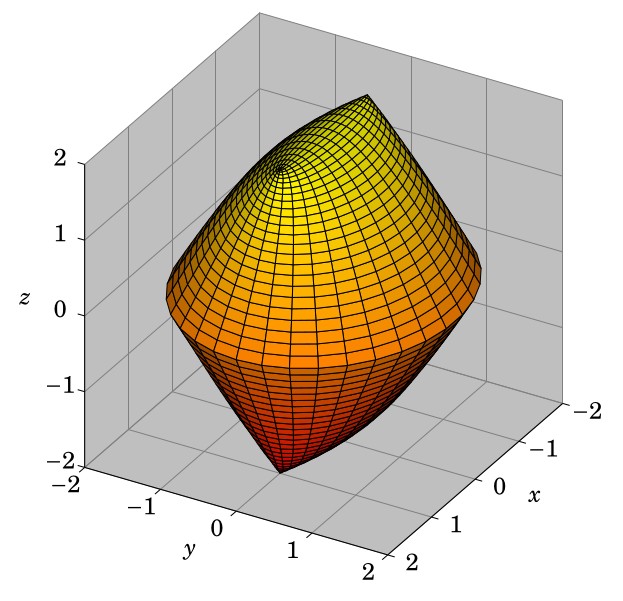}
    \label{fig:placeholder}
\end{figure}
%\end{comment}
Moreover, when $y\ne 0$, we have that
\[
\frac{\partial^{2}f}{\partial x^{2}}\left(x,y\right)=\frac{y^2}{((x - 1)^2 + y^2)^{\frac{3}{2}}}+\frac{y^2}{((x + 1)^2 + y^2)^{\frac{3}{2}}}>0
\]
and the determinant of the Hessian matrix of $f$ is
\[
\frac{4 y^2}{\left(\left((x -1)^2 + y^2\right)\left((x+1)^2 + y^2\right)\right)^{\frac{3}{2}}}>0,
\]
implying that $f$ is strictly convex when $y\ne0$. It follows that all points on the boundary $\partial B= \left\{\left(x,y,z\right): \left|z\right|=4-f\left(x,y\right)\right\}$ with $y\ne 0$ are extreme points of $B$. Note that $B$ is not strictly convex, as $\partial B$ contains $\left\{\left(x,y,z\right): y=0,~ \left|z\right|+\left|x+1\right|+\left|x-1\right|=4\right\}$, which is a broken line. Now let $F$ be $\R^{3}$ with the norm given by the Minkowski functional of $B$. Then, $\Ba_{F}=B$, and so $\ext\Ba_{F}$ is dense in $\So_{F}$. However, $\Ba_{F}=B$ is not strictly convex, and so by part~(i) of Proposition~\ref{strictstrong}, $\icecream\left(F\right)$ is not strongly atomic.
\qed\end{example}

Atoms in ice cream spaces may behave in a very different way from atoms in vector lattices.

\begin{example}
The sum of two atoms may fail to be a strongly atomic element. Indeed, let $F:=\ell_{\8}^{2}$, and let $f:=\left(1,1\right)$ and $g:=\left(1,-1\right)$. Note that $\frac{1}{2}\left(f+g\right)\in \So_{F}$ is not an extreme point of $\Ba_{F}$. Then, $e:=\left(f,1\right)$ and $h:=\left(g,1\right)$ are atoms in $\icecream\left(F\right)$, but $\frac{1}{2}\left(e+h\right)=\left(\frac{1}{2}\left(f+g\right),1\right)$ is not strongly atomic, according to part~(ii) of Proposition~\ref{strictstrong}.

It is also possible that the sum of two atoms is strongly atomic, but the supremum of the two summands does not exist. Indeed, let $H:=\ell_{2}^{2}$, so that $\icecream\left(H\right)$ is strongly atomic by part~(i) of Proposition~\ref{strictstrong}. Let $f:=\left(1,0\right)$ and $g:=\left(0,1\right)$, so that $\left(f,1\right)$ and $\left(g,1\right)$ are atoms in $\icecream\left(H\right)$, their sum is strongly atomic (because every positive vector in $\icecream\left(H\right)$ is), but $\left(f,1\right)$ and $\left(g,1\right)$ do not have a supremum, as $\icecream\left(H\right)$ is an antilattice (Corollary~\ref{antilattice2}), and $\left(f,1\right)$ and $\left(g,1\right)$ are incomparable.
\qed\end{example}

We conclude the section with two examples. The first one shows that strong atomicity does not imply order (and hence directed order) pre-compactness of the order intervals.

\begin{example}\label{noocomp}
Let $F:=\left(\ell_{1},\|\cdot\|_{*}\right)$, where $\|\cdot\|_{*}$ is the renorming of $\ell_{1}$ defined by $\|f\|_{*}:=\sqrt{\|f\|_{1}^{2}+\|f\|_{2}^{2}}$. One can show that this norm is strictly convex, which implies that $\icecream\left(F\right)$ is strongly atomic according to part~(i) of Proposition~\ref{strictstrong}. Also, since $\|\cdot\|_{2}\le\|\cdot\|_{1}$ on $\ell_{1}$, it follows that $\|\cdot\|_{*}\le\sqrt{2}\|\cdot\|_{1}$.

Let $\left(f_{n}\right)_{n\in\N}$ be the standard basis of $\ell_{1}$, and for $n\in\N$ let $e_{n}:=\left(f_{n},0\right)$. Then, $\left(e_{n}\right)_{n\in\N}$ is norm bounded in $\icecream\left(F\right)$, hence order bounded. Let us show that $\left(e_{n}\right)_{n\in\N}$ does not have an order Cauchy subnet.

Assume that $\left(e_{n_{\alpha}}\right)_{\alpha\in A}$ is an order Cauchy subnet, witnessed by $P\subseteq \icecream\left(F\right)$. That is, for every $p\in P$ there is $\alpha_{p}\in A$ such that for every $\beta,\gamma\ge\alpha_{p}$ we have $e_{n_{\beta}}-e_{n_{\gamma}}\le p$. Let us show that $P\ge \left(0_{F},1\right)$, thus immediately reaching a contradiction.

Fix $p\in P$ and let $p=\left(h,r\right)$. Fix $k\in \N$. There is $\beta_{k}\ge\alpha_{p}$ such that $n_{\beta_{k}}\ge k$, and there is $\gamma_{k}\ge\alpha_{p}$ such that $n_{\gamma_{k}}> n_{\beta_{k}}$. Let $g_{k}:=f_{n_{\beta_{k}}}-f_{n_{\gamma_{k}}}$; then $\left(g_{k},0\right)\le p$ yields $\|h- g_{k}\|_{*}\le r$. Since the coordinates of $h$ converge to $0$, it is not hard to show that $\|h- g_{k}\|_{1}\to \|h\|_{1}+2$ and $\|h- g_{k}\|_{2}^{2}\to \|h\|_{2}^{2}+2$. Hence,
\[r^{2}\ge \|h\|_{2}^{2}+2+ \|h\|_{1}^{2}+4+4\|h\|_{1}> \|h\|_{*}^{2}+2\|h\|_{*}+1= \left(\|h\|_{*}+1\right)^{2},\]
implying $r\ge \|h\|_{*}+1$, and so $\left(h,r\right)\ge \left(0_{F},1\right)$, as required.
\qed\end{example}

The last example demonstrates that directed order compactness of order intervals does not imply atomicity.

\begin{example}\label{megaice}
Let $B$ be the Boolean algebra of clopen sets of the Cantor space $\left\{0,1\right\}^{\N}$. Any choice of the first $n$ coordinates gives rise to an element of $B$; we denote the collection of all such elements by $B_{n}$. Clearly, $B_{n}$ consists of $2^{n}$ disjoint nonzero elements which add up to $1_{B}$. Moreover, for every $0_{B}\ne b\in B$ there is $n\in\N$ such that $b$ is a disjoint union of a finite number of elements of $B_{n}$. 

Let $M:=M\left(B\right)$ be the space of finitely additive real-valued measures on $B$. Note that $M$ is an AL-space with respect to the total variation norm. For every $n\in\N$, let $\rho_{n}$ be the seminorm on $M$ defined by $\rho_{n}\left(\mu\right):=\sum_{b\in B_{n}}\left|\mu\left(b\right)\right|$. Note that $\rho_{n}\le \rho_{n+1}$, for every $n\in\N$, and if $\mu\ge 0_{M}$, then $\rho_{n}\left(\mu\right)=\|\mu\|$, for every $n\in\N$.

On $\ell_{\8}\oplus M$ consider the following version of the ice cream cone
\[C:=\left\{\left(f,\mu\right): f\left(n\right)\ge \rho_{n}\left(\mu\right)\text{ for all }n\in\N\right\}.\]
Since $C$ is closed with respect to pointwise topologies on $\ell_{\8}$ and $M$, the cone $C$ is Archimedean. Let $E$ be $\ell_{\8}\oplus M$ ordered by $C$.

For every $\mu\in M$ we have $\left(\|\mu\|\1,\mu\right)\in C$. It follows that $h:=\left(\1,0_{M}\right)$ is an order unit of $E$. Indeed, for any $f\in\ell_{\8}$ and $\mu\in M$ we have $\pm\left(f,\mu\right)\le \left(\|f\|+\|\mu\|\right)h$.

Let us show that order intervals are directed order compact. In fact, since $E$ is an order unit space, it is enough to show this for $\left[0_{E},h\right]$. Let $\left(f_{\alpha},\mu_{\alpha}\right)_{\alpha\in A}\subseteq \left[0_{E},h\right]$. For every $n\in\N$ and $\alpha\in A$ we have $1-f_{\alpha}\left(n\right),f_{\alpha}\left(n\right)\ge \rho_{n}\left(\mu_{\alpha}\right)$; in particular, $0\le f_{\alpha}\left(n\right)\le 1$, and for every $0_{B}\ne c\in B$ there is $n\in\N$ such that $c$ is the disjoint union of a subset of $B_{n}$, hence $\left|\mu_{\alpha}\left(c\right)\right|\le \rho_{n}\left(\mu_{\alpha}\right)\le 1$.

By passing to a subnet, we may therefore assume that there are functions $f\colon \N\to\left[0,1\right]$ and $\mu \colon B\to\left[-1,1\right]$ such that $f_{\alpha}\left(n\right)\to f\left(n\right)$ and $\mu_{\alpha}\left(b\right)\to \mu\left(b\right)$, for every $n\in\N$ and $b\in B$.

By continuity, $\mu$ is a measure, and $1-f\left(n\right),f\left(n\right)\ge \rho_{n}\left(\mu\right)$, for every $n\in\N$, hence $e:=\left(f,\mu\right)\in \left[0_{E},h\right]$. It is left to show that $e_{\alpha}\todo e$. First, note that $e-e_{\alpha}\in \left[-h,h\right]$, for every $\alpha$. Hence, for every $n\in\N$ and $\alpha$ we have $1-\left|f_{\alpha}\left(n\right)-f\left(n\right)\right|\ge \rho_{n}\left(\mu_{\alpha}-\mu\right)$.

For $n\in\N$, let $g_{n}:=\frac{2}{n}\1_{\left[1,n\right]}+\1_{\left[n+1,\8\right)}$ and let $P:=\left\{\left(g_{n},0_{M}\right)\right\}_{n\in\N}$. It is easy to see that $P\downarrow 0_{E}$. For every $n\in\N$ there is $\alpha_{n}$ such that $\left|f_{\alpha}\left(k\right)-f\left(k\right)\right|\le \frac{1}{n}$, and $\left|\mu_{\alpha}\left(b\right)-\mu\left(b\right)\right|\le \frac{1}{2^{k}n}$, for every $\alpha\ge\alpha_{n}$, $k=1,\ldots,n$, and $b\in B_{k}$. Let us show that $\left(g_{n},0_{M}\right)\ge \pm\left(e_{\alpha}-e\right)$, for $\alpha\ge\alpha_{n}$. For $k=1,\ldots, n$, we have
\[g_{n}\left(k\right)-\left|f_{\alpha}\left(k\right)-f\left(k\right)\right|\ge \frac{1}{n}=2^{k}\frac{1}{2^{k}n}\ge \rho_{k}\left(\mu_{\alpha}-\mu\right).\]
For $k>n$, we have
\[g_{n}\left(k\right)-\left|f_{\alpha}\left(k\right)-f\left(k\right)\right|=1-\left|f_{\alpha}\left(k\right)-f\left(k\right)\right|\ge \rho_{k}\left(\mu_{\alpha}-\mu\right).\]
We conclude that $P$ witnesses $e_{\alpha}\todo e$.

We now show that $E$ is not atomic. First, if $\mu,\nu\in M_{+}$ are such that $\mu\le\nu$, then $\left(\|\mu\|\1,\mu\right)\le \left(\|\nu\|\1,\nu\right)$. Indeed, for $\lambda:=\nu-\mu\in M_{+}$ we have $\|\lambda\|=\|\nu\|-\|\mu\|$, and so $\left(\|\nu\|\1,\nu\right)-\left(\|\mu\|\1,\mu\right)=\left(\|\lambda\|\1,\lambda\right)\in C$.

Conversely, assume that $\mu\in M$, $\nu\in M_{+}$ and $f\in\ell_{\8}$ satisfy $0_{E}\le \left(f,\mu\right)\le \left(\|\nu\|\1,\nu\right)$. We show that $0_{M}\le \mu\le \nu$ and $f=\|\mu\|\1$. Without loss of generality, we may assume that $\|\nu\|=1$. Fix $n\in\N$. We have $f\left(n\right)\ge \rho_{n}\left(\mu\right)$ and $1-f\left(n\right)\ge \rho_{n}\left(\nu-\mu\right)$. Hence,
\[1\ge \rho_{n}\left(\mu\right)+\rho_{n}\left(\nu-\mu\right)= \sum\limits_{b\in B_{n}}\left(\left|\mu\left(b\right)\right|+\left|\nu\left(b\right)-\mu\left(b\right)\right|\right)\ge \sum\limits_{b\in B_{n}}\nu\left(b\right)=\nu\left(1_{B}\right)=\|\nu\|=1,\]
and so all present inequalities are in fact equalities. In particular, for every $b\in B_{n}$ we have $\left|\mu\left(b\right)\right|+\left|\nu\left(b\right)-\mu\left(b\right)\right|=\nu\left(b\right)$, hence $0\le \mu\left(b\right)\le \nu\left(b\right)$. We see that $\mu$ and $\nu-\mu$ are positive. Moreover, $f\left(n\right)\ge \rho_{n}\left(\mu\right)=\|\mu\|$, and $1-f\left(n\right)\ge \rho_{n}\left(\nu-\mu\right)=\|\nu-\mu\|=1-\|\mu\|$. We conclude that $f\left(n\right)=\|\mu\|$, hence $f = \|\mu\|\1$.

For every measure $\mu\in M_{+}$ which is not atomic there are $\lambda,\nu\in M_{+}$ which are not multiples of $\mu$ and such that $\mu=\lambda+\nu$. It follows that
\[e:=\left(\|\mu\|\1,\mu\right)=\left(\|\lambda\|\1,\lambda\right)+\left(\|\nu\|\1,\nu\right)\]
is a positive decomposition of $e$ into non-proportional positive summands, and so $e$ is not an atom. Assume further that $\mu$ is disjoint with all atomic measures, for example $\mu$ is the Bernoulli measure, which assigns $\mu\left(b\right):=2^{-n}$, for every $b\in B_{n}$. Suppose that $0_{E}\le g\le e$. Then, by the previous step, we have that $g=\left(\|\lambda\|\1,\lambda\right)$, where $0_{M}\le \lambda\le \mu$. In particular, $\lambda$ is atomless, and so $g$ is not an atom. Thus, there are no atoms in $\left[0_{E},e\right]$.
\qed\end{example}

\subsection*{Acknowledgements}

The authors would like to thank Jochen Gl\"uck for contributing ideas for Proposition~\ref{do=ru} and Example~\ref{jochen}, Kevin Abela for contributing Example~\ref{pat}, and Janko Stennder for bringing the authors' attention to \cite[Theorem~4.12]{gluck}. Furthermore, we are indebted to David Gao for contributing the main idea of Example~\ref{smoothre} through the service \href{https://mathoverflow.com/}{MathOverflow}. Additionally, we credit David Mu\~noz-Lahoz for setting up the interface that enabled interactions with models GPT-5.5 and GPT-5.6 Sol, which generated the ideas for Examples~\ref{monodo}, \ref{nonperv}, \ref{ordconst}, \ref{poly}, \ref{sosdo}, \ref{noocomp} and \ref{megaice}, as well as for Corollary~\ref{antilattice2} and for the proof of the implication (vii)$\Rightarrow$(iii) in Theorem~\ref{jblat}.

The project was initiated during the second and fifth authors' visit to the University of Pretoria. Parts of the work on the project were done during the first, fourth and sixth authors' visit to the Mathematical Institute of Leiden University, and the second and third authors' visit to the University of Alberta. All authors would like to express their gratitude to the Erasmus$+$ ICM Grant for facilitating these productive visits between Leiden University and the University of Pretoria, and between Leiden University and the University of Alberta. They are furthermore grateful to Marcel de Jeu and Vladimir Troitsky for their hospitality. Part of the work was additionally conducted during the first author's visit to the University of Pretoria, which was financially supported by the DSTI-NRF Centre of Excellence in Mathematical and Statistical Sciences (CoE-MaSS), South Africa. Opinions expressed and conclusions arrived at are those of the authors and are not necessarily to be attributed to the CoE-MaSS. The first author was also supported by the Pacific Institute for the Mathematical Sciences.

\end{document}